\documentclass[a4paper,reqno,oneside,10pt]{amsart} 
 \pdfoutput=1



\usepackage{pinlabel}
\usepackage{graphicx}

\usepackage{amsmath}
\usepackage{amssymb}
\usepackage{enumerate}
\usepackage{mathrsfs}


\newcommand{\C}{\mathbf{C}}
\newcommand{\R}{\mathbf{R}}
\newcommand{\Q}{\mathbf{Q}}
\newcommand{\Z}{\mathbf{Z}}
\newcommand{\N}{\mathbf{N}}

\newcommand{\T}{\mathbf{T}}
\newcommand{\Hyp}{\mathbf{H}}
\newcommand{\proj}{\mathbf{P}}
\newcommand{\Sph}{\mathbf{S}}

\def\a{\alpha}
\def\b{\beta}
\def\g{\gamma}
\def\G{\Gamma}
\def\d{\delta}
\def\e{\varepsilon}
\def\l{\lambda}
\def\f{\varphi}

\def\s{\sigma}

\newcommand{\Isom}{\mathbf{Isom}}

\newcommand{\fix}{\mathrm{Fix}}

\newcommand{\real}{\mathrm{Re}}
\newcommand{\im}{\mathrm{Im}}

\newcommand{\Mod}{\mathrm{Map}}
\newcommand{\teich}{\mathcal{T}}

\newcommand{\simple}{\mathscr{S}}
\newcommand{\ML}{\mathcal{ML}}

\newcommand{\PML}{\mathcal{PML}}
\newcommand{\curve}{\mathscr{C}}
\newcommand{\mcurve}{\mathcal{S}}
\newcommand{\current}{\mathcal{C}}
\newcommand{\pcurrent}{\mathcal{PC}}

\newcommand{\muth}{\mu_{Th}}

\newcommand{\moduli}{\mathcal{M}}
\newcommand{\qmoduli}{\mathcal{QM}}
\newcommand{\uqmoduli}{\mathcal{Q}^1\mathcal{M}}
\newcommand{\qteich}{\mathcal{QT}}
\newcommand{\uqteich}{\mathcal{Q}^1\mathcal{T}}

\newcommand{\Rep}{\mathsf{Rep}}
\newcommand{\Hgy}{\mathrm{H}}

\newcommand{\Ad}{\mathsf{Ad}}
\newcommand{\SL}{\mathsf{SL}}
\newcommand{\PSL}{\mathsf{PSL}}

\newcommand{\sys}{\textnormal{sys}}
\newcommand{\diff}{\mathrm{d}}
\newcommand{\vol}{\textnormal{vol}}

\newcommand{\cone}{\mathcal{B}}
\newcommand{\hol}{\textnormal{hol}}

\newcommand{\Geod}{\mathcal{G}}
\newcommand{\supp}{\mathrm{supp}}

\newcommand{\mbf}{\mathbf}
\newcommand{\mrm}{\mathrm}
\newcommand{\mc}{\mathcal}


\theoremstyle{plain}
\newtheorem{theorem}{Theorem}[section] 
\newtheorem*{theoremnonumber}{Theorem}

\newtheorem*{proposition123}{Proposition 12.3}
\newtheorem{corollary}[theorem]{Corollary}
\newtheorem{proposition}[theorem]{Proposition}

\newtheorem{lemma}[theorem]{Lemma}

\theoremstyle{definition}

\newtheorem{question}{Question}[section]
\newtheorem{conjecture}{Conjecture}[section]

\newtheorem{remark}{Remark}[section]

\newtheorem*{remarknonumber}{Remark}


\setlength{\textwidth}{125mm}
\setlength{\pagedepth}{195mm}

\title[]{What's wrong with the growth of simple closed geodesics on nonorientable hyperbolic surfaces}
\subjclass[2000]{32G15 \and 30F60 \and 30F45}
\keywords{ nonorientable surfaces, Teichm\"uller spaces, geodesics}
\date{\today}
\thanks{This work was supported by fund FIRB 2010 (RBFR10GHHH$_{003}$)}

\begin{document}

\maketitle

\begin{flushleft} 
            \textbf{Matthieu Gendulphe}\\
  \begin{small}Dipartimento di Matematica\\
                      Universit\`a di Pisa
  \end{small}
\end{flushleft}
\vspace{1cm}

\begin{abstract}
 A celebrated result of Mirzakhani states that, if $(S,m)$ is a finite area \emph{orientable} hyperbolic surface, then the number of simple closed geodesics of length less than $L$ on $(S,m)$ is asymptotically equivalent to a positive constant times $L^{\dim\ML(S)}$, where $\ML(S)$ denotes the space of measured laminations on $S$. We observed on some explicit examples that this result does not hold for \emph{nonorientable} hyperbolic surfaces. The aim of this article is to explain this surprising phenomenon.\par

 Let $(S,m)$ be a finite area \emph{nonorientable} hyperbolic surface. We show that the set of measured laminations with a closed one--sided leaf has a peculiar structure. As a consequence, the action of the mapping class group on the projective space of measured laminations is not minimal. We determine a partial classification of its orbit closures, and we deduce that the number of simple closed geodesics of length less than $L$ on $(S,m)$ is negligible compared to $L^{\dim\ML(S)}$. We extend this result to general multicurves.\par

 Then we focus on the geometry of the moduli space. We prove that its Teichm\"uller volume is infinite, and that the Teichm\"uller flow is not ergodic. We also consider a volume form introduced by Norbury. We show that it is the right generalization of the Weil--Petersson volume form. The volume of the moduli space with respect to this volume form is again infinite (as shown by Norbury), but the subset of hyperbolic surfaces whose one--sided geodesics have length at least $\e>0$ has finite volume.\par
 
These results suggest that the moduli space of a nonorientable surface looks like an infinite volume geometrically finite orbifold. We discuss this analogy and formulate some conjectures.
\end{abstract}

\setcounter{tocdepth}{1}
\newpage

\section{Introduction}\label{sec:introduction}

 In this article we are interested in three aspects of the geometry of hyperbolic surfaces and their moduli spaces:
\begin{itemize} 
\item  the growth of the number of closed geodesics of a given topological type,
\item the mapping class group action on the space of measured laminations,
\item the Teichm\"uller and Weil--Petersson volumes of the moduli space.
\end{itemize}
The recent work of Mirzakhani brought to light many connections between these topics, in addition to solving important problems. However her work only deals with orientable surfaces.\par
 The purpose of this article is to point out some interesting phenomena that occur in the case of nonorientable surfaces. They are interesting for two reasons. Firstly they show a difference between the orientable and the nonorientable cases. Secondly they suggest that the moduli space of a nonorientable surface looks like an infinite volume geometrically finite orbifold. Before going to this conclusion let us describe our results and emphasize the differences with the orientable setting.\par
 
  In this introduction $(S,m)$ is a finite area hyperbolic surface without boundary.
Given a homotopy class of closed curves $\g$, we define its $m$--length $\ell_m(\g)$ as the infimum of the lengths of its representatives. If $\g$ is nontrivial and nonperipheral, then $\ell_m(\g)$ is realized by its unique geodesic representative. The mapping class group $\Mod(S)$ acts on the set of homotopy classes, we denote by $\mcurve_{\g_0} =  \Mod(S)\cdot\g_0$ the orbit of a homotopy class $\g_0$. 
   
\subsection*{Growth of simple closed geodesics}
Let us start with the result that motivated our study. In \cite{mirzakhani-preprint} Mirzakhani established:
 
 \begin{theoremnonumber}[Mirzakhani]
 Let $(S,m)$ be a finite area orientable hyperbolic surface. For any simple closed geodesic $\g_0$ of $(S,m)$ there exists $c(m,\g_0)>0$ such that 
 \begin{eqnarray*}
\lim_{L\rightarrow +\infty}   \frac{\left| \left\{\g\in \mcurve_{\g_0}~;~\ell_m(\g)\leq L \right\} \right| }{L^{\dim\ML(S)}} &=& c(m,\g_0).
\end{eqnarray*}
\end{theoremnonumber}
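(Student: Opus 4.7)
The plan is to view simple closed curves as integral points of $\ML(S)$ and reinterpret the count as an equidistribution statement. Embedding each $\g \in \mcurve_{\g_0}$ into $\ML(S)$ as its geodesic representative with integer transverse counting measure,
\[
N(m,L) := \big|\{\g \in \mcurve_{\g_0} : \ell_m(\g) \leq L\}\big| = \#\big(\mcurve_{\g_0} \cap B_L(m)\big),
\qquad B_L(m) := \{\l \in \ML(S) : \ell_m(\l) \leq L\}.
\]
Since $\ell_m$ is continuous and positively homogeneous of degree one on $\ML(S)$, $B_L(m) = L\cdot B_1(m)$, so the problem reduces to showing that the rescaled orbit $\tfrac{1}{L}\mcurve_{\g_0}$ equidistributes in $\ML(S)$.

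To formalize this, I would introduce the Radon measures
\[
\mu_L := \frac{1}{L^d} \sum_{\g \in \mcurve_{\g_0}} \d_{\g/L} \quad \text{on } \ML(S), \qquad d := \dim\ML(S),
\]
and try to show $\mu_L \to c_{\g_0}\cdot \muth$ weakly for some $c_{\g_0}>0$. Because $\ell_m$ is piecewise-linear and proper on $\ML(S)$, $\partial B_1(m)$ is $\muth$-negligible, so applying weak convergence to $\mathbf{1}_{B_1(m)}$ would yield $c(m,\g_0) = c_{\g_0}\cdot \muth(B_1(m))$. The uniqueness of the limit is built in: any weak subsequential limit of $\mu_L$ is $\Mod(S)$-invariant and positively homogeneous of the correct degree, and by Masur's theorem $\Mod(S)$ acts ergodically on $\PML(S)$ with respect to the class of $\muth$, forcing proportionality with $\muth$.

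To pin down $c_{\g_0}$ and produce the limit, I would average over moduli space against the Weil--Petersson measure. Since $N(\cdot, L)$ descends to $\moduli(S)$, Mirzakhani's unfolding formula rewrites
\[
\int_{\moduli(S)} N(m, L)\,\diff v_{WP}(m) = \frac{1}{|\mathrm{Stab}(\g_0)|}\int_0^L \ell\, V_{WP}\big(S\setminus \g_0, \ell\big)\,\diff \ell,
\]
where $V_{WP}(S\setminus\g_0,\ell)$ is the Weil--Petersson volume of the moduli space of the surface cut along $\g_0$ with boundary lengths $\ell$. By Mirzakhani's polynomiality theorem, the right-hand side is an explicit polynomial of degree $d$ in $L$, whose leading coefficient, compared with $c_{\g_0}\int_{\moduli(S)}\muth(B_1(m))\,\diff v_{WP}(m)$, determines $c_{\g_0}$.

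The main obstacle is upgrading the Weil--Petersson average to a pointwise statement at each $m$. The natural route combines weak-$*$ compactness of $\{\mu_L\}$ — via properness of $\ell_m$ on $\ML(S)$, which prevents escape of mass to infinity — with Masur's ergodic uniqueness to force every subsequential limit to be the same multiple of $\muth$; continuity in $m$ of the resulting function then transfers the integral identity to a pointwise identity. The technical heart is therefore Mirzakhani's integration formula together with her polynomial volume recursion on $\overline{\moduli}_{g,n}$; without them the positivity and explicit value of $c(m, \g_0)$ are inaccessible.
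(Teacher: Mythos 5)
The paper does not prove this statement: it is quoted as background with an attribution to Mirzakhani (\cite{mirzakhani-preprint}, and the original proof for simple curves is in \cite{mirzakhani-annals}). The paper's own contribution (Theorem~\ref{thm:1}) is the nonorientable analogue, for which it explicitly reuses part of Mirzakhani's scheme but replaces the hardest step — integration over moduli space and Masur's ergodic theorem — by the Danthony--Nogueira theorem.

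Your proposal is a faithful outline of Mirzakhani's actual argument: counting measures on $\ML(S)$, weak-$\ast$ relative compactness, uniqueness of the limit via $\Mod(S)$-invariance and Masur's ergodicity, and computation of the constant via the unfolding integral over moduli space together with the polynomiality of Weil--Petersson volumes. So it is essentially the same approach the paper defers to.

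One small inaccuracy worth flagging: you justify $\muth(\partial B_1(m))=0$ by asserting that $\ell_m$ is piecewise-linear on $\ML(S)$. It is not — $\ell_m$ is real-analytic on train-track charts and convex, not piecewise-linear. The correct (and simpler) argument, which the paper uses in Lemma~\ref{lem:bdy-negligible}, is the degree-$d$ homogeneity of $\muth$: one has $\partial B_1(m)\subset B_{1+\e}(m)\setminus B_{1-\e}(m)$, so $\muth(\partial B_1(m))\leq \muth(B_1(m))\big((1+\e)^d-(1-\e)^d\big)$ for all $\e>0$. Your conclusion is right; only the justification needs replacing. The rest of the sketch — in particular the acknowledged difficulty of passing from the Weil--Petersson average to the pointwise statement, which is indeed the technical core — is accurate.
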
 

\begin{remarknonumber}
The case of the punctured torus is due to McShane and Rivin (\cite{mcshane}).
\end{remarknonumber}

 We denote by $\ML(S)$ the space of measured laminations of $S$, its dimension is given by:
 $$\dim\ML(S)~=~\left\{\begin{array}{ll} 6g-6+2r & \textnormal{if $S$ is orientable} \\
                                                      3g-6+2r & \textnormal{if $S$ is nonorientable} 
                              \end{array}\right. ,$$
where $g$ is the genus of the surface and $r$ its number of punctures.\par
In contrast to Mirzakhani's result we show:

\begin{theorem}\label{thm:1}
 Let $(S,m)$ be a finite area nonorientable hyperbolic surface. For any simple closed geodesic $\g_0$ of $(S,m)$ we have 
 \begin{eqnarray*}
\lim_{L\rightarrow +\infty}   \frac{\left| \left\{\g\in \mcurve_{\g_0}~;~\ell_m(\g)\leq L \right\} \right| }{L^{\dim\ML(S)}} &=& 0.
\end{eqnarray*}
 \end{theorem}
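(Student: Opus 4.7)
The plan is to adapt Mirzakhani's lattice-point counting framework in $\ML(S)$ to the nonorientable setting, and then to exploit the structural anomaly of the paper---the non-minimality of the $\Mod(S)$-action on $\PML(S)$---to show that the expected $L^{\dim\ML(S)}$ growth collapses to $o(L^{\dim\ML(S)})$.

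First I rewrite the counting function as a lattice-point count. The length function $\ell_m$ extends continuously and positively homogeneously to $\ML(S)$, so
\[
\bigl|\{\gamma\in\mcurve_{\gamma_0}:\ell_m(\gamma)\le L\}\bigr| \;=\; \bigl|\mcurve_{\gamma_0}\cap B_L^m\bigr|, \qquad B_L^m:=\{\lambda\in\ML(S):\ell_m(\lambda)\le L\}.
\]
Since $B_L^m=L\cdot B_1^m$ under the natural $\R_{>0}$-action, dividing by $L^{\dim\ML(S)}$ converts the count into $\nu_L(B_1^m)$, where
\[
\nu_L \;:=\; L^{-\dim\ML(S)}\sum_{\gamma\in\mcurve_{\gamma_0}}\delta_{\gamma/L}
\]
is a Radon measure on $\ML(S)$. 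Proving the theorem amounts to showing $\nu_L(B_1^m)\to 0$.

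Next, after checking that $(\nu_L)$ is locally uniformly bounded on a neighbourhood of $B_1^m$ (using the properness of $\ell_m$ on $\ML(S)$), I extract a subsequential weak-$*$ limit $\mu$. Routine verifications show that $\mu$ is $\Mod(S)$-invariant, homogeneous of degree $\dim\ML(S)$ under the $\R_{>0}$-action, and supported in the cone $\overline{\R_{>0}\cdot\mcurve_{\gamma_0}}$ over the projective orbit closure of $[\gamma_0]$ in $\PML(S)$. It suffices to show that every such $\mu$ satisfies $\mu(B_1^m)=0$.

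Here the paper's structural results take over. By the description of measured laminations carrying a closed one-sided leaf and the partial classification of $\Mod(S)$-orbit closures in $\PML(S)$, the orbit closure $\overline{\Mod(S)\cdot[\gamma_0]}$ is a proper closed subset of $\PML(S)$, contained in a finite union of strata of positive codimension (whose precise shape is dictated by the one-sided/two-sided topological type of $\gamma_0$). In local train-track charts adapted to these strata, $\ML(S)$ is Euclidean of dimension $\dim\ML(S)$ and the stratum is a lower-dimensional affine piece; hence its Thurston measure vanishes, and any Radon measure $\mu$ of homogeneity $\dim\ML(S)$ supported on the cone over such a stratum would in polar coordinates be of the form $r^{\dim\ML-1}dr\otimes\bar\mu$ with $\bar\mu$ a $\Mod(S)$-invariant measure concentrated on the proper sub-stratum $X\subsetneq\PML(S)$. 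Combining this with the classification of $\Mod(S)$-invariant Radon measures on $\PML(S)$ established in the paper, the only possibility compatible with being a weak-$*$ limit of discrete counting measures on a single orbit is $\bar\mu=0$.

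The main obstacle is precisely this last step: converting the qualitative statement $\overline{\Mod(S)\cdot[\gamma_0]}\neq\PML(S)$ into the quantitative vanishing $\mu\equiv 0$. The delicate point is to rule out concentration of subpolynomial mass on the lower-dimensional orbit closure; this will require the detailed local model for $\ML(S)$ near the one-sided stratum developed earlier in the paper, together with a careful analysis of how lattice points in $\mcurve_{\gamma_0}$ distribute inside train-track charts transverse to that stratum. Once $\mu(B_1^m)=0$ for every weak-$*$ limit, standard compactness gives $\nu_L(B_1^m)\to 0$, which is Theorem~\ref{thm:1}.
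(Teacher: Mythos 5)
Your overall strategy---recast the count as a lattice-point count, scale down by $L$, form the counting measures $\nu_L$, pass to weak-$*$ limits, and show every limit vanishes on $B_1^m$---is indeed the scheme of the paper, so the scaffolding is sound. The gap is exactly in the step you yourself flag as ``the main obstacle,'' and it is not a technical loose end: the argument you sketch to close it would not work.

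You describe the orbit closure $\overline{\Mod(S)\cdot[\g_0]}$ as ``contained in a finite union of strata of positive codimension'' and claim that in local train-track charts these strata are ``lower-dimensional affine pieces; hence [their] Thurston measure vanishes.'' This misreads the structure. For a simple closed geodesic $\g_0$, the orbit closure in $\PML(S)$ is contained in the (countable, hence trivially negligible) discrete orbit together with a subset of $\PML^+(S)$; see Theorem~\ref{thm:closures}. But $\PML^+(S)$, equivalently $\ML^+(S)\subset\ML(S)$, is \emph{not} a lower-dimensional affine piece of any train-track chart. It is a closed set with empty interior whose Thurston-negligibility is the substantive theorem of Danthony and Nogueira (Theorem~\ref{thm:nogueira}), proved via Rauzy induction for linear involutions; no dimension count delivers it. Your proposal silently replaces this deep input with a stratification heuristic that is false. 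Separately, you invoke ``the classification of $\Mod(S)$-invariant Radon measures on $\PML(S)$ established in the paper,'' but the paper establishes no such classification; the nearest statement is Conjecture~\ref{conj:measure}, which is left open.

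The paper's route avoids both problems and is more elementary than what you are setting up. It considers the $\Mod(S)$-invariant set $\ML^-_N(S,\Z)\supset\mcurve_{\g_0}$ of integral simple multicurves whose one-sided component weights are bounded by $N$, and the associated counting measures $\nu^L$. Two observations then conclude. First, $\nu^L\le\mu^L$ (the counting measure of all integral points), and since $\mu^L\to\muth$ this gives both relative compactness and, crucially, \emph{absolute continuity} of any weak-$*$ limit $\nu^\infty$ with respect to $\muth$---a domination argument your proposal has no analogue of, and which replaces the invariant-measure classification you appeal to. Second, a short local computation shows $\supp(\nu^\infty)\subset\ML^+(S)$: near a lamination $\l$ with one-sided weights $x_i>0$ one takes a product neighborhood $U=I_1\g_1+\cdots+I_k\g_k+\ML^+(S-\l^-)\cup\{0\}$, and for $\eta\in\ML^-_N(S,\Z)\cap(L\cdot U)$ the one-sided weights of $\eta$ must lie in $L\cdot I_i$ yet be $\le N$, impossible for $L$ large, so $\nu^L(U)=0$ eventually. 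Combining absolute continuity with Danthony--Nogueira gives $\nu^\infty=0$. To repair your write-up, you would need to replace the stratification heuristic by the Danthony--Nogueira theorem and replace the appeal to an invariant-measure classification by the domination $\nu^L\le\mu^L$; at that point your approach and the paper's coincide.
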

 
 This phenomenon was already known in a few cases.  In \cite{gendulphe} we treated the case of nonorientable surfaces with Euler characteristic  $-1$. Based on Mirzakhani's result, we showed that $\left| \left\{\g\in \mcurve_{\g_0}~;~\ell_m(\g)\leq L \right\} \right|$ is asymptotically equivalent to a monomial whose degree is an integer less than $\dim \ML(S)$. In \cite{huang} Huang and Norbury studied the case of the thrice--punctured projective plane denoted by $N_{1,3}$. They related the growth of the number of one--sided simple closed geodesics to the growth of the number of Markoff quadruples. In the recent work \cite{gamburd}, Gamburd, Magee and Ronan determined the asymptotic of the growth of the number of integral Markoff tuples. Let $m_0$ be the most symmetric hyperbolic metric (up to isometry) on the thrice--punctured projective plane, and let $\mcurve_{\g_0}$ be the orbit of the one--sided simple closed geodesics. A combination of both works entails the existence of $\b\in (2,3)$ such that the limit
$$
\lim_{L\rightarrow +\infty}   \frac{\left| \left\{\g\in \mcurve_{\g_0}~;~\ell_{m_0}(\g)\leq L \right\} \right| }{L^{\b}} 
$$
exists and is positive. This result has been extend to any hyperbolic metric on $N_{1,3}$ by Magee (\cite{magee}).\par

 \subsection*{Counting measures} 
 To prove her theorem, Mirzakhani introduced a family of counting measures $(\nu_{\g_0}^L)_{L>0}$ on $\ML(S)$, and showed its weak$^\ast$ convergence towards a positive multiple of the Thurston measure $\muth$. This convergence can be interpreted as the equidistribution of $\mcurve_{\g_0}$ in the projective space of measured laminations $\PML(S)$.\par

  The framework of Mirzakhani's proof is avaible for nonorientable surfaces as well as for orientable ones. Actually the difference between the two theorems reflects a difference in the dynamics of the action of $\Mod(S)$ on $\ML(S)$. In the nonorientable case, the orbit $\mcurve_{\g_0}$ accumulates on the subset $\ML^+(S)\subset\ML(S)$  of measured laminations without one--sided closed leaves, which is negligible with respect to the Thurston measure $\muth$ (Danthony and Nogueira \cite{nogueira}). This implies that the family of counting measures $(\nu^L_{\g_0})_{L>0}$ weak$^\ast$ converges towards the zero measure (Proposition~\ref{pro:convergence-measure}).
  
 \subsection*{Dynamics of the mapping class group action} 
Let us focus on the action of $\Mod(S)$ on $\ML(S)$. When $S$ is orientable this action is rather well--understood:
\begin{itemize}
\item the action of $\Mod(S)$ on $\PML(S)$ is minimal (Thurston, see \cite{flp}),
\item the action of $\Mod(S)$ on $\ML(S)$ is ergodic with respect to $\muth$ (Masur \cite{masur}),
\item there is a classification of $\Mod(S)$--invariant locally finite ergodic measures on $\ML(S)$ (Hamenst\"adt \cite{hamenstadt}, Lindenstrauss--Mirzakhani \cite{mirzakhani-imrn}),
\item there is a classification of the orbit closures of $\Mod(S)$ in $\ML(S)$ (\emph{ibid.}).
\end{itemize}
 In contrast to these results we show that:
\begin{itemize}
\item the action of $\Mod(S)$ on $\PML(S)$ is not minimal (Proposition~\ref{pro:consequences}),
\item the action of $\Mod(S)$ on $\PML(S)$ is topologically transitive if and only if $S$ is of genus one (Proposition~\ref{pro:transitivity}),
\item we have a partial classification of the orbit closures of $\Mod(S)$ on $\PML(S)$, which is complete when $S$ has genus one (Theorem~\ref{thm:closures}).  
\item the action of $\Mod(S)$ on $\ML(S)$ is not topologically transitive, thus is not ergodic with respect to $\muth$ (Proposition~\ref{pro:consequences}),
\end{itemize} 
We also determine the unique minimal invariant closed subset of $\PML(S)$ (\textsection\ref{sec:orbit-closure}).

 \subsection*{Structure of $\ML(S)$}
These results are consequences of the peculiar structure of $\ML(S)$ when $S$ is nonorientable. As mentioned above, the set $\ML^-(S)\subset\ML(S)$ of measured laminations with a one--sided closed leaf has full Thurston measure (Danthony and Nogueira \cite{nogueira}). In addition it admits a canonical cover by topological open balls of dimension $\dim\ML(S)$ (\textsection\ref{sec:decomposition}). Each ball $\cone(\g)$ is associated to a collection of disjoint one--sided simple closed geodesics $\g=\{\g_1,\ldots,\g_k\}$, and consists in the set of measured laminations that admit $\g_1,\ldots,\g_k$ as leaves. The projection of $\cone(\g)$ into $\PML(S)$ is a topological open ball. These balls form a packing (\emph{i.e.} are pairwise disjoint) when $S$ has genus one.

\subsection*{Volumes of moduli spaces}
The particular structure of $\PML(S)$ influences the geometry of the moduli space $\moduli(S)$. This is logical since $\PML(S)$ is the Thurston boundary of the Teichm\"uller space.\par

 When $S$ is nonorientable, we show that the Teichm\"uller flow is not ergodic and that the Teichm\"uller volume of the moduli space is infinite (\textsection\ref{sec:teichmuller}). We also consider a kind of volume form introduced by Norbury (\cite{norbury}) as a generalization of the Weil--Petersson volume form to the case of nonorientable surfaces. We show (\textsection\ref{sec:weil-petersson}) that it is indeed the right generalization from the point of view of the twist flow. Then we provide a simpler proof of the fact --- due to Norbury (\emph{ibid}.) --- that $\moduli(S)$ has infinite volume with respect to this volume form. We also show that the subset $\{\sys^-\geq \e\}$ of points in $\moduli(S)$ whose one--sided simple closed geodesics have length at least $\e>0$ is a finite volume deformation retract of $\moduli(S)$ (\textsection\ref{sec:finite-convex}).
 
\subsection*{Counting multicurves }
Mirzakhani (\cite{mirzakhani-preprint}) has extended her theorem on the growth of simple closed geodesics to general multicurves. The particular case of the punctured torus has been solved independently by Erlandsson and Souto (\cite{souto}), who also established some other interesting results. We first need few lines to precise the terminology.\par

 A \emph{multicurve} is a formal $\R_+^\ast$--linear combination $\g=a_1\g_1+\ldots +a_n\g_n$ of distinct homotopy classes of noncontractible and nonperipheral closed curves. For technical reasons we assume that each $\g_i$ is \emph{primitive}, that is corresponds to the conjugacy class of primitive elements of $\pi_1(S)$. We say that $\g$ is an \emph{integral} (resp. \emph{rational}) multicurve if moreover $a_i\in\N$ (resp. $a_i\in\Q$) for any $i=1,\ldots,n$. We denote by $\mcurve$ the set of integral multicurves of $S$. The mapping class group acts on $\mcurve$, we denote by $\mcurve_{\g}=\Mod(S)\cdot \g$ the orbit of $\g$. Given a hyperbolic metric $m$ on $S$, we define the $m$--length of $\g$ by $\ell_m(\g)=a_1\ell_m(\g_1)+\ldots+a_n\ell_m(\g_n)$. We say that a multicurve is \emph{simple} if its components are simple and disjoint.\par
 
  For sake of clarity, we stated Mirzakhani's theorem in the case of a simple closed geodesic, but it applies to any \emph{simple integral multicurve}. As we mentioned above, she has extended her theorem to all multicurves (\cite[Theorem~1.1]{mirzakhani-preprint}). We do the same way with our:
 
\begin{theorem}\label{thm:2}
Let $(S,m)$ be a finite area nonorientable hyperbolic surface. For any integral multicurve $\g_0$ we have 
\begin{eqnarray*}
\lim_{L\rightarrow 0}   \frac{\left| \left\{\g\in \mcurve_{\g_0}~;~\ell_m(\g)\leq L \right\} \right| }{L^{\dim\ML(S)}} &=& 0.
\end{eqnarray*}
\end{theorem}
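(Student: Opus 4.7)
The plan is to follow the proof of Theorem~\ref{thm:1} (via Proposition~\ref{pro:convergence-measure}), adapted to general integral multicurves as Mirzakhani \cite{mirzakhani-preprint} adapts her simple-multicurve count. I would introduce the rescaled counting measure
\begin{equation*}
\nu^L_{\g_0}~=~\frac{1}{L^{\dim\ML(S)}}\sum_{\g\in\mcurve_{\g_0}}\delta_{\g/L}
\end{equation*}
on the space of geodesic currents $\current(S)$. With $B_m=\{c\in\current(S):\ell_m(c)\le 1\}$, which is compact, the quotient appearing in the theorem equals $\nu^L_{\g_0}(B_m)$, so the statement reduces to showing that $\nu^L_{\g_0}\to 0$ in the weak$^\ast$ topology as $L\to+\infty$.

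First I would verify weak$^\ast$ precompactness via a uniform bound $\nu^L_{\g_0}(K)\le C_K$ for each compact $K\subset\current(S)$, using standard length estimates that do not rely on orientability. Any accumulation point $\nu$ is supported on $\ML(S)\subset\current(S)$: since $i(\g,\g)=i(\g_0,\g_0)$ is constant on $\mcurve_{\g_0}$, the rescaled self-intersection $i(\g/L,\g/L)$ tends to $0$, so by Bonahon's criterion any limit of a sequence $\g/L_n\to c$ with $\g\in\mcurve_{\g_0}$ lies in $\ML(S)$. The homogeneity under $\lambda\mapsto t\lambda$ together with the $\Mod(S)$-invariance of each $\nu^L_{\g_0}$ then shows that $\nu$ is proportional to $\muth$ on its support.

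The key step will be to prove that $\nu$ is supported on $\ML^+(S)$. Given a subsequence $\phi_n(\g_0)/L_n\to\lambda\in\ML(S)$ with $L_n\to+\infty$ and $\phi_n\in\Mod(S)$, each one-sided simple component $\g_i$ of $\g_0=\sum a_i\g_i$ contributes mass $a_i/L_n\to 0$ and so cannot survive as a positive-weight leaf of $\lambda$. For the remaining components I would adapt the argument underlying Proposition~\ref{pro:convergence-measure}, using the cone decomposition of $\ML^-(S)$ given in \textsection\ref{sec:decomposition}: a one-sided closed leaf $\delta$ of $\lambda$ of positive weight would force $\phi_n(\g_0)/L_n$ into a cone $\cone(\delta)$ for large $n$, and extracting a further subsequence where $\phi_n^{-1}(\delta)$ is a fixed geodesic would produce a contradiction with $L_n\to+\infty$ via an intersection-number estimate. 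The self-intersection suppression from the previous step would ensure that non-simple components of $\g_0$ do not disrupt this analysis.

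Since $\muth(\ML^+(S))=0$ by Danthony--Nogueira \cite{nogueira}, one concludes $\nu=0$, from which the theorem follows. The hard part will be the third paragraph: adapting the cone-decomposition argument from simple multicurves to general integral multicurves, where one must simultaneously suppress the self-intersection contribution and handle the possible one-sided primitive components within $\g_0$ itself.
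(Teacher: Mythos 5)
Your overall skeleton --- rescaled counting measures on $\current(S)$, extracting a weak$^\ast$ limit, locating the support in $\ML^+(S)$, and invoking Danthony--Nogueira --- matches the paper's strategy, but two of your three key steps have genuine gaps.

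First, the claim that ``homogeneity together with the $\Mod(S)$--invariance of each $\nu^L_{\g_0}$ shows that $\nu$ is proportional to $\muth$ on its support'' is false as stated. In the orientable case one deduces proportionality from Masur's ergodicity, but $\Mod(S)$ acting on $\ML(S)$ is \emph{not} ergodic for $\muth$ when $S$ is nonorientable (Proposition~\ref{pro:consequences}), so homogeneity plus invariance gives nothing like proportionality. What the paper actually uses is a deep theorem of Erlandsson and Souto (Proposition~\ref{pro:souto}, the nonorientable analogue of \cite[Proposition~4.1]{souto}): any limit point of $(\nu^L_{\g_0})_L$ is \emph{absolutely continuous} with respect to $\muth$. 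That absolute continuity (far weaker than proportionality, but enough once the support is $\muth$--negligible) is the hard input; it does not follow from a soft invariance argument and must be imported.

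Second, your third paragraph proposes to place $\phi_n(\g_0)/L_n$ in a cone $\cone(\delta)$, but the cones are subsets of $\ML(S)$, i.e.\ of the light cone $\{i(c,c)=0\}$ inside $\current(S)$. The currents $\phi_n(\g_0)/L_n$ have strictly positive self--intersection (shrinking to zero but never vanishing for finite $n$), so they never lie in any $\cone(\delta)$, and the cone decomposition argument of Proposition~\ref{pro:convergence-measure} does not transplant. This is precisely the extra difficulty of the non--simple case, and the paper resolves it by a new geometric lemma: in a M\"obius--band collar around a one--sided geodesic $\g$, a geodesic arc of length $L$ has roughly $L/2\ell(\g)$ self--intersections (Lemma~\ref{lem:intersection-collar}), so any geodesic current near $\lambda$ either has $\g$ in its support or contains a geodesic with many self--intersections (Proposition~\ref{pro:accumulation-current}). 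Since $\phi_n(\g_0)$ has self--intersection number bounded by $i(\g_0,\g_0)$, this forces $\g$ itself to appear as a component with a uniformly bounded integer multiplicity, whose rescaled contribution then vanishes --- that is the contradiction. Your phrase ``the self--intersection suppression from the previous step would ensure that non--simple components do not disrupt this analysis'' is an acknowledgement of the obstacle but does not supply the Möbius--collar estimate that actually overcomes it.
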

 
 We prove Theorem~\ref{thm:2} in the same manner as Theorem~\ref{thm:1} by showing that the family of counting measures $(\nu_{\g_0}^{L})_{L>0}$ converges to the zero measure. These measures are not defined on $\ML(S)$ anymore, but on the space  $\current(S)$ of geodesic currents. We show that any limit point of $(\nu_{\g_0}^{L})_{L>0}$ is supported on $\ML^+(S)$, and we conclude using a powerful result of Erlandsson and Souto (\cite{souto}) which states that limit point of $(\nu_{\g_0}^{L})_{L>0}$ is absolutely continuous with respect to $\muth$.\par
 
 The method developed in \cite{mirzakhani-preprint} is very different, and does not make use of geodesic currents. Working with geodesic currents present the advantage that Theorem~\ref{thm:2} extends immediately to all geometric structures that define a filling geodesic currents, like complete negatively curved metrics (see Theorem~\ref{thm:final} for a precise statement).
 
 Let us mention a result which explains why an accumulation point in $\current(S)$ of an orbit $\simple_{\g_0}$ can not have a one--sided leaf: 
 
 \begin{proposition123}
 Let $\l\in\ML^-(S)$ be a measured lamination with a one--sided closed leaf $\g$. For any $k\geq 1$, there exists a neighborhood $U_k$ of $\l$ in $\current(S)$ such that for any geodesic current $c\in U_k$ there exists a geodesic $\d\in\supp(c)$ that projects either on $\g$ either on a geodesic with $k$ self--intersections.
 \end{proposition123}
 
\section{Conclusion}\label{sec:conclusion}

 The analogy between moduli spaces and locally symmetric orbifolds has guided the study of Teichm\"uller spaces and mapping class groups since many years. So far, moduli spaces were compared to \emph{finite volume} locally symmetric orbifolds. The results established in this paper suggest that, in the case of a nonorientable surface, the moduli space looks like a \emph{geometrically finite} locally symmetric orbifold of \emph{infinite volume}. The aim of this section is to defend this point of view.\par
 
  For sake of simplicity, we compare the mapping class group $\Mod(S)$ with a geometrically finite Kleinian group $\G$. We mostly focus on the dynamical properties of their actions. In the first paragraphs we recall some basic results and definitions. Then we describe the analogy and state some conjectures. Such an analogy has been particularly fruitful in the orientable setting (see \cite{ABEM,EM}).

\subsection*{Geometrically finite Kleinian groups}
We recall that a \emph{Kleinian group} $\G$ is a discrete subgroup of $\Isom(\Hyp^n)$. Its \emph{limit set} $\Lambda$ is the set of points in $\partial\Hyp^n$ that are adherent to any orbit of $\G$ in $\Hyp^n$. We assume that $\G$ is \emph{non elementary}, which means that $\Lambda$ is infinite. The \emph{convex core} of the orbifold $\Hyp^n/\G$ is the projection $\mrm{convex}(\Lambda)/\G$ of the convex hull of $\Lambda$ in $\Hyp^n$.\par
 
Following \cite{kapovich}, we say that $\G$ is \emph{geometrically finite} if there exists $\e>0$ such that the $\e$--tubular neighborhood of the convex core has finite volume. For example,
a Kleinian group which admits a convex fundamental domain bounded by finitely many geodesic faces is geometrically finite, but the converse is not true. Note that a geometrically finite Kleinian group is a lattice (\emph{i.e.} has finite covolume) if and only if $\Lambda=\partial\Hyp^n$.\par

 There are various characterizations of geometrical finiteness, let us mention the following: a Kleinian group $\G$ is geometrically finite if and only if there exists $\{B_i\}_{i\in I}$ a $\G$--invariant collection of disjoint horoballs centered at fixed points of parabolic subgroups of $\G$ such that 
 $\left(\mrm{convex}(\Lambda)-\cup_i B_i\right)/\G$ 
is compact  (see \cite{kapovich} or \cite[Chap.12]{ratcliffe}).

\subsection*{Patterson--Sullivan theory}
Let $\G$ be a Kleinian group. To study the dynamics of the action of $\G$ on $\partial\Hyp^n$ one naturally looks for an invariant Radon measure supported on $\Lambda$. Such a measure is necessarily trivial when $\G$ is non elementary, therefore one considers a more general object called conformal density.\par

  A \emph{conformal density of dimension $\d>0$} is a family $\{\mu_x\}_{x\in\Hyp^n}$ of Radon measures on $\partial \Hyp^n$ such that any two measures $\mu_x$ and $\mu_{y}$ are absolutely continuous one with respect to the other, and are related by the following equality:
\begin{eqnarray*}
\frac{\mrm d \mu_y}{\mrm d \mu_x}(\xi) & = & e^{-\d \b_\xi(y,x)}\quad(\forall\xi\in\partial\Hyp^n),
\end{eqnarray*}
where $\b_\xi(y,x)$ is the Busemann cocycle that gives the signed distance between the horospheres centered at $\xi$ passing through $y$ and $x$. We say that the conformal density is \emph{$\G$--invariant} if $\g_\ast \mu_x = \mu_{\g x}$ for any $\g\in\G$ and any $x\in\Hyp^n$. The Lebesgue measure on $\Sph^{n-1}$ induces an invariant conformal density of dimension $n-1$. More interestingly, Patterson and Sullivan (\cite{sullivan}) constructed a non zero invariant conformal density supported on $\Lambda$, and they showed that its dimension is equal to the \emph{critical exponent} of $\G$: 
\begin{eqnarray*}
\d_\G &  = &   \inf\left\{\d>0~;~\sum_\G e^{-\d d_{\Hyp^n}(o,\g o)}<+\infty\right\},
\end{eqnarray*}
 where $o$ is any point in $\Hyp^n$. \par

 To any $\G$--invariant conformal density corresponds a measure on the unit tangent bundle of $\Hyp^n$ which is invariant under the geodesic flow and the action of $\G$, this is the associated \emph{Bowen--Margulis--Sullivan measure} (\cite{sullivan-acta}). In the case of the conformal density induced by the Lebesgue measure on $\Sph^{n-1}$, the associated Bowen--Margulis--Sullivan measure is simply the Liouville measure. The properties of a conformal density and its Bowen--Margulis--Sullivan measure are related by the Hopf--Tsuji--Sullivan dichotomy (see \cite[Th\'eor\`eme~1.7]{roblin}).\par
 
 Let us assume that $\G$ admits a convex fundamental domain bounded by finitely many geodesic faces. Then the $\G$--invariant conformal density of dimension $\d_\G$ is unique up to scaling, and has finite total mass. Moreover $\d_\G$ is equal to the Hausdorff dimension of $\Lambda\subset\partial \Hyp^n$ with respect to the angular distance at any point of $\Hyp^n$ (\cite{sullivan-acta}). The associated Bowen--Margulis--Sullivan measure on $\Hyp^n/\G$ is ergodic. This measure is a crucial tool to establish growth and equidistribution results (see \cite{roblin}).

\subsection*{The analogy}
 Let $S$ be a surface of finite type with $\chi(S)<0$. The Teichm\"uller space equipped with the Teichm\"uller metric plays the role of $\Hyp^n$. The mapping class group acts properly and discontinuously by isometry on $\teich(S)$, like a Kleinian group on $\Hyp^n$. The Thurston boundary replaces $\partial\Hyp^n$, even though $\PML(S)$ is not the visual boundary of the Teichm\"uller metric (Kerckhoff \cite{kerckhoff}).\par

 The Thurston measure induces a conformal density $\{\mu_{X}\}_{X\in\teich(S)}$ analogous to the conformal density induced by the Lebesgue measure, it is given by
\begin{eqnarray*}
 \mu_X(U) & = & \muth(B_X\cap \pi^{-1}(U)),
 \end{eqnarray*}
for any $U\subset\PML(S)$ measurable, where $\pi:\ML(S)\rightarrow \PML(S)$ is the canonical projection and $B_X=\{\l\in\ML(S)~;~\ell_X(\l)\leq 1\}$. The Thurston measure is also related to the Teichm\"uller volume defined on  the space of quadratic differentials (see \textsection\ref{sec:teichmuller}). This space identifies canonically with the cotangent bundle of $\teich(S)$, so that the Teichm\"uller volume appears as an analogue of the Liouville measure. We refer to \cite[\textsection 2.3.1]{ABEM} for more details.\par 

 Now we assume that $S$ is nonorientable. The mapping class group shouldn't be considered as a lattice since it has infinite covolume. We suggest to compare it with a geometrically finite Kleinian group. To motivate this analogy we first look at its limit set. In \textsection\ref{sec:orbit-closure} we prove the following conjecture in the case of genus one surfaces:

\begin{conjecture}
The limit set of $\Mod(S)$ in $\PML(S)$ is the set of projective measured laminations without one--sided closed leaves, it is denoted by $\PML^+(S)$.
\end{conjecture}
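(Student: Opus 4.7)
I would prove the two inclusions of $\Lambda(\Mod(S)) = \PML^+(S)$ separately, using the classification of orbit closures in genus one obtained in \textsection\ref{sec:orbit-closure}. As a preliminary, $\PML^+(S)$ is closed in $\PML(S)$ --- its complement is the union of the projections of the open balls $\cone(\g)$ from \textsection\ref{sec:decomposition} --- and it is $\Mod(S)$-invariant, because one-sidedness of a simple closed curve is a topological invariant.

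For the inclusion $\PML^+(S) \subseteq \Lambda(\Mod(S))$, I would use pseudo-Anosov classes $\psi \in \Mod(S)$: their stable and unstable projective measured laminations lie in $\PML^+(S)$ because they are minimal and filling (in particular they have no closed leaves at all), and the Thurston limit of $\psi^n X_0$ is the stable fixed point $[\lambda_\psi^+]$ of $\psi$, which therefore lies in $\Lambda(\Mod(S))$. It remains to check that the set of such stable fixed points is dense in $\PML^+(S)$; in genus one this should follow from the classification of orbit closures in \textsection\ref{sec:orbit-closure} together with the standard construction of pseudo-Anosov classes by composing Dehn twists and crosscap slides supported on filling pairs of two-sided multicurves.

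For the reverse inclusion $\Lambda(\Mod(S)) \subseteq \PML^+(S)$, suppose for contradiction that $\varphi_n X_0 \to [\lambda]$ in the Thurston boundary with $\lambda \in \ML^-(S)$ having a one-sided closed leaf $\g$ of positive weight. Choosing a Thurston normalisation $t_n \to +\infty$, the vanishing $i(\lambda,\g) = 0$ yields $\ell_{X_0}(\varphi_n^{-1}\g) = o(t_n)$. The difficulty is that in the orientable case the analogous statement fails: iterating the Dehn twist along a two-sided closed leaf realises precisely such Thurston-convergent sequences with bounded leaf length. The nonorientable hypothesis must therefore enter decisively through the absence of a Dehn twist along the one-sided curve $\g$ (whose tubular neighbourhood is a M\"obius band rather than an annulus), which makes the stabiliser $\mathrm{Stab}_{\Mod(S)}(\g)$ lose one $\Z$-factor compared to its orientable analogue.

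I would make this precise by splitting $\varphi_n = \eta_n\psi_n$ with $\psi_n \in \mathrm{Stab}_{\Mod(S)}(\g)$ and $\eta_n$ ranging over coset representatives. The bound $\ell_{X_0}(\varphi_n^{-1}\g) = o(t_n)$ combined with Theorem~\ref{thm:1} confines the $\eta_n$ to a set of cardinality $o(t_n^{\dim\ML(S)})$, while the missing twist along $\g$ should prevent the stabiliser factor $\psi_n$ from providing the Teichm\"uller stretch needed to account for the remaining divergence. The main obstacle is making this last step rigorous: in genus one the stabiliser and its action on the slice $\{\ell(\g) \le C\} \subset \teich(S)$ are explicit enough for the argument to go through, which is the content of \textsection\ref{sec:orbit-closure}, while extending it to higher genus is precisely what keeps the statement conjectural in general.
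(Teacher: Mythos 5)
Your proof takes a genuinely different route from the paper's, and the harder inclusion has a real gap.

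The paper's argument in genus one does not work with the orbit--accumulation definition of the limit set at all. What \textsection\ref{sec:orbit-closure} actually does is (i) prove the partial classification of orbit closures of the $\Mod(S)$-action on $\PML(S)$ (Theorem~\ref{thm:closures}), (ii) deduce via Lemma~\ref{lem:PML+PML2} that $\overline{\PML^+(S,\Q)}$ is the \emph{unique minimal} $\Mod(S)$-invariant closed subset of $\PML(S)$ (equivalently, the closure of the pseudo-Anosov fixed points, by Papadopoulos--McCarthy), and (iii) identify this set with $\PML^+(S)$ in genus one via Lemma~\ref{lem:PML+PML}. The paper then calls the minimal invariant closed subset the ``limit set'' by analogy with Kleinian groups, for which the two notions coincide. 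So the paper never proves the accumulation statement you set out to prove, only the minimal-invariant characterisation. Your approach --- proving directly that the accumulation set of $\Mod(S)X_0$ in the Thurston boundary equals $\PML^+(S)$ --- is conceptually parallel but technically different, and if completed would in fact establish the equivalence of these two characterisations for $\Mod(S)$, which is additional content.

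Within your approach, the inclusion $\PML^+(S)\subseteq\Lambda$ is essentially sound: pseudo-Anosov stable laminations lie in $\PML^+(S)$, are limit points of orbits in $\teich(S)$, and their density in $\PML^+(S)$ in genus one follows from Lemma~\ref{lem:PML+PML} combined with the minimal-invariant subsection (since the closure of the pA laminations is precisely that minimal invariant). The only inaccuracy is the invocation of crosscap slides; the paper's Lemma~\ref{lem:filling} produces filling pairs of two-sided curves whenever $\chi(S)<-1$, and Thurston's construction via Dehn twists along such a pair already suffices.

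The reverse inclusion $\Lambda\subseteq\PML^+(S)$ is where the gap lies, and your own final paragraph signals it. The coset count via Theorem~\ref{thm:1} is correct as far as it goes (the cosets of $\mathrm{Stab}_{\Mod(S)}(\g)$ biject with $\Mod(S)\cdot\g$, so the number with $\ell_{X_0}(\eta^{-1}\g)\le t_n$ is $o(t_n^{\dim\ML(S)})$), but the step ``the missing twist along $\g$ should prevent the stabiliser factor $\psi_n$ from providing the remaining stretch'' is not an argument: $\mathrm{Stab}_{\Mod(S)}(\g)$ still contains $\Mod^\ast(S-\g)$, which is the full mapping class group of an orientable subsurface and supplies unbounded Teichm\"uller displacement, so the claimed confinement does not follow from the absence of $T_\g$ alone. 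Moreover, the statement that ``the stabiliser and its action on the slice $\{\ell(\g)\le C\}\subset\teich(S)$ are explicit enough for the argument to go through, which is the content of \textsection\ref{sec:orbit-closure}'' mischaracterises the paper: that section contains no analysis of stabilisers, coset decompositions, or Teichm\"uller slices --- it works entirely inside $\PML(S)$. Nothing in the paper supplies the missing estimate you need, so as written the proof of the reverse inclusion is incomplete even in genus one.
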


 We have already mentioned that $\PML^+(S)$ is negligible with respect to the Thurston measure class (Danthony and Nogueira \cite{nogueira}). Similarly, in dimension three, the limit set of an infinite covolume geometrically finite Kleinan group is negligible with respect to the Lebesgue measure class (Ahlfors \cite{ahlfors}).\par

 We observe that $\PML^+(S)$ coincides with the set of points in the Thurston boundary that are adherent to $\{\sys^-\geq\e\}\subset\teich(S)$ for $\e>0$ small enough, where $\sys^-$ denotes the length of the shortest closed one--sided geodesic. We believe that $\{\sys^-\geq\e\}\subset\teich(S)$ is the analogue of an infinite convex polyhedron of $\Hyp^n$.\par
  
  Let us write $\{\sys^-\geq\e\} = \cap_{\g\in\simple^-} \{\ell_\g\geq \e\}$ where we denote by $\simple^-$ the set of isotopy classes of one--sided simple closed curves. When $\g$ is a two--sided simple closed geodesic, the hypersurface $\{\ell_\g=\e\}$ is a kind of horosphere. When $\g$ is a one--sided simple closed geodesic, the boundary of $\{\ell_\g=\e\}$ in the Thurston compactification is a polyhedral sphere (see \textsection\ref{sec:decomposition}), like the boundary of a geodesic hyperplane in $\Hyp^n$ is a conformal sphere in $\partial\Hyp^n$. So it seems natural to think about $\{\ell_\g=\e\}$ as a geodesic hyperplane (or maybe as a hypersurface made of points at a given distance from a geodesic hyperplane).\par
  
 Then, the fact that $\{\sys^-\geq\e\}\subset\moduli(S)$ has finite Weil--Petersson volume (\textsection\ref{sec:finite-convex}) is analogous to geometrical finiteness. Note that the tubular neighborhood of $\{\sys^-\geq\e\}$ with respect to the Teichm\"uller metric is contained in $\{\sys^-\geq\e_1\}$ for some $\e_1>0$ small enough (use the main theorem of \cite{minsky}).\par
 
  Let us examine the characterization of geometrical finiteness we mentioned in a previous paragraph.
We have $\{\sys^- \geq \e\} - \bigcup_{\g\in\simple^+} \{\ell_\g<\e\} =  \{\sys\geq \e\}$ where $\simple^+$ is the set of isotopy classes of two--sided simple closed curves. As well--known, the subset $\{\sys\geq \e\}\subset\moduli(S)$ is compact, and any $\{\ell_\g<\e\}\subset \teich(S)$ with $\g\in\simple^+$ is a kind of horoball. These horoballs are not pairwise disjoint, but any two of them are disjoint if their corresponding geodesics intersect (Collar Lemma). So the characterization of geometrical finiteness is somehow satisfied.\par

 We believe that $\Mod(S)$ should be compared to a Kleinian group that admits a convex fundamental domain bounded by finitely many faces. Such fundamental domains have been constructed when $S$ has a small Euler characteristic (the author \cite{gendulphe}, Huang and Norbury \cite{huang}).\par
 
  We now formulate some optimistic conjectures. The following conjecture is the analogue of Sullivan's theorem (\cite[Theorem~1]{sullivan-acta}): 

\begin{conjecture}\label{conj:measure}
There exists a unique (up to scaling) ergodic $\Mod(S)$--invariant Radon measure whose support is $\ML^+(S)$, it is $\d$--homogeneous where $\d$ is the Hausdorff dimension of $\ML^+(S)$.
\end{conjecture}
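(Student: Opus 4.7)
The plan is to adapt the Patterson--Sullivan construction to $\Mod(S)\curvearrowright \ML(S)$. Fix a basepoint $X\in\teich(S)$ and a one-sided simple closed curve $\gamma_0\in\simple^-$, and start from the Poincar\'e series
\begin{eqnarray*}
P_{\gamma_0}(s,X) & = & \sum_{\gamma\in\mcurve_{\gamma_0}} e^{-s\,\ell_X(\gamma)},
\end{eqnarray*}
whose critical exponent $\delta = \delta(\gamma_0, X)$ is at most $\dim\ML(S)$ by Theorem~\ref{thm:1}, and is expected to be strictly smaller (and independent of both $X$ and of $\gamma_0\in\simple^-$, exploiting the packing of the cones $\cone(\gamma)$ from \textsection\ref{sec:decomposition}). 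By the standard Patterson diagonal/thickening argument, the normalized measures $P_{\gamma_0}(s,X)^{-1}\sum_\gamma e^{-s\ell_X(\gamma)}\,\delta_\gamma$ on the Thurston compactification should admit a weak-$\ast$ limit $\bar\mu_X$ as $s\searrow\delta$, which is a $\Mod(S)$-quasi-invariant finite measure on $\PML(S)$ supported on $\overline{\mcurve_{\gamma_0}}\subset\PML^+(S)$ by Theorem~\ref{thm:closures}. A companion construction on $\ML(S)$, using the $(s-\delta)$-normalization together with truncation to $\{\ell_X(\gamma)\leq L\}$ and a joint passage to the limit, should yield a locally finite $\Mod(S)$-invariant Radon measure $\mu_X$ on $\ML^+(S)$.

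The $\delta$-homogeneity $\mu_X(tA) = t^\delta \mu_X(A)$ would then follow formally from the scaling $\ell_X(t\lambda) = t\,\ell_X(\lambda)$ of the hyperbolic length function, exactly as for Thurston's measure in its natural dimension. Identifying $\delta$ with the Hausdorff dimension of $\ML^+(S)$ (measured transversely to the radial action) should follow from a mass distribution / Frostman argument: the upper bound on the dimension comes from covering $\ML^-(S) = \ML(S)\setminus\ML^+(S)$ by the cones $\cone(\gamma)$ of \textsection\ref{sec:decomposition} and estimating their Thurston masses, while the lower bound is produced by testing $\mu_X$ on shrinking balls centered at points of $\ML^+(S)$.

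The main obstacle is uniqueness and ergodicity. In the orientable setting the analogous classification is the Lindenstrauss--Mirzakhani theorem, whose proof relies crucially on the conservativity and ergodicity of the Teichm\"uller flow, both of which fail for nonorientable $S$ (see \textsection\ref{sec:teichmuller} and Proposition~\ref{pro:consequences}). I foresee two natural routes. The first is to lift to the orientation double cover $\widetilde S \to S$: a $\Mod(S)$-invariant Radon measure on $\ML^+(S)$ pulls back to a symmetric invariant measure on the involution-fixed subset of $\ML(\widetilde S)$ consisting of laminations with no fixed leaf, where one could try to apply the Lindenstrauss--Mirzakhani classification equivariantly; the difficulty is that the restriction of an ergodic Radon measure to an invariant closed subset need not remain either ergodic or locally finite. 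The second route is a Hopf-type ratio ergodic argument inside moduli space, using the finite Weil--Petersson volume region $\{\sys^-\geq\e\}\subset\moduli(S)$ (\textsection\ref{sec:finite-convex}) as a substitute for the truncated convex core of a geometrically finite Kleinian group; the difficulty here is that $\{\sys^-\geq\e\}$ is not invariant under any natural flow on $\moduli(S)$. I expect that the genus-one case, where the orbit closure classification is complete by Theorem~\ref{thm:closures}, must be treated first and will serve as the base case of an induction on the complexity of $S$.
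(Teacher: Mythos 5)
The statement you were asked to prove is labeled as a \emph{Conjecture} in the paper and is presented in the Conclusion (\textsection\ref{sec:conclusion}) under the explicit heading ``We now formulate some optimistic conjectures,'' as the conjectural analogue of Sullivan's theorem for geometrically finite Kleinian groups. The paper offers no proof of it whatsoever --- the only supporting material is the surrounding analogy, the finiteness result of \textsection\ref{sec:finite-convex}, and the genus-one orbit-closure classification of Theorem~\ref{thm:closures}. There is therefore no proof in the paper against which to compare your attempt, and you should not expect one: this is an open problem, not a theorem.

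That said, a few remarks on your strategy sketch. You correctly identify the central obstruction, namely that the Lindenstrauss--Mirzakhani route to uniqueness and ergodicity is unavailable because it rests on conservativity and ergodicity of the Teichm\"uller flow, both of which fail for nonorientable $S$ (\textsection\ref{sec:teichmuller}). Three points of caution. First, the Poincar\'e series you write is summed over the $\Mod(S)$-orbit of a one-sided \emph{curve} $\gamma_0$, not over the orbit of a basepoint $X_0\in\teich(S)$; this is closer to Mirzakhani's counting measures than to the classical Patterson--Sullivan construction, and passing between $\ell_X(\gamma)$ and a Teichm\"uller or Thurston distance $d(X,f\cdot X_0)$ is not free. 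Second, Theorem~\ref{thm:1} gives only $\delta\leq\dim\ML(S)$; the strict inequality you need (so that the limit measure is concentrated on a negligible set) is not established anywhere in the paper and would itself be substantial --- indeed Conjecture~\ref{conj:growth} is essentially the assertion that the count is $\simeq cL^\delta$ with $\delta$ the Hausdorff dimension of $\ML^+(S)$, so you are implicitly assuming one open conjecture to argue for another. Third, the closure $\overline{\mcurve_{\gamma_0}}$ is \emph{not} contained in $\PML^+(S)$ (the orbit itself consists of one-sided curves and lies in $\PML^-(S)$); only the accumulation set does, by Proposition~\ref{pro:accumulation-currents} and Theorem~\ref{thm:closures}. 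Whether the limiting Patterson measure charges the orbit itself depends on divergence at the critical exponent, which is also unknown. Your two proposed routes to uniqueness have exactly the defects you name; neither closes the gap, and the conjecture remains open.
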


The Hausdorff dimension of $\ML^+(S)$ is the one defined by any Euclidean norm in any train--track chart. The transition maps between the train--track charts are piecewise linear  --- in particular Lipschitz --- therefore the Hausdorff dimension does not depend on the choice of a chart.\par

 Let us now consider the growth of simple closed geodesics. The conjecture below is true when the Euler characteristic of $S$ is $-1$ (see \cite{gendulphe}), and seems to be true when $S$ is the thrice--punctured projective plane (Magee \cite{magee}):  
 
\begin{conjecture}\label{conj:growth}
For any hyperbolic metric $m$ on $S$, and for any simple closed geodesic $\g_0$, there exists a constant $c>0$ such that $|\{\g\in\mcurve_{\g_0}~;~\ell_m(\g)\leq L\}|\simeq c L^\d$ as $L$ tends to infinity, where $\d$ is the Hausdorff dimension of $\ML^+(S)$.
\end{conjecture}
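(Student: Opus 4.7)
The plan is to follow the Mirzakhani--Patterson--Sullivan paradigm, adapted to the infinite covolume setting suggested by the analogy. Let $\d$ denote the Hausdorff dimension of $\ML^+(S)$, and introduce the counting measures
\begin{eqnarray*}
\nu_{\g_0}^L & = & \frac{1}{L^{\d}}\sum_{\g\in\mcurve_{\g_0}} \delta_{\g/L}
\end{eqnarray*}
on $\ML(S)$. Tested against the ball $B_m=\{\l\in\ML(S)~;~\ell_m(\l)\leq 1\}$, they yield $\nu_{\g_0}^L(B_m) = |\{\g\in\mcurve_{\g_0}~;~\ell_m(\g)\leq L\}|/L^\d$. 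It therefore suffices to show that $(\nu_{\g_0}^L)_{L>0}$ weak$^\ast$ converges as $L\to +\infty$ to a positive multiple of a $\d$--homogeneous, $\Mod(S)$--invariant Radon measure $\mu$ on $\ML^+(S)$, and that the boundary of $B_m$ is $\mu$--negligible.

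The first step is to prove that every weak$^\ast$ accumulation point $\mu_\infty$ of $(\nu_{\g_0}^L)$ is $\Mod(S)$--invariant, $\d$--homogeneous, and supported on $\ML^+(S)$. Invariance is automatic, and the scaling behavior of $\nu_{\g_0}^L$ under dilation by $\R_+^\ast$ forces $\d$--homogeneity of any limit. The concentration on $\ML^+(S)$ is exactly the content of Proposition~12.3 (extended to geodesic currents when needed), along the same lines as in the proof of Theorem~\ref{thm:1} and Theorem~\ref{thm:2}.

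Assuming Conjecture~\ref{conj:measure}, we deduce $\mu_\infty = c(m,\g_0)\,\mu$ for a unique constant $c(m,\g_0)\geq 0$, so the whole family converges and the limit in the conjecture exists. To identify and bound $c(m,\g_0)$ from below, one would adapt Mirzakhani's integration formula: parametrize $\mcurve_{\g_0}$ by $\Mod(S)/\mrm{Stab}(\g_0)$ and average $\nu_{\g_0}^L(B_m)$ over $\moduli(S)$ against the Norbury volume form restricted to $\{\sys^-\geq\e\}$, which has finite volume by \textsection\ref{sec:finite-convex}. The $\Mod(S)$--equivariance of length functions should then reduce this average to an explicit orbit integral that is finite and nonzero, yielding positivity of $c(m,\g_0)$.

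The main obstacles are threefold. First, Conjecture~\ref{conj:measure} is itself open and amounts to a nontrivial unique ergodicity statement for an action whose dynamics on $\ML^+(S)$ is still poorly understood. Second, identifying the homogeneity exponent of $\mu$ with the Hausdorff dimension of $\ML^+(S)$ --- and, ideally, with a suitable critical exponent for $\Mod(S)$ acting on the Teichm\"uller space --- requires a Patterson--Sullivan construction adapted to a merely Finsler metric. Third, and most delicate, is to rule out $c(m,\g_0)=0$: since $\moduli(S)$ has infinite Weil--Petersson and Teichm\"uller volume, one must prove that no positive fraction of the mass of $\nu_{\g_0}^L$ escapes into the cusp--like regions $\{\sys^-<\e\}$. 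This is a quantitative nondivergence statement in the spirit of \cite{ABEM,EM}, but in a setting where ergodicity of the natural flows on $\moduli(S)$ is unavailable.
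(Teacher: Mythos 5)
This statement is labeled a \emph{conjecture} in the paper (Conjecture~\ref{conj:growth}), and the paper offers no proof of it; it only records that it holds when $\chi(S)=-1$ (by \cite{gendulphe}) and appears to hold for $N_{1,3}$ (by \cite{magee}). There is therefore no ``paper's own proof'' to compare against. What you have written is a program, not a proof, and you say so yourself; that honesty is the right attitude here, and your roadmap is faithful to the Patterson--Sullivan analogy that the paper develops in \textsection\ref{sec:conclusion}. In particular you correctly reduce the problem to weak$^\ast$ convergence of the rescaled counting measures to a $\d$--homogeneous $\Mod(S)$--invariant measure on $\ML^+(S)$, you correctly invoke Proposition~12.3 and the Theorem~\ref{thm:2} machinery for concentration on $\ML^+(S)$, and you correctly identify Conjecture~\ref{conj:measure} as the missing unique--ergodicity input.

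That said, a few steps in your sketch are asserted more confidently than they can currently be justified, beyond the explicit dependence on Conjecture~\ref{conj:measure}. First, you need a priori relative compactness of the family $(\nu_{\g_0}^L)_{L>0}$ in the space of Radon measures with the $L^\d$ normalization; the domination $\nu_{\g_0}^L\leq\mu^L$ used in Proposition~\ref{pro:convergence-measure} only controls the family normalized by $L^{\dim\ML(S)}$, so with $\d<\dim\ML(S)$ one must separately prove a polynomial upper bound $|\{\g\in\mcurve_{\g_0};\ell_m(\g)\leq L\}|=O(L^\d)$, which is essentially half of the conjecture. Second, your claim that ``the scaling behavior \dots\ forces $\d$--homogeneity of any limit'' only follows from $\nu_{\g_0}^{tL}=t^{-\d}(t^{-1})_\ast\nu_{\g_0}^L$ if one already knows that subsequential limits are independent of the subsequence (or at least stable under multiplication of $L$ by rationals); along a single subsequence one gets a relation between two limit points, not homogeneity of either. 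Third, as you note, ruling out $c(m,\g_0)=0$ is the hard quantitative nondivergence statement; the Mirzakhani--type integration over $\moduli(S)$ against Norbury's form must be carried out on $\{\sys^-\geq\e\}$ and one needs to show that the error from the infinite--volume cusp regions is $o(L^\d)$, which is not addressed by anything currently in the paper. None of this is an error in your plan; it is a candid account of why the statement remains a conjecture.
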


\subsection*{A remarkable example}
Let $N_{1,3}$ denote the thrice--punctured projective plane. In a forthcoming paper (\cite{gendulphe-moduli}) we show that the action of $\Mod(N_{1,3})$ on the Thurston compactification of $\teich(N_{1,3})$ is topologically conjugate to the action of a geometrically finite Kleinian group $\G$ on $\Hyp^3\cup\partial\Hyp^3$. A finite index subgroup of $\G$ is the so--called Apollonian group, that is the group of M\"obius transformations that preserve the Apollonian packing. Many counting problems related to the Apollonian packing have been studied recently (see \cite{Oh}).

\section{Organization of the paper}

The rest of paper falls into three parts. The first part is devoted to the space of measured laminations. We start with some explicit examples (\textsection\ref{sec:complexity}) before describing the peculiar structure of $\ML^-(S)$ in the general case (\textsection\ref{sec:decomposition}).  Then we study the dynamics of the mapping class group action (\textsection\ref{sec:orbit-closure}) and prove Theorem~\ref{thm:1} (\textsection\ref{sec:growth-simple}). In the second part we extend Theorem~\ref{thm:1} to multicurves. The main difficulty is to show that the orbit of a multicurve in $\current(S)$ accumulates on $\ML^+(S)$ (\textsection\ref{sec:accumulation-currents}). In the third part we consider various aspects of the geometry of Teichm\"uller spaces. We show that the Teichm\"uller volume of the moduli space of unit area quadratic differentials is infinite, and that the Teichm\"uller flow is not ergodic (\textsection\ref{sec:teichmuller}). Then we discuss the definition of the Weil--Petersson volume (\textsection\ref{sec:weil-petersson}), and we show that the subset $\{\sys^-\geq \e\}$ of the moduli space has finite volume (\textsection\ref{sec:finite-convex}).

\tableofcontents

\section{Acknowledgements}
I would like to thank Juan--Carlos \'Alvarez Paiva and Gautier Berck for an original idea of title (see \cite{berck}).
I would like to thank Gabriele Mondello who gave me interesting questions and ideas, a part of this work was realized when I was in Rome. I would like to thank the members of the Mathematics department of the university of Pisa for their friendly welcome. Finally I thank the Italian Republic for the last four years of financial support.

\section{Notations}
For the convenience of the reader we list some notations introduced in the paper:
\begin{itemize}
\item[$B_X(L)$] set of measured laminations $\l\in\ML(S)$ with $\ell_X(\l)\leq L$ (\textsection\ref{sec:decomposition})
\item[$b_X(L)$] Thurston measure of $B_X(L)$ (\textsection\ref{sec:decomposition})
\item[$\cone(\g)\subset\ML(S)$] set of measured laminations that admit the components of the simple multicurve $\g=\g_1+\ldots+\g_k$ as closed leaves (\textsection\ref{sec:decomposition})
\item[$\curve(S)$] curve complex of $S$
\item[$\curve^-(S)$] subcomplex of one--sided curves (\textsection\ref{sec:decomposition})
\item[$S$] smooth connected surface of finite type with negative Euler characteristic
\item[$N_{g,n}$] compact nonorientable surface of genus $g$ with $n$ boundary components (\textsection\ref{sec:complexity})
\item[$\simple$] set of isotopy classes of simple closed curves that do not bound a disk, a punctured disk, an annulus or a M\"obius strip (\textsection\ref{sec:preliminaries})
\item[$\simple^-\subset\simple$] set of isotopy classes of one--sided simple closed curves (\textsection\ref{sec:preliminaries})
\item[$\simple^-_{b}\subset\simple^-$] set of isotopy classes of one--sided simple closed curves whose complement is orientable (\textsection\ref{sec:preliminaries})
\item[$\simple^-_{nb}\subset\simple^-$] set of isotopy classes of one--sided simple closed curves whose complement is nonorientable (\textsection\ref{sec:preliminaries})
\item[$\mcurve$] set of integral multicurves (\textsection\ref{sec:introduction})
\item[$\mcurve_\g$] $\Mod(S)$--orbit of an integral multicurve $\g\in\mcurve$ (\textsection\ref{sec:introduction})
\item[$\mcurve_k$] set of integral multicurves with exactly $k$ self--intersections (Part~\ref{part:currents})
\item[$\tilde S$] universal cover of $S$
\item[$\tilde S_\infty$] visual boundary of $\tilde S$
\item[$\Geod(\tilde S)$] space of geodesics of $\tilde S$
\item[$\current(S)$] space of geodesic currents on $S$
\item[$\ML(S)$] space of measured laminations on $S$ (\textsection\ref{sec:preliminaries})
\item[$\ML^-(S)$] subspace of measured laminations with a one--sided closed leaf (\textsection\ref{sec:decomposition})
\item[$\ML^+(S)$] subspace of measured laminations without one--sided closed leaves (\textsection\ref{sec:decomposition})
\item[$\ML(S,\Z)$] set of integral simple multicurves (\textsection\ref{sec:preliminaries})
\item[$\ML(S,\Q)$] set of rational multiples of elements of $\ML(S,\Z)$ (\textsection\ref{sec:preliminaries})
\item[$\ML_N^-(S,\Z)$] set of simple multicurves in $\ML(S,\Z)$ whose one--sided components have weight bounded by $N$ (\textsection\ref{sec:growth-simple})
\item[$\PML(S)$] projective space of measured laminations (\textsection\ref{sec:preliminaries})
\item[$\teich(S)$] Teichm\"uller space of $S$ (\textsection\ref{sec:preliminaries})
\item[$\qteich(S)$] space of quadratic differentials on $S$ (\textsection\ref{sec:teichmuller})
\item[$\moduli(S)$] moduli space of $S$ (\textsection\ref{sec:preliminaries})
\item[$\Mod(S)$] mapping class group of $S$ (\textsection\ref{sec:preliminaries})
\item[$\Mod^\ast(S)$] extended mapping class group of $S$ (\textsection\ref{sec:preliminaries})
\item[$\muth$ or $\muth^S$] Thurston measure on $\ML(S)$ (\textsection\ref{sec:preliminaries})
\item[$\mu^L$] counting measure of $\ML(S,\Z)$ (see \eqref{eq:Thurston-measure} in \textsection\ref{sec:preliminaries})
\item[$\mu^L_{\g_0}$] counting measure of the orbit $\mcurve_{\g_0}$
\item[$\sys^-$] the length of the shortest closed one--sided geodesic.
\end{itemize}

\section{Preliminaries}\label{sec:preliminaries}

In this section we recall some classical definitions and results. Most of them deal with measured laminations. We refer to \cite{bonahon-book,flp,hatcher} for a more detailed exposition.\par

In all this text $S$ is a smooth connected surface of finite type with negative Euler characteristic. A \emph{hyperbolic metric} on $S$ is a finite area complete metric of constant curvature $-1$. We assume that the boundary $\partial S$ is geodesic if it is nonempty. Given a hyperbolic metric $m$ on $S$, each homotopy class $\g$ of noncontractible and nonperipheral closed curves admits a unique geodesic representative. We abusively use the same notation for a geodesic and its homotopy class. In the sequel we implicitly assume that a \emph{closed curve} is not homotopic to a point nor a puncture.

\subsection*{Teichm\"uller spaces and mapping class groups}
The \emph{Teichm\"uller space} $\teich(S)$ is the space of isotopy classes of hyperbolic metrics on $S$. We assume that the lengths of the boundary components are fixed. The Teichm\"uller space is a smooth manifold diffeomorphic to an open ball of dimension $-3\chi(S)-n$ where $n$ is the sum of the number of boundary components plus the number of punctures of $S$.\par

The \emph{extended mapping class group} $\Mod^\ast(S)$ is the group of diffeomorphisms of $S$ up to isotopy. The \emph{mapping class group} $\Mod(S)$ is the subgroup of $\Mod^\ast(S)$ that preserves each boundary component, each puncture, and the orientation. The mapping class group acts properly and discontinuously by diffeomorphisms on the Teichm\"uller space. The orbifold $\moduli(S)=\teich(S)/\Mod(S)$ is the \emph{moduli space of} $S$.

\subsection*{Measured laminations}
Let $m$ be a hyperbolic metric on $S$. A $m$--\emph{geodesic lamination} on $S$ is a compact subset of $S$ foliated by simple $m$--geodesics. The boundary components can not be leaves of the lamination.
A \emph{transverse measure} $\mu$ for $\l$ is the data of a Radon measure $\mu_k$ on each arc $k$ transverse to $\l$ such that:
\begin{itemize}
\item the restriction of $\mu_k$ to a transverse subarc $k'\subset k$ is $\mu_{k'}$,
\item an isotopy preserving $\l$ between two transverse arcs $k,k'$ sends $\mu_k$ on $\mu_{k'}$,
\item the support of $\mu_k$ is $k\cap \l$.
\end{itemize}
A \emph{measured lamination} is a $m$--geodesic lamination equipped with a transverse measure. The transverse measure determines the $m$--geodesic lamination. We denote by $\ML(S)$ the space of measured laminations on $S$.\par

Let $\simple$ be the set of isotopy classes of simple closed curves that do not bound a disk, a punctured disk, an annulus or a M\"obius strip. The map $(\l,\mu)\mapsto (\mu(\g))_{\g\in\simple}$ is a proper topological embedding of $\ML(S)$ into the affine space $\R^\simple$ endowed with the product topology. The space $\ML(S)\cup\{0\}$ is homeomorphic to an open ball of the same dimension as $\teich(S)$. The extended mapping class group acts by homeomorphisms on $\ML(S)\cup\{0\}$.
 
\subsection*{The Thurston compactification}
  The space $\ML(S)$ is a cone since one can multiply a transverse measure by a positive number. We denote by $\PML(S)$ the projective space of measured laminations. It is homeomorphic to a sphere and embedds topologically  into $\proj(\R^\simple)$ through the map $[\l,\mu]\mapsto [\mu(\g)]_{\g\in\simple}$.\par
  
 The map $[m]\mapsto[\ell_\g(m)]_{\g\in\simple}$ is a topological embedding of $\teich(S)$ into $\proj(\R^\simple)$.
The closure of its image is a closed ball whose boundary is exactly the image of $\PML(S)$ in $\proj(\R^\simple)$.
The union $\teich(S)\cup\PML(S)$ endowed with the topology induced by $\proj(\R^\simple)$ is called the \emph{Thurston compactification} of $\teich(S)$. The mapping class group acts by homeomorphism on the Thurston compactification.
 
\subsection*{Train--tracks and piecewise integral linear structure}
The space $\ML(S)$ has no canonical smooth structure, however it admits a \emph{piecewise integral linear} structure. This means that it has an atlas whose transition maps are piecewise linear and coincide with integral linear isomorphisms over pieces defined by integral linear inequalities. In particular, there is a well--defined notion of integral point and a canonical measure on $\ML(S)$.\par

 This atlas is canonical and explictly defined through train--tracks. In this article we do not use directly train--tracks, we refer to
 \cite{bonahon-book,hatcher} for more details. These references consider both orientable and nonorientable surfaces. 

\subsection*{Integral points and Thurston measure}
 We denote by $\ML(S,\Z)$ the set of integral points of $\ML(S)$, they correspond to \emph{integral simple multicurves}, that is to linear combinations of the form $a_1\a_1+\ldots+a_k\a_k$ where $a_1,\ldots,a_k\in\N^\ast$ and $\a_1,\ldots, \a_k$ are disjoint elements of $\simple$. In the same way we denote by $\ML(S,\Q)$ the set of rational multiples of elements in $\ML(S,\Z)$.\par
 
  The \emph{Thurston measure} $\muth$ on $\ML(S)$ (also denoted $\muth^S$) corresponds to the Lebesgue measure in any train--track chart. Thus it is also the weak$^\ast$ limit of the counting measures of integral points. To be more precise, let $\mu^L$ ($L>0$) be the measure defined by
\begin{eqnarray}\label{eq:Thurston-measure}
\mu^L &= & \frac{1}{L^{\dim \ML(S)}} \sum_{\g\in \ML(S,\Z)} \mbf 1_{\frac{1}{L}\g}.
\end{eqnarray}
The measure $\mu^L$ tends to $\muth$ in the weak$^\ast$ topology as $L$ tends to infinity. Integral points and the Thurston measure are invariant under the action of $\Mod(S)$.  \par

\subsection*{The intersection form}
The intersection number $i(\g,\d)$ between two simple closed geodesics $\g$ and $\d$ is well--defined. It extends to $\ML(S,\Q)$ by linearity, and then to $\ML(S)$ by density and uniform continuity of $i(\cdot,\cdot)$.

\subsection*{How to look at nonorientable surfaces}
In order to work with nonorientable surfaces it is convenient to look at them as orientable surfaces with boundary submitted to some identifications. There are two kinds of identifications, but we are going to use only one in this article. Let $c$ be a boundary component of a hyperbolic surface $(S,m)$, one can identify the points of $c$ as follows: two points are identified if they divide $c$ into two segments of equal length. The quotient of $(S,m)$ under this identification is a nonorientable hyperbolic surface, and the boundary component $c$ projects onto a one--sided geodesic. Note that this identification does not change the Euler characteristic.\par

 Let us consider two examples. Let $(\T,m)$ be a one--holed torus equipped with a hyperbolic metric. The surface obtained by identification of $\partial\T$ is homeomorphic to the connected sum of three real projective planes. Let $(P,m)$ be a hyperbolic pair of pants, the identification of each boundary component of $P$ produces again a hyperbolic surface homeomorphic to the connected sum of three projective planes.\par

\subsection*{One--sided simple closed curves}
 We denote by $\simple^-$ the subset of one--sided elements of $\simple$. There are exactly two topological types of one--sided simple closed curves~: bounding and non bounding.
We say that a one--sided simple closed curve is \emph{bounding} if its complement is orientable, otherwise we say that it is \emph{non bounding}. A simple closed curve is bounding if and only if it intersects any other one--sided simple closed curve. We denote by $\simple^-_b$ (resp. $\simple^-_{nb}$) the subset of bounding (resp. non bounding) elements of $\simple^-$, it forms an orbit under the mapping class group action. Let us note that a genus one nonorientable surface has only bounding one--sided simple closed curves, and an even genus nonorientable surface has only non bounding one--sided simple closed curves.

\part{Spaces of measured laminations}

\section{Nonorientable surfaces of small complexity}\label{sec:complexity}

 In this section we provide some examples that illustrate the results of the next sections. We quickly describe the projective space of measured laminations $\PML(S)$ in some particular cases. We consider the three compact nonorientable surfaces with Euler characteristic $-1$ and the three--holed projective plane.\par

The curve complex of these surfaces has been studied by Scharlemann (\cite{scharlemann}). We studied the nonorientable surfaces with Euler characteristic $-1$ in \cite{gendulphe}, and the three--holed projective plane in \cite{gendulphe-moduli}. All the statements made in this section can be found in these articles. We also refer to \cite{huang} for the particular case of the three--holed projective plane.\par

 The nonorientable surfaces with Euler characteristic $-1$ are very peculiar, and they appear as exceptions of some theorems. For instance, they do not admit pseudo--Anosov mapping classes, and they are hyperelliptic in the sense that there is a unique element in the mapping class group which acts as $-\mrm{Id}$ on the first homology group. This element is central and acts trivially on the space of measured laminations. In the sequel we denote by $N_{g,n}$ the compact nonorientable surface of genus $g$ with $n$ boundary components.

\subsection*{The two--holed projective plane}
This surface has only two simple closed geodesics (that are one--sided), and any other simple geodesic spirals around one of these closed geodesics or along a boundary component. So $\PML(N_{1,2})$ consists in only two points, that are exchanged by the mapping class group $\Mod(N_{1,2})$ which is isomorphic to $\Z/2\Z\times \Z/2\Z$.

\subsection*{The one--holed Klein bottle}
This surface has exactly one two--sided simple closed geodesic, that we call $\g_\infty$. It has infinitely many one--sided simple closed geodesics, that we denote by $(\g_n)_{n\in \Z}$. Any other simple geodesic spirals around a closed geodesic or along the boundary component. So $\PML(N_{2,1})$ is a circle with a marked point $\g_\infty$, which is the limit of the points $(\g_n)_{n\in \Z}$ as $n$ tends to $\pm \infty$. Each arc $[\g_n,\g_{,n+1}]\subset \PML(N_{2,1})$ consists in the set of projective measured laminations of the form $[t\g_n+(1-t)\g_{n+1}]$ with $t\in[0,1]$.\par

 The mapping class group $\Mod(N_{2,1})$ is isomorphic to $\mbf D_\infty\times\Z/2\Z$, where $\mbf D_\infty$ is the infinite dihedral group. The index two subgroup $\Z\leqslant \mbf D_\infty$ is generated by the Dehn twist along $\g_\infty$ and acts transitively on $\{\g_n~;~n\in\Z\}$.\par
 
 For any hyperbolic metric, the number of one--sided simple closed geodesics of length less than $L$ is asymptotically equivalent to a positive constant times $L$, the constant depending on the metric (\cite{gendulphe}).

\subsection*{The connected sum of three projective planes}
This surface is very close to be a one--holed torus. Indeed it has a unique simple closed geodesic $\g$ whose complement is a one--holed torus $\T$ (Scharlemann \cite{scharlemann}, see also \cite{gendulphe}). As a consequence there is a canonical embedding $\PML(\T)\subset \PML(N_3)$, and also a canonical isomorphism
$\Mod(N_3)\simeq \Mod^\ast(\T)$. We recall that the symplectic representation induces an isomorphism between $\Mod^\ast(\T)$ and $\mbf{GL}(2,\Z)$.\par
 We showed in \cite{gendulphe,komori} that any two--sided simple closed curve is homotopic to a curve contained in $\T$, and more generally any measured lamination $\l$ without one--sided closed leaf is contained in $\T$. The circle $\PML(\T)$ divides $\PML(N_3)$ into two open disks. One open disk consists in all projective measured laminations of the form $[\g+\l]$ with $\l\in\ML(\T)$. The other, given by the inequality $i(\g,\cdot)>0$, consists in all measured laminations with a one--sided closed leaf distinct from $\g$. We refer to \cite{komori} for more details and a nice picture of $\PML(N_3)$.\par 

  For any hyperbolic metric, the number of two--sided simple closed geodesics of length less than $L$ is asymptotically equivalent to a positive constant times $L^2$, the constant depending on the metric. The same is true for the number of one--sided simple closed geodesics of length less than $L$.

\subsection*{The three--holed projective plane}
The complement $N_{1,3}-\g$ 
of any one--sided simple closed geodesic $\g$ is a four--holed sphere. Let us denote by $\cone(\g)$ the set of measured laminations of the form $t\g+\l$ with $t>0$ and $\l\in\ML(N_{1,3}-\g)$. It is an open ball invariant under multiplication by a positive scalar, it projects onto an open disk in $\PML(N_{1,3})$ which is a $2$--sphere. Any other one--sided simple closed geodesic $\d$ intersects $\g$, therefore the balls $\cone(\g)$ are $\cone(\d)$ are disjoint. One easily observes that $\cone(\g)$ and $\cone(\d)$ are tangent (\emph{i.e.} their boundaries have  nontrivial intersection) if and only if $i(\g,\d)=1$.\par

 The projections on $\PML(N_{1,3})$ of the balls $\cone(\g)$ with $\g\in \simple^-$ form a packing of disks. There exists a homeomorphism $\f:\PML(N_{1,3})\rightarrow \Sph^2$ that sends this packing of disks on the so--called Apollonian packing (\cite{gendulphe-moduli}). Moreover $\f$ is equivariant with respect to the actions of $\Mod(N_3)$ and of a discrete subgroup $\G$ of $\Isom(\Hyp^3)\simeq\mbf{Conf}(\Sph^2)$. The complement of the packing of disks is a minimal $\Mod(N_{1,3})$--invariant closed subset of $\ML(N_{1,3})$. Huang and Norbury \cite{huang} showed that $\Mod(N_{1,3})\simeq(\Z/2\Z\oplus \Z/2\Z)\ast \Z/2\Z$.

\section{A decomposition of the space of measured laminations}\label{sec:decomposition}

\subsection*{The ball of a simple multicurve}
 Given a simple multicurve $\g=\g_1+\ldots+\g_k$, we call \emph{ball of $\g$} the set $\cone(\g)$ of measured laminations on $S$ that admit $\g_1,\ldots,\g_k$ as closed leaves:
\begin{eqnarray*}
\cone(\g) & = & \R_+^\ast \g_1+\ldots +\R_+^\ast \g_k +\ML(S-\g)\cup\{0\}.
\end{eqnarray*}
It is homeomorphic to an open ball of dimension
$\dim\cone(\g)=\dim\ML(S)-k^+,$
where $k^+$ is the number of two--sided components of $\g$. If all the components $\g_i$ are one--sided, then $\cone(\g)$ is an open subset of $\ML(S)$. The ball $\cone(\g)$ is convex in any train--track chart, and stable under multiplication by a positive scalar. Its projection on $\PML(S)$ is homeomorphic to an open ball of dimension $\dim\ML(S)-k^+-1$ bounded by a polyhedral sphere.\par
 
 Inside $\cone(\g)$, the Thurston measure $\muth^S$ decomposes as a product:
\begin{eqnarray}\label{eq:measure}
\muth^S & = & \diff \g_1 \otimes \ldots \otimes \diff\g_n\otimes \muth^{S-\g}.
\end{eqnarray}
We use abusively the notation $\diff \g_i$ for the differential of the weight of $\g_i$.
Let us consider $V=I_1\g_1+\ldots+I_n\g_n+U$ where $I_i\subset \R_+^\ast$ is an open interval and $U$ is an open subset of $\ML(S-\g)\cup\{0\}$. Open subsets like $V$ generate the Borel $\sigma$--algebra of $\ML(S)$. The number of integral points in $V$ is the product of the numbers of integral points in the $I_i$'s and in $U$. We conclude that \eqref{eq:measure} is true since the Thurston measure can be defined in terms of integral points.

The combinatorics of the intersection of balls is very simple:
$$\left\{\begin{array}{lc}
\cone(\g)\cap \cone(\delta)=\emptyset & \mathrm{if}\ i(\g,\delta)\neq 0,\\
\cone(\g)\cap \cone(\delta)=\cone(\g+\d) & \mathrm{if}\ i(\g,\delta)= 0.
\end{array}\right.$$
In particular, an intersection of balls is contractible whenever it is nonempty.

\subsection*{Laminations with a closed one--sided leaf}
We denote by $\ML^-(S)$ the set of measured laminations with a closed one--sided leaf, and by $\ML^+(S)$ the set of measured laminations without closed one--sided leaves. We have
\begin{eqnarray*}
\ML^-(S) & = & \bigcup_{\g\in\simple^-} \cone(\g).
\end{eqnarray*}
The collection of balls
$\cone(\g)$ such that $\g$ is a simple multicurve whose components are one--sided
forms an open cover of $\ML^-(S)$ which is stable under intersection. As each $\cone(\g)$ is contractible, the nerve of this cover has same homotopy type as $\ML^-(S)$. From the combinatorics of the intersection of balls we deduce that the nerve is the complex of one--sided closed curves, denoted by $\curve^-(S)$. A $n$--simplex of $\curve^-(S)$ is a family of $n+1$ disjoint isotopy classes of one--sided simple closed curves.\par

 Alternatively, one can show that $\curve^-(S)$ is a deformation retract of $\PML^-(S)$. Any $\l$ in $\ML^-(S)$ is uniquely written as $\l=\l^+ + \l^-$ where $\l^-$ is a weighted sum of one--sided closed curves, and $\l^+$ has no one--sided closed leaf. Let us define $H:[0,1]\times \ML^-(S)\rightarrow\ML^-(S)$ by $H(t,\l)=(1-t)\l^+ + \l^-$. This is clearly a homotopy between the identity of $\ML^-(S)$ and the retraction $\l\mapsto \l^-$. As $H$ commutes with the multiplication by a scalar, it induces a deformation retraction of $\PML^-(S)$ onto $\curve^-(S)$. Note that these two constructions are $\Mod(S)$-invariant.\par
 
Let us look at the connectivity of $\ML^-(S)$:
 
\begin{proposition}
The connected components of $\ML^-(S)$ are $\cup_{\g\in \simple^-_{nb}} \cone(\g)$
and the balls $\cone(\g)$ with $\g\in \simple^-_{b}$.
\end{proposition}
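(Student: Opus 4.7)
The preceding discussion expresses $\ML^-(S)$ as the union of the open connected balls $\cone(\g)$, $\g\in\simple^-$, and establishes that $\cone(\g)\cap\cone(\delta)\neq\emptyset$ precisely when $i(\g,\delta)=0$. I encode this combinatorics in a graph $\mathcal{G}$ with vertex set $\simple^-$ and an edge between every pair of disjoint curves. Since each $\cone(\g)$ is connected and two balls meet exactly when the corresponding vertices are adjacent, the assignment $C\mapsto\bigcup_{\g\in C}\cone(\g)$ is a bijection between connected components of $\mathcal{G}$ and connected components of $\ML^-(S)$. The proposition is thus equivalent to identifying the components of $\mathcal{G}$.

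The bounding case is immediate from the characterization recalled in Section~\ref{sec:preliminaries}: any $\g\in\simple^-_b$ intersects every other element of $\simple^-$, so $\g$ is an isolated vertex of $\mathcal{G}$, producing the connected component $\cone(\g)$ of $\ML^-(S)$.

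The non-bounding case amounts to proving that the full subgraph $\mathcal{G}_{nb}$ spanned by $\simple^-_{nb}$ is connected. My plan is as follows. First, I observe that for any $\g\in\simple^-_{nb}$ the complement of a tubular neighborhood of $\g$ is nonorientable by definition, hence contains one-sided simple closed curves; any such curve, viewed back in $S$, is disjoint from $\g$ and is itself non-bounding in $S$ (its own complement still contains the one-sided curve $\g$). This already produces many neighbors of $\g$ in $\mathcal{G}_{nb}$. Given a second curve $\delta\in\simple^-_{nb}$, I propose to connect $\g$ to $\delta$ by induction on $i(\g,\delta)$: the base case $i=0$ is an edge, and in the inductive step I would resolve one crossing $p\in\g\cap\delta$ by a suitable smoothing, obtaining a new simple closed curve $\eta\in\simple^-_{nb}$ disjoint from $\g$ and satisfying $i(\eta,\delta)<i(\g,\delta)$.

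The main obstacle lies precisely in this surgery step: one must verify that at least one of the two local smoothings at $p$ yields a \emph{single} simple closed curve which is still one-sided, still non-bounding in $S$, and has strictly smaller intersection number with $\delta$. This rests on a local orientability bookkeeping at $p$, combined with a global argument using a regular neighborhood of $\g\cup\delta$ and the non-boundedness hypothesis to control the topology of the complement. An alternative route is to invoke the known connectivity of the subcomplex of non-bounding one-sided curves in the curve complex of a nonorientable surface, which is itself established by essentially the same surgery machinery.
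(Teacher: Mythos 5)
Your reduction to connectivity of the graph on $\simple^-$ with edges given by disjointness, and your handling of the bounding case via isolated vertices, track the paper's framing. But the non-bounding case is left open: you state a plan (induction on $i(\g,\d)$ via a surgery at a crossing) and then explicitly flag the key inductive step as an unresolved obstacle. The specific surgery you propose is also unlikely to work as described. Smoothing a single crossing of the $4$-valent graph $\g\cup\d$ does not by itself yield a closed simple curve, and there is certainly no reason for the result to be one-sided, non-bounding, and \emph{disjoint} from $\g$ with strictly smaller intersection with $\d$, all at once. That verification is exactly where the substance of the proof lies, and it is missing.

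The paper's proof of the corresponding lemma ($d(\g,\d)\leq 2\,i(\g,\d)$ in the $1$-skeleton of $\curve^-(S)$) closes the induction by a different construction that notably does \emph{not} aim for a curve disjoint from $\g$. The base case is $i(\g,\d)=1$, not $i=0$: a regular neighborhood $P$ of $\g\cup\d$ is a two-holed projective plane, and the non-bounding hypothesis rules out genus one, so $S-P$ is nonorientable and hence contains a one-sided curve disjoint from both $\g$ and $\d$, giving $d(\g,\d)=2$. For $i(\g,\d)\geq 2$, one cuts along $\g$, selects an arc $\d_1$ of $\d$ whose endpoints meet $\bar\g$ with opposite transverse orientations (possible because $\d$ is one-sided), and forms a curve $c$ from an arc parallel to $\d_1$ together with an arc of $\g$ running to the \emph{antipodal} point of one endpoint. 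This antipodal device is what forces $c$ to be one-sided, and the construction arranges $i(c,\g)=1$ (not $0$) and $i(c,\d)<i(\g,\d)$. One then gets $d(\g,c)=2$ from the $i=1$ base case and concludes by induction and the triangle inequality. So the step you left open needs this explicit arc construction, not the crossing-smoothing you sketched, and the induction has to be anchored at $i=1$ with the tubular-neighborhood argument rather than at $i=0$.
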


Each ball $\cone(\g)$ with $\g\in\simple^-_b$ is obviously a connected component of $\ML^-(S)$. So it remains to show that the subcomplex $\curve^-_{nb}(S)\subset\curve^-(S)$ spanned by $\simple_{nb}^-$ is connected. This comes directly from the following lemma.

\begin{lemma}
For any non disjoint $\g,\d\in\simple^-_{nb}$ we have $d(\g,\d)\leq 2 i(\g,\d)$ where $d$ is the distance on the $1$-skeleton of $\curve^-(S)$ obtained by fixing the length edge to $1$.
\end{lemma}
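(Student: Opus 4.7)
We argue by induction on $n=i(\g,\d)\geq 1$, producing at each stage an intermediate curve in $\simple^-_{nb}$.

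For the \emph{inductive step} $n\geq 2$, pick an arc $\d_0\subset\d\setminus\g$ with distinct endpoints $p,q\in\g\cap\d$ and let $\g_1,\g_2$ be the two arcs of $\g\setminus\{p,q\}$. Smooth the corners at $p$ and $q$ of the unions $\g_j\cup\d_0$ (routing each corner through a sector avoiding the other arcs of $\d$) and isotope the resulting simple closed curves $\a_j$ slightly into the M\"obius neighborhood $N(\g)$ so as to be disjoint from $\g$; then $i(\a_j,\d)\leq|\g_j\cap\d\setminus\{p,q\}|\leq n-2$.

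A parity argument in $H_1(S;\Z/2\Z)$ selects the right one. Over $\Z/2\Z$--coefficients the symmetric intersection pairing on a (possibly) nonorientable surface is bilinear, so the self--intersection $q(x):=x\cdot x$ is additive; moreover a simple closed curve is one--sided if and only if $q([\cdot])=1$. Since $[\a_1]+[\a_2]=[\g]$ in $H_1(S;\Z/2\Z)$, we deduce
\[q([\a_1])+q([\a_2])=q([\g])=1,\]
so exactly one of the $\a_j$ is one--sided. Calling it $\a$, the fact that $\a$ is disjoint from the one--sided curve $\g$ forces $S\setminus\a$ to contain $\g$ and hence to be nonorientable, so $\a\in\simple^-_{nb}$. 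By the inductive hypothesis (or the trivial bound if $i(\a,\d)=0$) we have $d(\a,\d)\leq 2(n-2)$, whence
\[d(\g,\d)\leq 1+d(\a,\d)\leq 2n-3\leq 2n.\]

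For the \emph{base case} $n=1$, the surgery construction degenerates: the two ends of the unique arc $\d\setminus\g$ coincide, and both resolutions of the figure--eight $\g\cup\d$ come out two--sided by the same parity argument. One must instead exhibit a one--sided neighbor directly. Let $N=N(\g\cup\d)$; an isotopy argument rules out $N\cong N_{1,2}$ (otherwise the unique isotopy class of one--sided curves in $N$ would force $\g$ and $\d$ isotopic, contradicting $i=1$), so $N$ is the one--holed Klein bottle $N_{2,1}$. A topological analysis of the complement $S\setminus N$, relying on the non--bounding hypotheses on $\g$ and $\d$, produces an essential one--sided simple closed curve $\e$ disjoint from $\g\cup\d$; and $\e$ is automatically non--bounding in $S$ because $S\setminus\e$ contains the one--sided $\g$. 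The length--$2$ path $\g-\e-\d$ then gives $d(\g,\d)\leq 2=2n$.

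\emph{Main obstacle.} The subtle point is the base case $n=1$, where the algebraic surgery collapses and one has to locate a one--sided curve by a global topological argument in the complement $S\setminus N$; this is where the non--bounding hypotheses on $\g$ and $\d$ genuinely come in.
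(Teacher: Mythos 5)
Your inductive step is correct but genuinely different from the paper's. You resolve the figure $\g\cup\d_0$ at the two crossings $p,q$ and use the $\bmod\,2$ quadratic form $q(x)=\langle w_1(S),x\rangle=x\cdot x$ to argue that exactly one resolution $\a$ is one--sided; disjointness from $\g$ then gives non--bounding for free, and $i(\a,\d)\leq n-2$ closes the induction. The paper instead cuts along $\g$, producing a boundary $\bar\g$ of length $2\ell(\g)$, and uses a hyperbolic--geometric argument (a subsegment $\s$ of $\g$ of length $<\ell(\g)$ joining $q$ to the antipode of $p$ meets at most $i(\g,\d)$ of the arcs) to build the intermediate one--sided curve $c$, giving $d(\g,c)=2$ and $d(c,\d)\leq 2(i(\g,\d)-1)$. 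Your surgery--plus--$w_1$ route is more purely topological, avoids the metric, and even gives the slightly sharper bound $2n-3$; it is a clean alternative.

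The base case $n=1$, however, contains a genuine error. You rule out $N\cong N_{1,2}$ by asserting that $N_{1,2}$ carries a unique isotopy class of one--sided curves, but this is false: as the paper recalls in its section on surfaces of small complexity, the two--holed projective plane has exactly \emph{two} simple closed geodesics, both one--sided, and they meet once. In fact the regular neighborhood of $\g\cup\d$ with $\g,\d$ one--sided and $i(\g,\d)=1$ is \emph{always} $N_{1,2}$, not $N_{2,1}$ (both Mobius bands attached to a disk around $p$, with the four band--ends alternating $\g,\d,\g,\d$, produce a surface with two boundary circles, hence genus one). This is exactly what the paper's own base--case proof uses: $P$ is a two--holed projective plane, and the non--bounding hypothesis forces $S-P$ to be nonorientable, whence a one--sided curve disjoint from $\g\cup\d$. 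Since your ``topological analysis of $S\setminus N$'' is stated to rely on the (incorrect) identification $N\cong N_{2,1}$, the base case as written does not stand; you need to redo it with $N\cong N_{1,2}$, at which point you essentially recover the paper's argument. Note also that you flag the base case as the ``subtle point,'' which is accurate — this is exactly where the gap is.
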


\begin{proof}
We endow $S$ with a hyperbolic metric, and we work with the geodesic representatives of the isotopy classes.\par
We proceed by induction on $i(\g,\d)$.
We first consider the case $i(\g,\d)=1$. 
Let $P$ be a sufficiently small tubular neighborhood of $\g\cup\d$. Then $P$ is a projective plane with two boundary components embedded in $S$.
The complement $S-P$ is a (possibly non connected) nonorientable surface. Otherwise $S$ would be of genus one, and any one--sided simple closed curve of $S$ would be bounding, which is impossible since $\g$ is non bounding. As $S-P$ is nonorientable, it contains a one--sided simple closed curve, necessarily disjoint form $\g$ and $\d$.
This shows that $d(\g,\d)=2$.\par 

Now we assume $i(\g,\d)\geq 2$. We cut $\g$, this produces a boundary component $\bar\g$ of length $2\ell(\g)$.
The trace of $\d$ in $S-\g$ consists in a collections of disjoint arcs $\d_1,\ldots,\d_m$ with $m=i(\g,\d)$.
We assume that a transverse orientation of $\d_1$ induces distinct orientations of $\bar\g$ at the endpoints of $\d_1$,
this is possible for $\d$ is one--sided. We denote by $p$ and $q$ the endpoints of $\d_1$ on $\bar\g$.
Let $\s$ be the shortest arc of $\g$ that joins $q$ to the antipodal point of $p$. This arc intersects $\d_1\cup\ldots \cup\d_m$ in at most $i(\g,\d)$ points, because its length is less than $\ell(\g)$.
From an arc parallel to $\d_1$ and a subegment of $\s$ one can make a one--sided curve $c$ with $i(c,\d_1)=0$ and $i(c,\d)<i(\g,\d)$. Moreover we have $i(c,\g)=1$. So $d(\g,c)=2$ and by induction $d(c,\d)\leq 2i(c,\d)\leq 2(i(\g,\d)-1)$. We conclude with the triangular inequality
\end{proof}

\subsection*{Genericity of laminations with a one--sided leaf}
The following theorem plays a crucial role in the proofs of our main theorems~:
 
\begin{theorem}[Danthony--Nogueira]\label{thm:nogueira}
$\ML^-(S)$ is a dense open subset of $\ML(S)$ of full Thurston measure.
\end{theorem}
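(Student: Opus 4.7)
I would establish the three assertions (openness, density, and full Thurston measure) in that order.

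Openness is immediate from the decomposition $\ML^-(S)=\bigcup_{\g\in\simple^-}\cone(\g)$ together with the observation recorded in the preceding paragraphs: whenever $\g$ is a simple multicurve whose components are \emph{one--sided}, the set $\cone(\g)$ is an open subset of $\ML(S)$. Hence $\ML^-(S)$ is a union of open sets. Moreover, density of $\ML^-(S)$ is a formal consequence of full measure, because the Thurston measure $\muth$ agrees with Lebesgue measure in each train--track chart and therefore assigns positive mass to every nonempty open set; the complement of a Thurston--full set must then have empty interior.

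The substance of the theorem is thus full Thurston measure, which I would argue locally in train--track coordinates. Let $\{V(\tau)\}_\tau$ be the countable family of open cones indexed by generic maximal train tracks $\tau$ on $S$, where $V(\tau)$ consists of measured laminations carried by $\tau$ with strictly positive weights on every branch. Up to a Thurston--null set, the $V(\tau)$ cover $\ML(S)$, and on each $V(\tau)$ the Thurston measure $\muth$ coincides with Lebesgue measure in the branch--weight coordinates. It therefore suffices to prove that, for every such $\tau$, Lebesgue--almost every $\lambda\in V(\tau)$ has a one--sided closed leaf.

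Because $S$ is nonorientable, the first--return map of the leaves of $\lambda\in V(\tau)$ to a small transversal arc is a \emph{linear involution}, i.e.\ an interval exchange transformation with flips, whose combinatorial data (the pattern of identifications, including the flipped intervals) depends only on $\tau$ and whose length data depends linearly on the branch weights. A one--sided closed leaf of $\lambda$ corresponds precisely to a periodic orbit of this interval exchange along which the transverse orientation is reversed, that is to a \emph{flipped periodic orbit}. The main theorem of Danthony--Nogueira on linear involutions \cite{nogueira} asserts that, for every combinatorial datum containing at least one flipped interval, Lebesgue--almost every choice of length parameters produces such a flipped periodic orbit. Applied inside each $V(\tau)$ and summed over the countable family, this yields $\muth(\ML(S)\setminus\ML^-(S))=0$.

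The main obstacle is of course the Danthony--Nogueira theorem on flipped interval exchanges itself: it plays the role, on a nonorientable surface, that Keane's conjecture (Masur--Veech) plays on an orientable one, and all the probabilistic content of the statement is concentrated there. Once this input is available, the translation between the IET language and measured laminations via the train--track parameterization, and the patching of the local statements into a global one, are essentially bookkeeping.
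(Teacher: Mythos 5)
Your outline follows exactly the same route the paper takes: the paper does not prove this statement either, it cites Danthony--Nogueira and merely remarks that ``the measurable part\ldots involves the Rauzy induction for linear involutions (a generalization of interval exchanges)'' while the topological (open, dense) part is elementary --- which is precisely your division of labour, with the density-from-full-measure shortcut and the reduction in train--track charts to the Danthony--Nogueira statement about flipped periodic orbits of linear involutions. In both your write-up and the paper, the entire measure-theoretic content is delegated to that external theorem, so the two accounts are essentially identical.
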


We say that a subset of $\ML(S)$ has \emph{full Thurston measure} if its complement is $\muth$--negligible.
The measurable part of the statement is difficult, it involves the Rauzy induction for linear involutions (a generalization of interval exchanges). The topological part is relatively easy (Proposition~1.2 in \cite{scharlemann} is rather similar).

 In the following two corollaries we precise the structure of a generic lamination:
 
\begin{corollary}\label{cor:generic}
The set of measured laminations $\l\in\ML(S)$ such that $\l^-$ bounds an orientable subsurface is a dense open subset of full Thurston measure.
\end{corollary}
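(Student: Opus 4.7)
The plan is to give an explicit description of the set in question as a union of open cones indexed by maximal one-sided simple multicurves, which makes openness immediate, and then to deduce the full-measure statement by reducing it to Theorem~\ref{thm:nogueira} applied to $S$ and to each subsurface obtained by cutting along such a multicurve. Density will then come for free.

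First I will denote by $\mathcal{F}$ the set of $\lambda\in\ML(S)$ such that $\lambda^-$ bounds an orientable subsurface, i.e.\ $S-\supp(\lambda^-)$ is orientable. The key structural step will be the identity
$$\mathcal{F} \;=\; \bigsqcup_{\Gamma}\cone(\Gamma),$$
where $\Gamma$ ranges over all maximal (by inclusion) collections of pairwise disjoint isotopy classes of one-sided simple closed curves. The point is that such a $\Gamma$ is maximal exactly when $S-\Gamma$ is orientable, for otherwise $S-\Gamma$ contains a one-sided simple closed curve that could be added to $\Gamma$. For $\lambda\in\cone(\Gamma)$ with $\Gamma$ maximal, writing $\lambda=\sum a_i\gamma_i+\mu$ with $\mu\in\ML(S-\Gamma)\cup\{0\}$, the orientability of $S-\Gamma$ forces $\mu$ to have no one-sided closed leaves, whence $\supp(\lambda^-)=\Gamma$ and $\lambda\in\mathcal{F}$; conversely, any $\lambda\in\mathcal{F}$ lies in $\cone(\supp(\lambda^-))$. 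Two distinct maximal $\Gamma$'s must contain intersecting curves (otherwise their union would enlarge both), so the corresponding cones are disjoint by the intersection combinatorics recalled in \textsection\ref{sec:decomposition}. Since each $\cone(\Gamma)$ with all components one-sided is open in $\ML(S)$, the set $\mathcal{F}$ is open.

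For full Thurston measure I will use the decomposition
$$\ML(S)\setminus\mathcal{F} \;=\; \ML^+(S)\;\sqcup\;\bigsqcup_{\Gamma\ \text{non-maximal}} E_\Gamma,\qquad E_\Gamma \;=\; \{\lambda\in\cone(\Gamma)\,:\,\supp(\lambda^-)=\Gamma\},$$
the inner union running over the countably many isotopy classes of non-maximal one-sided simple multicurves. Theorem~\ref{thm:nogueira} applied to $S$ gives $\muth(\ML^+(S))=0$. For each non-maximal $\Gamma$, the complement $S-\Gamma$ is nonorientable, and the condition $\supp(\lambda^-)=\Gamma$ reads, under $\lambda=\sum a_i\gamma_i+\mu$, as $\mu\in\ML^+(S-\Gamma)$. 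The product-measure formula \eqref{eq:measure} on $\cone(\Gamma)$ then expresses $\muth^S(E_\Gamma)$ as a product one of whose factors is $\muth^{S-\Gamma}(\ML^+(S-\Gamma))$, which vanishes by Theorem~\ref{thm:nogueira} applied to the nonorientable surface $S-\Gamma$. Hence $\muth^S(E_\Gamma)=0$, and countable subadditivity yields $\muth^S(\ML(S)\setminus\mathcal{F})=0$.

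Density is then automatic: the complement of $\mathcal{F}$ is a null set for $\muth$, while every nonempty open subset of $\ML(S)$ has positive Thurston measure (since $\muth$ coincides with Lebesgue in any train-track chart), so $\mathcal{F}$ must be dense. I do not expect a serious obstacle here; the main conceptual step is recognizing that $\mathcal{F}$ is precisely the union of the cones $\cone(\Gamma)$ over maximal one-sided multicurves, after which the Thurston-measure computation reduces cleanly, via \eqref{eq:measure}, to Nogueira--Danthony applied both to $S$ and to each nonorientable subsurface $S-\Gamma$.
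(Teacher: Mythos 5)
Your proof is correct, and it uses exactly the same two ingredients as the paper (Theorem~\ref{thm:nogueira} of Danthony--Nogueira and the product splitting~\eqref{eq:measure} of $\muth$ over a cone $\cone(\g)$), but organizes them differently. The paper argues by induction on the genus: it first invokes Theorem~\ref{thm:nogueira} to reduce to showing full measure inside each $\cone(\g)$ for a \emph{single} one--sided curve $\g$, then splits off the $\g$--factor and applies the inductive hypothesis to $S-\g$. You instead avoid the induction entirely by cutting along a whole maximal one--sided multicurve at once: you first identify $\mathcal F$ exactly as the disjoint union of the open cones $\cone(\Gamma)$ over \emph{maximal} one--sided $\Gamma$, which gives openness immediately, and then you partition the complement as $\ML^+(S)$ together with the countably many slices $E_\Gamma$ over \emph{non-maximal} $\Gamma$, each killed by $\muth^{S-\Gamma}(\ML^+(S-\Gamma))=0$ via~\eqref{eq:measure}. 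Unrolling the induction into this single explicit partition of $\ML(S)\setminus\mathcal F$ is a genuine, if modest, gain in transparency: it makes the cover by cones over maximal one--sided multicurves (the same object used in the identity for $b_X(1)$ in \textsection\ref{sec:decomposition}) directly visible, and it cleanly separates the two tasks (openness versus null complement). One small point worth making explicit in either argument is that Theorem~\ref{thm:nogueira} is being applied to the possibly disconnected surface $S-\Gamma$ (or $S-\g$); this is harmless because $\ML$ of a disjoint union is a product and it suffices that one component be nonorientable, but the reader should see the reduction. Your deduction of density from openness and full measure, using that $\muth$ gives positive mass to every nonempty open set in a train--track chart, is also correct and a slight strengthening of the paper's phrasing, which simply reads density off Theorem~\ref{thm:nogueira}.
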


\begin{remark}
A collection of disjoint non isotopic simple closed curves bounds an orientable subsurface if and only if it is maximal among such collections. 
\end{remark}

\begin{proof}
We proceed by induction on the genus $g\geq 1$ of $S$.
 \emph{(Initialization)} If $g=1$, then the corollary comes directly from Theorem~\ref{thm:nogueira}.
 \emph{(Induction)} We assume the property true for all nonorientable surfaces of genus less than a given $g\geq 1$. From Theorem~\ref{thm:nogueira} it suffices to show that, for any one--sided simple closed curve $\g$, the set of $\l\in\cone(\g)$ such that $\l^-$ bounds an orientable subsurface is open and has full measure in $\cone(\g)$ (note that $\g$ is automatically a component of $\l^-$). If $S-\g$ is orientable, then the assertion is obvious.
If $S-\g$ is nonorientable, then we use the induction hypothesis and the decomposition of the Thurston measure \eqref{eq:measure}.
\end{proof}

From Corollary~\ref{cor:generic} and \cite[Lemma~2.4]{mirzakhani-imrn} we immediately get:

\begin{corollary}
Almost every measured lamination $\l\in\ML(S)$ is of the form $\l=\l^-+\l^+$ where $\l^-$ bounds an orientable subsurface, and $\l^+$ is a maximal measured lamination of $S-\l^-$.
\end{corollary}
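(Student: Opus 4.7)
The plan is to combine Corollary~\ref{cor:generic}, which handles the one--sided part $\l^-$, with Mirzakhani's Lemma~2.4 from \cite{mirzakhani-imrn}, which handles maximality on orientable surfaces, and to glue them by Fubini on the product decomposition \eqref{eq:measure} of the Thurston measure.

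More precisely, by Corollary~\ref{cor:generic} the subset $A\subset\ML(S)$ of measured laminations $\l$ whose one--sided part $\l^-$ bounds an orientable subsurface has full Thurston measure. It therefore suffices to prove that the set $B\subset A$ of those $\l\in A$ for which $\l^+$ is moreover a maximal measured lamination of $S-\l^-$ also has full measure. I would decompose $A$ according to the topological type of the simple multicurve $\g=\g_1+\ldots+\g_k$ underlying $\l^-$: if $\simple^-_{\mathrm{or}}$ denotes the countable set of isotopy classes of simple multicurves whose components are one--sided and bound an orientable subsurface, then $A$ is covered up to a negligible set by the balls $\cone(\g)$ for $\g\in\simple^-_{\mathrm{or}}$, and the restriction of $\muth^S$ to each such $\cone(\g)$ factors as $\diff \g_1\otimes\ldots\otimes \diff \g_k\otimes \muth^{S-\g}$ by \eqref{eq:measure}.

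Since $\g$ bounds an orientable subsurface, $S-\g$ is orientable. I would then invoke Lemma~2.4 of \cite{mirzakhani-imrn}, which asserts exactly that, on a finite area orientable hyperbolic surface, almost every measured lamination (with respect to the Thurston measure) is maximal. Applied to $S-\g$, this gives that for $\muth^{S-\g}$--almost every $\l^+\in\ML(S-\g)$ the lamination $\l^+$ is maximal in $S-\g$. By Fubini's theorem applied to the product factorization above, for $\muth^S$--almost every $\l\in\cone(\g)$ the positive part $\l^+$ is a maximal measured lamination of $S-\g=S-\l^-$; summing this full--measure conclusion over the countable family $\{\cone(\g)\}_{\g\in\simple^-_{\mathrm{or}}}$ yields that $B$ has full Thurston measure in $A$, hence in $\ML(S)$.

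The only step that deserves care is that $\l^+$, defined a priori only as the part of $\l$ without one--sided closed leaves, genuinely lies in $\ML(S-\l^-)\cup\{0\}$ so that Mirzakhani's lemma can be applied on the orientable piece; but this is built into the very definition of $\cone(\g)$ as $\R_+^\ast\g_1+\ldots+\R_+^\ast\g_k+\ML(S-\g)\cup\{0\}$ used in \textsection\ref{sec:decomposition}, so there is no real obstacle. Everything else is a routine combination of the two cited results with the product structure \eqref{eq:measure}.
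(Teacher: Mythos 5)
Your proof is correct and takes essentially the same route the paper has in mind: the paper deduces the corollary in one line from Corollary~\ref{cor:generic} and Lemma~2.4 of \cite{mirzakhani-imrn}, and the decomposition of the Thurston measure as $\diff\g_1\otimes\cdots\otimes\diff\g_k\otimes\muth^{S-\g}$ on $\cone(\g)$ together with Fubini and a countable sum over topological types is exactly the implicit argument. You have merely spelled out the ``immediately'' that the paper omits.
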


\subsection*{An identity}
Here we present another consequence of Theorem~\ref{thm:nogueira} in the form of an identity. Following \cite{mirzakhani-annals} we set $X=(S,m)$ and, for any $L>0$,
\begin{eqnarray*}
B_X(L) & = & \{\l\in\ML(S)~;~\ell_m(\l)\leq L\},\\
b_X(L) & = & \muth(B_X(L)).
\end{eqnarray*}
For any simple closed geodesic $\g$ of $X$, we abusively denote by $X-\g$ the hyperbolic surface with boundary which is the metric completion of $X-\g$. In particular, $b_{X-\g}(1)$ is the Thurston measure of $\{\l\in\ML(S-\g)~;~\ell_{X-\g}(\l)\leq 1\}$ where we denote by $S-\g$ the underlying topological surface of $X-\g$.

\begin{proposition}
Let $X$ be a finite area nonorientable hyperbolic surface. Then
 \begin{eqnarray*}  
 b_X(1)   & =& \sum_{\g} \frac{(d-n)!}{d!}  \frac{b_{X-\g}(1)}{\ell_X(\g_1)\cdots \ell_X(\g_n)},
\end{eqnarray*} 
where $d=\dim\ML(S)$ and $\g=\g_1+\ldots+\g_n$ ($n\in\N^\ast$) runs over the set of maximal families of disjoint non isotopic one--sided simple closed curves. 
\end{proposition}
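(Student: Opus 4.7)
\medskip

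\noindent\textbf{Proof plan.} The plan is to integrate $\muth$ over $B_X(1)$ by slicing it according to the combinatorics of the one--sided part $\l^-$ of a generic lamination.

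By Theorem~\ref{thm:nogueira} the set $\ML^+(S)$ is $\muth$--negligible, and by Corollary~\ref{cor:generic} almost every $\l\in\ML(S)$ can be written uniquely as $\l=\l^+ + \l^-$ where $\l^-$ is a positive linear combination of the components of a maximal family $\g=\g_1+\ldots+\g_n$ of disjoint non--isotopic one--sided simple closed curves, and $\l^+\in\ML(S-\g)$. For such a maximal family let
\[
A_\g \;=\; \{\l\in\ML(S)~:~\supp(\l^-)=\{\g_1,\ldots,\g_n\}\}\;\subset\;\cone(\g).
\]
The $A_\g$'s are pairwise disjoint (if $\g\neq\g'$ then $\supp(\l^-)$ differs) and their union is $\ML^-(S)$ up to the $\muth$--negligible boundary locus $\{a_i=0\}$. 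Therefore
\[
b_X(1)\;=\;\sum_{\g}\muth\bigl(A_\g\cap B_X(1)\bigr).
\]

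Fix a maximal family $\g$. Inside $\cone(\g)$ the identity \eqref{eq:measure} gives
$\muth=\diff a_1\otimes\cdots\otimes\diff a_n\otimes \muth^{S-\g}$, where $a_i$ is the weight of $\g_i$. A lamination $\l=a_1\g_1+\ldots+a_n\g_n+\mu$ with $\mu\in\ML(S-\g)$ lies in $B_X(1)$ iff $a_1\ell_X(\g_1)+\ldots+a_n\ell_X(\g_n)+\ell_{X-\g}(\mu)\le 1$ (the leaves of $\mu$ are disjoint from $\g$, so $\ell_X$ and $\ell_{X-\g}$ agree on them). Setting $t_i=a_i\ell_X(\g_i)$ and $s=\ell_{X-\g}(\mu)$ I would rewrite
\[
\muth(A_\g\cap B_X(1))\;=\;\frac{1}{\ell_X(\g_1)\cdots\ell_X(\g_n)}\int_{\substack{t_i\ge 0,\ s\ge 0\\ t_1+\ldots+t_n+s\le 1}}\!\!\diff t_1\cdots \diff t_n\,\diff F(s),
\]
where $F(s):=\muth^{S-\g}\{\mu\in\ML(S-\g)~:~\ell_{X-\g}(\mu)\le s\}=b_{X-\g}(s)$.

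Now I would use the homogeneity of the Thurston measure on $\ML(S-\g)$. Since the components of $\g$ are all one--sided we have $\dim\cone(\g)=d$ (no two--sided components), hence $\dim\ML(S-\g)=d-n$, and $b_{X-\g}(s)=s^{d-n}b_{X-\g}(1)$, so $\diff F(s)=(d-n)\,s^{d-n-1}b_{X-\g}(1)\,\diff s$. The remaining integral is a Dirichlet integral on the standard $(n+1)$--simplex,
\[
\int_{\substack{t_i,s\ge 0\\ t_1+\ldots+t_n+s\le 1}}\!\!s^{d-n-1}\,\diff t_1\cdots\diff t_n\,\diff s
\;=\;\frac{\Gamma(1)^n\,\Gamma(d-n)}{\Gamma(d+1)}\;=\;\frac{(d-n-1)!}{d!},
\]
which after multiplication by $(d-n)$ yields the factor $(d-n)!/d!$. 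Summing over $\g$ gives the stated identity.

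The only genuine subtlety is justifying the essentially disjoint decomposition $B_X(1)=\bigsqcup_\g A_\g\cap B_X(1)$ modulo $\muth$--null sets; this is where Theorem~\ref{thm:nogueira} and Corollary~\ref{cor:generic} enter, and it is what makes the nonorientable identity genuinely different from its orientable analogue in \cite{mirzakhani-annals}. The rest is bookkeeping: product structure of $\muth$ on $\cone(\g)$, homogeneity of $b_{X-\g}$, and a one--line Dirichlet integral.
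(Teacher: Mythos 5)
Your proposal is correct and follows essentially the same route as the paper's own proof: decompose $B_X(1)$ over the pairwise disjoint cones $\cone(\g)=A_\g$ of maximal one--sided multicurves (justified by Corollary~\ref{cor:generic}), apply the product decomposition~\eqref{eq:measure}, invoke homogeneity of $b_{X-\g}$, and finish with a simplex integral. The only cosmetic difference is that you push $\muth^{S-\g}$ forward by the length function and write a one--dimensional Stieltjes integral $\diff F(s)$, whereas the paper keeps $b_{X-\g}(L-\ell_X(x\cdot\g))$ inside the integrand; the resulting Dirichlet/Beta computation is the same.
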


We don't have any application of this formula which shows a relation between the volume $b_{X-\g}(1)$ and the lengths $\ell_X(\g_1),\ldots,\ell_X(\g_n)$. It would be interesting to bound $b_{X-\g}(1)$ in terms of these lengths.

\begin{proof}
From Corollary~\ref{cor:generic} we have:
\begin{eqnarray*}
\muth(B_X(L)) & = & \sum_{\g} \muth(B_X(L) \cap \cone(\g))\quad (\forall L>0),
\end{eqnarray*}
where $\g=\g_1+\ldots+\g_n$ ($n\in\N^\ast$) runs over the set of maximal families disjoint non isotopic one--sided simple closed curves.
Using \eqref{eq:measure} we find:
\begin{eqnarray*}
\muth(B_X(L) \cap \cone(\g)) & = & \int_{L\geq \ell_X( x\cdot \g)} b_{X-\g}(L-\ell_X(x\cdot \g)) \diff x, \\
                        & = &b_{X-\g}(1)  \int_{L\geq \ell_X( x\cdot \g)} (L-x_1\ell_X(\g_1)-\ldots -x_n\ell_X(\g_n))^{d-n} \diff x ,\\                        
                        & = & \frac{b_{X-\g}(1)}{\ell_X(\g_1)\cdots \ell_X(\g_n)} \int_{L\geq y_1+\ldots+y_n} (L-y_1-\ldots -y_n)^{d-n}  \diff y, \\
                         & =&   \frac{b_{X-\g}(1)}{\ell_X(\g_1)\cdots \ell_X(\g_n)}  \frac{(d-n)!}{d!}  L^{d} ,
\end{eqnarray*}                        
where $d=\dim\ML(S)$ and $x\cdot\g=x_1\g_1+\ldots+ x_n\g_n$ for any $x\in(\R_+^\ast)^n$.
\end{proof}

\subsection*{A $\Mod(S)$--invariant continuous function}
We have seen that any measured lamination $\l\in\ML(S)$ is uniquely written as 
$\l =  \l^-+\l^+$ where $\l^+\in\ML^+(S)$ and $\l^-$ is a simple multicurve whose components are one--sided:
\begin{eqnarray*}
 \l^- & = &   a_1\g_1+\ldots+a_k\g_k
\end{eqnarray*}
where $a_1\geq\ldots\geq a_k>0$ and $\g_1,\ldots,\g_k$ are disjoint (non isotopic) one--sided simple closed curves.
We set $a_{k+1}=\ldots=a_g=0$ if $k$ is less than the genus $g$ of $S$.

 With the above notations, we define a function
$$\begin{array}{cccc}
w^-: & \ML(S) & \longrightarrow & \R_+^g \\
 & \l & \longmapsto & (a_1,\ldots, a_g)
\end{array}.$$

\begin{proposition}\label{pro:invariant-function}
The function $w^-$ is continuous and $\Mod(S)$--invariant. It induces a continuous and $\Mod(S)$--invariant function from $\PML^-(S)$ to $\proj(\R^g)$
\end{proposition}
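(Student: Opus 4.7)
The plan is to split the proof into three steps: (a) $\Mod(S)$-invariance, which is direct; (b) continuity of each individual weight function $w_\gamma:\ML(S)\to\R_+$ defined by $w_\gamma(\lambda)=$ weight of $\gamma\in\simple^-$ as a leaf of $\lambda$ (and $0$ if $\gamma$ is not a leaf); and (c) continuity of the ordered tuple $w^-$, via a local finiteness argument that reduces to (b). The $\PML^-(S)$ statement then follows from positive homogeneity. For (a), any $\phi\in\Mod(S)$ restricts to a bijection of $\simple^-$ and sends the canonical decomposition $\lambda=\lambda^++\lambda^-$ to $\phi(\lambda^+)+\phi(\lambda^-)$, preserving the multiset of one-sided leaf weights; after decreasing rearrangement this gives $w^-(\phi\cdot\lambda)=w^-(\lambda)$.

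For (b), I would rely on the structure of $\cone(\gamma)$ established in the previous subsection: since $\gamma$ is one-sided, $\cone(\gamma)$ is open in $\ML(S)$ and the canonical decomposition identifies it with $\R_+^\ast\times(\ML(S-\gamma)\cup\{0\})$, so on $\cone(\gamma)$ the function $w_\gamma$ is simply the first coordinate, while on the closed complement it vanishes. The delicate point is continuity at the boundary: for $\lambda_n\in\cone(\gamma)$ converging to some $\lambda\notin\cone(\gamma)$, I would write $\lambda_n=a_n\gamma+\mu_n$ and rule out $a_n$ accumulating at any $a\in(0,\infty]$. The case $a=\infty$ fails because $i(\lambda_n,\alpha)\geq a_n\, i(\gamma,\alpha)$ would blow up for any $\alpha\in\simple$ with $i(\gamma,\alpha)>0$. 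For the case $a\in(0,\infty)$, the identity $i(\mu_n,\alpha)=i(\lambda_n,\alpha)-a_n\, i(\gamma,\alpha)$ shows $\mu_n$ converges in $\R^\simple$, hence by properness of the embedding $\ML(S)\hookrightarrow\R^\simple$ it has a subsequential limit $\mu\in\ML(S)$. Because $\ML(S-\gamma)\cup\{0\}=\{i(\cdot,\gamma)=0\}\cap(\ML(S)\setminus\cone(\gamma))$ is closed, $\mu$ lies in $\ML(S-\gamma)\cup\{0\}$, whence $\lambda=a\gamma+\mu\in\cone(\gamma)$, a contradiction. Thus $a_n\to 0$, yielding the desired continuity.

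For (c), fix $\lambda_0\in\ML(S)$ with $\lambda_0^-=a_1^0\gamma_1+\ldots+a_k^0\gamma_k$ (decreasingly) and a hyperbolic metric $X$. In a bounded neighborhood $V$ of $\lambda_0$, the inequality $w_\delta(\lambda)\,\ell_X(\delta)\leq\ell_X(\lambda^-)\leq\ell_X(\lambda)$ together with the positive lower bound $\ell_X(\delta)\geq\sys^-(X)$ on one-sided geodesic lengths shows that, for any $\epsilon>0$, only finitely many $\delta\in\simple^-$ can have $w_\delta(\lambda)\geq\epsilon$ somewhere in $V$. Using (b) for each such $\delta$, I would shrink $V$ so that $w_\delta(\lambda)<\epsilon$ for $\delta\notin\{\gamma_i\}$ and $|w_{\gamma_i}(\lambda)-a_i^0|<\epsilon$ throughout. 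For $\epsilon<\min_i a_i^0/2$ (vacuous when $k=0$), the top $k$ entries of $w^-(\lambda)$ are a permutation of $(w_{\gamma_1}(\lambda),\ldots,w_{\gamma_k}(\lambda))$ and the remaining entries are $<\epsilon$, so their decreasing rearrangement converges to $w^-(\lambda_0)$. Finally $w^-(t\lambda)=t\,w^-(\lambda)$ and $w^-(\lambda)\neq 0$ on $\ML^-(S)$, so $w^-$ descends to a continuous $\Mod(S)$-invariant map $\PML^-(S)\to\proj(\R^g)$. The main obstacle is the boundary argument in (b), which relies critically on the openness of $\cone(\gamma)$ established earlier and on the closedness of $\ML(S-\gamma)\cup\{0\}$ in $\ML(S)$.
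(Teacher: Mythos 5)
Your proof is correct, but it departs from the paper's at the central step. For the continuity of each individual weight function $w_\gamma$, the paper's Lemma~\ref{lem:weight-function} argues geometrically via the Collar Lemma: any simple geodesic disjoint from $\gamma$ either is asymptotic to $\gamma$ or avoids a fixed collar $C_r(\gamma)$, so for a sufficiently short transversal arc $k$ through $\gamma$ one has $w_\gamma(\lambda)=i(k,\lambda)$ on all of the closure of $\cone(\gamma)$, and continuity is inherited from $i(\cdot,\cdot)$. You instead run a purely topological sequential argument in $\R^\simple$, ruling out subsequential limits $a\in(0,\infty]$ of the weights $a_n$ via properness of $\ML(S)\cup\{0\}\hookrightarrow\R^\simple$ and closedness of $\ML(S-\gamma)\cup\{0\}=\{i(\cdot,\gamma)=0\}\setminus\cone(\gamma)$. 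Both routes work; the paper's has the small advantage that the identity $w_\gamma=i(k,\cdot)$ treats interior and boundary points of $\cone(\gamma)$ uniformly, whereas your interior case (``$w_\gamma$ is the first coordinate'') silently leans on uniqueness of the decomposition $\lambda=w_\gamma(\lambda)\gamma+\lambda'$ --- correct, but worth making explicit, and in fact your boundary argument adapts verbatim to give it. Where you genuinely add something is step (c): the paper's proof of Proposition~\ref{pro:invariant-function} just asserts that continuity of $w^-$ ``follows directly'' from that of the $w_\gamma$, but the index set $\simple^-$ is infinite, so some local-finiteness control is needed. Your bound $w_\delta(\lambda)\,\ell_X(\delta)\leq\ell_X(\lambda)$, combined with the lower bound on one-sided lengths, reduces matters to finitely many $\delta$ on a bounded neighborhood, after which continuity of the decreasing-rearrangement map finishes the job. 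That argument is required and is precisely what the paper's one-line proof glosses over.
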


\begin{remark}\label{rem:invariant-function}
The function $w^-$ is obviously nonconstant. The induced function from $\PML^-(S)$ to $\proj(\R^g)$ is nonconstant if $g\geq 2$.
\end{remark}

\begin{proof}
The only difficulty is the continuity, but $w^-$ gives in decreasing order the values of the weight functions of the one--sided simple closed geodesics, so the continuity of $w^-$ follows directly from the continuity of the weight functions (Lemma~\ref{lem:weight-function}).
\end{proof}
 
Given an isotopy class $\g$ of one--sided simple closed curves, the weight function $w_\g:\ML(S)\rightarrow\R$ is defined as follows: for any $\l\in\ML(S)$, the weight $w_\g(\l)$ is the unique nonnegative real number such that $\l=w_\g(\l)\cdot\g+\l'$ where $\l'\in\ML(S)$ has no leaf isotopic to $\g$. 
 
\begin{lemma}\label{lem:weight-function}
The weight function $w_\g$ of a one--sided curve $\g$ is continuous.
\end{lemma}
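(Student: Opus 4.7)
The plan is to exploit the product structure of $\cone(\g)$, which is an \emph{open} subset of $\ML(S)$ precisely because $\g$ is one-sided, and to treat $\cone(\g)$ and its complement separately. On $\cone(\g)$ the canonical decomposition $\l = w_\g(\l)\cdot\g + \l'$ with $\l'\in\ML(S-\g)\cup\{0\}$ identifies $\cone(\g)$ with the product $\R_+^\ast\times(\ML(S-\g)\cup\{0\})$ — the topological counterpart of the product decomposition \eqref{eq:measure} of the Thurston measure, visible directly in any train-track chart carrying $\g$, where $w_\g$ appears as one of the branch weights. In particular $w_\g$ is continuous on the open set $\cone(\g)=\{w_\g>0\}$, and since it vanishes on the complement, it only remains to show that $\{w_\g\geq c\}$ is closed for every $c>0$.

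So let $\l_n\to\l$ with $w_\g(\l_n)\geq c$ and write $\l_n = w_\g(\l_n)\g + \l_n'$ with $\l_n'\in\ML(S-\g)\cup\{0\}$. First I would bound the sequence $(w_\g(\l_n))$: pick a simple closed curve $\d$ on $S$ with $i(\g,\d)>0$ — such a $\d$ exists because cutting $S$ along $\g$ yields a surface with $\chi<0$ whose new boundary component double-covers $\g$, hence carries essential arcs joining pairs of points with distinct images on $\g$. Bilinearity of the intersection form gives
\[
w_\g(\l_n)\,i(\g,\d)\;\leq\;i(\l_n,\d)\;\longrightarrow\;i(\l,\d),
\]
bounding $w_\g(\l_n)$. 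After extracting a subsequence, $w_\g(\l_n)\to c'\geq c$ and $\l_n'$ converges in $\ML(S)$ to $\l':=\l-c'\g$. If $\l'\in\ML(S-\g)\cup\{0\}$, then $\l=c'\g+\l'\in\cone(\g)$ with $w_\g(\l)=c'$, and the argument closes.

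The key technical point is therefore the closedness of $\ML(S-\g)\cup\{0\}$ in $\ML(S)$. This follows from writing the set as $\{\xi\in\ML(S):i(\xi,\g)=0\}\setminus\cone(\g)$, a closed set (by continuity of the intersection pairing) minus an open set. Notice that this is precisely the step where the one-sidedness of $\g$ is essential: the openness of $\cone(\g)$ is specific to one-sided curves, and the argument genuinely breaks down in the two-sided case — consistent with the restricted form of the statement.
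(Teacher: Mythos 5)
Your proof is correct, and it takes a genuinely different route from the paper's. The paper's argument is geometric: it fixes a hyperbolic metric, invokes the collar lemma around $\g$, and then observes that for a sufficiently short arc $k$ crossing $\g$ once, $w_\g(\l)=i(k,\l)$ for every $\l$ in the closure of $\cone(\g)$; continuity then falls out of continuity of $i$ in one line. Your argument is topological: you prove continuity on the open set $\cone(\g)$ via train-track coordinates, observe that the function vanishes off $\cone(\g)$, and then establish that the superlevel sets $\{w_\g\geq c\}$ ($c>0$) are closed, using the closed curve $\d$ crossing $\g$ to bound $w_\g(\l_n)$ and the identity $\ML(S-\g)\cup\{0\}=\{i(\cdot,\g)=0\}\setminus\cone(\g)$ to locate the limit of the remainders $\l'_n$. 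The paper's proof is shorter and gives an explicit local formula for $w_\g$; yours is more self-contained from the combinatorial/topological structure already set up in \textsection\ref{sec:decomposition} and makes entirely explicit where the one-sidedness (i.e.\ openness of $\cone(\g)$) enters. Two small points worth tightening: when you say $\l'_n\to\l':=\l-c'\g$ \emph{in} $\ML(S)$, you should note that the limit exists a priori only as a geodesic current, and lands in $\ML(S)$ because $\ML(S)$ is closed in $\current(S)$; and the existence of a simple closed curve $\d$ with $i(\g,\d)>0$ is elementary (any one-sided curve on a surface with $\chi<0$ is crossed by a simple closed curve), so the explanation via the double cover of $\g$ is heavier than needed.
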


\begin{proof}
This function is identically equal to zero outside the open subset $\cone(\g)$. So it suffices to show that $w_\g$ is continuous on the closure of $\cone(\g)$ in $\ML(S)$.\par
 We fix a hyperbolic metric on $S$, and we denote by $C_r(\g)$ the collar of width $r>0$ around $\g$. As well--known, for $r>0$ sufficiently small, any simple geodesic disjoint from $\g$ either is asymptotic to $\g$, either does not penetrate $C_r(\g)$. In particular, a measured lamination in the closure of $\cone(\g)$ has no leaf that penetrate $C_r(\g)$ except $\g$. As a consequence, for a sufficiently small geodesic arc $k$ that intersects $\g$ transversely, we have $w_\g(\l)=i(k,\l)$ for any $\l$ in the closure of $\cone(\g)$. We conclude that $w_\g$ is continuous on the closure of $\cone(\g)$ by continuity of $i(\cdot,\cdot)$.
\end{proof}

\subsection*{Consequences on the dynamics of the $\Mod(S)$ action}

\begin{proposition}\label{pro:consequences}
The action of $\Mod(S)$ on $\PML(S)$ is not minimal, and is not topologically transitive if the genus of the surface is at least $2$.\par
The action of $\Mod(S)$ on $\ML(S)$ is not topologically transitive, in particular it is not ergodic with respect to the Thurston measure. 
\end{proposition}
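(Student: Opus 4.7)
The plan is to combine Danthony--Nogueira's Theorem~\ref{thm:nogueira} with the continuous $\Mod(S)$--invariant function $w^-$ from Proposition~\ref{pro:invariant-function}, each assertion being obtained by exhibiting a suitable invariant subset. For nonminimality on $\PML(S)$, consider $\PML^+(S)$: it is $\Mod(S)$--invariant by construction, closed because it is the complement of the open set $\PML^-(S) = \bigcup_{\g\in\simple^-}\cone(\g)/\R_+^\ast$, nonempty (it contains any two--sided element of $\simple$), and proper since $\PML^-(S)$ is dense in $\PML(S)$ by Theorem~\ref{thm:nogueira}. A proper nonempty closed invariant subset is exactly what is needed to rule out minimality.

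For the failure of topological transitivity on $\PML(S)$ when $g \geq 2$, apply Proposition~\ref{pro:invariant-function}: the induced continuous $\Mod(S)$--invariant map $\PML^-(S) \to \proj(\R^g)$ is nonconstant by Remark~\ref{rem:invariant-function}. I would pick two disjoint open subsets $V_1, V_2 \subset \proj(\R^g)$ each meeting the image; their preimages are disjoint, nonempty, $\Mod(S)$--invariant open subsets of $\PML^-(S)$, and since $\PML^-(S)$ is itself open in $\PML(S)$ they remain open in $\PML(S)$, precluding any dense orbit. The same argument applied directly to the (always nonconstant) map $w^- : \ML(S) \to \R_+^g$ produces two disjoint, nonempty, $\Mod(S)$--invariant open subsets of $\ML(S)$, which witness non--topological--transitivity there as well. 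Since $\muth$ coincides with Lebesgue measure in any train--track chart, it has full support, so both of these open sets have strictly positive Thurston measure; being disjoint and invariant, they show that $\muth$ is not ergodic.

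The argument is essentially formal given the previous results: the content is packaged into Proposition~\ref{pro:invariant-function}, whose proof reduces to the continuity of each individual weight function $w_\g$, and into Theorem~\ref{thm:nogueira}. Once these two inputs are granted, no serious obstacle arises; the only delicate point is making sure the nonconstancy in Remark~\ref{rem:invariant-function} is used in its correct regime ($g\geq 2$ for the projectivized map, any genus for the unprojectivized one).
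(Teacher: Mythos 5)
Your argument is correct and follows exactly the same route as the paper's (very terse) proof: nonminimality is witnessed by the proper, nonempty, closed, invariant subset $\PML^+(S)$ (equivalently, any orbit starting there misses the nonempty open set $\PML^-(S)$), and the remaining statements are deduced from the nonconstant continuous $\Mod(S)$--invariant functions of Proposition~\ref{pro:invariant-function} and Remark~\ref{rem:invariant-function} by pulling back disjoint open sets. You have simply spelled out the details that the paper compresses into one sentence, including the observation that $\muth$ has full support so the disjoint invariant open sets you produce in $\ML(S)$ each have positive measure.
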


\begin{remark}
In the case of an orientable surface, the action of the mapping class group on the space of measured laminations is ergodic with respect to the Thurston measure (\cite{masur}). Moreover, the orbit of any measured lamination without closed leaves is dense (see \cite{mirzakhani-lindenstrauss,hamenstadt}).
\end{remark}

\begin{proof}
The orbit of any $[\l]\in\PML^+(S)$ is disjoint from the open set $\PML^-(S)$, so the action of $\Mod(S)$ on $\PML(S)$ is not minimal. The other statements come from the existence of nonconstant, continuous and $\Mod(S)$--invariant functions (see Proposition~\ref{pro:invariant-function} and Remark~\ref{rem:invariant-function}).
\end{proof}

\section{Partial classification of orbit closures}\label{sec:orbit-closure}

In this section we study the topological dynamics of the $\Mod(S)$--actions on $\ML(S)$ and $\PML(S)$. We give a partial classification of its orbit closures. The idea is to use the $\Mod(S)$--invariance of the decomposition of $\ML^-(S)$ into open balls $\cone(\g)$ where $\g$ a simple multicurve whose component are one--sided.\par

 In the orientable case, Hamenst\"adt (\cite{hamenstadt}) and Lindenstrauss--Mirzakhani (\cite{mirzakhani-lindenstrauss}) gave a complete classification of the closed invariant subsets of the space of measured laminations. Their works rely on the properties of the Teichm\"uller geodesic and horocyclic  flows, for instance they both use the nondivergence of the Teichm\"uller horocyclic flow established by Minsky and Weiss (see \cite[\textsection 6]{mirzakhani-lindenstrauss} and \cite[Appendix]{hamenstadt}). It seems difficult to adapt their arguments since the Teichm\"uller horocylic flow is not well--defined in the nonorientable case (see \textsection\ref{sec:teichmuller}).\par

We recall that $S$ is a finite type nonorientable surface with $\chi(S)<0$. We denote by $\ML^+(S,\Q)$ the set of elements in $\ML(S,\Q)$ whose components are two--sided. The closure of $\ML^+(S,\Q)$ is included in $\ML^+(S)$, we prove that there is equality when $S$ has genus one (Lemma~\ref{lem:PML+PML}).

\subsection*{Orbit closures in $\ML(S)$}

Following \cite{mirzakhani-lindenstrauss} we define a \emph{complete pair} as a couple $\mc R=(R,\g)$ where $\g=x_1\g_1+\ldots+x_n\g_n$ ($x_i>0$) is a simple multicurve, and $R$ is a union of connected components of the complement $S-\g$. As in \cite{mirzakhani-lindenstrauss} we use the following notations: 
$$\mc G^{\mc R}  =  \g+(\ML^+(R)\cup\{0\})\quad \textnormal{and}\quad \mc G^{[\mc R]} =  \cup_{f\in\Mod(S)}\  \mc G^{f\cdot \mc R}.$$
To any measured lamination $\l$ we associate a complete pair $\mc R_\l=(R,\g)$ as follows: the multicurve $\g$ corresponds to the atomic part of the transverse measure of $\l$, and the subsurface $R$ is the union of the connected components of $S-\g$ that contain a noncompact leaf of $\l$.

\begin{theorem}\label{thm:closure-ML}
For any measured lamination $\l\in\ML(S)$ we have
$$\overline{\Mod(S)\cdot \l}\subset\mc G^{[\mc R_\l]}.$$
The inclusion is an equality if $R$ is orientable.
\end{theorem}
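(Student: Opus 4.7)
My plan is to prove the inclusion first and then handle the equality separately. Throughout, write $\lambda=\gamma+\lambda^+$ with $\gamma=\sum_{i=1}^n x_i\gamma_i$ the atomic part of $\lambda$ and $\lambda^+$ its non-atomic part, supported on $R$; fix an auxiliary hyperbolic metric $m$ on $S$ and a convergent sequence $f_k\cdot\lambda\to\lambda_\infty$.

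\emph{Step 1 (the atomic curves stabilize).} Since $\ell_m$ is continuous on $\ML(S)$, the sequence $\ell_m(f_k\cdot\lambda)$ is bounded, and hence so is $\ell_m(f_k\cdot\gamma_i)\le\ell_m(f_k\cdot\lambda)/x_i$ for every $i$. But $\{\alpha\in\simple:\ell_m(\alpha)\le L\}$ is finite (a bounded subset of the discrete set $\ML(S,\Z)$), so after passing to a subsequence $f_k\cdot\gamma_i=\gamma_i'$ is constant in $k$ for each $i$. Choose $g\in\Mod(S)$ with $g\cdot\gamma_i=\gamma_i'$, set $h_k:=g^{-1}f_k$, and extract once more through the finite group permuting the components of $R$; this lets me assume that every $h_k$ fixes each $\gamma_i$ individually and preserves each component of $R$.

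\emph{Step 2 (the limit picks up no new atom).} Since $h_k\cdot\gamma=\gamma$, we have $g^{-1}\lambda_\infty=\gamma+\mu$ where $\mu:=\lim_k h_k\cdot\lambda^+$ in $\ML(S)$. The plan here is to use the weight function $w_\alpha:\ML(S)\to\R_{\ge 0}$ attached to any $\alpha\in\simple$: Lemma~\ref{lem:weight-function} gives continuity when $\alpha$ is one-sided, and the same collar-lemma argument works verbatim when $\alpha$ is two-sided. Combined with the equivariance $w_\alpha\circ f=w_{f^{-1}\alpha}$, this yields
\[
w_\alpha(\mu)\;=\;\lim_k w_\alpha(h_k\cdot\lambda^+)\;=\;\lim_k w_{h_k^{-1}\alpha}(\lambda^+)\;=\;0
\]
for every $\alpha\in\simple$, because $\lambda^+$ has no atomic weight on any simple closed curve. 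Hence $\mu$ has no closed leaves at all; in particular none of them is one-sided. Since the support of each $h_k\cdot\lambda^+$ lies in $R$, the support of $\mu$ lies in $\overline R$, and together with the absence of closed leaves this places $\mu$ in $\ML^+(R)\cup\{0\}$. Therefore $\lambda_\infty\in g\cdot\mc G^{\mc R_\lambda}=\mc G^{g\cdot\mc R_\lambda}\subset\mc G^{[\mc R_\lambda]}$, proving the inclusion.

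\emph{Equality when $R$ is orientable.} In this case $\ML^+(R)=\ML(R)$. Any $\lambda'\in\mc G^{[\mc R_\lambda]}$ may be rewritten, after applying some $f^{-1}\in\Mod(S)$, as $\gamma+\nu$ with $\nu\in\ML(R)\cup\{0\}$. By construction of $R$ the non-atomic lamination $\lambda^+$ fills $R$, so the Lindenstrauss--Mirzakhani classification of $\Mod(R)$--orbit closures in the orientable case supplies a sequence $h_k\in\Mod(R)$ with $h_k\cdot\lambda^+\to\nu$. Extending each $h_k$ by the identity on $S-R$ produces mapping classes of $S$ that fix $\gamma$ and satisfy $h_k\cdot\lambda\to\gamma+\nu=\lambda'$, which reverses the inclusion. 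The main obstacle I anticipate is the two-sided extension of the weight-function continuity of Lemma~\ref{lem:weight-function}, needed in Step~2 whenever $\gamma$ has two-sided components; the collar argument should go through, but must be spelled out carefully. A secondary point is verifying, for the equality, that the definition of $R$ really gives a lamination $\lambda^+$ that is filling in the sense required to apply Lindenstrauss--Mirzakhani.
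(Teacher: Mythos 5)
Your Step 1 (stabilizing the atomic curves) is correct and essentially the argument in the paper, though the paper phrases it via geodesic currents and weak$^\ast$ compactness rather than length functions and discreteness; both work. The equality case is also the paper's argument (apply Lindenstrauss--Mirzakhani inside $R$), and the worry you flag about whether $\lambda^+$ genuinely fills $R$ is legitimate, since the construction of $\mc R_\lambda$ only demands that each component of $R$ contain \emph{some} noncompact leaf, not that $\lambda^+$ be filling.

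The genuine gap is in Step 2. The equality $w_\alpha(\mu)=\lim_k w_\alpha(h_k\cdot\lambda^+)$ requires $w_\alpha$ to be continuous at $\mu$, and this \emph{fails} for two-sided $\alpha$. The proof of Lemma~\ref{lem:weight-function} depends essentially on the ball $\cone(\alpha)$ being \emph{open} in $\ML(S)$, which holds precisely because $\alpha$ is one-sided; for two-sided $\alpha$ the set $\cone(\alpha)$ has positive codimension and $w_\alpha$ is discontinuous at every point of it. Concretely, if $\beta$ is a simple closed curve meeting a two-sided $\alpha$ and $T_\alpha$ is the Dehn twist, then $\frac{1}{n}T_\alpha^n\beta\to i(\alpha,\beta)\cdot\alpha$ in $\ML(S)$, while $w_\alpha\bigl(\frac{1}{n}T_\alpha^n\beta\bigr)=0$ for all $n$ and $w_\alpha\bigl(i(\alpha,\beta)\alpha\bigr)=i(\alpha,\beta)>0$. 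So the assertion that ``$\mu$ has no closed leaves at all'' is not established, and in fact the claim is \emph{false} in general --- the orbit closure $\mc G^{[\mc R_\lambda]}$ contains laminations whose $R$-part has two-sided closed leaves. What is actually needed, and what the paper proves, is only that $\mu$ has no \emph{one-sided} closed leaf, and for that a much cheaper argument suffices: since $\lambda^+\in\ML^+(S)$ and $\ML^+(S)$ is closed in $\ML(S)$ (complement of the open set $\ML^-(S)$, established in \textsection\ref{sec:decomposition}), the limit $\mu$ lies in $\ML^+(S)$. You could of course salvage your argument by restricting $\alpha$ to range over $\simple^-$, where Lemma~\ref{lem:weight-function} applies as stated; but then you should separately verify that $\mu$ carries no atomic mass on the boundary curves $\gamma_i$ of $R$ (using the collar estimate that laminations supported in $R$ avoid a fixed-width collar of $\partial R$), a point which your ``no closed leaves at all'' shortcut was doing double duty for.
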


\begin{proof}
\emph{1) We show that $\overline{\Mod(S)\cdot \l}\subset\mc G^{[\mc R_\l]}$.}
 Let $\l_\infty$ be a measured lamination in the closure of $\Mod(S)\cdot\l$. We consider a sequence $(g_n)_n$ in $\Mod(S)$ such that $(g_n\l)_n$ converges to $\l_\infty$ in $\ML(S)$.\par
  
The measured lamination $\l$ is uniquely written in the form $\l=\g+\l'$ where $\g=x_1\g_1+\ldots+x_m\g_m$ ($x_i>0$) is a simple multicurve, and $\l'$ is a measured lamination without compact leaves. We clearly have $g_n\g\leq g_n\l$ and $g_n\l'\leq g_n\l$ for any $n$, here we consider a measured lamination as a geodesic current (see \textsection\ref{sec:geodesic-currents}). As $(g_n\l)_n$ converges, it comes that the sequences $(g_n\g)_n$ and $(g_n\l')_n$ are bounded in the space of Radon measures. Therefore they admit convergent subsequences with respect to the weak* topology (Tychonoff theorem). So we assume that $(g_n\g)_n$ and $(g_n\l')_n$ are convergent with respective limits $\g_\infty$ and $\l'_\infty$.\par

 Using the same argument, we assume that each sequence $(g_n\g_i)_n$ converges. Actually this implies that each sequence $(g_n\g_i)_n$ stablizes, for a compact subset of $\ML(S)$ contains only a finite number of simple closed geodesics. We fix an integer $N$ such that $g_n\g_i=g_N\g_i$ for any $n\geq N$ and any $i=1,\ldots,m$.\par
 
 Up to the choice of a subsequence and a bigger $N$, we assume that the mapping classes $g_N^{-1}g_n$ ($n\geq N$) do not permute the connected components of $S-\g_\infty$. Then the lamination $\l'_\infty=\lim_n g_n\l'$ is clearly contained in the subsurface $g_N R$, where $R$ is the subsurface defined before the statement of the theorem.\par
 
  It remains to show that $\l'_\infty$ has no closed one--sided leaf. This comes directly from the fact that the set $\ML+(S)$ of measured laminations without one--sided leaf is closed in $\ML(S)$, or equivalently that the set $\ML^-(S)$ of measured laminations with a one--sided leaf is open in $\ML(S)$ (see \textsection\ref{sec:decomposition}).
 
\emph{2) If $R$ is orientable.}
Then $\ML^+(R)=\ML(R)$ and, according to Theorem~1.2 of \cite{mirzakhani-lindenstrauss}, the orbit $\Mod(S-\g)\cdot \l'$ is dense in $\ML(R)$.
\end{proof}

\subsection*{Orbit closures in $\PML(S)$}
As well--known, for a finite type orientable surface with negative Euler characteristic, the action of the mapping class group on the projective space of measured laminations is minimal (see \cite{flp}). The following theorem shows that the mapping class group action has a very different topological dynamics in the nonorientable case.\par
Let $\mc{PG}^{[\mc R_\l]}$ denote the image of $\mc{G}^{[\mc R_\l]}$ in $\PML(S)$.

\begin{theorem}\label{thm:closures}
For any projective measured lamination $[\l]\in\PML(S)$ we have 
$$\overline{\Mod(S)\cdot [\l]}\subset \mc{PG}^{[\mc R_\l]}\cup\PML^+(S).$$
The inclusion is an equality if $S$ has genus one.
\end{theorem}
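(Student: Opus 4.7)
The plan is to separate the two possible asymptotic behaviours $[\l_\infty]\in\PML^+(S)$ versus $[\l_\infty]\in\PML^-(S)$, using the continuous, $\Mod(S)$--invariant, degree--one homogeneous function $w^-:\ML(S)\to\R_+^g$ of Proposition~\ref{pro:invariant-function} to control the rescaling that is unavoidable when passing from $\ML(S)$ to $\PML(S)$.

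For the inclusion, let $[\l_\infty]\in\overline{\Mod(S)\cdot[\l]}$. If $[\l_\infty]\in\PML^+(S)$ we are done, so assume $\l_\infty\in\ML^-(S)$ and pick $g_n\in\Mod(S)$, $\lambda_n>0$ with $\lambda_n g_n\l\to\l_\infty$ in $\ML(S)$. The $\Mod(S)$--invariance and the degree--one homogeneity of $w^-$ yield
\begin{equation*}
\lambda_n\,w^-(\l)\;=\;w^-(\lambda_n g_n\l)\;\longrightarrow\;w^-(\l_\infty).
\end{equation*}
Since $\l_\infty\in\ML^-(S)$, the top coordinate of $w^-(\l_\infty)$ is strictly positive; comparing top coordinates forces first $\l\in\ML^-(S)$ (otherwise the left side is identically zero) and then $\lambda_n\to\lambda_\infty:=w^-(\l_\infty)_1/w^-(\l)_1>0$. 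Hence $g_n\l\to\l_\infty/\lambda_\infty$ in $\ML(S)$, and Theorem~\ref{thm:closure-ML} places the limit in $\mc G^{[\mc R_\l]}$, so $[\l_\infty]\in\mc{PG}^{[\mc R_\l]}$. In the opposite case $\l\in\ML^+(S)$ the whole $\Mod(S)$--orbit of $\l$ lies in the closed $\Mod(S)$--invariant set $\ML^+(S)$, so its projective closure is contained in $\PML^+(S)$.

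For equality when $S$ has genus one, recall that every one--sided simple closed curve on such an $S$ is bounding, so its complement is orientable. Consequently, whenever $\l\in\ML^-(S)$ the multicurve $\g$ in the canonical decomposition $\l=\g+\l'$ contains a one--sided component, and the subsurface $R$ appearing in $\mc R_\l$ is a union of orientable pieces. Theorem~\ref{thm:closure-ML} then gives $\overline{\Mod(S)\cdot\l}=\mc G^{[\mc R_\l]}$, and projectivising yields the inclusion $\mc{PG}^{[\mc R_\l]}\subset\overline{\Mod(S)\cdot[\l]}$. For the remaining inclusion $\PML^+(S)\subset\overline{\Mod(S)\cdot[\l]}$, I would first place one point of $\PML^+(S)$ in the orbit closure via a Dehn twist: if $\alpha$ is a two--sided essential simple closed curve in $S-\g$ with $i(\alpha,\l)>0$, then $\tfrac{1}{n}T_\alpha^n\l\to i(\alpha,\l)\,\alpha$ in $\ML(S)$, so $[\alpha]\in\overline{\Mod(S)\cdot[\l]}$; this reduces matters to the case $\l\in\ML^+(S)$, where one needs $\overline{\Mod(S)\cdot[\l]}=\PML^+(S)$.

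The main obstacle is this last density statement. My approach would be to combine Lemma~\ref{lem:PML+PML}, which asserts $\overline{\ML^+(S,\Q)}=\ML^+(S)$ in the genus--one case, with the orientable minimality theorems of Hamenst\"adt~\cite{hamenstadt} and Lindenstrauss--Mirzakhani~\cite{mirzakhani-lindenstrauss} applied to the orientable cut surface $S-\g_0$ obtained by removing a one--sided simple closed curve $\g_0$: an arbitrary element of $\PML^+(S)$ is approximated by rational simple two--sided multicurves in $\PML^+(S,\Q)$, and each of these can be pushed by $\Mod(S-\g_0)\le\Mod(S)$ through its own dense orbit on the orientable subsurface, which by continuity transports back to density in $\PML^+(S)$. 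Making this transport rigorous --- in particular, checking that one can always arrange the target rational multicurve to be supported away from some one--sided curve fixed by the $g_n$'s --- is the delicate genus--one--specific step.
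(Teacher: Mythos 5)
Your argument for the inclusion $\overline{\Mod(S)\cdot[\l]}\subset\mc{PG}^{[\mc R_\l]}\cup\PML^+(S)$ is correct and takes a genuinely different --- and arguably cleaner --- route than the paper. Where the paper proves Lemma~\ref{lem:stabilize} by a subsequence argument tracking the individual atomic weights of $\l^-$ against $\l_\infty^-$, you exploit that $w^-$ is continuous, $\Mod(S)$--invariant, and homogeneous of degree one, so $\lambda_n w^-(\l)=w^-(\lambda_n g_n\l)\to w^-(\l_\infty)$. Since $\l_\infty\in\ML^-(S)$ forces $w^-(\l_\infty)_1>0$, this immediately yields $\l\in\ML^-(S)$, convergence of the scalars $\lambda_n$, and hence convergence of $(g_n\l)_n$ in $\ML(S)$ to a positive multiple of $\l_\infty$; Theorem~\ref{thm:closure-ML} then finishes. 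This buys you a short conceptual argument in place of an index--chasing one, at the mild cost of relying on the continuity of $w^-$ (Lemma~\ref{lem:weight-function}), which the paper establishes independently.

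For the equality in genus one, however, you have an acknowledged and genuine gap: you need $\PML^+(S)\subset\overline{\Mod(S)\cdot[\l]}$, and you stop short of a complete argument. The sketch you give --- approximate a target in $\PML^+(S)$ by rational two--sided multicurves via Lemma~\ref{lem:PML+PML}, then transport through dense $\Mod(S-\g_0)$--orbits on an orientable cut surface --- does not quite close, because a $\Mod(S-\g_0)$--orbit is at best dense in $\PML(S-\g_0)$, which is a priori a proper subset of $\PML^+(S)$, and because you still need to relate the starting curve produced by your Dehn--twist reduction to a curve supported in the \emph{same} cut subsurface as the target multicurve. The paper sidesteps all of this with Lemma~\ref{lem:PML+PML2}, which is purely a chain of Dehn--twist limits in the ambient surface: from $[\l]$ one reaches a two--sided curve $[\g]$ intersecting $\l$; from $[\g]$ one reaches a two--sided curve $[\g_1]$ intersecting both $\g$ and the target $\d$; and from $[\g_1]$ one reaches $[\d]$. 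Transitivity of the relation ``lies in the orbit closure of'' then yields $\PML^+(S,\Q)\subset\overline{\Mod(S)\cdot[\l]}$, and Lemma~\ref{lem:PML+PML} upgrades this to $\PML^+(S)$. No orientable minimality theorems are invoked in that step --- they are only used for the other half, $\mc{PG}^{[\mc R_\l]}\subset\overline{\Mod(S)\cdot[\l]}$, via the equality case of Theorem~\ref{thm:closure-ML}, which you did identify correctly.

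One further caution on your reduction step: you write ``if $\alpha$ is a two--sided essential simple closed curve in $S-\g$ with $i(\alpha,\l)>0$, then $\frac{1}{n}T_\alpha^n\l\to i(\alpha,\l)\,\alpha$.'' That is fine when such an $\alpha$ exists, but when $\l$ is purely atomic and supported on one--sided curves you need $\alpha$ to intersect $\l$ itself, not $\l'$; the paper's Lemma~\ref{lem:PML+PML2} is formulated precisely to produce a two--sided curve intersecting $\l$ in all but the one exceptional case (the bounding geodesic of $N_3$), which does not arise in genus one.
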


\begin{remark}
The theorem gives a complete classification of orbit closures in genus one.
\end{remark}

\begin{proof}[Proof of Theorem~\ref{thm:closures}]
The proof falls into two parts.\par
\emph{1) The inclusion.} Let $\l_\infty $ be a measured lamination such that $[\l_\infty]$ is in $\overline{\Mod(S)\cdot [\l]}$. We want to show that $[\l_\infty]\in \mc{PG}^{[\mc R_\l]}\cup\PML^+(S)$.
If $\l_\infty$ does not have a one--sided leaf, then $\l_\infty\in\ML^+(S)$ and we are done.
So we assume that $\l_\infty$ has a one--sided leaf. Then

\begin{lemma}\label{lem:stabilize}
There exists a number $\e>0$ and a sequence $(g_n)_n\subset \Mod(S)$ such that $(g_n\l)_n$ converges to $\e\l_\infty$ in $\ML(S)$.
\end{lemma}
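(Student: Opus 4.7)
The plan is to exploit the continuous $\Mod(S)$-invariant weight function $W(\mu) := \sum_i w^-_i(\mu)$ obtained from Proposition~\ref{pro:invariant-function}, which is positive and positively $1$-homogeneous precisely on $\ML^-(S)$. The point is that once both $\l$ and $\l_\infty$ are known to lie in $\ML^-(S)$, the invariance $W(g_n \l) = W(\l)$ converts projective convergence into genuine convergence in $\ML(S)$ after a single global rescaling.

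First I would fix a sequence $(g_n) \subset \Mod(S)$ such that $[g_n \l] \to [\l_\infty]$ in $\PML(S)$. Next I would show that $\l \in \ML^-(S)$. By Theorem~\ref{thm:nogueira}, $\ML^-(S)$ is open in $\ML(S)$, and it is saturated under both the $\Mod(S)$-action and multiplication by positive scalars, so $\PML^+(S)$ is closed and $\Mod(S)$-invariant in $\PML(S)$. If $\l$ lay in $\ML^+(S)$, then every $g_n \l$ would too, forcing the limit $[\l_\infty] \in \PML^+(S)$, contradicting the hypothesis that $\l_\infty$ has a one-sided leaf.

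Now with both $\l$ and $\l_\infty$ in $\ML^-(S)$, the function $W$ is positive at both. By continuity and $1$-homogeneity, the map $[\mu] \mapsto \mu/W(\mu)$ is a well-defined continuous section of the quotient $\ML^-(S) \to \PML^-(S)$. Applying this section to $[g_n\l] \to [\l_\infty]$ yields
$$\frac{g_n \l}{W(g_n \l)}\ \longrightarrow\ \frac{\l_\infty}{W(\l_\infty)} \quad\text{in}\quad \ML(S).$$
Because $W$ is $\Mod(S)$-invariant, $W(g_n\l) = W(\l)$, so multiplying through by $W(\l)$ and setting $\e := W(\l)/W(\l_\infty) > 0$ gives $g_n \l \to \e \l_\infty$, as required.

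There is no serious obstacle here; the substantive content is already packaged into Proposition~\ref{pro:invariant-function} (continuity and invariance of $w^-$) and the openness of $\ML^-(S)$ (Theorem~\ref{thm:nogueira}). The only delicate point is lifting the projective convergence to a genuine one in $\ML(S)$, which is precisely what $W$ achieves: it exhibits a rescaling factor $W(\l)/W(\l_\infty)$ that is a positive constant independent of $n$, producing the single uniform $\e$.
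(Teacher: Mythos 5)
Your proof is correct, and it takes a genuinely different route from the paper's. The paper's argument is hands-on: it starts from scalars $\e_n$ with $\e_n(g_n\l)\to\l_\infty$, observes that $g_n\l$ eventually lies in the open cone $\cone(\l_\infty^-)$, then tracks the individual weights $x_i$ of the one-sided components of $\l^-$ against those of $\l_\infty^-$; after passing to a subsequence (to rule out permutations of components of $\l^-$ under $g_n$), one reads off $\e_n\to y_i/x_i$ and concludes. Your argument packages the same underlying information --- the one-sided weights --- into the single $\Mod(S)$-invariant, $1$-homogeneous, continuous function $W=\sum_i w^-_i$, whose positivity locus is exactly $\ML^-(S)$. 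The section $[\mu]\mapsto \mu/W(\mu)$ of the projectivization over $\PML^-(S)$ then does all the work at once, and the invariance $W(g_n\l)=W(\l)$ hands you the constant $\e=W(\l)/W(\l_\infty)$ directly. Two small advantages of your route: no subsequence is needed (summing over components makes $W$ permutation-symmetric, so one never has to fix the way $g_n$ matches up the $\g_i$), and the deduction that $\l\in\ML^-(S)$ is made explicit (in the paper this is implicit in $g_n\l\in\cone(\l_\infty^-)$). The paper's version is more elementary in that it never needs Proposition~\ref{pro:invariant-function}; yours is cleaner and makes better structural use of the invariant it already proved.
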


\begin{proof}[Proof of Lemma~\ref{lem:stabilize}]
There exists two sequences $(\e_n)_n\subset \R_+^\ast$ and $(g_n)_n\subset \Mod(S)$ such that $\e_n(g_n\l)$ tends to $\l_\infty$ in $\ML(S)$ as $n$ tends to infinity. Clearly $\l_\infty$ belongs to the open ball $\cone(\l_\infty^-)$, therefore $g_n\l\in\cone(\l_\infty^-)$ for $n$ big enough. In particular, for $n$ big enough, any component of $\l^-_\infty$ is a component of $g_n\l^-$.\par
 Let us write $\l^-=x_1\g_1+\ldots+x_m\g_m$ where $\g_1,\ldots,\g_m$ are disjoint one--sided geodesics and $x_1,\ldots,x_m>0$. Up to the choice of a subsequence of $(g_n)_n$ we have $\l_\infty^-=y_1(g_n\g_1)+\ldots+y_l (g_n\g_m)$ for $n$ big enough and for some $y_1,\ldots,y_m\geq 0$  not all equal to zero and independent of $n$. The role of the subsequence is to avoid any permutation of the $\g_i$'s. From $\lim_n \e_n (g_n\l)=\l_\infty$ we deduce $\lim_n \e_n x_i=y_i$ and consequently $\lim_n \e_n=y_i/x_i$ for any $i=1,\ldots,m$. We conclude that $(g_n\l)_n$ converges to $\e\l_\infty $ with $\e=x_i/y_i$ ($i=1,\ldots,m$).
\end{proof}
 
 As a direct consequence of the above lemma we have $\e\l_\infty\in\overline{\Mod(S)\cdot\l}$.  Then Theorem~\ref{thm:closure-ML} implies $\e\l_\infty\in\mc G^{[\mc R_\l]}$ and $[\l_\infty]\in\mc{PG}^{[\mc R_\l]}$. This prove the first part of the statement.\par
 
 \emph{2) The equality.} From Lemmas~\ref{lem:PML+PML} and \ref{lem:PML+PML2} we have $\PML^+(S)\subset\overline{\Mod(S)\cdot[\l]}$. From the equality case of Theorem~\ref{thm:closure-ML} we have $\mc{PG}^{[\mc R_\l]}\subset\overline{\Mod(S)\cdot[\l]}$.
 This concludes the proof of the theorem.
\end{proof}

\begin{lemma}\label{lem:PML+PML}
If $S$ has genus one then $\PML^+(S)$ is the closure of $\PML^+(S,\Q)$.
\end{lemma}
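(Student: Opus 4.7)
To prove the lemma I would first note that in a genus-one nonorientable surface $S=N_{1,r}$ every one-sided simple closed curve is bounding, and any two of them must intersect because the complement of any such curve is orientable and hence contains no other one-sided simple closed curve. Consequently any simple integral multicurve on $S$ has at most one one-sided component, and every rational two-sided multicurve is supported in the complement of some one-sided simple closed curve, so
\[
\ML^+(S,\Q) \;=\; \bigcup_{\g \in \simple^-}\ML(S-\g,\Q).
\]
Since each $S-\g$ is an orientable genus-zero surface, the standard density of rational multicurves gives $\overline{\ML(S-\g,\Q)}=\ML(S-\g)\subset\ML^+(S)$, hence the easy inclusion $\overline{\PML^+(S,\Q)} \supseteq \bigcup_{\g\in\simple^-}\PML(S-\g)$.

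The heart of the argument is the reverse inclusion $\PML^+(S)\subseteq\overline{\bigcup_\g \PML(S-\g)}$. By Theorem~\ref{thm:nogueira} the set $\PML^-(S)$ has full Thurston measure and is therefore dense; since in genus one the open balls $\pi(\cone(\g))$ form a packing of $\PML^-(S)$, any $[\l]\in\PML^+(S)$ is a limit of a sequence $[\mu_n]\to[\l]$ with each $[\mu_n]\in\pi(\cone(\g_n))$ for a well-defined $\g_n\in\simple^-$. After extracting a subsequence, either the $\g_n$ are eventually a fixed $\g$, in which case $[\l]\in\overline{\pi(\cone(\g))}\setminus\pi(\cone(\g))=\PML(S-\g)\subset\overline{\PML(S-\g,\Q)}$, or the $\g_n$ are pairwise distinct. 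In the latter case I would push forward $\muth|_{B_X(1)}$ to a finite measure $\nu$ on $\PML(S)$ and use the product decomposition~\eqref{eq:measure} together with $\dim\ML(S-\g)=\dim\ML(S)-1$ to obtain
\[
\nu(\pi(\cone(\g))) \;=\; \frac{b_{X-\g}(1)}{d\cdot\ell_X(\g)},\qquad d=\dim\ML(S).
\]
The packing property gives $\sum_\g \nu(\pi(\cone(\g)))<\infty$, so $\nu(\pi(\cone(\g_n)))\to 0$; together with the parameterization of $\pi(\cone(\g))$ as a suspension over its boundary sphere $\PML(S-\g)$ with apex $[\g]$, this forces $\diam(\pi(\cone(\g_n)))\to 0$. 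Hence any $[\nu_n]\in\PML(S-\g_n,\Q)$ lies within $d([\l],[\mu_n])+\diam(\pi(\cone(\g_n)))\to 0$ of $[\l]$, and density of rational multicurves in each orientable $\PML(S-\g_n)$ yields the required approximating sequence in $\PML^+(S,\Q)$.

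The main difficulty in this plan is justifying the implication ``small Thurston measure forces small diameter'' for the cones in the second case. Since the cones are not a priori round, one cannot invoke a general isoperimetric inequality; instead I would exploit the explicit structure of $\pi(\cone(\g))$ as a suspension over $\PML(S-\g)$ with apex $[\g]$, combined with the transitivity of $\Mod(S)$ on $\simple^-$ in genus one (which allows a uniform comparison across all one-sided curves), to convert the quantitative bound on $b_{X-\g}(1)/(d\cdot\ell_X(\g))$ into a bound on the metric diameter of the cone.
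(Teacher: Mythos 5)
The gap you flag yourself --- deducing $\diam\,\pi(\cone(\g_n))\to 0$ from $\nu(\pi(\cone(\g_n)))\to 0$ --- is genuine, and the repair you sketch does not close it. A set of small Thurston measure in $\PML(S)$ may still have large diameter unless its shape is controlled, and the mechanism you invoke, transitivity of $\Mod(S)$ on $\simple^-$, does not supply such control: $\Mod(S)$ does not act by isometries, nor even by uniformly bi-Lipschitz maps, on $\PML(S)$ with respect to the metric you are implicitly using (coming from a fixed train-track chart or from the embedding into $\proj(\R^\simple)$), so a diameter bound for one cone does not transfer to its $\Mod(S)$-translates. Worse, the measure $\nu=\pi_\ast\bigl(\muth|_{B_X(1)}\bigr)$ is itself not $\Mod(S)$-invariant (the ball $B_X(1)$ depends on $X$), which is a second source of non-uniformity. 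As written, your packing-plus-finite-mass argument proves that the measures of the cones tend to zero, not that the diameters do.

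The paper's proof avoids diameters altogether and gives a pointwise estimate in place of your global one. Fix a convex neighborhood $U$ of $\l$ inside a single train-track chart identified with an open subset of $\R^m$, and choose any sequence of rational simple multicurves $\g_n\to\l$. When $\g_n$ has a one-sided component $\a_n$, we have $\g_n\in\cone(\a_n)$, and for $n$ large the Euclidean segment $[\g_n,\l]$ lies in $U$. Since $\cone(\a_n)$ is open, $\g_n\in\cone(\a_n)$ and $\l\notin\cone(\a_n)$, the segment meets the frontier $\ML(S-\a_n)$ of $\cone(\a_n)$ at a point $p_n$; because $p_n$ lies on the segment, one gets $\|\l-p_n\|\le\|\l-\g_n\|$ with no reference to the extent of the cone. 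As $S-\a_n$ is orientable in genus one, $p_n$ is approximable by rational two-sided multicurves $\d_n$ with $\|\l-\d_n\|\le\|\l-\g_n\|+1/n\to 0$. This segment comparison is exactly the ingredient your second case is missing; the rest of your outline (at most one one-sided component in genus one, every two-sided rational multicurve supported in some $S-\g$, the eventually constant case landing on $\PML(S-\g)$) is sound and essentially matches the paper.
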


\begin{proof}
Let $\l$ be a measured lamination without one--sided leaf: $\l\in\ML^+(S)$. We show that $\l$ is a limit of elements in $\ML^+(S,\Q)$. To do this we consider a sequence $(\g_n)_n\subset\ML(S,\Q)$ that converges to $\l$ in $\ML(S)$. If infinitely many $\g_n$ have no one--sided leaf, then we are done. So, up to the choice of a subsequence, we assume that each $\g_n$ has a one--sided leaf $\a_n$. Thus $\g_n$ belongs to the ball $\cone(\a_n)$. Our problem is to construct from $(\g_n)_n$ a sequence of \emph{two--sided} simple multicurves $(\d_n)_n\subset\ML^+(S,\Q)$ which has same limit as $\g_n$.\par
 Let $U\subset \R^m$ be a convex set which is a neighborhood of $\l$ in some train--track chart. For $n$ big enough we have $\g_n\in U$, and the segment $[\g_n,\l]$ is contained in $U$. It intersects the boundary $\partial\cone(\a_n)=\ML(S-\a_n)$ in a point $p_n$. This point corresponds to a lamination contained in $S-\a_n$. Note that $S-\a_n$ is a sphere with some punctures and holes, because $S$ has genus one. As $S-\a_n$ is \emph{orientable} there exists a \emph{two--sided} rational simple multicurve $\d_n$ on $S-\a_n$ which is at distance at most $1/n$ from $p_n$ in $U\subset \R^m$ with respect to the canonical Euclidean norm of $\R^m$. We have $\|\l-\d_n \|\leq \|\l-p_n\|+1/n\leq \|\l-\g_n \|+1/n$, and we conclude that the sequence of rational two--sided simple multicurves $(\d_n)_n$ converges to $\l$ in $\ML(S)$.
\end{proof}

\begin{lemma}\label{lem:PML+PML2}
For any $[\l]\in\PML(S)$ the orbit closure $\overline{\Mod(S)\cdot [\l]}$ contains $\PML^+(S,\Q)$, except if $[\l]$ is the projective class of the unique bounding one--sided simple closed geodesic of the connected sum of three projective planes.
\end{lemma}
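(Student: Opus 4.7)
The plan is to deduce $\PML^+(S,\Q)\subset\overline{\Mod(S)\cdot[\l]}$ in two steps: first, produce a two-sided simple closed curve $\b_0$ with $[\b_0]\in\overline{\Mod(S)\cdot[\l]}$; second, approximate any target $[\d]=[a_1\a_1+\ldots+a_n\a_n]\in\PML^+(S,\Q)$ starting from $[\b_0]$ by a suitable multi-twist.

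For the first step, observe that if there exists $\b\in\simple^+$ with $i(\l,\b)>0$, then in a train-track chart around $\l$ one has $T_\b^k(\l)=\l + k\,i(\l,\b)\,\b + O(1)$ as $k\to\infty$, so projectively $[T_\b^k(\l)]\to[\b]$ and we may take $\b_0=\b$. The obstruction is the case $i(\l,\b)=0$ for every $\b\in\simple^+$. Decomposing $\l=\l^++\l^-$ as in \textsection\ref{sec:decomposition}, I would argue that $\l^+$ must vanish (a nonzero $\l^+\in\ML^+(S)$ meets some two-sided simple closed curve positively by a standard filling argument) and then that $\l^-$, a weighted sum of one-sided simple closed geodesics, has positive intersection with some two-sided simple closed curve except in the case of the unique bounding one-sided geodesic $\g$ of $N_3$. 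For this last point, by \textsection\ref{sec:complexity}, every two-sided simple closed curve of $N_3$ is isotopic into $\T=N_3-\g$ and hence disjoint from $\g$; for any other one-sided geodesic $\d$ in a nonorientable $S$, one finds a two-sided simple closed curve meeting $\d$ transversely using the structure of a M\"obius neighborhood of $\d$ together with the richer topology of $S-\d$.

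For the second step, it suffices by $\Mod(S)$-invariance of the orbit closure to prove $\PML^+(S,\Q)\subset\overline{\Mod(S)\cdot[\b_0]}$ for a given $\b_0\in\simple^+$. Given $\d=\sum_j a_j\a_j$, find $g\in\Mod(S)$ with $i(g\b_0,\a_j)>0$ for every $j$ by a change of coordinates argument, and form $\phi_{\underline m}=T_{\a_1}^{m_1}\cdots T_{\a_n}^{m_n}$. A standard computation yields $\phi_{\underline m}(g\b_0)=g\b_0+\sum_j m_j\,i(g\b_0,\a_j)\,\a_j$ in the relevant chart. Since the $a_j$ are rational and the intersections are positive integers, one chooses sequences of positive integers $m_j^{(k)}$ whose ratios approach $(a_j/a_1)(i(g\b_0,\a_1)/i(g\b_0,\a_j))$, obtaining $[\phi_{\underline m^{(k)}}(g\b_0)]\to[\d]$.

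The main obstacle is the change of coordinates in the second step, namely finding $g$ such that $i(g\b_0,\a_j)>0$ for all $j$ simultaneously: the $\Mod(S)$-orbits in $\simple^+$ split into several topological types in the nonorientable setting and the classical change of coordinates needs to be adapted. A cleaner alternative is an induction on $n$: use Dehn twists to bring $[\a_1]$ into $\overline{\Mod(S)\cdot[\b_0]}$, then apply the same machinery (now with $\b_0$ replaced by $\a_1$) inside the subsurface where $\a_2,\ldots,\a_n$ live to build up $[a_1\a_1+\ldots+a_n\a_n]$ component by component.
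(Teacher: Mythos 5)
Your overall strategy (produce a two--sided curve in the orbit closure by twisting, then reach any rational target in $\PML^+(S,\Q)$ by a multi--twist) is the same as the paper's, and your first step is essentially the paper's first step, with a bit more detail spelled out on why the exceptional case is exactly the bounding one--sided geodesic of $N_3$. The difference is in the second step, and here the paper is cleaner and sidesteps exactly the obstacle you flag.

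You require a mapping class $g$ with $i(g\b_0,\a_j)>0$ for all $j$, i.e.\ a curve in the $\Mod(S)$--orbit of $\b_0$ meeting every component of $\d$. This is an unnecessary constraint: the paper instead picks \emph{any} two--sided simple closed geodesic $\g_1$ that intersects both the already-obtained curve $\g$ and every leaf of $\d$ --- $\g_1$ need not lie in the orbit of $\g$ --- and then uses transitivity of the ``belongs to the orbit closure of'' relation. Twisting $\g$ along $\g_1$ gives $[\g_1]\in\overline{\Mod(S)\cdot[\g]}$; the multi--twist $T_{\a_1}^{m_1}\cdots T_{\a_n}^{m_n}$ applied to $\g_1$ (which works because $i(\g_1,\a_j)>0$) gives $[\d]\in\overline{\Mod(S)\cdot[\g_1]}$; then $\overline{\Mod(S)\cdot[\g_1]}\subset\overline{\Mod(S)\cdot[\g]}\subset\overline{\Mod(S)\cdot[\l]}$ by the general topological fact that if $y\in\overline{G\cdot x}$ then $\overline{G\cdot y}\subset\overline{G\cdot x}$. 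Existence of such a $\g_1$ is a mild topological statement and does not require keeping track of orbit types. Your alternative (induction on the number of components) would likely also work, but as described it is vaguer and still needs a mechanism to go from $[\a_1]$ to $[a_1\a_1+a_2\a_2]$, which again is most cleanly handled by an auxiliary curve plus transitivity.
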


\begin{remark}\label{rem:PML+PML2}
If $S$ is the projective plane with two boundary components then $\PML^+(S)$ is empty. If $S$ is the Klein bottle with one boundary component then $\PML^+(S)$ has only one point that represents the unique two--sided simple closed geodesic.
If $S$ is the connected sum of three projective planes, then the bounding one--sided simple closed geodesic is the unique geodesic lamination that does not intersect any two--sided simple closed geodesic.
\end{remark}

\begin{proof}
We assume that $S$ is not the projective plane with two holes or the Klein bottle with one hole (in both cases the lemma is trivially true). Let us consider a measured lamination $\l\in\ML(S)$ which is not the bounding geodesic of the connected sum of three real projective planes. Then there exists a two--sided simple closed geodesic $\g$ that intersects $\l$. Using the Dehn twist along $\g$ we find that $[\g]$ is in the closure of $\Mod(S)\cdot [\l]$ (see \cite[Proposition 3.4]{farb}).\par
 Given $\d\in \PML^+(S,\Q)$ there exists a two--sided simple closed geodesic $\g_1$ that intersects $\g$ and each leaf of $\d$. Using Dehn twists, we first show that $[\g_1]$ belongs to the closure of $\Mod(S)\cdot [\g]$, and then that $[\d]$ belongs to the closure of $\Mod(S)\cdot[\g_1]$. We conclude by transitivity of the relation \emph{to belong to the orbit closure of}.
\end{proof}

\subsection*{Topological transitivity in $\PML(S)$}
We recall that an action is \emph{topologically transitive} if it has a dense orbit.

\begin{proposition}\label{pro:transitivity}
The action of $\Mod(S)$ on $\PML(S)$ is topologically transitive if and only if $S$ has genus one.
\end{proposition}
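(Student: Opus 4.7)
The ``only if'' direction is already in Proposition~\ref{pro:consequences}: when $S$ has genus at least $2$, the continuous $\Mod(S)$-invariant function induced on $\PML^-(S)$ by $w^-$ is nonconstant (Remark~\ref{rem:invariant-function}), so no $\Mod(S)$-orbit can be dense in $\PML(S)$. I would simply cite this.

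For the ``if'' direction, assume $S$ has genus one. My plan is to exhibit a single $[\l]\in\PML(S)$ with dense $\Mod(S)$-orbit and conclude via the equality case of Theorem~\ref{thm:closures}. The degenerate case $S = N_{1,2}$ I would dispatch in one line, since $\PML(S)$ consists of just two points and $\Mod(S)$ exchanges them. In all other genus-one cases $\chi(S)\leq -2$, and I would pick any one-sided simple closed curve $\g\subset S$ together with any $\l' \in \ML(S-\g)$ without compact leaves (for instance any minimal measured lamination on the orientable genus-zero surface $S - \g$), and set $\l = \g + \l'$. Then the complete pair is $\mc R_\l = (S - \g, \g)$ with $R = S - \g$ orientable.

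Theorem~\ref{thm:closures} then yields $\overline{\Mod(S)\cdot[\l]} = \mc{PG}^{[\mc R_\l]} \cup \PML^+(S)$, and the task reduces to showing $\mc{PG}^{[\mc R_\l]} \supseteq \PML^-(S)$. Two genus-one facts combine to give this. First, orientability of $R$ gives $\ML^+(R) = \ML(R)$, so $\mc G^{\mc R_\l} = \g + \ML(S-\g) \cup \{0\}$, and its image in $\PML(S)$ coincides with the projective image of the whole ball $\cone(\g) = \R_+^\ast \g + \ML(S-\g) \cup \{0\}$, since every element of the latter is a positive scalar multiple of an element of the former. Second, every one-sided simple closed curve on $S$ is bounding, so $\Mod(S)\cdot\g = \simple^-$. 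Taking the $\Mod(S)$-orbit therefore yields $\mc{PG}^{[\mc R_\l]} = \PML^-(S)$, written as the union, over $\g' \in \simple^-$, of the projectivizations of the balls $\cone(\g')$. Adding $\PML^+(S)$ gives $\PML(S)$.

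There is no serious obstacle: all the real content is carried by Theorem~\ref{thm:closures}, and the remaining steps (the rescaling inside $\cone(\g)$, transitivity on $\simple^-$, and orientability of the complement) are immediate for genus-one surfaces.
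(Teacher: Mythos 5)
Your proof is correct but takes a different route from the paper's. The paper's own proof does not invoke Theorem~\ref{thm:closures}: it uses Lemma~\ref{lem:top-transitive}, which shows via Masur's ergodic theorem that the stabilizer of a one-sided curve $\g$ with $S-\g$ orientable has a dense orbit in the projectivization of $\cone(\g)$, and then concludes directly from the transitivity of $\Mod(S)$ on $\simple^-$ in genus one together with the density of $\PML^-(S)$ in $\PML(S)$. You route instead through the equality case of Theorem~\ref{thm:closures}, which is built on Lemmas~\ref{lem:PML+PML} and~\ref{lem:PML+PML2} and the Lindenstrauss--Mirzakhani orbit classification for the orientable complement --- heavier machinery for the same conclusion, but with a bonus: your argument characterizes exactly which orbits are dense, and the Remark immediately following Proposition~\ref{pro:transitivity} in the paper records precisely this consequence of Theorem~\ref{thm:closures}, so your approach is anticipated there. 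Your isolation of the degenerate case $N_{1,2}$ is the right move, since there $\ML(S-\g)=\{0\}$ and there is no $\l'$ to choose; your observation that $\chi(S)\leq -2$ makes $S-\g$ a sphere with at least four holes is the correct reason the construction goes through otherwise. The only-if direction is, as you say, just a citation of Proposition~\ref{pro:consequences}.
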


\begin{remark}
A direct application of Theorem~\ref{thm:closures} shows that, if $S$ has genus one, then an orbit $\Mod(S)\cdot[\l]$ is dense in $\PML(S)$ if and only if $[\l]=[\g+\l']$ where $\g$ is a one--sided geodesic and $\l'$ has no closed leaf. 
\end{remark}

\begin{proof} We distinguish the two cases:\par
\emph{1) $S$ has genus one.}  We recall that $\ML^-(S)=\cup_\g \cone(\g)$ where $\g$ runs over the set of one--sided simple closed geodesics. From the hypothesis that $S$ has genus one we deduce that $\Mod(S)$ acts transitively on the set of components $\cone(\g)$ (there is only one topological type of one--sided simple closed geodesic). Using Lemma~\ref{lem:top-transitive} we conclude that the $\Mod(S)$--action on $\PML(S)$ is topologically transitive.\par

\emph{2) $S$ has genus at least two.} See Proposition~\ref{pro:consequences}.  
\end{proof}

\begin{lemma}\label{lem:top-transitive}
Let $\g$ be a simple closed geodesic such that $S-\g$ is orientable. Then the action of $\Mod(S)$ on the projective ball $\mc{ PB}(\g)$ is topologically transitive.
\end{lemma}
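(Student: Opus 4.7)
The strategy is to reduce the topological transitivity of the $\Mod(S)$-action on the nonorientable ball $\mc{PB}(\g)$ to the classical theorem of Masur on orientable surfaces, by identifying $\mc{PB}(\g)$ with the space of measured laminations of $S-\g$.

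First I would construct the identification. Every element of $\cone(\g)$ admits a unique decomposition $t\g+\l$ with $t>0$ and $\l\in\ML(S-\g)\cup\{0\}$, so projectivizing and normalizing the weight on $\g$ to $1$ yields a homeomorphism
\[
\Phi:\mc{PB}(\g)\longrightarrow \ML(S-\g)\cup\{0\},\qquad [\g+\l]\longmapsto \l.
\]

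Next I would analyze how the mapping class group acts under this identification. Only the stabilizer $H:=\mathrm{Stab}_{\Mod(S)}([\g])$ preserves $\mc{PB}(\g)$, and $\Phi$ intertwines the $H$-action with the natural action on $\ML(S-\g)\cup\{0\}$ of the image of the restriction homomorphism $\rho:H\to\Mod(S-\g)$. Any Dehn twist along a simple closed curve supported in the interior of $S-\g$ extends by the identity to a mapping class of $S$ in $H$, and $\rho$ sends it to the same Dehn twist. By Lickorish's theorem such Dehn twists generate a finite-index subgroup of $\Mod(S-\g)$ (the pure mapping class group), so $\rho(H)$ is of finite index.

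Finally, since $S-\g$ is orientable, Masur's theorem \cite{masur} tells us that $\Mod(S-\g)$ acts ergodically on $\ML(S-\g)$ with respect to the Thurston measure, which has full support; this forces the existence of dense $\Mod(S-\g)$-orbits, and a standard Baire-category argument shows that a finite-index subgroup inherits a dense orbit (alternatively one may invoke the Lindenstrauss--Mirzakhani classification of orbit closures for the pure mapping class group). Pulling the resulting dense $\rho(H)$-orbit back through $\Phi$ provides a dense $\Mod(S)$-orbit in $\mc{PB}(\g)$, proving topological transitivity.

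The main obstacle is the precise analysis of $\rho$ in the second step: one must check that the restriction map has image sufficiently large in $\Mod(S-\g)$ for the orientable theorem to apply. This is immediate for Dehn twists supported in the interior of $S-\g$, but when $\g$ is one-sided, cutting produces a boundary component carrying an antipodal involution that must be accounted for; fortunately this extra structure commutes with any Dehn twist supported away from $\g$, so the identification of a finite-index subgroup inside $\rho(H)$ goes through without difficulty.
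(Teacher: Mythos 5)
Your approach is essentially the same as the paper's: identify $\mc{PB}(\g)$ with $\ML(S-\g)\cup\{0\}$ via the slice $\g+\ML(S-\g)\subset\cone(\g)$, and then invoke Masur's ergodicity theorem for the orientable surface $S-\g$. You are actually more explicit than the paper about the group-theoretic content, namely that the stabilizer $H$ of $[\g]$ acts on $\mc{PB}(\g)$ through the restriction homomorphism $\rho:H\to\Mod(S-\g)$, and that one must know $\rho(H)$ is large enough; this is a worthwhile clarification that the paper glosses over.

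The step that closes your argument, however, is not correct. It is false in general that a finite-index subgroup of a topologically transitive group action inherits a dense orbit: for instance $\Z$ acts minimally on $\{0,1\}\times S^1$ by $(\epsilon,z)\mapsto(1-\epsilon,e^{2\pi i\alpha}z)$ with $\alpha$ irrational, yet the index-two subgroup $2\Z$ preserves each copy of $S^1$. A Baire-category argument only yields that some $H$-orbit closure has nonempty interior, not that it is all of the space, and ergodicity likewise does not automatically pass to finite-index subgroups of an ergodic action.

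Fortunately the finite-index detour is unnecessary. With the paper's conventions $\Mod(S-\g)$ preserves each boundary component, and since $S-\g$ is orientable it is generated by Dehn twists along simple closed curves in the interior of $S-\g$. As you observe, each such twist extends over the M\"obius strip obtained by quotienting $\bar\g$ by its antipodal involution, so $\rho(H)$ contains a generating set and hence $\rho(H)=\Mod(S-\g)$; Masur's theorem then applies directly without any loss. Your alternative route through the Lindenstrauss--Mirzakhani classification for subgroups containing all Dehn twists would also serve, but the Baire argument as written should be dropped.
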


\begin{proof}
Let us consider the subset $\g+\ML(S-\g)\subset \cone(\g)$.
The action of $\Mod(S-\g)$ on $\ML(S-\g)$ is ergodic with respect to the Thurston measure on $\ML(S-\g)$ (Masur \cite{masur}), in particular it is topologically transitive (see also \cite{mirzakhani-lindenstrauss} for more precise results). We conclude as the projection $\ML(S)\rightarrow\PML(S)$ induces a $\Mod(S)$-invariant homeomorphism between $\g+\ML(S-\g)$ and $\mc{PB}(\g)$.
\end{proof}

\subsection*{Minimal invariant in $\PML(S)$}
We have studied in \textsection\ref{sec:complexity} the case of surfaces with $\chi(S)=-1$, so we assume $\chi(S)<-1$.
 Then the above lemma says that \emph{$\overline{\PML^+(S,\Q)}$ is the unique minimal $\Mod(S)$--invariant closed subset of $\PML(S)$}. This minimal invariant can alternatively be described in terms of pseudo--Anosov laminations (Papadopoulos and McCarthy (\cite{papadopoulos}). Indeed, the closure of the set of pseudo--Anosov laminations is $\Mod(S)$--invariant and contained in the closure of any $\Mod(S)$--orbit. The only problem is to show that this set is nonempty (this happens when $\chi(S)=-1$). Thurston explained how to construct pseudo--Anosov mapping classes provided that $S$ has a pair of two--sided simple closed geodesics that fill up $S$ (\cite[Theorem~7]{thurston}, see also \cite[Theorem~14.1]{farb}). Such a pair does not exist when $\chi(S)=-1$, but it does exist when $\chi(S)<-1$ (Lemma~\ref{lem:filling}).\par

\subsection*{Conjectures}
In view of our partial classifications it is natural to formulate the following conjectures:
 
\begin{conjecture}
We have $\PML^+(S)=\overline{\PML^+(S,\Q)}$.
\end{conjecture}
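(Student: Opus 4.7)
The inclusion $\overline{\PML^+(S,\Q)} \subset \PML^+(S)$ is immediate, since $\PML^+(S)$ is the complement in $\PML(S)$ of the open set $\PML^-(S) = \bigcup_{\gamma\in\simple^-}\cone(\gamma)$ and is therefore closed. For the substance of the conjecture---the reverse inclusion---my first attempt would be a direct approximation: fix $[\lambda]\in\PML^+(S)$, use density of rational points to pick $\mu_n\in\ML(S,\Q)$ with $\mu_n\to\lambda$, and decompose $\mu_n=\mu_n^++\mu_n^-$ into its two-sided and one-sided parts. Since $\mu_n^+\in\ML^+(S,\Q)$, the conjecture would follow at once if I could show that, after passing to a subsequence, $\mu_n^+\to\lambda$ in $\ML(S)$, or equivalently $\mu_n^-\to 0$.

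The continuity of the function $w^-$ (Proposition~\ref{pro:invariant-function}) together with $w^-(\lambda)=0$ gives $w^-(\mu_n)\to 0$, so the individual weights $a_{i,n}$ of the one-sided components $\alpha_{i,n}$ of $\mu_n$ tend to zero. This is exactly the mechanism that succeeds in genus one (Lemma~\ref{lem:PML+PML}) thanks to the orientability of $S-\alpha$ for every one-sided $\alpha$, which lets one replace the excursion of $\mu_n$ into each ball $\cone(\alpha_{i,n})$ by a two-sided rational approximant coming from $S-\alpha_{i,n}$. The main obstacle in higher genus will be that the curves $\alpha_{i,n}$ may have hyperbolic length growing like $1/a_{i,n}$, so that $\mu_n^-$ can accumulate on a nontrivial measured lamination $\nu\in\ML^+(S)$ (a weak limit of weighted one-sided simple closed geodesics whose supports spread out over $S$); then $\mu_n^+\to\lambda-\nu$ and projectively we would only recover $[\lambda-\nu]\neq[\lambda]$.

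I see two strategies for removing this obstacle. The first is an iterative correction: apply the conjecture to $\nu$ to produce two-sided rationals $\sigma_k\to\nu$, so that $\mu_n^++\sigma_{k(n)}\in\ML^+(S,\Q)$ converges to $\lambda$. To avoid circularity I would need a legitimate induction on some complexity of $\lambda$---for example the dimension of the smallest subsurface carrying $\lambda$, with the genus-one case (Lemma~\ref{lem:PML+PML}) and the explicit low-complexity descriptions of \textsection\ref{sec:complexity} as base cases---together with an argument that the residual $\nu$ lands in a strictly simpler subspace at each step. The second is to reformulate the conjecture as an orbit-closure statement: $\Mod(S)$ preserves two-sidedness, and Lemma~\ref{lem:PML+PML2} already guarantees that the orbit closure of any two-sided simple closed curve $\delta\in\simple^+$ contains $\PML^+(S,\Q)$, so the conjecture reduces to showing $\overline{\Mod(S)\cdot[\delta]}=\PML^+(S)$. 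For this I would try to exhibit a dense family of pseudo-Anosov classes $\phi\in\Mod(S)$ produced by Thurston's construction from pairs of two-sided filling curves (available when $\chi(S)<-1$ by Lemma~\ref{lem:filling}), using $\phi^n(\delta)\to\lambda^s(\phi)$ projectively to populate the orbit closure.

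The hard part, common to both routes, will be showing that the pseudo-Anosov stable laminations $\{\lambda^s(\phi)\}$ are dense in $\PML^+(S)$---the genuine nonorientable analogue of Thurston's density theorem. As the discussion around Theorem~\ref{thm:closures} and Conjecture~\ref{conj:measure} already suggests, this seems inseparable from the full classification of $\Mod(S)$-orbit closures in $\PML^+(S)$ beyond genus one, and I expect the conjecture above to be settled simultaneously with, or as a consequence of, such a classification.
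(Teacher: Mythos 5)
The statement you were asked to prove is labelled a \emph{conjecture} in the paper, and the paper gives no proof of it: the only case that is settled is genus one (Lemma~\ref{lem:PML+PML}), and the authors explicitly present the general statement as an open problem whose resolution would be ``a first step'' toward a full classification of orbit closures. So there is nothing in the paper for your attempt to diverge from, and you correctly refrain from claiming a proof.

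Your analysis itself is sound and tracks the paper's own logic closely. The easy inclusion $\overline{\PML^+(S,\Q)}\subset\PML^+(S)$ is correct for the reason you give (openness of $\ML^-(S)=\bigcup_{\gamma\in\simple^-}\cone(\gamma)$). You also identify precisely why the genus-one argument (replace $\mu_n$ by a two-sided rational approximant in $\ML(S-\alpha_n)$) breaks down in higher genus: $S-\alpha$ need not be orientable, and even though $w^-(\mu_n)\to 0$ by Proposition~\ref{pro:invariant-function}, the weighted one-sided part $\mu_n^-$ can accumulate on a nonzero $\nu\in\ML^+(S)$, since $\nu\leq\lambda$ as geodesic currents forces $i(\nu,\nu)=0$ but not $\nu=0$. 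Your second strategy --- reducing the conjecture via Lemma~\ref{lem:PML+PML2} to $\overline{\Mod(S)\cdot[\delta]}\supset\PML^+(S)$ for a two-sided $\delta$, and then to density of pseudo-Anosov stable laminations in $\PML^+(S)$ --- is consistent with the discussion surrounding Theorem~\ref{thm:closures} and the minimal invariant set in \textsection\ref{sec:orbit-closure}, and your closing remark that this is entangled with the full classification of orbit closures is exactly the paper's own position. One small caution on your first strategy: the ``iterative correction'' producing $\mu_n^++\sigma_{k(n)}$ only yields an element of $\ML(S,\Q)$ if $\sigma_{k(n)}$ and $\mu_n^+$ can be chosen disjoint, which is not automatic and would need an additional argument; and the induction on the complexity of $\lambda$ would have to control the residual $\nu$, which may be supported on a subsurface of no smaller complexity than $\lambda$ itself, so the descent is not obviously well-founded. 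That said, as a survey of the obstacles rather than a proof, your proposal is accurate.
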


We have seen that this is true when $S$ has genus one (Lemma~\ref{lem:PML+PML}). It would be a first step in the direction of the following more general conjecture:

\begin{conjecture}
The inclusions in Theorems~\ref{thm:closure-ML} and \ref{thm:closures} are equalities, except if $\l$ is the bounding geodesic of the genus three closed surface.
\end{conjecture}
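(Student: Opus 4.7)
The plan is to prove both equalities simultaneously by induction on the complexity of $S$ (measured, say, by $-\chi(S)$), using as the base case the genus one situation handled by Theorem~\ref{thm:closures}, together with the $\chi(S) = -1$ cases discussed in Remark~\ref{rem:PML+PML2} and Section~\ref{sec:complexity}. The inclusion half of each statement is already established in Theorems~\ref{thm:closure-ML} and~\ref{thm:closures}, so the entire task is the reverse inclusion. As a first organizational step, the $\PML(S)$ equality should be reduced to the $\ML(S)$ equality: given $[\lambda_\infty]$ in $\mc{PG}^{[\mc{R}_\lambda]} \cup \PML^+(S)$, an argument modeled on Lemma~\ref{lem:stabilize} should produce a scalar $\e > 0$ and a sequence in $\Mod(S) \cdot \lambda$ converging in $\ML(S)$ to $\e \lambda_\infty$, so the $\PML$-statement follows once one knows the $\ML$-statement and $\PML^+(S) \subset \overline{\Mod(S) \cdot [\lambda]}$ separately.

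The core remaining work for the $\ML(S)$-statement is the case where the subsurface $R$ in the complete pair $\mc{R}_\lambda = (R, \gamma)$ is nonorientable. Writing $\lambda = \gamma + \lambda'$ with $\lambda' \in \ML^+(R)$, the stabilizer $\Mod(S-\gamma) \subset \Mod(S)$ acts on the factor $\gamma + (\ML^+(R) \cup \{0\})$, and the claim $\mc{G}^{\mc{R}} \subset \overline{\Mod(S) \cdot \lambda}$ reduces to showing that the $\Mod(R)$-orbit of $\lambda'$ is dense in $\ML^+(R)$. This is a strictly simpler instance of the same conjecture --- for the surface $R$ and a lamination whose associated complete pair has empty multicurve part --- so the inductive hypothesis applies on each nonorientable component of $R$, while on orientable components one appeals directly to the Lindenstrauss--Mirzakhani classification as in the proof of Theorem~\ref{thm:closure-ML}.

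For the missing ingredient $\PML^+(S) \subset \overline{\Mod(S) \cdot [\lambda]}$ in the $\PML(S)$-statement, one would first prove the preliminary conjecture $\PML^+(S) = \overline{\PML^+(S,\Q)}$ and then recover $\PML^+(S) \subset \overline{\Mod(S) \cdot [\lambda]}$ via the Dehn twist argument of Lemma~\ref{lem:PML+PML2} applied to a two-sided simple closed curve that intersects $\lambda$. To establish the preliminary conjecture in higher genus, I would refine the argument of Lemma~\ref{lem:PML+PML} inductively: given $\lambda \in \ML^+(S)$ approximated by $\gamma_n \in \ML(S,\Q)$ with one-sided components $\alpha_n^1, \dots, \alpha_n^{k_n}$, project each $\gamma_n$ along a segment in a train-track chart onto the boundary of $\cone(\alpha_n^1 + \cdots + \alpha_n^{k_n})$. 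By the remark following Corollary~\ref{cor:generic} one may enlarge the family of $\alpha_n^i$ to be maximal, whereupon $S - \bigcup_i \alpha_n^i$ is orientable and rational two-sided multicurves form a dense subset, yielding the desired approximants.

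The main obstacle is the density of $\Mod(R)$-orbits on $\ML^+(R)$ for nonorientable $R$ --- the fuel for the induction. In the orientable setting this is the content of the Hamenst\"adt and Lindenstrauss--Mirzakhani classifications, whose proofs rely on Masur's ergodicity theorem and on non-divergence of the Teichm\"uller horocyclic flow, tools which, as the author points out in Section~\ref{sec:teichmuller}, are not defined in the nonorientable setting. Bypassing this is likely to require either a direct surgery and twist argument along two-sided curves of $R$ (mirroring how Lemma~\ref{lem:PML+PML2} exploits Dehn twists to migrate between projective classes), or lifting to the orientable double cover and proving that the known orientable dynamics descend equivariantly under the deck involution. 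The exceptional case of the genus three closed surface --- where the bounding one-sided geodesic is disjoint from every two-sided simple closed curve, as recalled in Remark~\ref{rem:PML+PML2} --- indicates that any Dehn twist based approach must fail precisely there, which is consistent with the exception carved out in the conjecture.
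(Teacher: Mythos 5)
The statement you were asked to prove is explicitly labeled a \emph{conjecture} in the paper; the author gives no proof. The reverse inclusions are established only when $S$ has genus one (Theorem~\ref{thm:closures}) and when the subsurface $R$ in the complete pair is orientable (Theorem~\ref{thm:closure-ML}); the paper even states the weaker preliminary conjecture $\PML^+(S)=\overline{\PML^+(S,\Q)}$ immediately before, calling it ``a first step.'' So there is no paper proof to compare against, and a correct review must begin by noting that.

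Your write-up is an honest sketch rather than a proof: it correctly maps the territory but does not close the gaps it identifies. The induction requires density of the $\Mod(R)$-orbit of $\lambda'$ in $\ML^+(R)$ for nonorientable $R$, and the inductive hypothesis does not in fact deliver it. Applying the conjectural equality of Theorem~\ref{thm:closure-ML} to $R$ and to the complete pair $\mc R_{\lambda'}=(R,\emptyset)$ would give $\overline{\Mod(R)\cdot\lambda'}=\mc G^{[\mc R_{\lambda'}]}=\ML^+(R)\cup\{0\}$ only if one already knows that the orbit fills all of $\ML^+(R)$ --- which is precisely the dynamical fact that is unavailable, because the nonorientable setting has no analogue of Masur ergodicity or of the Hamenst\"adt and Lindenstrauss--Mirzakhani classifications, as you yourself point out. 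Your two proposed escape routes (a surgery-and-twist argument internal to $R$, or descent from the orientation double cover) are mentioned but not carried out; note also that the paper observes in Section~\ref{sec:teichmuller} that the Teichm\"uller horocyclic flow is not defined on $\qteich(S)$, which blocks the most direct version of the descent strategy.

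There is also a concrete flaw in your inductive argument for $\PML^+(S)=\overline{\PML^+(S,\Q)}$. When you intersect $[\gamma_n,\lambda]$ with $\partial\cone(\alpha_n^1+\cdots+\alpha_n^{k_n})$, the resulting point $p_n$ lies in $\ML(S-\bigcup_i\alpha_n^i)$, and if this complement is nonorientable then $p_n$ may again carry a one--sided leaf. ``Enlarging the family $\alpha_n^i$ to be maximal'' is not licensed by the remark after Corollary~\ref{cor:generic}: that remark characterizes when a \emph{fixed} disjoint collection bounds an orientable subsurface; it does not tell you how to augment the atomic part of a given $\gamma_n$ by additional one--sided curves while retaining membership in the relevant cone and control of the distance to $\lambda$. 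In genus one the complement of a single one--sided curve is automatically orientable, which is exactly why Lemma~\ref{lem:PML+PML} works there and does not generalize by the same computation. In short, your outline identifies the right obstruction --- the absence of nonorientable orbit-density results --- but does not overcome it, which is consistent with the paper leaving the statement as an open conjecture.
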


We have seen in Remark~\ref{rem:PML+PML2} that the bounding geodesic of the connected sum of three real projective planes is an exception.

\section{Growth of the number of simple closed geodesics}\label{sec:growth-simple}

We now prove Theorem~\ref{thm:1} on the growth of $\mcurve_{\g_0}=\Mod(S)\cdot\g_0$. Actually we are going to prove a more general result (Theorem~\ref{thm:growth-simple}). We first need to introduce some notations.\par

For any $N\in\N^\ast$, we denote by $\ML^-_N(S,\Z)$ the set of integral simple multicurves whose \emph{one--sided} components have weight at most $N$. In other words, a simple multicurve $\g=n_1\g_1+\ldots +n_k\g_k$ ($n_i\in\N^\ast$) belongs to $\ML^-_N(S,\Z)$ if and only if $n_i\leq N$ for every \emph{one--sided} component $\g_i$. This set is clearly $\Mod(S)$--invariant, and it contains infinitely many $\Mod(S)$--orbits since the weights of the two--sided components are not bounded. Note also that, for any $\g_0\in\ML(S,\Z)$, there exists $N$ such that $\mcurve_{\g_0}\subset \ML^-_N(S,\Z)$.\par

 The following theorem is the generalization of Theorem~\ref{thm:1}:
 
\begin{theorem}\label{thm:growth-simple}
Any nonorientable hyperbolic surface of finite area $(S,m)$ satisfies
\begin{eqnarray*}
\lim_{L\rightarrow\infty} \frac{\left\{\g\in\ML^-_N(S,\Z)~;~\ell_m(\g)\leq L \right\}}{L^{\dim\ML(S)}} =0.
\end{eqnarray*}
\end{theorem}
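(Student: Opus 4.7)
The plan is to recast the counting problem as a question of weak-$\ast$ convergence of measures and then exploit the fact --- peculiar to the nonorientable setting --- that the scaled orbit concentrates on the Thurston-negligible set $\ML^+(S)$. For each $L>0$ define
\[
\mu^L_N \;=\; \frac{1}{L^{\dim\ML(S)}} \sum_{\gamma\in\ML^-_N(S,\Z)} \mathbf 1_{\gamma/L}.
\]
The ball $B_X(1)=\{\lambda\in\ML(S)\ ;\ \ell_m(\lambda)\leq 1\}$ is compact because the length function $\ell_m$ is continuous and proper on $\ML(S)$, and by construction
\[
\mu^L_N(B_X(1)) \;=\; \frac{\left|\{\gamma\in\ML^-_N(S,\Z)\ ;\ \ell_m(\gamma)\leq L\}\right|}{L^{\dim\ML(S)}}.
\]
So the theorem amounts to proving $\mu^L_N(B_X(1))\to 0$.

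Since $\ML^-_N(S,\Z)\subset\ML(S,\Z)$ we have $\mu^L_N\leq \mu^L$ as measures. By \eqref{eq:Thurston-measure}, $\mu^L\to\muth$ in the weak-$\ast$ topology, so the restrictions $(\mu^L_N|_{B_X(1)})_L$ form a uniformly bounded family of Radon measures on the compact set $B_X(1)$. Any weak-$\ast$ accumulation point $\mu_\infty$ therefore satisfies $\mu_\infty\leq \muth|_{B_X(1)}$. The key point is to show that $\mu_\infty$ is supported on $\ML^+(S)\cap B_X(1)$. I would use the $\Mod(S)$-invariant continuous function $w^-:\ML(S)\to\R^g_+$ from Proposition~\ref{pro:invariant-function}, which is homogeneous of degree one and vanishes exactly on $\ML^+(S)$. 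For any $\gamma\in\ML^-_N(S,\Z)$ the coordinates of $w^-(\gamma)$ are the weights of its one-sided components, all bounded by $N$, so $\|w^-(\gamma/L)\|_\infty \leq N/L$. Given any compact $K\subset \ML^-(S)\cap B_X(1)$, continuity of $w^-$ yields $\|w^-\|_\infty\geq\e$ on $K$ for some $\e>0$; therefore $\mu^L_N(K)=0$ as soon as $L>N/\e$, and hence $\mu_\infty(K)=0$. Inner regularity on the open subset $\ML^-(S)\cap B_X(1)$ of $B_X(1)$ gives $\mu_\infty(\ML^-(S)\cap B_X(1))=0$, as claimed.

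Combining the two inequalities with Theorem~\ref{thm:nogueira} of Danthony--Nogueira,
\[
\mu_\infty(B_X(1)) \;=\; \mu_\infty(\ML^+(S)\cap B_X(1)) \;\leq\; \muth(\ML^+(S)\cap B_X(1)) \;=\; 0.
\]
Thus the zero measure is the unique weak-$\ast$ accumulation point of the uniformly bounded family $(\mu^L_N|_{B_X(1)})_L$, so $\mu^L_N(B_X(1))\to 0$, which is exactly the conclusion of the theorem.

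The principal difficulty is the concentration step: one must rule out mass escaping into $\ML^-(S)$ in the limit. This is exactly where the assumption that \emph{one-sided} weights stay bounded by $N$ is used, via the continuity and homogeneity of $w^-$; without the bound $N$ on one-sided weights the argument would fail, which is consistent with the fact that the full counting measure $\mu^L$ does converge to the nonzero measure $\muth$.
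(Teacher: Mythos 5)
Your overall strategy is sound and mirrors the paper's: reduce to weak$^\ast$ convergence of the counting measures, bound them by $\mu^L\to\muth$, and show the limit concentrates on the $\muth$--null set $\ML^+(S)$. Where you diverge from the paper is the localization step. The paper (Lemma~\ref{lem:support-zero}) works with an explicit product neighborhood $U=I_1\g_1+\ldots+I_k\g_k+\ML^+(S-\l^-)\cup\{0\}$ of a point $\l\in\ML^-(S)$, and \emph{carefully verifies} that $\nu^\infty(\partial U)=0$ (using absolute continuity with respect to $\muth$ and the product decomposition \eqref{eq:measure}) before concluding $\nu^\infty(U)=\lim_n\nu^{L_n}(U)=0$. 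You instead use the invariant function $w^-$ of Proposition~\ref{pro:invariant-function} together with its degree-one homogeneity, which is an attractive and more global device; the observation that $\|w^-(\g/L)\|_\infty\leq N/L$ for $\g\in\ML^-_N(S,\Z)$ captures exactly the same mechanism.

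However, there is a genuine gap in the step ``$\mu^L_N(K)=0$ as soon as $L>N/\e$, and hence $\mu_\infty(K)=0$.'' For a \emph{compact} set $K$ the portmanteau inequality runs the other way: weak$^\ast$ convergence gives $\limsup_n\mu^{L_n}_N(K)\leq\mu_\infty(K)$, so knowing that $\mu^{L_n}_N(K)$ is eventually $0$ tells you nothing about $\mu_\infty(K)$ (compare $\mu_n=\delta_{1/n}$ on $[0,1]$ and $K=\{0\}$). To repair the argument, pick an open set $G$ with $K\subset G\subset\overline G\subset\ML^-(S)$, $\overline G$ compact (possible since $\ML^-(S)$ is open); then $\|w^-\|_\infty\geq\e>0$ on $\overline G$, so $\mu^L_N(G)=0$ for $L>N/\e$, and now the correct portmanteau direction for open sets gives $\mu_\infty(G)\leq\liminf_n\mu^{L_n}_N(G)=0$, whence $\mu_\infty(K)\leq\mu_\infty(G)=0$. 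With this correction, your inner-regularity step and the final appeal to Theorem~\ref{thm:nogueira} go through, and the argument is a valid alternative to the paper's (which sidesteps this issue by checking $\nu^\infty$--negligibility of $\partial U$ directly).
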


\begin{remark} In Part~\ref{part:currents} we extend this result to all multicurves (not necessarily simple) and to all filling geodesic currents (not necessarily Liouville currents of hyperbolic metrics).
\end{remark}

 The global scheme of the proof is similar to the one of \cite{mirzakhani-annals}: we introduce a family of counting measures $(\nu^L)_{L>0}$ on $\ML(S)$ and show its weak$^\ast$ convergence. We use the theorem of Danthony and Nogueira (\textsection\ref{sec:decomposition}) instead of Masur's ergodic theorem. We do not need to integrate over the moduli space, which is the most difficult part of Mirzakhani's proof (\cite{mirzakhani-annals}). 

\begin{proof}
 Let us consider the sequence of counting measures $(\nu^L)_{L>0}$ defined by
\begin{eqnarray*}
\nu^L  & = &  \frac{1}{L^{\dim\ML(S)}} \sum_{\g\in\ML^-_N} \mathbf{1}_{\frac{1}{L}\g},
\end{eqnarray*}
where for short we set $\ML^-_N=\ML^-_N(S,\Z)$. We have
$$\nu^L(B_m(1))  =   \frac{|B_m(1)\cap \frac{1}{L} \ML^-_N|}{L^{\dim\ML(S)}}
 =   \frac{|B_m(L)\cap \ML^-_N|}{L^{\dim\ML(S)}}
 = \frac{ | \{\g\in \ML^-_N~;~\ell_m(\g)\leq L\} |}{L^{\dim\ML(S)}}.$$
Thus the conclusion of the theorem is equivalent to
\begin{eqnarray*}
\lim_{L\rightarrow \infty} \nu^L(B_m(1)) &= & 0,
\end{eqnarray*}
which is a direct consequence of the weak$^\ast$ convergence of $(\nu^L)_L$ towards the zero measure (Proposition~\ref{pro:convergence-measure} and Lemma~\ref{lem:bdy-negligible}).
\end{proof}

\begin{proposition}\label{pro:convergence-measure}
The measure $\nu^L$ weak$^\ast$ converges to the zero measure as $L$ tends to infinity.
\end{proposition}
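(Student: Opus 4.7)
My plan is to use compactness in the weak$^\ast$ topology on Radon measures on $\ML(S)$ and to show that the zero measure is the only possible subsequential weak$^\ast$ limit of the family $(\nu^L)_{L>0}$; this gives $\nu^L\to 0$.

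First I would observe that $\nu^L\leq\mu^L$ as Borel measures, since $\ML^-_N(S,\Z)\subset\ML(S,\Z)$. Because $\mu^L$ converges weak$^\ast$ to the Thurston measure $\muth$ by the very definition of $\muth$ (see \eqref{eq:Thurston-measure}), the family $(\nu^L)_L$ is tight on compact sets, so passing to a subsequence we may assume $\nu^{L_n}\to\nu$ weak$^\ast$ for some Radon measure $\nu$. Testing against continuous nonnegative compactly supported functions yields $\nu\leq\muth$.

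Next I would locate the support of $\nu$ using the continuous $\Mod(S)$-invariant function $w^-:\ML(S)\to\R_+^g$ of Proposition~\ref{pro:invariant-function}. Setting $W(\l)=a_1+\cdots+a_g$ for $w^-(\l)=(a_1,\ldots,a_g)$ defines a continuous, positively $1$-homogeneous function on $\ML(S)$ with $W^{-1}(0)=\ML^+(S)$; by construction $W(\l)$ is the total weight of the one-sided leaves of $\l$. Since at most $g$ disjoint non-isotopic one-sided simple closed curves exist on $S$, we have $W(\l)\leq gN$ for every $\l\in\ML^-_N(S,\Z)$. For any $\eta>0$, the open set $\{W>\eta\}$ therefore contains no element of $\frac{1}{L}\ML^-_N(S,\Z)$ as soon as $L\geq gN/\eta$, so $\nu^L(\{W>\eta\})=0$ for all sufficiently large $L$. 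Lower semicontinuity of the measure of open sets under weak$^\ast$ convergence then gives $\nu(\{W>\eta\})=0$ for every $\eta>0$, whence $\supp\nu\subset\ML^+(S)$.

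To finish, I would combine $\nu\leq\muth$ with $\supp\nu\subset\ML^+(S)$ and invoke the Danthony--Nogueira theorem (Theorem~\ref{thm:nogueira}), which asserts $\muth(\ML^+(S))=0$, to conclude $\nu=0$. The main obstacle lies in the second step: one must arrange that the rescaling $\l\mapsto\l/L$ kills all the one-sided mass. This is precisely where the definition of $\ML^-_N$ is used, via homogeneity of $W$: the uniform bound $W(\l)\leq gN$ forces $W(\l/L)\leq gN/L\to 0$, so all limit mass is pushed onto the Thurston-negligible set $\ML^+(S)$.
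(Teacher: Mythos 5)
Your proof is correct, and it shares the same opening (relative compactness via $\nu^L\leq\mu^L$, reduction to showing every subsequential limit vanishes, and the final appeal to Danthony--Nogueira) but diverges from the paper's at the crucial step of locating the support of a limit point. The paper's Lemma~\ref{lem:support-zero} works pointwise in $\ML^-(S)$: it picks a product-form neighborhood $U=I_1\g_1+\cdots+I_k\g_k+\ML^+(S-\l^-)\cup\{0\}$ of a given $\l$, shows $\ML^-_N(S,\Z)\cap(L\cdot U)=\emptyset$ for $L$ large (the bounded one-sided weights can't live in $L\cdot I_i$), and then must verify $\partial U$ is $\nu^\infty$-negligible to pass to the limit; that verification leans on absolute continuity with respect to $\muth$ and on the product decomposition~\eqref{eq:measure}. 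You replace this local argument with the global continuous $\Mod(S)$-invariant function $W=a_1+\cdots+a_g$ (the total one-sided weight, built from Proposition~\ref{pro:invariant-function}), which is $1$-homogeneous, vanishes exactly on $\ML^+(S)$, and is bounded by $gN$ on $\ML^-_N(S,\Z)$; homogeneity then makes $\{W>\eta\}$ eventually empty of $\frac1L\ML^-_N(S,\Z)$, and the open-set Portmanteau inequality gives $\supp\nu\subset\{W=0\}=\ML^+(S)$ with no boundary-negligibility check at all. Your route buys a cleaner, more conceptual support argument (one continuous invariant does all the work, no neighborhood bookkeeping), while the paper's route has the minor advantage of localizing the obstruction to each ball $\cone(\l^-)$ and re-using the product decomposition that is already in hand from \textsection\ref{sec:decomposition}. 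Both correctly isolate the same mechanism: the uniform bound $N$ on one-sided weights, which is what dies under rescaling.
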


\begin{proof}
Let us recall that $(\mu^L)_L$ is the sequence of counting measures associated to the set of all integral points of $\ML(S)$ (see \eqref{eq:Thurston-measure} for an explicit formula). We have $\nu^L\leq \mu^L$ for any $L>0$, and we have seen (\textsection\ref{sec:preliminaries}) that $\mu^L$ weak$^\ast$ converges to $\muth$ as $L$ tends to infinity. We deduce that $(\nu^L)_{L>0}$ is relatively compact in the space of Radon measures (Tychonoff's theorem). Thus, to prove its convergence, it suffices to show that it has a unique limit point.\par
 Let $\nu^\infty$ be the weak$^\ast$ limit of a sequence $(\nu^{L_n})_n$ where $(L_n)_n$ is a sequence of positive real numbers that tends to infinity as $n$ tends to infinity. From the inequality $\nu^{L_n}\leq \mu^{L_n}$ we get that $\nu^\infty$ is absolutely continuous with respect to $\muth$. But its support $\mrm{supp}(\nu^\infty)$ is $\muth$--negligible (Lemma~\ref{lem:support-zero}), so we conclude that $\nu^\infty$ is the zero measure.
 \end{proof}

\begin{lemma}\label{lem:support-zero}
We have $\mrm{supp}( \nu^\infty)\subset \ML^+(S)$ which is $\muth$--negligible.
\end{lemma}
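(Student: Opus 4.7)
The lemma asserts two things. First, that $\ML^+(S)$ is $\muth$--negligible: this is immediate from the Danthony--Nogueira theorem (Theorem~\ref{thm:nogueira}), since $\ML^-(S)$ has full Thurston measure and $\ML^+(S)$ is its complement. So the real work lies in the support statement.

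The plan for showing $\mrm{supp}(\nu^\infty)\subset\ML^+(S)$ is to exhibit, for each $\l\in\ML^-(S)$, an open neighborhood $U$ of $\l$ with $\nu^\infty(U)=0$. The key tool is the first coordinate $w_1^-$ of the continuous $\Mod(S)$--invariant map $w^-$ from Proposition~\ref{pro:invariant-function}, which records the maximal weight of a one--sided leaf; it is continuous, homogeneous of degree one, and vanishes precisely on $\ML^+(S)$. Given $\l\in\ML^-(S)$, set $a=w_1^-(\l)>0$ and take $U$ to be $\{w_1^->a/2\}$ intersected with a relatively compact $m$--length neighborhood of $\l$ (available since $\ell_m$ is proper on $\ML(S)$). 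By the definition of $\ML^-_N$, every $\g\in\ML^-_N(S,\Z)$ satisfies $w_1^-(\g)\leq N$, so homogeneity gives $w_1^-(\tfrac{1}{L}\g)\leq N/L$. For $L>2N/a$ no atom of $\nu^L$ lies in $U$, hence $\nu^L(U)=0$, and the portmanteau inequality for vague convergence (applicable since $\overline U$ is compact) yields $\nu^\infty(U)\leq \liminf_L \nu^L(U)=0$. Thus $\l\notin\mrm{supp}(\nu^\infty)$.

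There is no genuine obstacle here: both ingredients---the Danthony--Nogueira genericity theorem and the continuous homogeneous ``maximal one--sided weight'' function $w_1^-$---are already in hand from the previous sections. The argument simply records the intuitive fact that rescaling by $1/L$ a family of integral multicurves whose one--sided weights are uniformly bounded must crush their one--sided part to zero, so in the limit no mass of $\nu^\infty$ can accumulate on $\ML^-(S)$. The only point that requires mild care is the choice of $U$ with compact closure, so that the portmanteau comparison applies to Radon (not necessarily finite) measures.
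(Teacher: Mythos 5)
Your proof is correct, and it takes a genuinely different route from the paper's. The paper chooses a \emph{box} neighborhood $U = I_1\g_1+\ldots+I_k\g_k+\ML^+(S-\l^-)\cup\{0\}$ adapted to the specific decomposition $\l=\l^-+\l^+$, and then needs to argue that $\nu^\infty(U)=\lim_n\nu^{L_n}(U)$ \emph{exactly}; this forces it to check that $\partial U$ is $\nu^\infty$--negligible, which in turn uses the absolute continuity $\nu^\infty\ll\muth$ together with the product decomposition of $\muth$ on $\cone(\l^-)$. You avoid this entirely by (i) using the globally defined continuous, $1$--homogeneous invariant $w_1^-$ (the largest one--sided weight) from Proposition~\ref{pro:invariant-function} to cut out a single open set $\{w_1^->a/2\}$ that works uniformly near $\l$, and (ii) observing that only the open--set inequality $\nu^\infty(U)\leq\liminf\nu^{L_n}(U)$ of the portmanteau lemma is needed, not an exact limit. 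That inequality holds for any open set under weak$^\ast$ (vague) convergence by inner regularity of Radon measures, so the relative compactness of $\overline U$ you invoke is actually superfluous --- but harmless. The net effect is a cleaner proof that bypasses the product--measure and absolute--continuity bookkeeping, at the cost of having defined $w^-$ beforehand (which the paper has already done in \textsection\ref{sec:decomposition} anyway). The core mechanism in both arguments is identical: rescaling by $1/L$ sends the uniformly bounded one--sided weights of $\ML^-_N(S,\Z)$ to zero, so no mass can accumulate on $\ML^-(S)$.
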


\begin{proof}
Let us consider a measured lamination $\l\in\ML^-(S)$. Our aim is to show that $\l\notin\mrm{supp}(\nu^\infty)$.
We write $\l=\l^-+\l^+$ with $\l^{+}\in\ML^+(S)\cup\{0\}$ and $\l^-=x_1\g_1+\ldots +x_k\g_k$ ($k\geq 1$) where $x_1,\ldots,x_k>0$ and $\g_1,\ldots,\g_k$ are disjoint (non isotopic) one--sided simple closed geodesics.\par

 We consider a neighborhood $U=I_1\g_1+\ldots+I_k\g_k+\ML^+(S-\l^-)\cup\{0\}$ of $\l$ where each $I_i\subset\R_+^\ast$ is a \emph{compact} interval containing $x_i$ in its interior. Clearly $\nu^\infty(U)=0$ implies $\l\notin\mrm{supp}(\nu^\infty)$. Therefore our aim is to show that $\nu^\infty(U)=0$.\par 
 
 We claim that $\nu^\infty(U)=\lim_{n}\nu^{L_n}(U)$. As $(\nu^{L_n})_n$ weak$^\ast$ converges to $\nu^\infty$ we just have to check that $\partial U$ is $\nu^\infty$--negligible. But $\nu^\infty$ is absolutely continuous with respect to $\muth$, and $\muth$ decomposes as a product on $U$ (see \textsection\ref{sec:decomposition}), hence $\partial U$ is $\nu^\infty$--negligible.\par 

  Now we show that $\nu^L(U)=0$ for $L$ big enough. 
 Let $\eta$ be an element of $\ML^-_N(S,\Z)\cap (L\cdot U)$ for some $L>0$. From the definition of $\ML^-_N(S,\Z)$ we have $\eta=n_1\g_1+\ldots+ n_k\g_k+\eta^+$ with $N\geq n_i\geq 0$ and $\eta^+\in\ML^+(S-\l^-)\cup\{0\}$. And from $\eta\in L\cdot U$ we have $n_i\in L\cdot I_i$. The conditions $N\geq n_i$ and $n_i\in L\cdot I_i$ imply that $\ML^-_N(S,\Z)\cap (L\cdot U)=\emptyset$ for $L$ big enough. This is precisely equivalent to $\nu^L(U)=0$ for $L$ big enough.
\end{proof}

Let us recall the following obvious lemma (see \cite[\textsection 4.2]{souto}):

\begin{lemma}\label{lem:bdy-negligible}
The boundary $\partial B_m(1)$ is $\muth$--negligible.
\end{lemma}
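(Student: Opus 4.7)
The plan is to recognize $\partial B_m(1)$ as the ``unit sphere'' for the length function $\ell_m$ and to squeeze it between two concentric Thurston--balls whose measures differ by a vanishing amount, exploiting the fact that both $\ell_m$ and $\muth$ are homogeneous with respect to the natural $\R_+^\ast$--action on $\ML(S)$.

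First I would observe that $\ell_m:\ML(S)\to\R_+$ is continuous and positively homogeneous of degree one, since it extends by continuity and linearity the evaluation $a_1\g_1+\ldots+a_n\g_n\mapsto a_1\ell_m(\g_1)+\ldots+a_n\ell_m(\g_n)$ on rational simple multicurves. Consequently $B_m(1)$ is closed, its interior is exactly $\{\ell_m<1\}$, and $\partial B_m(1)=\{\l\in\ML(S)\,;\,\ell_m(\l)=1\}$. In particular, for every $\e\in(0,1/2)$ one has the inclusion
\begin{equation*}
\partial B_m(1)\ \subset\ B_m(1+\e)\setminus B_m(1-2\e),
\end{equation*}
because $\ell_m(\l)=1$ forces $1-2\e<\ell_m(\l)\leq 1+\e$.

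Next I would invoke the homogeneity of the Thurston measure: $\muth(tA)=t^d\muth(A)$ for every Borel $A\subset\ML(S)$ and every $t>0$, where $d=\dim\ML(S)$. This follows directly from the definition of $\muth$ as Lebesgue measure in any train--track chart, since the scaling $\l\mapsto t\l$ acts on such charts by the linear dilation $t\cdot\mathrm{Id}$. Applied to $A=B_m(1)$, it gives $b_X(r)=r^d\,b_X(1)$ for every $r>0$, so the inclusion above yields
\begin{equation*}
\muth(\partial B_m(1))\ \leq\ b_X(1+\e)-b_X(1-2\e)\ =\ \bigl((1+\e)^d-(1-2\e)^d\bigr)\,b_X(1).
\end{equation*}
Letting $\e\to 0$ drives the right--hand side to zero, which concludes the proof provided one knows that $b_X(1)<\infty$. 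This finiteness is already implicit in the identity for $b_X(1)$ derived earlier in \textsection\ref{sec:decomposition}, and is in any case standard for any finite area hyperbolic metric.

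There is no real obstacle here: the argument is the standard reflex that a proper, positively homogeneous continuous function has negligible level sets with respect to any locally finite Radon measure whose degree of homogeneity matches. The only point that must be imported from outside the formal statement is the finiteness of $b_X(1)$, which is already in force throughout the paper.
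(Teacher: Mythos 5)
Your proof is correct and takes essentially the same route as the paper's: sandwich $\partial B_m(1)$ between the balls $B_m(1+\e)$ and $B_m(1-\e)$ (you use $1-2\e$, which is immaterial), then use the degree--$d$ homogeneity of the Thurston measure to see that the measure of the annular region $B_m(1+\e)\setminus B_m(1-\e)$ tends to zero as $\e\to 0$. You are somewhat more careful than the paper in two respects — you spell out why $\partial B_m(1)=\{\ell_m=1\}$ via continuity and homogeneity of $\ell_m$, and you flag explicitly the need for $b_X(1)<\infty$ — but these are refinements of the same argument, not a different one.
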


\begin{proof}
We have $B_m(1)\subset B_m(1+\e)-B_m(1-\e)$ for any $\e>0$ small enough.
By multiplicativity of the Thurston measure we find (for any $\e>0$ small enough)
\begin{eqnarray*}
\muth(\partial B_m(1)) & \leq & \muth(B_m(1+\e))-\muth(B_m(1-\e)),\\
\muth(\partial B_m(1)) & \leq & \muth(B_m(1)) \cdot ((1+\e)^d-(1-\e)^d),\\
\muth(\partial B_m(1)) & \leq & \muth(B_m(1)) \cdot \e\cdot P(\e), 
\end{eqnarray*}
where $d=\dim\ML(S)$ and $P(\e)$ is a polynomial in $\e$.
We conclude by taking the limit as $\e$ tends to zero.
\end{proof}

\section{Pairs of filling curves}

Let $S$ be a compact surface with $\chi(S)<0$. Following Thurston (\cite{thurston}) we say that a pair of simple closed geodesics $\{\g,\d\}$ \emph{fills up} $S$ if each component of $S-(\g\cup\d)$ is an open disk or a half--open annulus whose boundary lies in $\partial S$. Equivalently $\{\g,\d\}$ is filling if every simple closed geodesic of $S$ intersects $\g\cup\d$.

\begin{lemma}\label{lem:filling}
If $S$ is nonorientable, then it has a filling pair of two--sided simple closed geodesics if and only if $\chi(S)<-1$.
\end{lemma}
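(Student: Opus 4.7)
The plan is to prove the two implications separately.

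For the forward direction, suppose $\chi(S)=-1$. Then $S$ is one of $N_{1,2}$, $N_{2,1}$, or $N_{3,0}$, and I would appeal to the explicit description of each given in \textsection\ref{sec:complexity}. On $N_{1,2}$ every simple closed geodesic is one-sided, so no two-sided simple closed curve exists at all. On $N_{2,1}$ there is a unique two-sided simple closed geodesic, so one cannot even pick two distinct two-sided simple closed curves. On $N_{3,0}$ every two-sided simple closed curve is homotopic into a fixed one-holed torus subsurface $\T\subsetneq N_{3,0}$, so any pair of two-sided simple closed curves is supported in a proper subsurface and therefore cannot fill.

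For the backward direction, $\chi(S)<-1$ means $S=N_{g,n}$ with $g+n\geq 4$. I would construct a filling pair by induction on $-\chi(S)$, with the four base cases $N_{4,0}$, $N_{3,1}$, $N_{2,2}$ and $N_{1,3}$ of Euler characteristic $-2$. In each base case I would exhibit an explicit pair of two-sided simple closed curves whose union divides $S$ into topological disks and half-open annuli around the boundary components. A convenient way is to represent $S$ as a polygon with side identifications (equivalently as a sphere with crosscaps and holes) and draw the pair explicitly, starting from a standard filling pair on an orientable surface of matching Euler characteristic and modifying the curves across each crosscap so that both remain two-sided.

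The inductive step comes in two flavours. If $\{a,b\}$ fills $N_{g,n}$, then placing an additional puncture inside a complementary disk gives a filling pair on $N_{g,n+1}$, since the disk simply becomes a half-open annulus. To pass from $N_{g,n}$ to $N_{g+1,n}$ one instead glues a M\"obius band inside a complementary disk $D$ and modifies one of the curves so that it picks up an arc crossing the new crosscap in a two-sided way, connecting two points of $\partial D$; the resulting pair is again simple, both of its curves are two-sided, and its complement inside $D$ together with the M\"obius band consists of disks.

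The main obstacle I anticipate is the crosscap step of the induction: verifying that the modification genuinely produces a simple, two-sided filling pair needs a careful local picture and a check that no new complementary region fails to be a disk or a half-open annulus. If this step proves inconvenient to formalize, a fallback is to bypass the induction altogether and construct explicit filling pairs on every $N_{g,n}$ with $g+n\geq 4$ directly by analogous polygon-with-identifications arguments, one family per topological type.
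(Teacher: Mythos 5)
Your route is genuinely different from the paper's. For the backward direction, the paper argues softly: it fixes a one--sided simple (multi)curve $\g$ with orientable complement, a filling measured lamination $\l$ of $S-\g$ --- which, living in the orientable subsurface, is a $\PML(S)$--limit of two--sided simple closed curves $(\a_n)_n$ --- and a two--sided $\d$ meeting $\g$; since $i(\d,\cdot)+i(\l,\cdot)>0$ on $\ML(S)$, compactness of $\PML(S)$ forces $\{\d,\a_n\}$ to fill once $n$ is large. That is short and uniform in $(g,n)$ but nonconstructive. Your forward direction (the case--check over $N_{1,2}$, $N_{2,1}$, $N_3$ using \textsection\ref{sec:complexity}) is correct and is exactly what the paper leaves implicit behind ``one easily sees.'' Your backward direction trades the soft argument for explicit pictures, and your hole--adding step of the induction does work: a complementary disk becomes a half--open annulus.

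The crosscap step, however, has a genuine gap, and it is exactly where you anticipated trouble. Glue a M\"obius band $M$ with core $c$ into a complementary disk $D$ along a subdisk $d\subset D$, and reroute a subarc $a_0$ of the filling curve $a$ through $M$ via a simple arc $\a$ with endpoints on $\partial D$. In a M\"obius band any embedded \emph{essential} arc with endpoints on the boundary meets the core exactly once (there is a single isotopy class of such arcs); an embedded arc meeting the core an even number of times is boundary--parallel, hence isotopable off $M$ and useless for filling. In the essential case, $\ell=a_0\cup\a$ is a simple closed curve in the M\"obius band $D'=(D\setminus\mathrm{int}(d))\cup M$ meeting its core once, hence one--sided; since $[a']=[a]+[\ell]$ in $H_1(N_{g+1,n};\Z/2)$ and $a$ stays two--sided, $\langle w_1,[a']\rangle=1$, so $a'$ is \emph{one--sided}. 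There is no ``arc crossing the new crosscap in a two--sided way'': a single detour always flips sidedness. To repair the step you must route $a$ through $M$ an even number of times (two disjoint essential arcs, then check that $M$ minus them is a union of disks), or add two crosscaps at once through a Klein--bottle summand $N_{2,1}$, which does carry an essential two--sided arc. Either fix is workable but has to be carried out; as written the inductive step fails, and at that point the paper's $\PML$ argument becomes the shorter path.
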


\begin{remark}
See \cite[Proposition~3.5]{farb} for a proof in the orientable case.
\end{remark}

\begin{proof}
If $\chi(S)=-1$ then one easily sees that $S$ has no pair of filling two--sided simple closed geodesics. So we assume $\chi(S)<-1$.\par
We consider a simple closed geodesic $\g$ whose complement is orientable. Let $\l$ be a \emph{filling} measured lamination of $S-\g$, that is a measured lamination that intersects any simple closed geodesic of $S-\g$. Let $\d$ be a two--sided simple closed geodesic of $S$ that intersects $\g$ (it exists thanks to the assumption $\chi(S)<-1$). The function $i(\d,\cdot)+i(\l,\cdot)$ is positive on $\ML(S)$.\par

 Let  $(\a_n)_n$ be a sequence of \emph{two--sided} simple closed geodesics that converges to $[\l]$ in $\PML(S)$. The existence of such a sequence is obvious for $\l$ is contained in an orientable subsurface. Let $(a_n)_n$ be a sequence of positive real numbers such that $(a_n\a_n)$ converges to $\l$ in $\ML(S)$.\par

 Let us show that \emph{there exists $N$ such that $\d$ and $\a_N$ fill up $S$}. By contradiction we assume that for each $n$ there exists $\b_n\in \simple$ such that $i(\d,\b_n)+i(\a_n,\b_n)=0$. Up to the choice of a subsequence, we assume that $(\b_n)_n$ converges in $\PML(S)$ to a point $[\b]$. Let $(b_n)_n$ be a sequence of positive real numbers such that $(b_n\b_n)$ converges to $\b$ in $\ML(S)$. By continuity of $i(\cdot,\cdot)$ on $\ML(S)\times \ML(S)$ it comes
$i(\d,\b)+i(\l,\b)=\lim_{n\rightarrow\infty} i(\d,b_n\b_n)+i(a_n\a_n,b_n\b_n)=0.$
This contradicts the positivity of $i(\d,\cdot)+i(\l,\cdot)$ on $\ML(S)$.
\end{proof}

\begin{lemma}
If $S$ is nonorientable, then it has a filling pair of one--sided simple closed geodesics.
\end{lemma}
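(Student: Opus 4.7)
The strategy is to adapt the proof of Lemma~\ref{lem:filling}, replacing the initial two--sided curve of orientable complement by a non--bounding one--sided curve $\alpha_1\in\simple^-$. Note first that the same Euler characteristic bookkeeping that excluded $\chi(S)=-1$ in the two--sided case applies verbatim to one--sided pairs: if $\{\alpha_1,\alpha_2\}$ fills $S$, then computing $\chi(S)$ from the CW structure with $v=i(\alpha_1,\alpha_2)$ vertices, $2v$ edges, $d$ disk faces and $b$ half--annular faces (one per boundary component) yields $d=\chi(S)+v-b\geq 0$. In particular the argument below implicitly assumes $\chi(S)<-1$, and the base case $\chi(S)=-1$ must either be excluded or checked separately.

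Assuming the nonorientable genus of $S$ is at least two, I choose $\alpha_1\in\simple^-$ to be non--bounding, so that $S-\alpha_1$ is itself a nonorientable surface with $\chi(S-\alpha_1)=\chi(S)<-1$. I then pick a filling measured lamination $\lambda\in\ML(S-\alpha_1)$, i.e.\ one with $i(\lambda,\beta)>0$ for every simple closed geodesic $\beta$ of $S-\alpha_1$. By Theorem~\ref{thm:nogueira} applied to $S-\alpha_1$, the set $\ML^-(S-\alpha_1)$ is dense in $\ML(S-\alpha_1)$, so $\lambda$ is the limit of a sequence $a_n\alpha_2^{(n)}$ with $a_n>0$ and $\alpha_2^{(n)}$ a one--sided simple closed geodesic of $S-\alpha_1$. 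Each $\alpha_2^{(n)}$ remains one--sided when viewed in $S$ because one--sidedness is a local property of the tubular neighborhood.

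To finish, I claim $\{\alpha_1,\alpha_2^{(n)}\}$ fills $S$ for $n$ large enough. The limit function $i(\alpha_1,\cdot)+i(\lambda,\cdot)$ is strictly positive on $\simple(S)$: a curve $\beta\in\simple(S)$ disjoint from $\alpha_1$ lies in $\simple(S-\alpha_1)$ and has $i(\lambda,\beta)>0$, while a curve meeting $\alpha_1$ has $i(\alpha_1,\beta)>0$. The exact contradiction--compactness argument from Lemma~\ref{lem:filling}, passing to a subsequential projective limit in $\PML(S)$ and invoking continuity of $i(\cdot,\cdot)$ on $\ML(S)\times\ML(S)$, transfers this positivity to $i(\alpha_1,\cdot)+i(\alpha_2^{(n)},\cdot)$ for $n$ large.

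The main obstacle is the nonorientable genus one case, in which every $\alpha_1\in\simple^-$ is bounding, $S-\alpha_1$ is orientable, and no one--sided approximants of $\lambda$ live in the complement. The plan in that case is to pick a filling two--sided measured lamination $\lambda$ in $S-\alpha_1$ and to band--sum approximating two--sided simple closed geodesics of $\lambda$ with $\alpha_1$ along an arc meeting $\alpha_1$ transversely once; because $\alpha_1$ is one--sided, the band--sum is one--sided, and the projective limit of these curves is $[\alpha_1+\lambda]$. The same compactness argument then forces the filling property, at the cost of the delicate point of controlling the intersection numbers of the band--summed curves in the limit.
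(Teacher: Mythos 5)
Your proposal takes a genuinely different route from the paper, but it has a real gap at its central step. The paper's proof reduces to Lemma~\ref{lem:filling}: for $\chi(S)<-1$ pick a filling pair $\{\alpha,\beta\}$ of \emph{two-sided} curves, then approximate each of $\alpha,\beta$ in $\PML(S)$ by a sequence of one--sided simple closed geodesics obtained by Dehn--twisting a fixed one--sided curve $\gamma_0$ along $\alpha$ (resp.\ $\beta$) — this preserves one--sidedness and converges projectively to $[\alpha]$ (resp.\ $[\beta]$) — and finally runs the same compactness argument; the case $\chi(S)=-1$ is treated by an explicit example. You instead start from a non--bounding one--sided $\alpha_1$ and try to approximate a filling lamination $\lambda$ of $S-\alpha_1$ by one--sided simple closed geodesics, citing Theorem~\ref{thm:nogueira}.

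The problem is that Theorem~\ref{thm:nogueira} does not give what you need. Density of $\ML^-(S-\alpha_1)$ means that measured laminations \emph{admitting a one--sided closed leaf} are dense; it does not say that (weighted) one--sided simple closed geodesics are dense in $\ML(S-\alpha_1)$, nor in $\PML(S-\alpha_1)$. Concretely, a sequence $\mu_n\in\ML^-(S-\alpha_1)$ with $\mu_n\to\lambda$ can have its one--sided parts $\mu_n^-$ equal to a fixed weighted curve $\tfrac1n\gamma$ for all $n$, so from $(\mu_n)_n$ one cannot extract a sequence of one--sided curves $\alpha_2^{(n)}$ with $[\alpha_2^{(n)}]\to[\lambda]$. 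This is precisely why the paper uses Dehn twists rather than Danthony--Nogueira here: twisting a fixed one--sided curve along a two--sided one directly produces one--sided curves converging to the desired projective class. If you want to keep your structure, you should replace the Danthony--Nogueira step by such a Dehn-twist construction (choose two--sided simple closed geodesics $\delta_n$ of $S-\alpha_1$ with $[\delta_n]\to[\lambda]$, and set $\alpha_2^{(n)}=T_{\delta_n}^{k_n}\gamma_0$ for a one--sided $\gamma_0$ and suitable $k_n\to\infty$).

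Two secondary points. First, your genus-one case (all one--sided curves bounding) is left genuinely incomplete: you flag the band-sum intersection control yourself, and this case is not covered by your main argument, while the paper's reduction to Lemma~\ref{lem:filling} handles genus one automatically whenever $\chi(S)<-1$. Second, the CW/Euler-characteristic bookkeeping you invoke does not by itself ``exclude $\chi(S)=-1$'' for two--sided pairs (the inequality you derive is satisfiable when $\chi(S)=-1$); the obstruction in that case is the degeneracy of the curve system of the three low--complexity surfaces, not a cell count, so this framing is misleading even if it is not used in an essential way.
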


\begin{proof}
We first assume $\chi(S)<-1$ and following the previous lemma we consider  $\{\a,\b\}$ a pair of filling two--sided simple closed geodesics. Let $(\a_n)_n$ and $(\b_n)_n$ be two sequences of one--sided simple closed geodesics that converge respectively to $\a$ and $\b$ in $\PML(S)$. One easily shows the existence of such sequences using Dehn twists. We choose two sequences $(a_n)_n$ and $(b_n)_n$ of positive real numbers such that $(a_n\a_n)$ and $(b_n\b_n)$ converge respectively to $\a$ and $\b$ in $\ML(S)$. We conclude by contradiction as in the proof of the previous lemma.\par
 For any nonorientable surface $S$ with $\chi(S)=-1$ it is not difficult to find an explicit filling pair of one--sided simple closed geodesics.
 \end{proof}

\part{Growth of geodesics with self--intersections}\label{part:currents}

The aim of this part is to establish the following theorem: 

\begin{theorem}
Let $(S,m)$ be a nonorientable hyperbolic surface of finite area (possibly with geodesic boundary). For any integral multicurve $\g_0$ we have
\begin{eqnarray*}
\lim_{L\rightarrow \infty} \frac{\left|\left\{\g\in\mcurve_{\g_0}~;~\ell_m(\g)\leq L  \right\}   \right|}{L^{\dim\ML(S)}} & = & 0.
\end{eqnarray*} 
\end{theorem}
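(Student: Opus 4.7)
The plan is to adapt the strategy of Theorem~\ref{thm:growth-simple} to the space of geodesic currents $\current(S)$, since multicurves with self--intersections are not elements of $\ML(S)$. Setting $d=\dim\ML(S)$, introduce on $\current(S)$ the counting measure
$$\nu^L_{\g_0} \ =\ \frac{1}{L^d}\sum_{\g\in\mcurve_{\g_0}}\mathbf{1}_{\frac{1}{L}\g},$$
where each integral multicurve is identified with its associated geodesic current. Writing $\mc L_m$ for the Liouville current of $m$ (so that $i(\mc L_m,\cdot)=\ell_m$), the compact set $B_m(1)=\{c\in\current(S)~;~i(\mc L_m,c)\leq 1\}$ satisfies
$$\nu^L_{\g_0}(B_m(1)) \ =\ \frac{|\{\g\in\mcurve_{\g_0}~;~\ell_m(\g)\leq L\}|}{L^d},$$
so the theorem reduces to the weak$^\ast$ convergence $\nu^L_{\g_0}\to 0$ as $L\to\infty$, the $\muth$--negligibility of $\partial B_m(1)$ being handled exactly as in Lemma~\ref{lem:bdy-negligible}.

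The argument then rests on the theorem of Erlandsson and Souto \cite{souto}, which asserts that the family $(\nu^L_{\g_0})_{L>0}$ is relatively compact in the space of Radon measures on $\current(S)$ and that every weak$^\ast$ accumulation point $\nu^\infty$ is supported on $\ML(S)\subset\current(S)$ and is absolutely continuous with respect to $\muth$. Since $\ML^+(S)$ is $\muth$--negligible by Theorem~\ref{thm:nogueira}, the task reduces to proving the inclusion $\supp(\nu^\infty)\subset\ML^+(S)$, equivalently to exhibiting, for every $\l\in\ML^-(S)$, an open neighborhood of $\l$ on which $\nu^L_{\g_0}$ vanishes for $L$ sufficiently large.

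Fix such a $\l$ with one--sided closed leaf $\g$ of weight $b>0$, and let $K_0$ bound the self--intersection numbers of the components of $\g_0$; since self--intersection is a topological invariant, the same bound holds for the components of every $\g'\in\mcurve_{\g_0}$. The central tool is Proposition~12.3 applied with $k=K_0+1$: it yields a neighborhood $U$ of $\l$ in $\current(S)$ such that any $c\in U$ has a geodesic in $\supp(c)$ projecting to $\g$ or to a closed geodesic with $K_0+1$ self--intersections. For rescaled orbit elements $c=\frac{1}{L}\g'$ the second alternative is impossible, so $c\in U$ forces $\g$ to appear as a component of $\g'$, and therefore forces $\g$ to belong to the $\Mod(S)$--orbit of some component of $\g_0$. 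When $\g$ does not lie in such an orbit, no rescaled orbit element enters $U$, so $\nu^L_{\g_0}(U)=0$ for every $L$, and the portmanteau theorem gives $\l\notin\supp(\nu^\infty)$.

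The main obstacle is the remaining subcase, where $\g=h\,\g_{0,i_0}$ for some $h\in\Mod(S)$ and some component $\g_{0,i_0}$ of $\g_0$. Rescaled orbit elements can then genuinely lie in $U$: the component $\g$ appears in $\g'$ with weight $a_{i_0}$, contributing atomic mass $a_{i_0}/L\to 0$ at $\g$, while the other components $h\,\g_{0,l}$ ($l\neq i_0$) could in principle spiral many times near $\g$ and collectively manufacture a large atomic mass at $\g$ in the weak$^\ast$ limit. I would rule this out by combining Proposition~12.3 with a quantitative detection of the atomic mass at $\g$ via a short geodesic arc $k$ transverse to $\g$, in the spirit of the proof of Lemma~\ref{lem:weight-function}: for $\l$ in the closure of $\cone(\g)$ one has $i(k,\l)=b$ while $i(k,\cdot)$ is continuous on $\current(S)$, and paired with the intersection $i(\g,\cdot)$ it should control the contribution of the non--$\g$ components and force a contradiction for $L$ large. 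This delicate concentration argument is the technical heart of the proof.
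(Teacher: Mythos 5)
Your framework is the right one: counting measures on $\current(S)$, the Erlandsson--Souto result giving subsequential limits absolutely continuous with respect to $\muth$, the Danthony--Nogueira theorem making $\ML^+(S)$ $\muth$--negligible, and the proposition about neighborhoods $U_k$ of a lamination with a one--sided leaf. The "easy case" ($\g$ not in the $\Mod(S)$--orbit of any component of $\g_0$) is handled correctly. But the case you flag as "the technical heart" is a genuine gap, and the fix you sketch is not what works.

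Two problems. First, the target you set yourself --- finding a neighborhood $U$ of $\l$ with $\nu^L_{\g_0}(U)=0$ for $L$ large --- is the wrong formulation. The neighborhood from Proposition~\ref{pro:accumulation-current} can be taken scale--invariant (see the remark following it), so $\nu^L_{\g_0}(U)$ is, up to the factor $L^{-d}$, just $|\mcurve_{\g_0}\cap U|$, which may well be infinite; and intersecting with a bounded set destroys the scale--invariance you need. Second, the "atomic mass at $\g$" worry is a red herring: Proposition~\ref{pro:accumulation-current} is a statement about \emph{supports}, and a spiraling component of $\g'$ contributes geodesics in $\supp(\g')$ with many self--intersections (Lemma~\ref{lem:intersection-collar} gives quasi--proportionality between collar length and self--intersection), which the proposition already excludes. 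No detection of mass via a transverse arc is needed.

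The paper avoids both issues by working at the level of projective accumulation points (Proposition~\ref{pro:accumulation-currents}), and then deducing the statement about $\supp(\nu^\infty)$ in a separate step (Lemma~\ref{lem:support-current}). The decisive move you are missing is in the proof of Corollary~\ref{cor:accumulation-current}: take $\d_n\in\mcurve_{\g_0}$ and $d_n>0$ with $d_n\d_n\to\l$ in $\current(S)$. Proposition~\ref{pro:accumulation-current} with $k=i(\g_0,\g_0)+1$ forces $\g$ to be a component of $\d_n$ for $n$ large; write $\d_n=c_n\g+\d'_n$ with $\g$ not a component of $\d'_n$. Since $c_n$ is bounded (by $\sum_i a_i$) and $d_n\to 0$, one has $d_nc_n\to 0$, hence $d_n\d'_n\to\l$ \emph{as well}. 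Applying the same proposition to the sequence $(d_n\d'_n)_n$ now gives an immediate contradiction: $\g\notin\supp(\d'_n)$ and every component of $\d'_n$ has at most $i(\g_0,\g_0)<k$ self--intersections. It is this peel--off--and--reapply step, exploiting convergence rather than exact vanishing of the counting measure, that closes the argument.
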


\begin{remark}
 Actually we prove a more general result (Theorem~\ref{thm:final}) that deals with geodesic currents.
\end{remark}

 This result contrasts with the result of Mirzakhani (\cite{mirzakhani-preprint}) which states that, for orientable surfaces, the above limit exists and is positive. It contrasts also with the fact that for any $k\geq 1$ there exists $C_k>0$ such that
$$\frac{1}{C_k}\cdot L^{\dim\ML(S)} \leq |\{\g\in \mcurve_k~;~\ell_m(\g)\leq L\}| \leq C_k \cdot L^{\dim\ML(S)},$$
for any $L$ large enough. Here we denote by $\mcurve_k$ the set of integral multicurves with exactly $k$ self--intersections. We refer to \cite[Corollary~3.6]{souto} for a proof of this fact that works for both orientable and nonorientable surfaces of finite type. This kind of estimates was first obtained by J.~Sapir (\cite{sapir,sapir-imrn}).\par

 As in the case of simple closed curves, the theorem comes with the convergence of a family of counting measures $(\nu_{\g_0}^L)_{L>0}$ defined by
 \begin{eqnarray*}
 \nu_{\g_0}^L & = & \frac{1}{L^{\dim\ML(S)}}\ \sum_{\g\in\mcurve_{\g_0}} \mathbf{1}_{\frac{1}{L}\g}.
 \end{eqnarray*}
These are $\Mod(S)$--invariant measures over the space of geodesic currents $\current(S)$ (see \textsection\ref{sec:geodesic-currents}). We show that the accumulation points of $\mcurve_{\g_0}$ in $\current(S)$ are contained in $\ML^+(S)$ (\textsection\ref{sec:accumulation-currents}). This implies that the support of any limit point of $(\nu^L_{\g_0})_L$ is contained in $\ML^+(S)$. Then we conclude that $(\nu_{\g_0}^L)_L$ converges to the zero measure (\textsection\ref{sec:convergence-currents}) using a theorem of Erlandsson and Souto (\cite{souto}).\par

 Along the way we show that, if $(\g_n)_n$ is a sequence of closed geodesics that converges to a one--sided simple closed geodesic in the projective space of geodesic currents $\pcurrent(S)$, then either $(\g_n)_n$ stabilizes either the number of self--intersections of $\g_n$ tends to infinity with $n$ (Proposition~\ref{pro:accumulation-current}).

\section{Geodesic currents}\label{sec:geodesic-currents}

In this section we recall some basic facts about Bonahon's geodesic currents. We refer to the article \cite{bonahon-inventiones}, or to the textbook \cite{martelli}, for more details and complete proofs. For sake of simplicity, we restrict our attention to closed surfaces, but it is explained in \cite[\textsection 4.1]{souto} how to deal with punctured surfaces.

\subsection*{Space of geodesics}
 Let $(S,m)$ be a closed hyperbolic surface. We denote by $\tilde S_\infty$ the boundary at infinity of the universal cover $\tilde S$. A geodesic of $(\tilde S,\tilde m)$ is encoded by its endpoints in $\tilde S_\infty$, that is why we call \emph{space of geodesics} of $\tilde S$ the quotient
$\Geod(\tilde S)=(\tilde S_\infty\times\tilde S_\infty-\Delta)/(\Z/2\Z)$
where $\Delta$ is the diagonal of $\tilde S_\infty\times\tilde S_\infty$ and $\Z/2\Z$ acts by transposition. \par
 
 Given another hyperbolic metric $m'$ on $S$, the identity $id_{\tilde S}:(\tilde S,\tilde m)\rightarrow (\tilde S,\tilde m')$ is a quasi--isometry, therefore it extends to a $\pi_1(S)$--invariant homeomorphism between the visual boundaries. This shows that $\tilde S_\infty$ and $\Geod(\tilde S)$ are of topological nature, they do not depend on $m$. Similarly, the objects introduced below are topological and independent of $m$ (except $\l_m$).\par
 
\subsection*{Geodesic currents}
A \emph{geodesic current on $S$} is a $\pi_1(S)$--invariant (positive) Radon measure on $\Geod(\tilde S)$. We denote by $\current(S)$ the space of geodesic currents on $S$ endowed with the weak$^\ast$ topology. It is stable under addition and multiplication by a positive scalar. The projective space of geodesic currents $\pcurrent(S)$ is compact with respect to the quotient topology (\cite[Proposition~5]{bonahon-inventiones}).\par

\subsection*{Multicurves as geodesic currents}
 To a (primitive) closed geodesic $\g$ of $(S,m)$ corresponds the geodesic current $\sum_{\tilde \g} \bf 1_{\tilde \g}$ where $\tilde \g$ runs over the set of lifts of $\g$. By linearity, this map extends to a canonical injection of the set of integral multicurves $\mcurve$ into the space of geodesic currents $\current(S)$.\par
 The set of \emph{integral} multicurves is discrete in $\current(S)$, but the set of closed geodesics is dense in $\pcurrent(S)$ (\cite[Proposition~2]{bonahon-inventiones}). The geometric intersection between closed geodesics extends to a bilinear continuous function $i:\current(S)\times \current(S)\rightarrow\R_+$. 

\subsection*{Hyperbolic metrics as geodesic currents}
  Any hyperbolic metric $m$ on $S$ determines a geodesic current $\l_m$, called the \emph{Liouille current} of $m$. As indicated by its name, it is related to the Liouville form on $T^\ast S$. For our purpose, it is enough to mention the following properties: the map $m\mapsto \l_m$ induces a topological embedding $\teich(S)\rightarrow\current(S)$ and, for any multicurve $\g\in\mcurve$, we have the following nice relation $i(\l_m,\g)=\ell_m(\g)$.

\subsection*{Measured laminations}
The injective map $\ML(S;\Q)\subset\mcurve\hookrightarrow\current(S)$ extends to a topological embedding of $\ML(S)$ into $\current(S)$. The image of $\ML(S)$ coincide with the light cone $\{c\in\current(S)~;~i(c,c)=0\}$.

\section{Accumulation points of $\mcurve_{\g_0}$ in $\current(S)$}\label{sec:accumulation-currents}

\begin{proposition}\label{pro:accumulation-currents}
For any $\g_0\in\mcurve$, the accumulation points of $\mcurve_{\g_0}$ in $\pcurrent(S)$ are contained $\PML^+(S)$.
\end{proposition}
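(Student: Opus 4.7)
The plan is to argue by contradiction using Proposition~12.3: if an accumulation point of $\mcurve_{\g_0}$ had a one-sided closed leaf $\g$, then $\g$ would be forced to appear as a component of each $\g_n$ for $n$ large, and peeling off that component to rerun the argument would close the loop. First I would extract a good representative of the accumulation point in $\current(S)$. Let $[c]$ be an accumulation point and pick distinct $\g_n = f_n\g_0\in\mcurve_{\g_0}$ with $[\g_n]\to[c]$ in $\pcurrent(S)$. After rescaling by suitable $L_n>0$ we may assume $c_n := \g_n/L_n \to c$ in $\current(S)$ for a representative $c$ of $[c]$. Since integral multicurves form a discrete subset of $\current(S)$ and the $\g_n$ are pairwise distinct, the sequence $(\g_n)$ cannot remain bounded, so after a further subsequence $L_n\to\infty$. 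Writing $\g_0 = c_1\a_1+\ldots+c_m\a_m$ in its primitive decomposition, the $\Mod(S)$-invariance of self-intersection together with continuity of the intersection pairing yields
\begin{equation*}
i(c,c) \;=\; \lim_{n\to\infty}\frac{i(\g_n,\g_n)}{L_n^2} \;=\; \lim_{n\to\infty}\frac{i(\g_0,\g_0)}{L_n^2} \;=\; 0,
\end{equation*}
so $c\in\ML(S)$.

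Next I would assume for contradiction that $c\in\ML^-(S)$ with a one-sided closed leaf $\g$, and exploit Proposition~12.3 to locate $\g$ as a component of each $\g_n$. Let $K$ be the maximal number of self-intersections of any $\a_j$; by $\Mod(S)$-invariance each component $f_n\a_j$ of $\g_n$ has at most $K$ self-intersections. Fix $k>K$ and let $U_k\subset\current(S)$ be the neighborhood of $c$ supplied by Proposition~12.3: every current in $U_k$ carries a support geodesic projecting either to $\g$ or to a closed geodesic with $k$ self-intersections. For $n$ large $c_n\in U_k$, and $\supp(c_n)=\supp(\g_n)$ is the union of $\pi_1$-orbits of lifts of the components $f_n\a_j$; the second alternative is excluded since $K<k$. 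Hence some component of $\g_n$ equals $\g$, say $f_n\a_{j_n}=\g$ with $j_n\in\{1,\dots,m\}$, and by pigeonhole we may pass to a subsequence on which $j_n=i$ is constant.

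Finally I would subtract the $\g$-component and iterate. Set $\g_0' := \sum_{j\neq i}c_j\a_j$, so that $\g_n = c_i\g + f_n\g_0'$ and, by injectivity of $f_n$ on homotopy classes, no component of $f_n\g_0'$ equals $\g$. Since $(c_i/L_n)\g\to 0$ in $\current(S)$, we deduce
\begin{equation*}
\frac{f_n\g_0'}{L_n} \;=\; c_n - \frac{c_i}{L_n}\g \;\xrightarrow[n\to\infty]{}\; c.
\end{equation*}
Applying Proposition~12.3 to this new sequence with the same neighborhood $U_k$, the identical reasoning forces some component of $f_n\g_0'$ to equal $\g$ for $n$ large, contradicting the construction of $\g_0'$. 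Therefore $c\in\ML^+(S)$. The substantive content is entirely concentrated in Proposition~12.3, which captures the geometric fact that currents near a lamination with a one-sided closed leaf must either carry that leaf or exhibit many self-intersections; granting it, the only new idea is the subtract-and-iterate move, which elegantly sidesteps any need for lower semi-continuity of the atomic weight at $\g$ (a property that in fact fails in general for weak-$\ast$ limits of Radon measures).
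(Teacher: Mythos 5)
Your proof is correct and follows essentially the same route as the paper: first show the accumulation point lies in $\ML(S)$ via continuity of the self-intersection pairing, then invoke Proposition~12.3 to force the one-sided leaf $\g$ to appear as a component of $\g_n$ for large $n$, and finally subtract off the $\g$-component (whose rescaled contribution vanishes) and reapply Proposition~12.3 to the remainder to obtain the contradiction. The paper's Corollary~\ref{cor:accumulation-current} carries out exactly this subtract-and-iterate step, writing $\d_n = c_n\g + \d_n'$ with $c_n$ uniformly bounded and observing $d_n\d_n' \to \l$.
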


\begin{remark}
One easily shows that any point in $\overline{\PML^+(S,\Q)}$ is an accumulation point of $\mcurve_{\g_0}$ in $\current(S)$, as well as any projective pseudo--Anosov lamination.
\end{remark}

\begin{proof}
Combine the Lemma~\ref{lem:limitinthecone} with Corollary~\ref{cor:accumulation-current}.
\end{proof}

\begin{lemma}\label{lem:limitinthecone}
The accumulation points of $\mcurve_{k}$ in $\pcurrent(S)$ are contained in $\PML(S)$.
\end{lemma}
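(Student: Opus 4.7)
The plan is to combine continuity of the intersection form $i:\current(S)\times\current(S)\to\R_+$ with the observation that self--intersection is uniformly bounded on $\mcurve_k$: under the standard convention, $i(\g,\g)/2$ counts the self--intersections of the weighted geodesic $\g$, so $i(\g,\g)=2k$ for every $\g\in\mcurve_k$. Since $\PML(S)$ sits inside $\pcurrent(S)$ as the projectivization of the light cone $\{c\in\current(S)~;~i(c,c)=0\}$, the task reduces to showing that any accumulation point $[c]$ is isotropic, i.e. $i(c,c)=0$.

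First I would pick a sequence $(\g_n)_n\subset\mcurve_k$ of pairwise distinct multicurves with $[\g_n]\to [c]$ in $\pcurrent(S)$. Fix a hyperbolic metric $m$ on $S$ and set $L_n=\ell_m(\g_n)=i(\l_m,\g_n)$. I would then observe that $L_n\to\infty$: the upper bound on $|\{\g\in\mcurve_k~;~\ell_m(\g)\leq L\}|$ recalled at the start of Part~\ref{part:currents} forces length--bounded subsets of $\mcurve_k$ to be finite, so distinct $\g_n$ must have diverging $m$--length. The renormalized currents $c_n:=\g_n/L_n$ then all lie on the unit--length sphere $\{c~;~i(\l_m,c)=1\}\subset\current(S)$, which is compact by Bonahon's properness result for length functions on the space of geodesic currents. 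Extracting a subsequence, $c_n\to c'$ in $\current(S)$, and continuity of the projection $\current(S)\setminus\{0\}\to\pcurrent(S)$ yields $[c']=[c]$.

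The conclusion then falls out from bilinearity and continuity of $i(\cdot,\cdot)$:
\begin{eqnarray*}
i(c',c') & = & \lim_{n\to\infty} i(c_n,c_n) \;=\; \lim_{n\to\infty} \frac{i(\g_n,\g_n)}{L_n^{2}} \;=\; \lim_{n\to\infty} \frac{2k}{L_n^{2}} \;=\; 0.
\end{eqnarray*}
Hence $c'\in\ML(S)$ and $[c]=[c']\in\PML(S)$. The only real obstacle is the compactness of the unit--length sphere in $\current(S)$ (equivalently, the properness of $\ell_m$ on the space of currents), which is Bonahon's classical theorem; once this is granted, the rest of the argument is a direct unwinding of the continuity of the intersection form together with the defining property of $\mcurve_k$.
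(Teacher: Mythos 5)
Your proof is correct and follows essentially the same route as the paper's: extract a sequence of distinct elements of $\mcurve_k$ converging projectively to the accumulation point, renormalize by $m$--length, observe that the lengths diverge so the scaling factors tend to zero, and conclude from bilinearity and continuity of $i(\cdot,\cdot)$ that the limiting current is isotropic, hence in $\ML(S)$. The only differences are cosmetic: you make the passage from projective convergence to convergence of normalized representatives explicit via compactness of the unit--length slice (the paper simply posits the scalars $d_n$), and you invoke the Sapir--type counting bound to see $L_n\to\infty$ where the paper uses the more elementary fact that distinct integral multicurves form a discrete set on which $\ell_m$ is proper; the factor of $2$ in $i(\g,\g)$ versus the paper's $k$ is a normalization convention that does not affect the limit.
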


\begin{proof}
Let $[\l]$ be an accumulation point of $\mcurve_k$ in $\pcurrent(S)$. There exist a sequence $(\d_n)_n$ of \emph{distinct} elements of $\mcurve_k$, and a sequence $(d_n)_n$ of positive real numbers such that $(d_n\d_n)_n$ converges to $\l$ in $\current(S)$. The geodesic current $\l$ is a measured lamination if and only if $i(\l,\l)=0$ (\cite[Proposition~17]{bonahon-inventiones}).\par
 We fix a hyperbolic metric $m$ on $S$.
On one hand we have $\ell_m(\d_n)\rightarrow \infty$ as $n$ tends to infinity (the $\d_n$ are distinct).
On another hand $\ell_m(d_n \d_n)\rightarrow\ell_m(\l)$ as $n$ tends to infinity ($\ell_m$ is continuous). So $d_n\rightarrow 0$ as $n$ tends to infinity. We conclude that 
$i(\l,\l) = \lim_n i(d_n\d_n ,d_n\d_n ) =k \lim_n d_n^2=0$ by continuity of $i(\cdot,\cdot)$.
 \end{proof}

\subsection*{Neighborhood of a lamination with a one--sided leaf}

\begin{proposition}\label{pro:accumulation-current}
Let $\l\in\ML^-(S)$ be a measured lamination with a one--sided closed leaf $\g$. For any $k\geq 1$, there exists a neighborhood $U_k$ of $\l$ in $\current(S)$ such that for any geodesic current $c\in U_k$ there exists a geodesic $\d\in \supp(c)$ that projects either on $\g$ either on a geodesic with at least $k$ self--intersections.
\end{proposition}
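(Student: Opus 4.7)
The plan is to exploit the specific feature of nonorientability that makes $\gamma$ one-sided: the stabiliser of a lift $\tilde\gamma\subset\tilde S$ in $\pi_1(S)$ is generated by a \emph{glide reflection} $g$ with axis $\tilde\gamma$, so that $g$ is orientation-reversing while $g^2$ is a pure translation of length $2\ell_m(\gamma)$ along $\tilde\gamma$. This allows one to produce many transverse self-intersections of nearby geodesics by iterating $g^2$, an effect that is absent in the two-sided case.

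First I would establish a purely geometric lemma: for every $k\geq 1$ there is an open neighbourhood $V_k$ of $\tilde\gamma$ in $\Geod(\tilde S)$ such that every $\delta\in V_k$ outside the $\pi_1(S)$-orbit of $\tilde\gamma$ projects to a geodesic of $S$ with at least $k$ transverse self-intersections. Working in an upper half-plane model where $\tilde\gamma$ is the positive imaginary axis, $g$ acts by $z\mapsto -e^{\ell_m(\gamma)}\bar z$ and fixes the endpoints $0,\infty$ of $\tilde\gamma$. Take $V_k$ to be the set of geodesics with both endpoints in tiny arcs around $0$ and $\infty$, so small that the two real endpoints $a<b$ of any $\delta\in V_k$ lying on one side of $\tilde\gamma$ satisfy $b/a>e^{2k\ell_m(\gamma)}$. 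A direct linking computation on $\partial\Hyp^2$ then shows that $\delta$ and $g^{2n}\delta$ are linked, and hence cross transversely in $\Hyp^2$, for every $n=1,\ldots,k$; each such crossing descends to a transverse self-intersection of the projected geodesic on $S$.

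With this lemma in hand, the proposition follows from the fact that $\gamma$ being a closed leaf of $\lambda$ of weight $a=w_\gamma(\lambda)>0$ endows $\lambda$, viewed as a $\pi_1(S)$-invariant Radon measure on $\Geod(\tilde S)$, with an atom of mass $a$ at the single point $\tilde\gamma$. After shrinking $V_k$ if necessary so that $\lambda(\partial V_k)=0$, set
\[
U_k \ = \ \bigl\{\,c\in\current(S)\ ;\ c(V_k)>a/2\,\bigr\}.
\]
This is an open subset of $\current(S)$ by lower semicontinuity of $c\mapsto c(V_k)$ on open sets, and it contains $\lambda$ since $\lambda(V_k)\geq a$. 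For any $c\in U_k$ one has $V_k\cap\supp(c)\neq\emptyset$, and any $\delta$ in this intersection does the job: either $\delta$ lies in the $\pi_1(S)$-orbit of $\tilde\gamma$ and projects onto $\gamma$, or the geometric lemma gives at least $k$ self-intersections on $S$.

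The main obstacle is checking, inside the geometric lemma, that the $k$ crossings $\delta\cap g^{2n}\delta$ descend to $k$ \emph{distinct} points of $S$ rather than being collapsed by the deck group. Two such crossings coincide on $S$ only if some $h\in\pi_1(S)$ preserves $\delta$ and sends one to the other, forcing a relation of the form $h^m=g^{2(n_2-n_1)}$ in $\pi_1(S)$; since the stabiliser of a generic $\delta$ is trivial and the only hyperbolic elements with axis $\tilde\gamma$ lie in $\langle g\rangle$, shrinking $V_k$ further so as to exclude $\delta$ from the orbit of $\tilde\gamma$ rules this out.
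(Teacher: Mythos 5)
Your approach is in the same spirit as the paper's (both exploit the glide reflection in the collar geometry), but the geometric lemma you state is false as formulated, and this is a genuine gap. Any neighbourhood $V_k$ of $\tilde\gamma$ in $\Geod(\tilde S)$ necessarily contains geodesics $\delta$ that share an endpoint with $\tilde\gamma$ (say, with endpoints $\{0,b\}$), i.e.\ geodesics asymptotic to $\tilde\gamma$. For such $\delta$ every translate $g^n\delta$ also has endpoint $0$, so $\delta$ and $g^n\delta$ never link and your crossing count yields nothing; worse, such a $\delta$ can perfectly well project to a \emph{simple} geodesic on $S$. This is not a fringe case: if $\delta_0$ is a simple leaf of $\lambda$ itself spiralling onto $\gamma$, then $g^n\tilde\delta_0\to\tilde\gamma$ in $\Geod(\tilde S)$, so for large $n$ the lifts $g^n\tilde\delta_0$ lie in $V_k\cap\supp(\lambda)$ yet project to the simple curve $\delta_0$. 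So already for $c=\lambda$ your argument (``pick any $\delta\in V_k\cap\supp(c)$ and apply the lemma'') can pick a $\delta$ that violates the lemma's conclusion. Note that your condition $b/a>e^{2k\ell}$ is silently vacuous when $a=0$, which is exactly where the argument breaks.

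The paper's proof is built to avoid this trap: it splits into two cases according to whether the chosen $\delta$ is asymptotic to $\tilde\gamma$. In the asymptotic case it does \emph{not} try to produce self-intersections; instead it observes that $g^{2n}\delta\to\tilde\gamma$, and since $\supp(c)$ is closed and $\pi_1(S)$-invariant this forces $\tilde\gamma\in\supp(c)$, landing in the first alternative of the statement. Only in the non-asymptotic case does it appeal to a self-intersection estimate (Lemma~\ref{lem:intersection-collar}, proved by counting linked lifts of a \emph{finite} geodesic arc inside the M\"obius collar). If you incorporate the same dichotomy — treat asymptotic $\delta$ by the closedness of $\supp(c)$, and apply your linking computation only when $\delta$ shares no endpoint with $\tilde\gamma$ — your proof can be repaired. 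There is also a smaller omission to fix: for $\delta$ whose endpoints straddle $\tilde\gamma$ (one near $0$, one near $\infty$, with opposite signs), the even powers $g^{2n}$ give nested, not linked, pairs; you need the \emph{odd} powers $g^{2n+1}$, which use the reflection part of $g$ and do link with $\delta$. This is precisely the point where one-sidedness enters; using only $g^2$ (a pure translation) the argument would not distinguish $\gamma$ from a two-sided curve.
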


\begin{remark}
\begin{enumerate}
\item We do not say that the projection of $\d$ on $S$ is closed.
\item The property satisfied by $U_k$ is invariant by multiplication by a scalar. As a consequence the cone $\R_+^\ast U_k$ satisfies this property.
\end{enumerate}
\end{remark}

\begin{corollary}\label{cor:accumulation-current}
Let $[\l]\in \PML(S)$ be an accumulation point of $\mcurve_{\g_0}$ in $\pcurrent(S)$. Then $\l$ has no one--sided leaf.
\end{corollary}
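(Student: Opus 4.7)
The plan is to argue by contradiction: assume the limit lamination $\l\in\ML(S)$ admits a one--sided closed leaf $\g$, and derive a contradiction by invoking Proposition~\ref{pro:accumulation-current} twice. Let $k_0$ denote the self--intersection number of $\g_0$. Since $\Mod(S)$ acts by homeomorphisms, every element of $\mcurve_{\g_0}$ has exactly $k_0$ self--intersections, and in particular each of its components has at most $k_0$ of them. Because $[\l]$ is an accumulation point of $\mcurve_{\g_0}$ in $\pcurrent(S)$, and since elements of a single $\Mod(S)$--orbit in $\mcurve$ all have the same total weight (so distinct orbit members yield distinct projective classes), I can choose pairwise distinct $\g_n\in\mcurve_{\g_0}$ and scalars $a_n>0$ with $a_n\g_n\to\l$ in $\current(S)$.

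Fix $k>k_0$ and let $U_k$ be the neighborhood of $\l$ given by Proposition~\ref{pro:accumulation-current}. For $n$ large enough, $a_n\g_n\in U_k$, so $\supp(\g_n)$ contains a geodesic projecting either onto $\g$ or onto a closed geodesic with at least $k$ self--intersections. The second alternative is impossible because every component of $\g_n$ has at most $k_0<k$ self--intersections, so $\g$ itself must be a component of $\g_n$. I then write $\g_n=b_n\g+\tilde\g_n$, where $b_n\geq 1$ is the multiplicity of $\g$ and $\tilde\g_n$ is the residual multicurve containing no component isotopic to $\g$. Since $\g_n=f_n(\g_0)$ for some $f_n\in\Mod(S)$, the integer $b_n$ equals the multiplicity in $\g_0$ of the component sent onto $\g$ by $f_n$, hence takes only finitely many values; after extraction I assume $b_n=b$ is constant.

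Next I analyse $(a_n)$. Testing against the Liouville current of a hyperbolic metric $m$, the relation $a_n\ell_m(\g_n)\to\ell_m(\l)<\infty$ together with $\ell_m(\g_n)\geq b\,\ell_m(\g)>0$ shows that $(a_n)$ is bounded; up to further extraction, $a_n\to a_\infty\in[0,\infty)$. If $a_\infty>0$, then $\g_n=(a_n\g_n)/a_n\to\l/a_\infty$ in $\current(S)$, contradicting the discreteness of integral multicurves in $\current(S)$ combined with the distinctness of the $\g_n$. If $a_\infty=0$, then $a_n b\g\to 0$ in $\current(S)$, hence $a_n\tilde\g_n=a_n\g_n-a_n b\g\to\l$; applying Proposition~\ref{pro:accumulation-current} again to $(a_n\tilde\g_n)$ and using that components of $\tilde\g_n$ still carry at most $k_0<k$ self--intersections, I conclude that $\g$ must be a component of $\tilde\g_n$ for $n$ large, contradicting the very definition of $\tilde\g_n$. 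In either subcase we reach the desired contradiction.

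The main obstacle is precisely the second subcase $a_\infty=0$. Once $\g$ is known to be eventually a component of $\g_n$, it is not a priori clear that subtracting $b\g$ produces a residual current whose limit avoids $\g$: the complementary multicurve $\tilde\g_n$ could in principle carry components accumulating onto $\g$ and thus reconstitute the atom $w_\g(\l)\g$ in the limit without literally containing $\g$. Proposition~\ref{pro:accumulation-current} is exactly the tool that forbids this, since any current close enough to a lamination with one--sided leaf $\g$ must either carry $\g$ itself in its support or carry geodesics with arbitrarily many self--intersections, and neither option is available to $\tilde\g_n$.
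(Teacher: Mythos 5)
Your proof is correct and follows essentially the same route as the paper's: apply Proposition~\ref{pro:accumulation-current} once with a threshold $k$ exceeding $i(\g_0,\g_0)$ to conclude that $\g$ must appear as a component of the approximating multicurves, subtract off the (uniformly bounded) $\g$--part, observe the residual still converges to $\l$, and apply the proposition a second time to reach a contradiction. The only stylistic difference is that the paper directly notes $d_n\to 0$ (from the argument in Lemma~\ref{lem:limitinthecone}) and so avoids the subcase $a_\infty>0$, which you dispose of by discreteness of integral multicurves; this is a harmless detour and the substance of the argument is identical.
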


\begin{proof}[Proof of Corollary~\ref{cor:accumulation-current}]
Let $[\l]\in\PML(S)$ be the limit in $\pcurrent(S)$ of a sequence $(\d_n)_n$ of elements of $\mcurve_{\g_0}$. There exists a sequence $(d_n)_n$ of positive real numbers such that $(d_n\d_n)_n$ converges to $\l$ in $\current(S)$. As noted in the proof of Lemma~\ref{lem:limitinthecone} we have $\lim_n d_n=0$.\par
 
 By contradiction we assume that $\l$ has a one--sided leaf $\g$. We denote by $U$ the neighborhood of $\l$ given by Proposition~\ref{pro:accumulation-current} for $k=i(\g_0,\g_0)+1$. For $n$ big enough we have  $d_n\d_n\in U$, which implies that $\g$ is a component of $\d_n$, and consequently can be written in the form $\d_n= c_n\g+\d'_n$ with $c_n\in\N^\ast$, and $\d_n'\in\mcurve_{\leq i(\g_0,\g_0)}$ such that $\g$ is not a component of $\d'$. \par
 
  The weight $c_n$ is uniformly bounded by the number of components of $\g_0$ counted with multiplicity. Therefore $\lim_n d_nc_n=0$ and $\lim_n d_n\d'_n=\lim_n d_n\d_n=\l$. So $d_n \d'_n$ belongs to $U$ for $n$ big enough. This contradicts Proposition~\ref{pro:accumulation-current}.
\end{proof}

\begin{proof}[Proof of Proposition~\ref{pro:accumulation-current}]
We fix a hyperbolic metric $m$ on $S$. Let $C$ be a collar neighborhood of $\g$ which is homeomorphic to a M\"obius band. We denote by $\tilde \g$ a lift of $\g$ to the universal cover, and by $\tilde C$ the lift of $C$ that contains $\tilde \g$.\par

We first choose $U_k$. According to Lemma~\ref{lem:intersection-collar} there exists $L_k>0$ such that any geodesic arc $\a$ in $C$ of length $\ell_m(\a)\geq L_k$ has at least $k$ self--intersections. We take $U_k$ to be the neighborhood of $\l$ given by Lemma~\ref{lem:collar-lift} for $L=L_k$.\par

 Let us show that $U_k$ satisfies the expected property. For any $c\in U_k$ there is a geodesic $\d\in \supp(c)$ such that $\d\cap \tilde C$ has length at least $L_k$ (Lemma~\ref{lem:collar-lift}). If $\d$ is asymptotic to $\tilde \g$, then we conclude that $\tilde \g\in \supp(c)$ because $\supp(c)$ is a closed $\pi_1(S)$--invariant subset of $G(\tilde S)$. If $\d$ is not asymptotic to $\tilde \g$, then $\d\cap \tilde C$ is a closed segment that projects onto a geodesic arc $\a$ in $C$ of length $\ell_m(\a)\geq L_k$. In that case we conclude that the projection of $\d$ on $S$ has at least $k$ self--intersections (by choice of $L_k$ and $U_k$).
\end{proof}

With the notations introduced in the proof of Proposition~\ref{pro:accumulation-current} we have:

\begin{lemma}\label{lem:collar-lift}
For any $L>0$ there exists a neighborhood $U\subset \current(S)$ of $\l$ such that for any $c\in U$ there is a geodesic $\d\in\supp(c)$ such that $\d\cap\tilde C$ has length at least $L$.
\end{lemma}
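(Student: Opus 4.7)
The plan is to exploit the fact that $\tilde\g$ is an atom of $\l$ viewed as a measure on $\Geod(\tilde S)$, then use weak$^\ast$ convergence of geodesic currents against a suitable bump function. First I would observe that, since $\g$ is a closed leaf of $\l$ of positive weight $x>0$, the measure $\l$ on $\Geod(\tilde S)$ has an atom of mass $x$ at each lift of $\g$. In particular $\l(\{\tilde\g\})=x>0$.

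The next step is purely metric. Since $C$ is a M\"obius band collar of the one--sided geodesic $\g$ and $\tilde\g$ is a lift of $\g$, the subset $\tilde C\subset\tilde S=\Hyp^2$ is an open hyperbolic strip of some width $r>0$ around the complete geodesic $\tilde\g$, so in particular $\tilde\g\subset\tilde C$. I would then argue that the map
\begin{equation*}
f\colon\Geod(\tilde S)\longrightarrow[0,+\infty],\qquad f(\d)=\mathrm{length}(\d\cap\tilde C),
\end{equation*}
satisfies $\liminf_{\d\to\tilde\g} f(\d)=+\infty$. Concretely, in the topology of $\Geod(\tilde S)=(\tilde S_\infty^2\setminus\Delta)/(\Z/2\Z)$, a geodesic $\d$ whose endpoints at infinity are sufficiently close to those of $\tilde\g$ approximates $\tilde\g$ uniformly on arbitrarily long compact subarcs; hence the portion of $\d$ lying in the open strip $\tilde C$ has length at least $L$ once the endpoints are close enough. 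This gives an open neighborhood $W\subset\Geod(\tilde S)$ of $\tilde\g$, with $\overline W$ compact (using that $\Geod(\tilde S)$ is locally compact), such that $f(\d)\geq L$ for every $\d\in W$.

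Now I would choose a continuous function $\chi\colon\Geod(\tilde S)\to[0,1]$ with compact support contained in $W$ and with $\chi(\tilde\g)=1$. The crucial observation is
\begin{equation*}
\int_{\Geod(\tilde S)}\chi\,\diff\l\;\geq\;\chi(\tilde\g)\cdot\l(\{\tilde\g\})\;=\;x\;>\;0.
\end{equation*}
The functional $c\mapsto\int\chi\,\diff c$ is continuous on $\current(S)$ for the weak$^\ast$ topology (this is how Bonahon's topology is defined), so the set
\begin{equation*}
U=\Bigl\{c\in\current(S)~;~\int\chi\,\diff c>x/2\Bigr\}
\end{equation*}
is an open neighborhood of $\l$.

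Finally I would conclude: for any $c\in U$, the condition $\int\chi\,\diff c>0$ forces $\supp(c)\cap\supp(\chi)\neq\emptyset$, in particular $\supp(c)\cap W\neq\emptyset$, and every $\d\in\supp(c)\cap W$ satisfies $\mathrm{length}(\d\cap\tilde C)\geq L$ by construction of $W$. The main obstacle is the purely geometric continuity step, which requires a careful argument in $\Hyp^2$ that closeness of endpoints at infinity forces long fellow-travelling inside the open strip $\tilde C$; once that is in hand, the remainder is a direct application of atomicity of $\l$ at $\tilde\g$ and weak$^\ast$ convergence.
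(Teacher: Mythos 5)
Your proposal is correct and follows essentially the same strategy as the paper: use the atom of $\l$ at $\tilde\g$ together with an open neighborhood of $\tilde\g$ in $\Geod(\tilde S)$ consisting of geodesics that cross $\tilde C$ with length at least $L$. The paper removes the fellow-travelling step you flag as the main obstacle by choosing this neighborhood concretely: take $p,q\in\tilde\g$ with $d(p,q)=L+2w$ (where $w$ is the width of $C$) and let $V$ be the set of geodesics meeting both balls of radius $w$ centered at $p$ and $q$; any $\d\in V$ passes through two points of $\tilde C$ at distance at least $L$, and the segment of $\d$ between them lies in $\tilde C$ by convexity, so the length bound is immediate --- the paper then sets $U=\{c~;~c(V)>0\}$, open by lower semicontinuity of $c\mapsto c(V)$ on the open set $V$, in place of your bump-function neighborhood.
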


\begin{remark}
In the proof of the lemma, we only use the fact that $\l$ has a closed leaf $\g$, we do not use the hypothesis $\g$ one--sided.
\end{remark}

\begin{proof}
We denote by $w$ the width of $C$. Let $p,q$ be two points on $\tilde \g$ at distance $L+2w$ from each other. We denote by $V\subset\mc G(S)$ the set of geodesics that intersect the balls of radius $w>0$ centered at $p$ and $q$. This is an open subset of $\mc G(\tilde S)$. We denote by $U$ the set of geodesic currents $c\in \current(S)$ such that $c(V)>0$, or equivalently $U=\{c\in \current(S)~;~\supp(c)\cap V\neq \emptyset\}$. It is an open neighborhood of $\l$.\par
 By construction, for any $c\in U$ there is a geodesic $\d\in \supp(c)$ that intersects the balls of radius $w$ centered at $p$ and $q$, thus $\d\cap\tilde C$ has length at least $L$. 
\end{proof}

\begin{lemma}\label{lem:intersection-collar}
Let $\g$ be a one--sided simple closed geodesic of a hyperbolic surface, and $C$ be a collar neighborhood of $\g$ homeomorphic to a M\"obius band. Any geodesic arc $\a$ in $C$ satisfies
$$\left| i(\a,\a)-\frac{\ell(\a)}{2\ell(\g)} \right| \leq \frac{\ell(\partial C)+2w}{2\ell(\g)}+1,$$
where $i(\a,\a)$ is the number of self--intersections of $\a$, and $w$ is the width of $C$. 
\end{lemma}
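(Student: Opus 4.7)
The plan is to lift $\a$ to the universal cover $\tilde C$ of the M\"obius collar $C$, a hyperbolic strip, and to express the self--intersections of $\a$ as intersections of this lift with its nontrivial deck--translates. Using Fermi coordinates $(s,t)$ around the lifted axis $\tilde\g=\{t=0\}$, one writes the metric as $\cosh^2(t)\,ds^2+dt^2$ on $\tilde C=\R\times[-w,w]$, with deck group generated by the glide reflection $T\colon(s,t)\mapsto(s+\ell(\g),-t)$. Lift $\a$ to a single geodesic arc $\hat\a\subset\tilde C$; a self--intersection of $\a$ then corresponds to a pair $\{\tau_1,\tau_2\}$ with $\hat\a(\tau_1)=T^k\hat\a(\tau_2)$ for some $k\in\Z\setminus\{0\}$. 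Since two distinct geodesics of $\Hyp^2$ meet at most once,
$$i(\a,\a)\;=\;\#\{k>0:\hat\a\cap T^k\hat\a\neq\emptyset\text{ inside }\tilde C\}.$$
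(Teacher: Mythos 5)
Your lifting framework is sound and matches the spirit of the paper's own argument: both pass to the strip $\tilde C$, use the deck transformation (a glide reflection $T$ along $\tilde\g$ with translation length $\ell(\g)$), and reduce the count of self--intersections of $\a$ to intersections of one fixed lift with its $T^k$--translates. The identity $i(\a,\a)=\#\{k>0:\hat\a\cap T^k\hat\a\neq\emptyset\}$ is correct, since $\hat\a$ and $T^k\hat\a$ are arcs of distinct geodesics of $\Hyp^2$ (hence meet at most once), and the translates $T^k$ and $T^{-k}$ give the same double point of $\a$.

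The problem is that the proposal stops exactly where the work begins. The lemma is a quantitative two--sided estimate, and nothing you have written bounds
$$\left|\#\{k>0:\hat\a\cap T^k\hat\a\neq\emptyset\}-\frac{\ell(\a)}{2\ell(\g)}\right|$$
by $\frac{\ell(\partial C)+2w}{2\ell(\g)}+1$. Two nontrivial steps are missing. First, you must determine \emph{which} $k$ in a range actually produce a crossing, not merely an overlap in the $s$--coordinate, and this is where the factor $1/2$ appears: $T$ is a glide reflection, so $T^k$ flips $t\mapsto -t$ when $k$ is odd and preserves $t$ when $k$ is even, and only about half of the admissible $k$ give a transverse crossing of $\hat\a$. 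Second, you must relate the relevant range of $k$ to $\ell(\a)$ from both sides, with an error of order $\ell(\partial C)+2w$; this requires controlling the $s$--extent of $\hat\a$ by $\ell(\a)$. The paper carries out precisely the analogues of these two steps: it normalizes the endpoints of the arc to fixed points $p,q\in\partial C$, enumerates the geodesic arcs $\a_k$ by homotopy class $\d\ast\b^k$, counts lifts explicitly to obtain $\bigl|i(\a_k,\a_k)-|k|/2\bigr|\le 1$, and then compares $\ell(\a_k)$ with $|k|\ell(\g)$ via the piecewise--geodesic loops $\g^k\ast\d$ and $\a_k\ast\d^{-1}$ together with the width $w$. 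Until you produce the analogous parity analysis and length comparison in your coordinates, the inequality in the statement has not been proved.
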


This lemma says that, for a geodesic arc $\a$ contained in a collar neighborhood of a one--sided geodesic, the length $\ell(\a)$ is quasi--proportional to the intersection number $i(\a,\a)$.

\begin{proof}
We fix two distinct points $p$ and $q$ on $\partial C$. We first enumerate the geodesic arcs whose endpoints are $p$ and $q$.
Let $\b$ be a loop based at $p$ whose homotopy class generates $\pi_1(C,p)\simeq\Z$, and let $\d$ be an arc of $\partial C$ that goes from $p$ to $q$. We assume that the orientation of $\b$ and $\d$ are compatible (note that $\b^2$ is homotopic to $\partial C$). The collection $\{\d\ast\b^k\}_{k\in \Z}$ is a set of representatives of the homotopy classes of paths from $p$ to $q$. We denote by $\a_k$ the unique \emph{geodesic} arc homotopic to $\d\ast \b^k$  (Figure~\ref{fig:arcs-mobius}).\par

\begin{figure}[h]
\centering
\labellist
\pinlabel $C$ at 17 300
\pinlabel $p$ at 180 345
\pinlabel $q$ at 180 30
\pinlabel $\g$ at 195 120
\pinlabel $\b$ at 235 252
\pinlabel $\d$ at 285 300
\pinlabel $\a_3$ at 481 275
\pinlabel $\a_2$ at 841 260
\endlabellist
\includegraphics[scale=0.3]{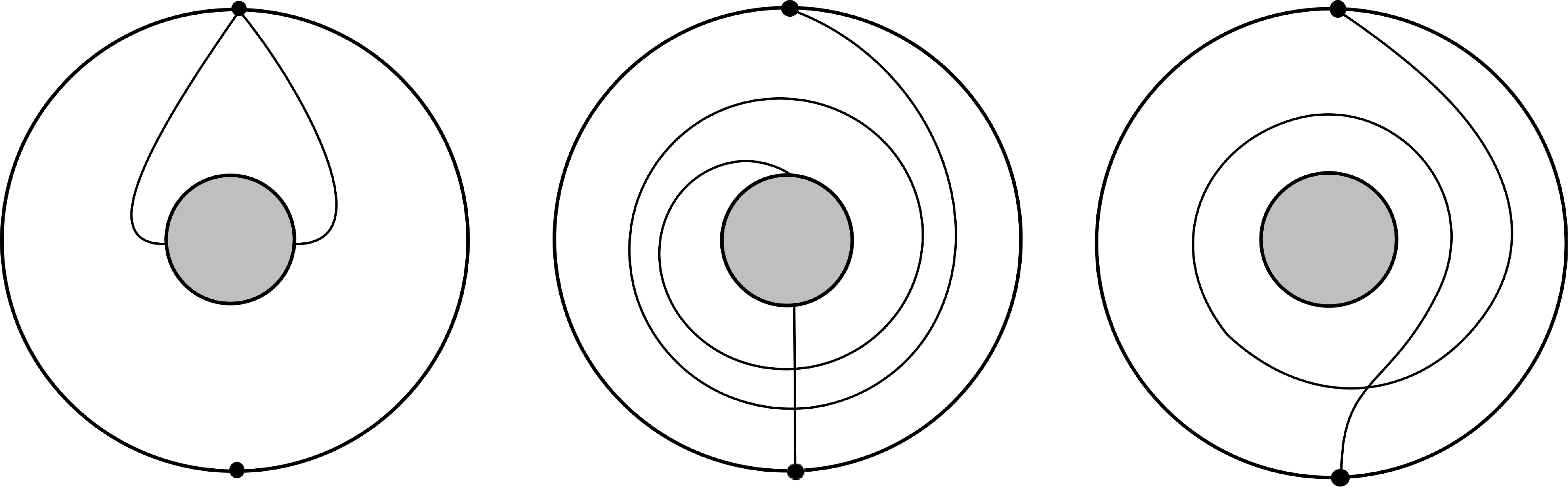}
\caption{Examples of geodesic arcs $\a_k$}\label{fig:arcs-mobius}

\end{figure}

 Now we show that $\left|i(\a_k,\a_k)-\frac{|k|}{2}\right| \leq 1$.
 The universal cover $\tilde C$ of $C$ is an infinite band isometric to the $w$--tubular neighborhood of a geodesic of the hyperbolic plane. The Deck transformation given by $\b$ acts by translation--reflection along the geodesic. A lift $\tilde \a_k$ of $\a_k$ divides $\tilde C$ into two connected components. The number $i(\a_k,\a_k)$ is half the number of lifts of $\a_k$ whose endpoints do not belong to the same component of $\tilde C-\tilde \a_k$ (Figure~\ref{fig:cover}). One easily finds that 
 $$i(\a_k,\a_k)=\left\{ \begin{array}{lll} k/2 & \textnormal{if} & k\geq0 \textnormal{ is even} \\
 |k|/2-1 & \textnormal{if} & k<0 \textnormal{ is even} \\
  (k+1)/2 & \textnormal{if} & k>0 \textnormal{ is odd} \\
  (|k|-1)/2 & \textnormal{if} & k<0 \textnormal{ is odd}  \end{array}\right. .$$
In the case $p=q$ we find that $i(\b^k,\b^k)$ is the integer part of $|k|/2$.

\begin{figure}[h]
\centering
\labellist
\pinlabel $\tilde C$ at 20 89
\pinlabel $\tilde p$ at  90 -10
\pinlabel $\tilde q$ at  445 180
\pinlabel $\tilde \a_3$ at  300  65
\endlabellist
\includegraphics[scale=0.3]{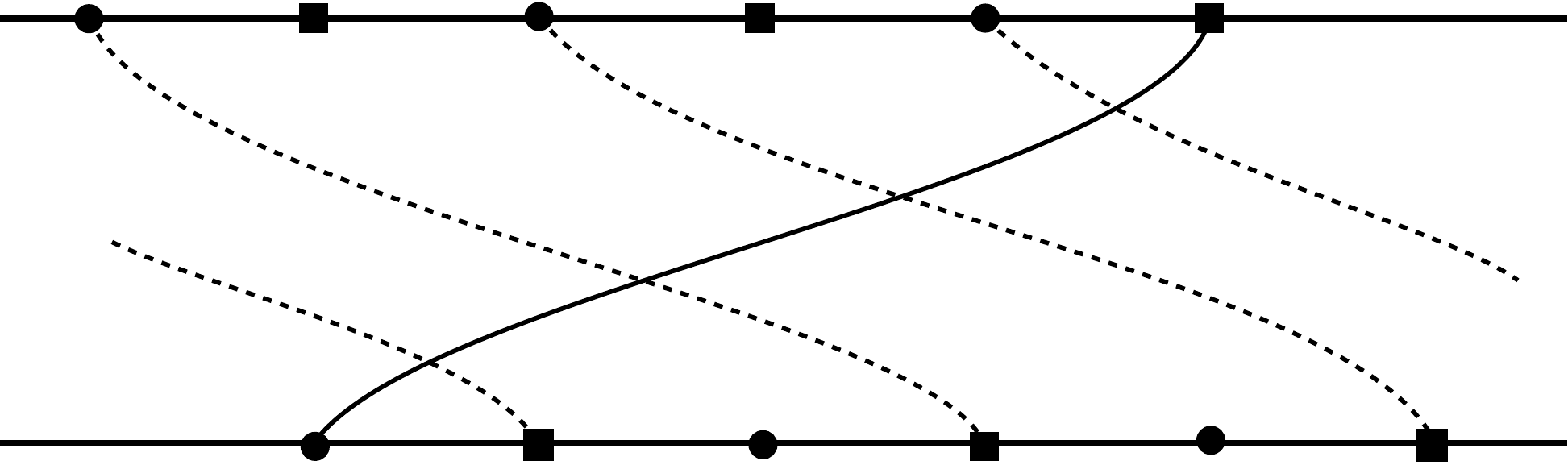}
\caption{The lift $\tilde \a_3$ in the universal cover $\tilde C$}\label{fig:cover}
\end{figure}

 By comparing the lengths of $\a_k$ and $\g^k$ with the lengths of some piecewise geodesic loops in the same homotopy classes we get
$\ell(\g^k)+\ell(\d)+2w>\ell(\a_k)$ and $\ell(\a_k)+\ell(\d) > \ell(\g^k)$,
where $w$ is the width of the collar $C$. This implies
$$\frac{\ell(\a_k)+\ell(\d)}{\ell(\g)}  >|k|> \frac{\ell(\a_k)-\ell(\d)-2w}{\ell(\g)}.$$
We conclude using the above expression of $i(\a_k,\a_k)$.
\end{proof}

\section{Convergence of counting measures}\label{sec:convergence-currents}

Let $\g_0\in\mcurve$ be an integral multicurve. For any $L>0$ we set
\begin{eqnarray*}
\nu_{\g_0}^L & = &\frac{1}{L^{\dim \ML(S)}}\ \sum_{\g\in \mcurve_{\g_0}} \mathbf{1}_{\frac{1}{L}\g}.
\end{eqnarray*}
This is a locally finite $\Mod(S)$--invariant Borel measure on the space $\current(S)$.

\begin{theorem}\label{thm:convergence-measures}
The measure $\nu^L_{\g_0}$ tends to the zero measure (with respect to the weak$^\ast$ topology) as $L$ tends to infinity.
\end{theorem}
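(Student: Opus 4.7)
The strategy mirrors that of Proposition~\ref{pro:convergence-measure}: show $(\nu_{\g_0}^L)_{L>0}$ is vaguely relatively compact on $\current(S)$, then that every weak$^\ast$ accumulation point is the zero measure.

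Relative compactness follows from the polynomial upper bound of Erlandsson--Souto recalled at the beginning of this part. Fixing a hyperbolic metric $m$, any compact $K\subset\current(S)$ is contained in some sublevel set $\{c\,;\,\ell_m(c)\leq R\}$, so
\[
\nu_{\g_0}^L(K)\ \leq\ \frac{|\{\g\in\mcurve_{k_0}\,;\,\ell_m(\g)\leq RL\}|}{L^{\dim\ML(S)}}\ \leq\ C_{k_0}R^{\dim\ML(S)},
\]
where $k_0=i(\g_0,\g_0)$ and $\mcurve_{\g_0}\subset\mcurve_{k_0}$.

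Let $\nu^\infty$ be a weak$^\ast$ limit of a subsequence $(\nu_{\g_0}^{L_p})_p$ with $L_p\to\infty$. The other Erlandsson--Souto result cited in the introduction of this part asserts that $\nu^\infty$ is absolutely continuous with respect to $\muth$, hence is carried by $\ML(S)$. Since $\muth(\ML^+(S))=0$ by Danthony--Nogueira (Theorem~\ref{thm:nogueira}), absolute continuity forces $\nu^\infty=0$ as soon as $\supp(\nu^\infty)\subset\ML^+(S)$. To prove this inclusion I claim that no $c\in\ML^-(S)$ is a cluster point of $\{\tfrac{1}{L}\g\,;\,L\geq 1,\ \g\in\mcurve_{\g_0}\}$ along sequences with $L\to\infty$. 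Indeed, if $\tfrac{1}{L_p}\g_{n_p}\to c\neq 0$ with $L_p\to\infty$, then $(\g_{n_p})$ cannot take only finitely many values (otherwise a constant subsequence would give $\tfrac{1}{L_p}\g\to 0$); extracting distinct multicurves, the projective classes satisfy $[\g_{n_p}]=[\tfrac{1}{L_p}\g_{n_p}]\to[c]$ in $\pcurrent(S)$, forcing $[c]\in\PML^+(S)$ by Proposition~\ref{pro:accumulation-currents} and contradicting $c\in\ML^-(S)$. Consequently, every $c\in\ML^-(S)$ admits an open neighborhood $V\subset\current(S)$ and an $L_0>0$ with $\nu_{\g_0}^L(V)=0$ for all $L\geq L_0$; lower semicontinuity of mass on open sets then yields $\nu^\infty(V)=0$, so $c\notin\supp(\nu^\infty)$. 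Combining these facts gives $\nu^\infty=0$, and uniqueness of the limit together with relative compactness yields $\nu_{\g_0}^L\to 0$ vaguely.

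\emph{Main obstacle.} Compared with Lemma~\ref{lem:support-zero}, the chief difficulty is that $\tfrac{1}{L}\g$ is not a measured lamination once $\g$ has self--intersections, so neither the weight--function identity $w_{\g_i}(\tfrac{1}{L}\eta)=n_i/L$ nor the product decomposition \eqref{eq:measure} of $\muth$ can be invoked directly to rule out mass accumulation near $\ML^-(S)$. The decisive replacement is Proposition~\ref{pro:accumulation-currents}, whose proof rests on the delicate Proposition~\ref{pro:accumulation-current} about neighborhoods in $\current(S)$ of laminations with one--sided closed leaves; once this geometric input is combined with the Erlandsson--Souto absolute continuity, the support argument above closes the proof.
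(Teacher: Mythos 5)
Your proof is correct and follows essentially the same route as the paper's: you combine Erlandsson--Souto's absolute continuity of limit points (Proposition~\ref{pro:souto}), the fact that projective accumulation points of $\mcurve_{\g_0}$ lie in $\PML^+(S)$ (Proposition~\ref{pro:accumulation-currents}), and Danthony--Nogueira's $\muth(\ML^+(S))=0$ (Theorem~\ref{thm:nogueira}). The only differences are presentational: the paper's Proposition~\ref{pro:souto} already packages both relative compactness and absolute continuity, whereas you derive compactness separately from the polynomial bound; and where the paper's Lemma~\ref{lem:support-current} argues directly that $c\in\supp(\nu^\infty)$ forces $[c]$ to be a projective accumulation point (via openness of $\current(S)\to\pcurrent(S)$), you argue the contrapositive, excluding $c\in\ML^-(S)$ by producing a neighborhood with eventually zero mass --- this implicitly uses first countability of $\current(S)$, which holds but is worth stating.
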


\begin{proof}
According to the Proposition~\ref{pro:souto} of Erlandsson and Souto it suffices to show that the only limit point of $(\nu_{\g_0}^L)_{L>0}$ is the zero measure. Such limit point is absolutely continuous with respect to $\muth$ (same proposition of Erlandsson and Souto) and supported on $\ML^+(S)$ (Lemma~\ref{lem:support-current}). The conclusion comes from the fact that $\ML^+(S)$ is $\muth$--negligible (Theorem~\ref{thm:nogueira} of Danthony and  Nogueira).
\end{proof}

\begin{lemma}\label{lem:support-current}
Any limit point of the family $(\nu^L_{\g_0})_{L>0}$ is supported on $\ML^+(S)$.
\end{lemma}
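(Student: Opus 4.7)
The plan is to show that for every $\l\in\current(S)\setminus\ML^+(S)$ I can produce an open neighborhood $U\ni\l$ in $\current(S)$ with $\nu^L_{\g_0}(U)\to 0$ as $L\to\infty$; the portmanteau theorem applied to the open set $U$ will then give $\nu^\infty(U)=0$ for any weak-$\ast$ accumulation point $\nu^\infty$ of $(\nu^L_{\g_0})_{L>0}$, whence $\supp(\nu^\infty)\subseteq\ML^+(S)$.

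I would first dispose of the case $\l\notin\ML(S)$ by absolute continuity. The Erlandsson--Souto statement (Proposition~\ref{pro:souto}, invoked just below in the proof of Theorem~\ref{thm:convergence-measures}) ensures that any limit point $\nu^\infty$ is absolutely continuous with respect to $\muth$, which is concentrated on the closed subset $\ML(S)=\{c\in\current(S)\,;\,i(c,c)=0\}$. This alone yields $\supp(\nu^\infty)\subseteq\ML(S)$.

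The decisive case is $\l\in\ML^-(S)$. Here I would invoke Proposition~\ref{pro:accumulation-currents}: since $[\l]\in\PML^-(S)$ while all accumulation points of $\mcurve_{\g_0}$ in $\pcurrent(S)$ lie in $\PML^+(S)$, there exists an open neighborhood $V\subset\pcurrent(S)$ of $[\l]$ meeting $\mcurve_{\g_0}$ in only finitely many classes, say $N$. I would also note that the canonical map $\mcurve_{\g_0}\to\pcurrent(S)$ is injective: two elements of $\mcurve_{\g_0}$ related by a positive scalar $t$ would force $t$ to permute the common multiset of integer weights of $\g_0$, and comparing the largest (or smallest) weight shows $t=1$. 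Letting $\pi\colon\current(S)\setminus\{0\}\to\pcurrent(S)$ denote the canonical projection, I would take $U=\pi^{-1}(V)$, which is an open neighborhood of $\l$ in $\current(S)$. For every $\g\in\mcurve_{\g_0}$ and every $L>0$, $\tfrac{1}{L}\g\in U$ is equivalent to $[\g]\in V$, so at most $N$ multicurves contribute for each $L$, and $\nu^L_{\g_0}(U)\leq N/L^{\dim\ML(S)}\to 0$ as $L\to\infty$. The portmanteau theorem applied to the open $U$ then yields $\nu^\infty(U)\leq\liminf_n\nu^{L_n}_{\g_0}(U)=0$ for any subsequence realizing $\nu^\infty$, so $\l\notin\supp(\nu^\infty)$.

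The main obstacle is really Proposition~\ref{pro:accumulation-currents}, whose proof rests on the delicate Proposition~\ref{pro:accumulation-current} describing neighborhoods of one-sided closed leaves in $\current(S)$. Once accumulation of orbits in $\pcurrent(S)$ is controlled and absolute continuity of the limit is granted, the present lemma reduces to a counting estimate and a standard application of the portmanteau theorem.
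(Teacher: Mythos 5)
Your proof is correct and takes essentially the same approach as the paper: both reduce the lemma to Proposition~\ref{pro:accumulation-currents} via the portmanteau inequality on open sets, you simply phrase it as a contrapositive (exhibiting a neighborhood $U$ of $\l\notin\ML^+(S)$ with $\nu^L_{\g_0}(U)\to 0$) rather than arguing directly that every point of $\supp(\nu^\infty)$ projects to an accumulation point of $\mcurve_{\g_0}$ in $\pcurrent(S)$. Your explicit check that $\mcurve_{\g_0}\to\pcurrent(S)$ is injective spells out a step the paper leaves implicit, while your appeal to Erlandsson--Souto for the case $\l\notin\ML(S)$ is redundant, since Proposition~\ref{pro:accumulation-currents} already confines accumulation points to $\PML^+(S)\subset\PML(S)$.
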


\begin{proof}
Let $(L_n)_{n}$ be a sequence of positive numbers that converges to infinity, and let us assume that $(\nu^{L_{n}}_{\g_0})_{n}$ converges to a measure $\mu$ in the weak$^\ast$ topology. For any open set $U$ we have $\liminf_n \nu_{\g_0}^{L_n}(U)\geq \mu(U)$, which implies
$$\left|\frac{1}{L_n}\cdot\mcurve_{\g_0}\cap U\right|\geq \frac{\mu(U)}{2}L_n^{\dim\ML(S)}$$
for any $n$ big enough. We deduce that if $U\cap \supp(\mu)\neq \emptyset$ then the projection of $U$ in $\pcurrent(S)$ contains infinitely many points of  the projection of $\mcurve_{\g_0}$. It follows that the projection of any point $c\in\supp(\mu)$ is an accumulation point of $\mcurve_{\g_0}$ in $\pcurrent(S)$. We conclude that $c\in\ML^+(S)$ by applying Proposition~\ref{pro:accumulation-currents}.
\end{proof}

The proposition below is the nonorientable analogue of \cite[Proposition~4.1]{souto}, 
it has the same proof until the use of Masur ergodic theorem (as mentioned in the remark below).

\begin{proposition}[Erlandsson--Souto]\label{pro:souto}
Let $(L_n)_{n}$ be a sequence of positive real numbers that tends to infinity with $n$. There exists a subsequence $(L_{n_i})_i$ such that the sequence of measures $(\nu^{L_{n_i}}_{\g_0})_{i}$ converges in the weak$^\ast$ topology to a measure which is absolutely continuous with respect to the Thurston measure $\muth$.
\end{proposition}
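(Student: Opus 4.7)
The plan is to mimic the proof that Erlandsson and Souto give in the orientable case for their Proposition~4.1, stopping just short of their invocation of Masur's ergodic theorem.

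First, I would establish that the family $(\nu^L_{\g_0})_{L>0}$ is locally uniformly bounded on $\current(S)$, which by weak$^\ast$-compactness of Radon measures yields convergent subsequences. Fix a hyperbolic metric $m$ on $S$ with Liouville current $\l_m$. Any compact $K\subset\current(S)$ lies in some sublevel set $\{c~;~i(c,\l_m)\leq M\}$, and since $i(\g,\l_m)=\ell_m(\g)$ for any integral multicurve $\g$, one has
\begin{eqnarray*}
\nu^{L}_{\g_0}(K) & \leq & \frac{\left|\left\{\g\in\mcurve_{\g_0}~;~\ell_m(\g)\leq ML\right\}\right|}{L^{\dim\ML(S)}}.
\end{eqnarray*}
Because $\mcurve_{\g_0}\subset\mcurve_{k}$ with $k=i(\g_0,\g_0)$, the polynomial upper bound recalled at the beginning of Part~\ref{part:currents}, namely $\left|\left\{\g\in\mcurve_k~;~\ell_m(\g)\leq L\right\}\right|\leq C_k L^{\dim\ML(S)}$, gives a bound on $\nu^L_{\g_0}(K)$ independent of $L$. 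Exhausting $\current(S)$ by an increasing family of compacts and diagonalizing through a subsequence of $(L_n)_n$ produces a subsequence $(\nu^{L_{n_i}}_{\g_0})_i$ converging weakly to a locally finite Radon measure $\nu^\infty$ on $\current(S)$.

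Second, absolute continuity of $\nu^\infty$ with respect to $\muth$ would follow from a local domination argument. The goal is to exhibit a constant $C=C(\g_0)>0$ such that $\nu^\infty(U)\leq C\cdot\muth(U)$ for every relatively compact open $U\subset\ML(S)$ with $\muth$-negligible boundary. Following \cite{souto}, I would pass to a finite regular cover $\pi:\hat S\to S$ chosen so that each component of the preimage of $\g_0$ is simple; such a cover exists by subgroup separability of surface groups, and the construction is insensitive to orientability. On $\hat S$, the counting measures of integral simple multicurves weak$^\ast$-converge to $\muth^{\hat S}$ (Thurston, see \textsection\ref{sec:preliminaries}), and the number of lifts of an element of $\mcurve_{\g_0}$ meeting a fixed compact in $\ML(S)$ is uniformly bounded. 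Pushing the resulting inequality down through $\pi$, whose action on train-track coordinates is piecewise linear, yields the required domination, and hence that $\nu^\infty$ is absolutely continuous with respect to $\muth$.

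The main obstacle is checking that every ingredient in Erlandsson-Souto's covering argument — the cover trivializing self-intersections of $\g_0$, the compatibility of train-track charts under $\pi$, the convergence $\mu^L\to\muth$ on both $S$ and $\hat S$, and the $\Mod(S)$-equivariance of all measures involved — extends to the nonorientable setting. None of these uses orientability in an essential way. The only genuinely orientation-dependent input in the full Erlandsson-Souto proof is Masur's ergodicity theorem \cite{masur}, which they invoke \emph{after} absolute continuity has been established in order to identify the limit as a positive multiple of $\muth$. Since the present proposition only asserts absolute continuity, this last step is dropped and the proof terminates here.
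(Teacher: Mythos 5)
Your first step is sound: the polynomial bound $|\{\g\in\mcurve_k:\ell_m(\g)\leq L\}|\leq C_k L^{\dim\ML(S)}$ gives local uniform boundedness of $(\nu^L_{\g_0})_L$ and hence, along a diagonal subsequence, weak$^\ast$ precompactness. The difficulty is the absolute continuity step, and the route you take there runs straight into the obstruction the paper singles out. You propose to pass to a finite cover $\pi:\hat S\to S$ trivializing the self-intersections of $\g_0$ and to invoke, on $\hat S$, the convergence $\mu^L_{\hat S}\to\muth^{\hat S}$. But that convergence is normalized by $L^{\dim\ML(\hat S)}$, whereas $\nu^L_{\g_0}$ is normalized by $L^{\dim\ML(S)}$, and $\dim\ML(\hat S)>\dim\ML(S)$ for any nontrivial finite cover. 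Worse, the rescaled lifted orbit $\{\tfrac{1}{L}\pi^{-1}(\g):\g\in\mcurve_{\g_0}\}$ accumulates on the image of the pullback $\pi^\ast:\ML(S)\hookrightarrow\ML(\hat S)$, a piecewise-linear subset of dimension $\dim\ML(S)<\dim\ML(\hat S)$, hence $\muth^{\hat S}$-null; so the statement ``$\mu^L_{\hat S}\to\muth^{\hat S}$'' gives no control on $\nu^L_{\g_0}$, and there is nothing to push down. This is exactly what the paper means when it warns that $\dim\ML(S)$ ``loses its meaning'' after passing to a cover, and that the $L^{\dim\ML(S)}$-normalized estimates of Erlandsson--Souto (their Corollary~3.6, Theorem~1.2, Lemma~3.5) do \emph{not} transfer by lifting.

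The paper's point is that the relevant counting in \cite{souto} is carried out directly on $S$ via train-track-like graphs (radallas): the assignment $\g\mapsto\omega_\g\in\N^{E(\tau)}$ is bounded-to-one (this is where the cover of \cite[Proposition~2.1]{souto} enters, and only there), and the switch equations cut out a linear subspace of dimension exactly $\dim\ML(S)$, giving the $O(L^{\dim\ML(S)})$ bound on carried multicurves without ever leaving $S$. That combinatorial machinery is orientation-blind, so it extends verbatim to nonorientable surfaces. To close your gap, replace the cover-and-push-down argument by this direct radalla-counting estimate; the cover is used by Erlandsson--Souto only for the bounded-to-one correspondence, not for the normalized counting bound.
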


\begin{remark}
We recall that, if $S$ is orientable, then the action of $\Mod(S)$ on $\ML(S)$ is ergodic with respect to $\muth$ (Masur \cite{masur}). This explains the difference between Theorem~\ref{thm:convergence-measures} and \cite[Proposition~4.1]{souto}.   
\end{remark}

 The proof of \cite[Proposition~4.1]{souto} involves all the preceding results of the article. It might not be clear that they all apply to nonorientable surfaces. Let us say few words about it to convince the reader.\par
  Some results extend directly to nonorientable surfaces by lifting the situation to the orientation cover (see for instance \cite[Proposition~2.1]{souto}), but in general this does not work. The main problem occurs when looking at some limit of the form $\lim_{L\rightarrow\infty} \frac{1}{L^{\dim S}} |\{\g\in\ast ~;~\ell_m(\g)\leq L \}|$ (see Corollary~3.6, Theorem~1.2 and Lemma~3.5 in \cite{souto}). Indeed the quantitiy $\dim\ML(S)$ looses its meaning when we pass to the orientation cover (it does not represent anymore the dimension of the space of measured laminations of the surface we are dealing with). Still the results concerned extend to the nonorientable setting. The idea behind them is to count multicurves using some specific graphs (train--tracks or a generalization of them called \emph{radalla}). To a multicurve $\g$ immersed on such a graph $\tau$ we associate the vector $\omega_\g\in \N^{E(\tau)}$ that gives the number of times each edge of the graph is followed. This correspondance is bounded--to--one (\cite[Proposition~2.1]{souto}) so that counting multicurves is almost counting the vectors in $\N^{E(\tau)}$ that satisfy some integral equations (the \emph{switch equations}). These equations define a linear subspace whose dimension is exactly $\dim\ML(S)$. This explains that the number of vectors $\omega_\g$ of norm less than $L$ is bounded by a constant times $L^{\dim\ML(S)}$. This kind of argument works perfectly well for nonorientable surfaces.\par
   
\section{Proof of the theorem}
  
 We prove the following theorem which is more general than Theorem~\ref{thm:2}: 
 
\begin{theorem}\label{thm:final}
Let $S$ be a nonorientable surface of finite type with $\chi(S)<0$. For any $\g_0\in\mcurve$, and for any filling geodesic current $c\in\current(S)$, we have
\begin{eqnarray*}
\lim_{L\rightarrow \infty} \frac{\left| \left\{ \g\in\mcurve_{\g_0}~;~i(c,\g)\leq L \right\} \right|}{L^{\dim\ML(S)}} & = & 0.
\end{eqnarray*}
\end{theorem}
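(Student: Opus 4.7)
The plan is to interpret the counting quantity as the mass that the counting measure $\nu^L_{\g_0}$ (introduced in Part~\ref{part:currents}) assigns to the unit ball of the functional $i(c,\cdot)$ on $\current(S)$. Setting
$$B_c(1) \;=\; \{\mu\in\current(S)~;~i(c,\mu)\leq 1\},$$
the definition of $\nu^L_{\g_0}$ and the bilinearity of $i(\cdot,\cdot)$ give
$$\nu^L_{\g_0}(B_c(1)) \;=\; \frac{|\{\g\in\mcurve_{\g_0}~;~i(c,\tfrac{1}{L}\g)\leq 1\}|}{L^{\dim\ML(S)}} \;=\; \frac{|\{\g\in\mcurve_{\g_0}~;~i(c,\g)\leq L\}|}{L^{\dim\ML(S)}}.$$
So it suffices to show $\lim_{L\to\infty}\nu^L_{\g_0}(B_c(1))=0$.

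The key step is to observe that $B_c(1)$ is \emph{compact} in $\current(S)$. This uses the standing hypothesis that $c$ is filling: by a classical result of Bonahon (\cite{bonahon-inventiones}), for a filling geodesic current $c$ the intersection function $i(c,\cdot):\current(S)\to\R_+$ is proper, so its sublevel sets are compact. In particular $B_c(1)$ is compact. Note that when $c=\l_m$ is the Liouville current of a finite area hyperbolic metric we recover the original formulation via $i(\l_m,\g)=\ell_m(\g)$, which covers Theorem~\ref{thm:2}.

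The final step is a standard weak$^\ast$ argument. By Theorem~\ref{thm:convergence-measures} we have $\nu^L_{\g_0}\to 0$ in the weak$^\ast$ topology on Radon measures on $\current(S)$. Since $\current(S)$ is locally compact and Hausdorff and $B_c(1)$ is compact, Urysohn's lemma provides a continuous compactly supported function $\f:\current(S)\to [0,1]$ with $\f\geq \mathbf{1}_{B_c(1)}$. Then
$$0 \;\leq\; \nu^L_{\g_0}(B_c(1)) \;\leq\; \int \f\, \diff \nu^L_{\g_0} \;\xrightarrow[L\to\infty]{}\; 0,$$
which yields the theorem.

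The substantive content of the theorem is already carried by Theorem~\ref{thm:convergence-measures}, which in turn rests on Proposition~\ref{pro:accumulation-currents} (accumulation of $\mcurve_{\g_0}$ lies in $\PML^+(S)$) and the Erlandsson--Souto absolute continuity result (Proposition~\ref{pro:souto}), combined with the Danthony--Nogueira fact that $\ML^+(S)$ is Thurston--negligible. The only genuinely new ingredient needed here is the properness of $i(c,\cdot)$ for filling $c$; this is what allows the extension to arbitrary filling currents, in particular to length functions of negatively curved Riemannian metrics or of flat metrics coming from quadratic differentials, for which no analogue of Mirzakhani's integration over moduli space is available.
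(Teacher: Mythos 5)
Your proof is correct and follows exactly the paper's own argument: rewrite the count as $\nu^L_{\g_0}(B_c(1))$, invoke compactness of $B_c(1)$ for filling $c$ (the paper cites Martelli, Proposition~8.2.25, for this fact), and apply the weak$^\ast$ convergence $\nu^L_{\g_0}\to 0$ from Theorem~\ref{thm:convergence-measures}. Your Urysohn bound $\nu^L_{\g_0}(B_c(1))\leq\int\f\,\diff\nu^L_{\g_0}$ just spells out the step the paper leaves implicit in "we conclude by compacity."
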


\begin{remark}
It is the nonorientable analogue of \cite[Proposition~4.3]{souto}.
\end{remark}

We say that a geodesic current \emph{fills} the surface if every geodesic of $\tilde S$ intersects transversely a geodesic in its support. The Liouville current of a hyperbolic metric has full support, therefore it is a filling geodesic current. For our purpose, the important property is the following: if $c$ is a filling geodesic current, then the unit ball $B_c(1)=\{\l\in\current(S)~;~i(c,\l)\leq 1\}$ is compact (see \cite[Proposition~8.2.25]{martelli}).

\begin{proof}
We have 
$$\lim_{L\rightarrow \infty} \frac{\left| \left\{ \g\in\mcurve_{\g_0}~;~i(c,\g)\leq L \right\} \right|}{L^{\dim\ML(S)}}  = \lim_{L\rightarrow \infty} \nu_{\g_0}^L(B_c(1))$$
We conclude by Theorem~\ref{thm:convergence-measures} and by compacity of $B_c(1)$.
\end{proof}

\part{Geometry of Teichm\"uller spaces}

In this last part, we study the Teichm\"uller and Weil--Petersson geometry of Teichm\"uller spaces. We focus on the volume of the moduli space, which plays a key role in the work \cite{mirzakhani-annals} of Mirzakhani. We first recall the definitions of the Teichm\"uller metric, the Teichm\"uller flow and the Teichm\"uller volume~; we note that they extend to the nonorientable setting. 

\section{Preliminaries on Teichm\"uller geometry}\label{sec:teichmuller}

\subsection*{The Teichm\"uller metric in the orientable setting}
Let $T$ be a smooth closed oriented surface of genus $g\geq 2$. 

\subsubsection*{Quadratic differentials}
Given a complex structure $X$ on $T$, a \emph{quadratic differential} is a tensor $q$ which is locally of the form $q_z=f(z) \diff z^2$ with $f$ holomorphic. The Riemannian metric $|q|$ is flat with holonomy in $\{\pm \mrm{Id}\}$ and conical singularities at the zeros of $q$.
Equivalently $(T,|q|)$ is isometric to the quotient of a polygon of $(\C,|\diff z|^2)$ by a pairing of the sides realized by isometries of the form $z\mapsto \pm z+c$ ($c\in\C$). There are two preferred orthogonal geodesic foliations on $(T,|q|)$: the horizontal and the vertical ones which are respectively given by the real $1$--forms $\real(q)$ and $\im(q)$. Each $1$--form defines a transverse measure on the corresponding foliation. We abusively denote by $\real(q)$ and $\im(q)$ these transverse measured foliations. The couple $(X,q)$ is completely determined by the pair $(\real(q), \im(q))$ or by the polygon $P\subset (\C,\diff z^2)$ and the pairing of its sides.

\subsubsection*{Bundle of quadratic differentials}
The bundle $\qteich(T)\rightarrow \teich(T)$ of isotopy classes of pairs $(X,q)$ is called the \emph{bundle of quadratic differentials}.\par

 We denote by $\qteich(T;a_1,\ldots,a_k)$ the subset of $\qteich(T)$ that consists in quadratic differentials whose zeros have multiplicities $a_1,\ldots,a_k$. From the Gauss--Bonnet formula we have $-2\chi(T)=a_1+\ldots+a_k$. The subset $\qteich(T;a_1,\ldots,a_k)$ is a smooth submanifold of dimension $-2\chi(T)+2k$. In particular $\qteich(T;1,\ldots,1)$ is a dense open subset of full measure (with respect to the Lebesgue class).\par
 
  Let us denote by $Z$ the set of zeros of a quadratic differential $q\in\qteich(T;1,\ldots,1)$. The flat surface $(T,|q|)$ admits a geodesic triangulation whose set of vertices is $Z$. Let us choose a collection of edges of the triangulation $(e_1,\ldots,e_{-2\chi(T)})$ which realizes a basis of $\Hgy_1(T,Z;\R)$. We call $q$--\emph{holonomy} of $e_i$ the quantity 
 $$\mrm{hol}_q(e_i)=\int_{e_i} \sqrt{q},$$ 
which depends on the choice of a branch of $\sqrt{q}$ and of an orientation of $e_i$. Any quadratic differential $q'$ sufficiently close to $q$ admits a geodesic triangulation in the same isotopy class (relative to $Z$). So that one can define a continuous map $q'\mapsto (\hol_{q'}(e_i))_i$ on a neighborhood of $q$ in $\qteich(T)$. This map is actually a local diffeomorphism. Moreover the set of such maps forms an atlas for a piecewise linear integral structure on $\qteich(T;1,\ldots,1)$. Note that $q$ corresponds to an integral point if and only if 
$\hol_q(e_i)\in\Z\oplus i\Z$ for any $i$. The induced notion of volume, called \emph{Teichm\"uller volume}, is locally given by 
$$\bigwedge_i \diff \real(\hol_q(e_i))\wedge \diff\im(\hol_q(e_i)).$$

\subsubsection*{From quadratic differentials to measured laminations}
The following map is a $\Mod(S)$--invariant homeomorphism
$$\begin{array}{clc}
\qteich(T) & \longrightarrow &  \ML(T)\times \ML(T)-\Delta \\
 q & \longmapsto & (\real(q), \im(q))
\end{array}$$ 
where
$$\Delta=\{(\l,\eta)~;~i(\g,\l)+i(\g,\eta)=0\textnormal{ for some } \g\in\ML(T)\}.$$
Mirzakhani (\cite[Lemma~4.3]{mirzakhani-imrn}) showed that the restriction of this map to $\qteich(T;1,\ldots,1)$ is a piecewise integral linear isomorphism, in particular it preserves the volume. Note that $|\im \hol_q(e_i)|=i(e_i,\real(q))$ and $|\real \hol_q(e_i)|=i(e_i,\im(q))$.\par

 In the sequel we adopt the point of view of measured laminations jusitified by the above isomorphism. One advantage of this point of view is that all notions extends immediately to all surfaces of finite type with negative Euler characteristic.
 
\subsubsection*{The Teichm\"uller metric}
 There is a canonical identification between $\qteich(T)$ and the cotangent bundle of $\teich(T)$, whereas the bundle of Beltrami differentials on $T$ identifies with the tangent bundle of $\teich(T)$. The pairing between Beltrami and quadratic differentials induces an isomorphism between the corresponding bundles. Therefore one can define a Finsler metric on $\teich(T)$ through $\qteich(T)$.\par
 
  The \emph{Teichm\"uller metric} is the Finsler metric on $\teich(T)$ defined by the norm 
 $$\|q \|=\int_T |q|=\mrm{area}(q)=i(\real(q),\im(q)).$$
 We set 
 $$\uqteich(T)=\{q\in\qteich(T)~;~\|q\|=1\}.$$
The  \emph{Teichm\"uller flow} is the geodesic flow of the Teichm\"uller metric.
In terms of polygons, the Teichm\"uller geodesic passing through $q\in\qteich(T)$ at $t=0$ is given by
$$t\mapsto\begin{pmatrix} e^t & 0\\ 0 & e^{-t} \end{pmatrix}\cdot P\quad \textnormal{for all }t\in\R,$$
where $P$ is a polygon that represents $q$. In terms of measured foliations, the same Teichm\"uller geodesic is given by
$t\longmapsto (e^{t}\real(q),e^{-t} \im(q))$ for all $t\in\R$.\par
 The trajectory of the \emph{Teichm\"uller horocyclic flow} passing through $q\in\qteich(T)$ at $t=0$ is given by
$$t\mapsto\begin{pmatrix} 1 & t\\ 0 & 1 \end{pmatrix}\cdot P\quad \textnormal{for all }t\in\R,$$
In terms of measured laminations, the Teichm\"uller horocyclic flow corresponds to the earthquake flow (see \cite{mirzakhani-imrn}).

\subsection*{Definitions in the nonorientable setting}
 Let $S$ be a closed nonorientable surface with $\chi(S)<0$. Then the bundle of quadratic differentials $\qteich(S)$ is the set of isotopy classes of pairs $(X,q)$ where $X$ is a dianalytic structure on $S$ and $q$ a quadratic differential with respect to $X$. We recall that a \emph{dianalytic structure} is given by an atlas whose changes of charts are holomorphic or anti--holomorphic.\par

\subsubsection*{The flat surface $(X,|q|)$} It is isometric to the quotient of a polygon $P\subset(\C,|dz|^2)$ by a pairing of the sides realized by isometries of the form $z\mapsto \pm z+c$ or $z\mapsto \pm\bar z+c$ with $c\in \C$. Thus the holonomy is not in $\{\pm \mrm{Id}\}$ anymore, and there are geodesics (possibly closed) with self--intersections outside the singularities. However the holonomy preserves the horizontal and vertical directions, thus the horizontal and vertical geodesics do not self--intersect outside the singularities, and there are two measured foliations $\real(q)$ and $\im(q)$.\par
 
\subsubsection*{The Teichm\"uller flow} 
  The linear part of the isometry $z\mapsto \pm \bar z+c$ is diagonal, thus it commutes with the linear map $(x,y)\mapsto (e^t,e^{-t})$.
It follows that the identifications of the sides of the polygon
$$\begin{pmatrix} e^t & 0\\ 0 & e^{-t} \end{pmatrix}\cdot P\subset (\C,\diff z^2)$$
are of the form $z\mapsto \pm z+c$ or $z\mapsto \pm\bar z+c$. Therefore the polygon and the pairing of the sides determine a quadratic differential on $S$. This shows that the Teichm\"uller flow is well--defined on $\qteich(S)$. It is still given by $t\mapsto (e^t\real(q),e^{-t}\im(q))$ in terms of measured laminations.\par

\subsubsection*{The horocyclic flow} 
On the contrary there is no Teichm\"uller horocyclic flow on $\qteich(S)$. Indeed, the conjugate of $z\mapsto \pm \bar z +c$ by $(x,y)\mapsto (x+ty,y)$ is not an isometry whenever $t\neq 0$.\par

 As well--known, the Teichm\"uller space of the Klein bottle can be identified with the geodesic $i\R^\ast_+$ of $\Hyp$, where $\Hyp$ has to be understood as the Teichm\"uller space of the torus equipped with its Teichm\"uller metric. Clearly the horocyclic flow of $\Hyp$ does not stabilize $i\R_+^\ast$.

\subsubsection*{Volume and piecewise integral linear structure}
These structures extends readily to the nonorientable setting. As in the orientable case, it is possible to define them through the holonomy of quadratic differentials, or through the real and imaginary measured foliations.\par 

\subsubsection*{The orientation cover point of view}
Let us denote by $\widehat S$ the orientation cover of $S$, and by $\f$ its automorphism. Any quadratic differential $(X,q)$ on $S$ lifts to a quadratic differential $(\widehat X,\hat q)$ on $\widehat S$. The automorphism $\f$ changes $\hat q$ into its conjugate. The map $q\mapsto \hat q$ identifies $\qteich(S)$ with the fixed--point locus $\fix(\f)\subset \qteich(\widehat S)$ of the mapping class $[\f]$. The bundle structure of $\qteich(S)$ is the one induced by $\qteich(\widehat S)$ on $\fix(\f)$.\par

\subsection*{Volume on $\uqmoduli(S)$}
 The \emph{moduli space $\qmoduli(S)$ of quadratic differentials on $S$} is the quotient $\qteich(S)/\Mod(S)$. Similarly the \emph{moduli space of unit area quadratic differentials on $S$} is $\uqmoduli(S)=\uqteich(S)/\Mod(S)$.\par
 
  We define a Borel measure $\vol_1$ on $\uqteich(S)$ as follows: we set 
$$\vol_1(U)=\vol((0,1)\cdot U)\quad \textnormal{for any measurable } U\subset \qteich^1(S)$$ 
where $\vol$ is the Teichm\"uller volume on $\qteich(S)$. Since the action of $\Mod(S)$ on $\uqteich(S)$ is proper and discontinuous, the measure $\vol_1$ induces a measure on $\uqmoduli(S)$ still denoted by $\vol_1$.\par

\section{Teichm\"uller volume}
In this section we prove the following theorem:

\begin{theorem}
Let $S$ be a nonorientable surface of finite type with $\chi(S)<0$.
The moduli space $\uqmoduli(S)$ of unit area quadratic differentials on $S$ has infinite volume.
\end{theorem}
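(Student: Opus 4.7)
The plan is to identify $\qteich(S)$ with pairs of measured laminations via $q\mapsto(\real(q),\im(q))$ and to exhibit an explicit infinite $(\muth\otimes\muth)$-volume region in the $\Mod(S)$-quotient of $\{0<i(\lambda,\eta)<1\}$, using the product structure of the Thurston measure on the balls $\cone(\g)$. The argument reduces the infinitude of $\vol_1(\uqmoduli(S))$ to the \emph{finite positivity} of $\vol_1(\uqmoduli(S'))$ for the orientable surface $S'$ obtained by cutting $S$ along a maximal one-sided multicurve, where the Masur--Veech theorem is available.

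First I would invoke Mirzakhani's piecewise integral linear isomorphism (\textsection\ref{sec:teichmuller}) between the principal stratum of $\qteich(S)$ and a full-measure open subset of $\ML(S)\times\ML(S)\setminus\Delta$, which transports the Teichm\"uller volume to $\muth\otimes\muth$ and the area to $i(\cdot,\cdot)$. Combining this with the defining formula $\vol_1(U)=\vol((0,1)\cdot U)$, the theorem reduces to the statement that $\{[(\lambda,\eta)]:0<i(\lambda,\eta)<1\}$ has infinite $\muth\otimes\muth$-measure in the quotient $(\ML(S)^2\setminus\Delta)/\Mod(S)$.

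Next I would fix a maximal collection $\g=\g_1+\cdots+\g_k$ of pairwise disjoint, non-isotopic one-sided simple closed curves in $S$; maximality forces the complement $S-\g$ to be orientable with $\chi(S-\g)=\chi(S)<0$. Using the product decomposition $\diff\muth^S|_{\cone(\g)}=\diff t_1\cdots \diff t_k\otimes \diff\muth^{S-\g}$, with $\lambda=\sum t_i\g_i+\lambda'$ and similarly $\eta=\sum s_j\g_j+\eta'$, the bilinearity of $i(\cdot,\cdot)$ combined with $i(\g_i,\g_j)=i(\g_i,\lambda')=i(\g_j,\eta')=0$ yields the crucial identity $i(\lambda,\eta)=i(\lambda',\eta')$. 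Setting $A=\cone(\g)\times\cone(\g)\cap\{0<i<1\}$, Fubini gives
\[
(\muth\otimes\muth)(A)=\Bigl(\int_{\R_+^k}\diff t\Bigr)\Bigl(\int_{\R_+^k}\diff s\Bigr)\cdot (\muth^{S-\g}\otimes\muth^{S-\g})\bigl(\{0<i(\lambda',\eta')<1\}\bigr).
\]

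Finally I would descend to the $\Mod(S)$-quotient through the key observation: if $\phi\in\Mod(S)$ satisfies $\phi(A)\cap A\neq\emptyset$, then some $\lambda$ has both $\{\g_i\}$ and $\{\phi\g_i\}$ as disjoint one-sided closed leaves, so their union is a disjoint family of one-sided simple closed curves and, by the maximality of $\g$, the two sets coincide, forcing $\phi\in\mrm{Stab}(\g)$. Hence the image of $A$ in the quotient is $A/\mrm{Stab}(\g)$; since $\mrm{Stab}(\g)$ fixes the $t_i,s_j$ up to a finite permutation and acts on $(\lambda',\eta')$ through a finite-index subgroup of $\Mod(S-\g)$, we obtain
\[
\vol_1(\uqmoduli(S))\geq c\cdot\Bigl(\int_{\R_+^{2k}}\diff t\, \diff s\Bigr)\cdot\vol_1(\uqmoduli(S-\g))=+\infty,
\]
where $c>0$ absorbs the finite-index and permutation constants, and $\vol_1(\uqmoduli(S-\g))$ is finite and \emph{strictly positive} by the Masur--Veech theorem applied to the orientable surface $S-\g$. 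The main obstacle of the proof, around which the whole argument is built, is the descent step: maximality of $\g$ is precisely what prevents $\Mod(S)$ from folding the cuspidal $\R_+^{2k}$-direction (which parametrizes the heights of flat one-sided M\"obius bands in the quadratic differential) into a finite-volume fundamental domain, thereby decoupling the convergent orientable contribution from the genuinely nonorientable divergence.
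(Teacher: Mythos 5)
Your proposal has a fatal gap at its very first structural step: the set $A=\cone(\g)\times\cone(\g)\cap\{0<i<1\}$ is entirely contained in the degenerate locus $\Delta$, and hence corresponds to \emph{no} quadratic differentials whatsoever. Recall from \textsection\ref{sec:teichmuller} that the identification $\qteich(S)\cong\ML(S)\times\ML(S)-\Delta$ excludes the set $\Delta=\{(\l,\eta):i(\zeta,\l)+i(\zeta,\eta)=0\text{ for some }\zeta\in\ML(S)\}$. If $(\l,\eta)\in\cone(\g)\times\cone(\g)$, then $\g_1$ is a closed leaf of \emph{both} $\l$ and $\eta$, so $i(\g_1,\l)=i(\g_1,\eta)=0$; taking $\zeta=\g_1$ shows $(\l,\eta)\in\Delta$. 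Geometrically, a closed curve cannot simultaneously be a leaf of the horizontal and of the vertical foliation of a flat surface: if it is a horizontal closed leaf then it is the core of a flat cylinder or M\"obius band, which the vertical foliation must cross transversally. Thus your Fubini computation, while arithmetically correct in $\ML(S)\times\ML(S)$, contributes zero to $\vol_1(\uqmoduli(S))$, and the remaining steps (the stabilizer argument and the invocation of Masur--Veech on $S-\g$) are moot.

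This is precisely why the paper works with \emph{two distinct} maximal one-sided multicurves $\g$ and $\d$ that \emph{fill} $S$ (Proposition~\ref{pro:volume-balls}). The filling condition $i(\g,\zeta)+i(\d,\zeta)>0$ for every $\zeta$ is exactly what guarantees $\cone(\g)\times\cone(\d)\cap\Delta=\emptyset$, so that these pairs genuinely parametrize quadratic differentials. The price is that the crucial identity you used, $i(\l,\eta)=i(\l',\eta')$, fails for $\l\in\cone(\g)$, $\eta\in\cone(\d)$: since the $\g_i$'s now intersect the $\d_j$'s, the intersection number acquires cross-terms $\sum_{i,j}t_is_j\,i(\g_i,\d_j)$ that grow with the cone coordinates, and one can no longer split off a clean $\int_{\R_+^{2k}}\diff t\,\diff s$ factor. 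The paper's Lemma~\ref{lem:infinite-volume} handles this by constructing a one-parameter family of rescaled product neighborhoods $W_t=(t\cdot(\g+U))\times(t^{-1}\cdot V)$ whose union foliates an infinite-volume region staying under the area constraint, with a compactness argument (via Ascoli and the Lipschitz property of $i$) to show $U,V$ are open with positive finite measure. The passage to the quotient (Lemma~\ref{lem:finite-fibers}) then uses a stabilizer argument close in spirit to yours, but for the pair $(\g,\d)$. If you want to rescue your strategy, the essential missing ingredients are (1) choosing the two cones to be disjoint from $\Delta$ via the filling condition, and (2) replacing the global Fubini split by a rescaled foliation that respects the area bound despite the cross-terms.
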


Actually we are going to prove a more precise result: 

\begin{proposition}\label{pro:volume-balls}
If $\g$ and $\d$ are two maximal one--sided simple multicurves that fill $S$.
Then the projection of $\cone(\g)\times\cone(\d)$ in $\uqmoduli(S)$ has infinite volume.
\end{proposition}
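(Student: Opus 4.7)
The plan is to exhibit infinitely many pairwise disjoint open sets of equal positive volume inside the projection of $\cone(\g)\times\cone(\d)$ to $\uqmoduli(S)$, by transporting a single such set along the Teichm\"uller flow. Under the identification $\qteich(S)\simeq(\ML(S)\times\ML(S))\setminus\Delta$ recalled in \textsection\ref{sec:teichmuller}, the flow acts by $\phi_t(\l,\eta)=(e^t\l,e^{-t}\eta)$, the area equals $i(\l,\eta)$, and the Teichm\"uller volume is $\muth\otimes\muth$. In particular, $\phi_t$ preserves $\uqteich(S)$, commutes with $\Mod(S)$, and descends to a $\vol_1$--preserving flow on $\uqmoduli(S)$. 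Since $\cone(\g)$ and $\cone(\d)$ are each stable under positive scaling, $\phi_t$ also preserves $\cone(\g)\times\cone(\d)$.

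First I would check that $U=(\cone(\g)\times\cone(\d))\cap\uqteich(S)$ is a nonempty open subset. Openness is immediate. For nonemptiness, pick $\l_0\in\cone(\g)$ and $\eta_0\in\cone(\d)$: since $\g\cup\d$ fills $S$ and each component of $\g$ appears with positive weight in $\l_0$ and crosses some component of $\d$ (by the filling hypothesis), one gets $i(\l_0,\eta_0)>0$, and we rescale to land on the unit area hypersurface, still inside $\cone(\g)\times\cone(\d)$ by scaling invariance. Next, invoke Proposition~\ref{pro:invariant-function} and define the continuous $\Mod(S)$--invariant function
$$F:\ML(S)\times\ML(S)\longrightarrow\R_+,\qquad F(\l,\eta)=w^-_1(\l),$$
the largest one--sided weight of $\l$. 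Because $\g$ is a \emph{maximal} one--sided multicurve, the one--sided closed leaves of any $\l\in\cone(\g)$ are precisely the components of $\g$, so $F$ is strictly positive on $\cone(\g)\times\cone(\d)$. Its key property is the homogeneity $F(\phi_t(\l,\eta))=e^tF(\l,\eta)$.

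By continuity of $F$, I would then shrink $U$ to a nonempty open subset $V\subset U$ on which $F$ takes values in some interval $(a,b)$ with $0<a<b<\infty$. Setting $t_n=2n\log(b/a)$, the intervals $F(\phi_{t_n}V)\subset(ae^{t_n},be^{t_n})$ are pairwise disjoint. Since $F$ is $\Mod(S)$--invariant, it descends to a function $\bar F$ on $\uqmoduli(S)$, so the images $\pi(\phi_{t_n}V)$ are themselves pairwise disjoint in $\uqmoduli(S)$. Using the commutation of $\phi_t$ with $\Mod(S)$ and the flow--invariance of $\vol_1$, one has $\vol_1(\pi(\phi_{t_n}V))=\vol_1(\phi_{t_n}(\pi V))=\vol_1(\pi V)$, and this common value is positive because $\pi V$ is a nonempty open subset of the orbifold $\uqmoduli(S)$. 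Summing over $n$ gives infinite volume inside the projection of $\cone(\g)\times\cone(\d)$, as desired.

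The only delicate point, and the one I would take most care to justify, is the compatibility of the orientable picture with the nonorientable setting: that the map $q\mapsto(\real(q),\im(q))$ identifies $\qteich(S)$ with an open subset of $\ML(S)\times\ML(S)$ in a way that sends the Teichm\"uller volume to $\muth^S\otimes\muth^S$ (at least generically) and intertwines the Teichm\"uller flow with $(\l,\eta)\mapsto(e^t\l,e^{-t}\eta)$. Once this is granted, the rest of the argument is elementary: a standard "escape to infinity" argument powered by the $\Mod(S)$--invariant homogeneous function $F$, exactly in the spirit of how one proves infinite volume for geometrically finite Kleinian groups with non--full limit set by flowing transversally to the convex core.
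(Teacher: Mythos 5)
Your proof is correct, and it takes a genuinely different route from the paper. The paper argues in two steps: first it shows (Lemma~\ref{lem:finite-fibers}) that $\pi$ restricted to $\cone(\g)\times\cone(\d)$ is a ramified covering of \emph{finite} degree onto its image, so the problem reduces to showing that the Teichm\"uller volume of the sub-level set $\{(\l,\mu)\in\cone(\g)\times\cone(\d)~;~i(\l,\mu)\leq 1\}$ is infinite; then it carries out that computation directly (Lemma~\ref{lem:infinite-volume}), foliating the set by scaling and invoking the product splitting \eqref{eq:measure} of $\muth$ on $\cone(\g)$, which needs a careful argument (relative compactness of $V$, Lipschitz continuity of $i(\cdot,\cdot)$, Ascoli) to make the relevant sets measurable and of positive finite Thurston mass. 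You instead dispense with both lemmas: you use the Teichm\"uller flow $\phi_t$ (which preserves the cone $\cone(\g)\times\cone(\d)$, the unit-area locus, and $\vol_1$) together with the $\Mod(S)$--invariant, flow--homogeneous scalar $F=w^-_1$ from Proposition~\ref{pro:invariant-function} to manufacture infinitely many pairwise disjoint flow--translates of a single open set of positive volume inside the projection. This is the classical ``escape to infinity'' scheme, and it makes the mechanism transparent: the same invariant function $w^-$ that breaks ergodicity of the flow is what forces infinite volume, so your argument ties Proposition~\ref{pro:volume-balls} directly to the non--ergodicity statement and to the Kleinian picture in \textsection\ref{sec:conclusion}. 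The trade--off, which you correctly flag, is that you lean more heavily on the identification $\qteich(S)\simeq\ML(S)\times\ML(S)\setminus\Delta$ intertwining the flow with $(\l,\eta)\mapsto(e^t\l,e^{-t}\eta)$ and carrying the Teichm\"uller volume to $\muth\otimes\muth$; the paper sets this up in \textsection\ref{sec:teichmuller} and in fact uses it as well, so this is a justified dependence rather than a gap. Two minor remarks: the strict positivity of $F$ on $\cone(\g)\times\cone(\d)$ already follows from the definition of $\cone(\g)$ (the $\g_i$ are closed one--sided leaves with positive weight), without invoking maximality of $\g$; and the filling hypothesis is also what guarantees $\cone(\g)\times\cone(\d)$ avoids $\Delta$, which is worth stating explicitly since it is what makes the projection to $\uqmoduli(S)$ meaningful in the first place.
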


A multicurve is \emph{one--sided} if each of its components is one--sided. A one--sided simple multicurve $\g=\g_1+\ldots+\g_n$ is \emph{maximal} if its complement $S-\g$ is orientable. Two simple multicurves $\g$ and $\d$ \emph{fill} $S$ if $i(\g,\eta)+i(\d,\eta)>0$ for any simple closed geodesic $\eta$. Note that $\g$ and $\d$ can not share a component.

\begin{proof}[Proof of Proposition~\ref{pro:volume-balls}]
The projection $\pi:\qteich(S)\rightarrow\qmoduli(S)$ is a covering with negligible ramification locus. Its restriction $\cone(\g)\times\cone(\d)\rightarrow\pi(\cone(\g)\times\cone(\d))$ is a ramified covering of finite degree $d\geq 1$ (Lemma~\ref{lem:finite-fibers}). Thus the volume of the projection of $\cone(\g)\times\cone(\d)$ on $\uqmoduli(S)$ is
$\frac{1}{d}\cdot \vol(\{(\l,\mu)\in \cone(\g)\times \cone(\d)~;~i(\g,\d)\leq 1\})$. But this quantity is infinite (Lemma~\ref{lem:infinite-volume}).\end{proof}

Let us denote by $\pi$ the projection $\pi:\qteich(S)\rightarrow\qmoduli(S)$. This is a ramified covering since the action of $\Mod(S)$ on $\qteich(S)$ is a proper and discontinuous. Here \emph{ramified} means that, in the neighborhood of a singular point, the projection $\pi$ is an \emph{\'etale covering} up to the action of a finite group. 

\begin{lemma}\label{lem:finite-fibers}
The restriction of $\pi$ to $\cone(\g)\times\cone(\d)\rightarrow \pi(\cone(\g)\times\cone(\d))$ is a ramified covering of finite degree.
\end{lemma}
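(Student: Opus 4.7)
Using the identification $\qteich(S)\simeq \ML(S)\times\ML(S)\setminus\Delta$, I first observe that $\cone(\g)\times\cone(\d)$ lies entirely in $\qteich(S)$: the filling hypothesis on $\{\g,\d\}$ means that for every $\eta\in\simple$ one has $i(\g,\eta)+i(\d,\eta)>0$, and this extends by continuity and linearity to all $\eta\in\ML(S)$, so no $(\l,\mu)\in\cone(\g)\times\cone(\d)$ is killed by a common curve; hence $\cone(\g)\times\cone(\d)\cap\Delta=\emptyset$. The projection $\pi$ is a ramified cover because $\Mod(S)$ acts properly discontinuously on $\qteich(S)$, so the restriction to $\cone(\g)\times\cone(\d)$ is a ramified cover onto its image, and the only thing to check is that the fibers have bounded cardinality.

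Fix $(\l,\mu)\in\cone(\g)\times\cone(\d)$. The fiber of $\pi|_{\cone(\g)\times\cone(\d)}$ over $\pi(\l,\mu)$ is
\[
\{g\cdot(\l,\mu)\;;\;g\in\Mod(S),\ g\l\in\cone(\g),\ g\mu\in\cone(\d)\}.
\]
The key observation is that since $\g$ is a \emph{maximal} one--sided simple multicurve, the complement $S-\g$ is orientable; therefore any $\l'\in\cone(\g)$ has its one--sided closed leaves equal exactly to the components of $\g$ (the piece of $\l'$ lying in $S-\g$ cannot contribute one--sided closed leaves). Consequently, if $g\l\in\cone(\g)$ then $g$ permutes the components of $\g$ setwise. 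The symmetric argument applied to $\mu$ and $\d$ shows that $g$ also permutes the components of $\d$. Hence the fiber is contained in $H\cdot(\l,\mu)$, where
\[
H\;=\;\{g\in\Mod(S)\;;\;g\cdot(\g_1\cup\cdots\cup\g_k)=\g_1\cup\cdots\cup\g_k,\ g\cdot(\d_1\cup\cdots\cup\d_l)=\d_1\cup\cdots\cup\d_l\}.
\]

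It remains to show that $H$ is finite. Since $\{\g,\d\}$ fills $S$, the union $\g\cup\d$ cuts $S$ into a finite collection of cells (disks and, in the punctured/bordered case, peripheral annular pieces): this is the filling condition. The group $H$ acts on this finite cell decomposition, and an element of $H$ that acts trivially on it is isotopic to the identity by a standard Alexander--method argument (a mapping class fixing a filling $1$--complex componentwise is trivial in $\Mod(S)$). Therefore $H$ injects into the finite group of combinatorial automorphisms of the cell decomposition, proving $|H|<\infty$. The degree of the ramified cover $\cone(\g)\times\cone(\d)\to\pi(\cone(\g)\times\cone(\d))$ is then at most $|H|$, and is exactly $|H|$ on the open dense locus of $(\l,\mu)$ with trivial $\Mod(S)$--stabilizer, completing the proof.

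\textbf{Main obstacle.} The only nonroutine step is the finiteness of $H$. The subtle point is to make sure that one--sided components of $\g$ and $\d$ are handled correctly by the Alexander method; one way to proceed cleanly is to pass to the orientation double cover $\widehat S$, lift $\g\cup\d$ to a filling (multi)curve system on $\widehat S$, invoke the classical finiteness of the stabilizer of a filling system on an orientable surface, and then descend by restricting to the subgroup commuting with the deck involution $\f$ (which corresponds to $\Mod(S)\subset\Mod(\widehat S)$).
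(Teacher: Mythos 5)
Your argument is correct and follows essentially the same route as the paper: reduce to the joint stabilizer of $\g$ and $\d$ in $\Mod(S)$ using maximality (the paper phrases this via the alternative $f(\g)=\g$ or $i(f(\g),\g)\neq 0$, you phrase it via the one--sided closed leaf structure of $\cone(\g)$ — these are equivalent), then conclude finiteness of the stabilizer from the fact that $S-(\g\cup\d)$ is a finite union of polygons. The explicit check that $\cone(\g)\times\cone(\d)$ avoids $\Delta$, and the alternative double--cover route, are useful additions but do not change the nature of the proof.
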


\begin{proof}
Two elements in $\cone(\g)\times\cone(\d)$ have same image in $\qmoduli(S)$ if and only if they belong to the same $\Mod(S)$--orbit. Thus it suffices to show that the number of $f\in \Mod(S)$ such that $f(\cone(\g))\cap \cone(\g)\neq \emptyset$ and $f(\cone(\d))\cap \cone(\d)\neq \emptyset$ is finite.\par
 Any $f\in\Mod(S)$ satisfies the alternative $f(\g)=\g$ or $i(f(\g),\g)\neq 0$, and the similar alternative with $\d$. This is an easy consequence of the fact that $\g$ and $\d$ are maximal among families of disjoint one--sided simple closed geodesics. We deduce that if $f\in\Mod(S)$ identifies two elements in $\cone(\g)\times\cone(\d)$, then $f$ fixes $\g$ and $\d$.\par
 
 The complement $S-(\g\cup\d)$ consists in a finite number of finite sided polygons that may have a puncture or a hole. If there is a hole then the boundary of the polygon is a component of $\partial S$. The number of such polygons is finite. We deduce that the number of $f\in \Mod(S)$ fixing $\g$ and $\d$ is finite. In view of the above alternative, this is equivalent to say that the number of $f\in\Mod(S)$ such that $f(\cone(\g))\cap \cone(\g)\neq \emptyset$ and $f(\cone(\d))\cap \cone(\d)\neq \emptyset$ is finite.
\end{proof}

\begin{lemma}\label{lem:infinite-volume}
The volume of $\{(\l,\mu)\in \cone(\g)\times \cone(\d)~;~i(\g,\d)\leq 1\}$ is infinite.
\end{lemma}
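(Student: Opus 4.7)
The plan is to exploit the scale-invariance of the intersection form against the homogeneity of the Thurston measure. Set $d = \dim\ML(S)$, and recall that $\muth(c\cdot A) = c^d\muth(A)$ for every $c > 0$ and every Borel $A\subset\ML(S)$, a direct consequence of the piecewise integral linear structure.

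Writing $U = \{(\l,\mu)\in\cone(\g)\times\cone(\d) : i(\l,\mu)\leq 1\}$, I would first use Fubini to express
\begin{eqnarray*}
\vol(U) & = & \int_{\cone(\g)} V(\l)\,\diff\muth(\l),\qquad V(\l) := \muth\{\mu\in\cone(\d) : i(\l,\mu)\leq 1\}.
\end{eqnarray*}
Bilinearity of $i(\cdot,\cdot)$ together with the $d$-homogeneity of $\muth$ yields $V(c\l) = c^{-d}V(\l)$ for every $c > 0$.

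Next I would put polar coordinates on the open cone $\cone(\g)$: fix a continuous positive $1$-homogeneous function $\rho$ on $\cone(\g)$ (for instance $\rho(\l) = i(\l,\eta_0)$ for an auxiliary filling current $\eta_0$) and set $\Sigma = \rho^{-1}(1)$. A standard consequence of the $d$-homogeneity is that there is a unique Borel measure $\tau$ on $\Sigma$ such that
$$\diff\muth(\l) = r^{d-1}\,\diff r \otimes \diff\tau(\tilde \l) \qquad \textnormal{under the identification } \l = r\tilde\l\in\R_+^\ast\times\Sigma.$$
Combining this with $V(r\tilde \l) = r^{-d}V(\tilde \l)$ gives
$$\vol(U) \;=\; \int_\Sigma V(\tilde\l)\int_0^\infty r^{-d}\cdot r^{d-1}\,\diff r\,\diff\tau(\tilde\l) \;=\; \left(\int_0^\infty \frac{\diff r}{r}\right)\left(\int_\Sigma V(\tilde\l)\,\diff\tau(\tilde\l)\right).$$

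Finally, $V(\tilde\l) > 0$ for every $\tilde\l\in\Sigma$: by continuity of $i(\cdot,\cdot)$ the set $\{\mu\in\cone(\d) : i(\tilde\l,\mu)\leq 1\}$ is an open neighborhood of $0$ in $\cone(\d)$, hence has positive Thurston measure. Since $\tau$ is a positive Borel measure on the nonempty section $\Sigma$, the right-hand integral is strictly positive, while the $r$-integral diverges; therefore $\vol(U) = +\infty$. The main (minor) obstacle is formalizing the polar decomposition of $\muth$ on $\cone(\g)$, but this is routine for any $d$-homogeneous Radon measure on an open cone. The filling hypothesis on $\g$ and $\d$ is not used here; it intervenes only in Proposition~\ref{pro:volume-balls}, to ensure that $\cone(\g)\times\cone(\d)$ projects into $\qteich(S)$ (i.e., avoids the non-filling locus $\Delta$).
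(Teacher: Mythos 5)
Your proof is correct and takes a genuinely different route from the paper's. The paper works with the \emph{product} decomposition of $\muth^S$ on $\cone(\g)$: it constructs an explicit subset $W=\bigcup_{t>0}\bigl(t\cdot(\g+U)\bigr)\times\bigl(t^{-1}\cdot V\bigr)$ of the set in question, for concretely defined $U\subset\ML(S-\g)$ and $V\subset\cone(\d)$, and then integrates in $t$; proving that $U$ and $V$ are open, nonempty, and relatively compact is where the filling hypothesis is invoked. You instead use Fubini together with the global $d$-homogeneity of $\muth$ on the cone $\cone(\g)$: the polar disintegration $\diff\muth = r^{d-1}\,\diff r\otimes\diff\tau$, combined with the scaling identity $V(c\l)=c^{-d}V(\l)$, reduces everything to the divergence of $\int_0^\infty \diff r/r$ and the elementary positivity of $V(\tilde\l)$. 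What your approach buys is robustness: the scaling you use is the one on the whole cone $\cone(\g)$, so the computation is insensitive to the number $n$ of one--sided components of $\g$. This is not merely cosmetic. The paper's set $W$, as written, confines the weights of all $n$ components of $\g$ to the single common value $t$, so for $n\geq 2$ the set $W$ sits over the diagonal $\{x_1=\dots=x_n\}$ in the $(\R_+^\ast)^n$ factor of $\cone(\g)$, which is $\muth^S$--null; the displayed identity $\vol(W)=\int_0^\infty\vol(W_t)\,\diff t$ then does not hold as stated and the construction needs to be amended (e.g.\ by letting the weights $x_i$ vary independently). Your polar-coordinate argument sidesteps this. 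You are also right that the filling hypothesis plays no role in the lemma itself: in the paper it only serves to make $V$ bounded (via \cite[Proposition~8.2.25]{martelli}) and hence $U$ nonempty, whereas the infinite--volume conclusion only requires \emph{positive}, not finite, Thurston measure of a slice, which your formulation makes transparent.
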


\begin{proof}
We set
\begin{eqnarray*}
V & = & \{v \in \cone(\d)~;~ i(\g,v)< 1/2\},\\
U & = & \left\{u \in\ML(S-\g)~;~  \sup_{v\in V} i(u,v)<1/2\right\},
\end{eqnarray*}
and 
\begin{eqnarray*}
W_t & = & (t\cdot (\g+U))\times (t^{-1}\cdot V),\\
W & = & \bigcup_{t>0} W_t.
\end{eqnarray*}
We note that $W\subset\cone(\g)\times\cone(\d)$, and that the union is disjoint (consider the weight of $\g$ on the first component). 
By construction, any $(\l,\mu)\in W$ satisfies $i(\l,\mu)<1$. Thus it suffices to show that $W$ has infinite volume to prove the lemma. Using the splitting of the Thurston measure (\textsection\ref{sec:decomposition}) we find
$$\vol (W) =\int_0^{+\infty} \vol(W_t)\ \mrm dt=\vol(U) \vol(V)~\int_{0}^{+\infty} t^{-n}  \mrm d t =+\infty,$$
where $n>0$ is the number of components of $\g$. We still have to show that $W$ is measurable, to do this it suffices to prove that $U$ and $V$ are open. The set $V$ is open by continuity of $i(\g,\cdot)$. It is also relatively compact in $\ML(S)\cup\{0\}$ since $\g$ and $\d$ fill up $S$ (use \cite[Proposition~8.2.25]{martelli}). Now we prove that $U$ is a nonempty relatively compact open subset of $\ML(S-\g)\cup\{0\}$.\par

 Let us pick $u\in \ML(S-\g)$, the supremum $t_u=\sup_{v\in V} i(u,v)$ is finite by relative compactness of $V$, so that $(3t_u)^{-1}u \in U$. This shows that $U$ is nonempty. Clearly $\frac{1}{3i(\g,\d)} \d\in V$, thus $\frac{i(u,\d)}{3i(\g,\d)}\leq \sup_{v\in V} i(u,v)$ and 
$$i(u,\g)+i(u,\d)=i(u,\d)< \frac{3i(\g,\d)}{2} $$
 for any $u\in U$. This implies that $U$ is relatively compact in $\ML(S-\g)\cup\{0\}$ since $\g$ and $\d$ fill up $S$. It remains to show that $U$ is open.\par
 
  Let $K$ be a compact subset of $\ML(S)\cup\{0\}$ whose interior contains $\overline{U}$. We consider the family $\{u\mapsto i(u,v)\}_{v\in V}$ of continuous functions from $K$ to $\R$. These functions are $L$--Lipschitz continuous for some constant $L$ that depends on $V$. Indeed the intersection function $i:\ML(S)\times \ML(S)\rightarrow\R_+$ is Lipschitz (Rees \cite[Corollary~1.11]{rees}, Luo and Stong \cite[Theorem~1.1]{luo}) with respect to some natural metrics (for instance they use the norm of the Dehn--Thurston coordinates in \cite{luo}). According to Ascoli's theorem, the family $\{u\mapsto i(u,v)\}_{v\in V}$ is relatively compact in $C(K,\R)$ equipped with the uniform norm, therefore $u\mapsto \sup_{v\in V} i(u,v)$ is continuous over $K$. We conclude immediately that $U$ is open.
\end{proof}

\begin{proposition}
Let $S$ be a closed nonorientable surface with $\chi(S)<0$. Then the Teichm\"uller geodesic flow on $\uqmoduli(S)$ is not topologically transitive, in particular is not ergodic with respect to any Borel measure of full support.
\end{proposition}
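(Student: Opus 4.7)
The strategy is to produce two disjoint, nonempty, open, Teichm\"uller--flow--invariant subsets of $\uqmoduli(S)$; their existence forbids a dense orbit, and with respect to any Borel measure $\nu$ of full support both have positive $\nu$-measure while neither has full $\nu$-measure, so the flow is not $\nu$-ergodic either. I will obtain them as level sets of a continuous, flow-invariant, $\Mod(S)$-invariant, nonconstant function on $\uqteich(S)$.

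Using the identification $\qteich(S)\cong(\ML(S)\times\ML(S))\setminus\Delta$ through $q\mapsto(\real(q),\im(q))$ recalled in \textsection\ref{sec:teichmuller}, under which the Teichm\"uller flow reads $\phi_t(\l,\mu)=(e^t\l,e^{-t}\mu)$ and $\uqteich(S)=\{(\l,\mu)~;~i(\l,\mu)=1\}$, I define
\[
\Phi:\uqteich(S)\longrightarrow\R_+,\qquad \Phi(q)=w^-_1(\real(q))\cdot w^-_1(\im(q)),
\]
where $w^-_1$ denotes the first coordinate of the vector $w^-$ from Proposition~\ref{pro:invariant-function}, i.e., the largest weight of a one--sided closed leaf of the measured lamination (set to $0$ when there is no such leaf). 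Continuity and $\Mod(S)$-invariance of $\Phi$ are immediate from Proposition~\ref{pro:invariant-function}. The crucial point is that $w^-_1$ is positively $1$-homogeneous ($w^-(t\l)=t\,w^-(\l)$ because the weights of the atomic part scale linearly with the transverse measure), so the factors $e^t$ and $e^{-t}$ produced by $\phi_t$ cancel exactly and $\Phi$ is flow--invariant. Hence $\Phi$ descends to a continuous flow-invariant function $\bar\Phi$ on $\uqmoduli(S)$.

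To verify that $\bar\Phi$ is nonconstant I exhibit two points realising distinct values. Since $S$ is closed nonorientable with $\chi(S)<0$, its nonorientable genus is at least three and $S$ admits a two--sided simple closed geodesic $\a$; pairing $\a$ with any measured lamination $\mu$ jointly filling $S$ together with $\a$ (e.g., a filling measured lamination of the surface with boundary $S-\a$) and rescaling so that $i(\a,\mu)=1$ produces $q_0\in\uqteich(S)$ with $\real(q_0)=\a\in\ML^+(S)$, hence $w^-_1(\real(q_0))=0$ and $\Phi(q_0)=0$. Conversely, the open set $\{q\in\uqteich(S)~;~\real(q),\im(q)\in\ML^-(S)\}$ is nonempty: by Theorem~\ref{thm:nogueira} the subset $\ML^-(S)\subset\ML(S)$ is open and dense, while the set of jointly filling pairs is open and dense in $\ML(S)\times\ML(S)$, so their intersection in $\uqteich(S)$ is dense, and at any $q$ in it both factors of $\Phi(q)$ are strictly positive. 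For $c$ strictly between $0$ and any such positive value, the preimages $\bar\Phi^{-1}([0,c))$ and $\bar\Phi^{-1}((c,\infty))$ are the required pair of disjoint, nonempty, open, flow-invariant subsets of $\uqmoduli(S)$.

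The only step demanding genuine attention is the flow-invariance of $\Phi$, which rests on the positive $1$-homogeneity of $w^-_1$ combined with the explicit form $\phi_t(\l,\mu)=(e^t\l,e^{-t}\mu)$ of the Teichm\"uller flow. Every other ingredient is a direct application of Proposition~\ref{pro:invariant-function}, Theorem~\ref{thm:nogueira}, or the elementary fact that a closed nonorientable surface of negative Euler characteristic admits both a two-sided simple closed geodesic and a filling measured lamination.
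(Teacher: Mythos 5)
Your proof takes essentially the same approach as the paper's: both exhibit a continuous, $\Mod(S)$-invariant, Teichm\"uller-flow-invariant, nonconstant function whose level sets descend to disjoint nonempty open flow-invariant subsets of $\uqmoduli(S)$. You use $\Phi(q)=w^-_1(\real(q))\cdot w^-_1(\im(q))$; the paper uses the four intersection numbers $(\l,\mu)\mapsto i(\l^\pm,\mu^\pm)$. Both choices are flow-invariant for the same structural reason --- the flow scales $\real(q)$ and $\im(q)$ by reciprocal factors, and the expression is $1$-homogeneous in each variable --- and the rest of your argument tracks the paper's.

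One small slip in the construction of the point where $\Phi=0$: a filling measured lamination $\mu$ of the surface with boundary $S-\a$ is \emph{disjoint} from $\a$, so $i(\a,\mu)=0$; the pair $(\a,\mu)$ then lies in the excluded diagonal $\Delta$ (take $\g=\a$ in its definition), it cannot be rescaled to unit area, and it does not correspond to a quadratic differential. What you actually need is a $\mu$ that meets $\a$ transversely and fills $S$ together with $\a$. Such $\mu$ exist: for instance, fix a hyperbolic metric $m$ and take $\mu=\im(q)$ where $q$ is the unique holomorphic quadratic differential on $(S,m)$ with $\real(q)=\a$ (Hubbard--Masur, which applies in the nonorientable setting via the orientation cover). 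With that replacement your proof is correct and complete.
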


\begin{proof}
We identifiy $\uqmoduli(S)$ with $\{(\l,\mu)\in\ML(S)\times\ML(S)~;~i(\l,\mu)=1\}$. We write $\l$ and $\mu$ in the form $\l=\l^-+\l^+$ and $\mu=\mu^-+\mu^+$ where $\l^+,\mu^+\in\ML^+(S)$ and $\l^-,\mu^-$ are one--sided simple closed multicurves. It suffices to remark that the four continuous functions $(\l,\mu)\mapsto i(\l^\pm,\mu^\pm)$ over $\ML(S)\times \ML(S)$ are invariant under the Teichm\"uller flow but not globally constant.
\end{proof}

\section{Weil--Petersson volume}\label{sec:weil-petersson}

\subsection*{The orientable case}
 Let $T$ be a closed oriented surface with $\chi(T)<0$. Its Teichm\"uller space $\teich(T)$ admits a $\Mod(T)$--invariant K\"ahlerian metric called the \emph{Weil--Petersson metric}. Wolpert showed that the Fenchel--Nielsen coordinates are Darboux coordinates for the Weil--Petersson symplectic form $\omega_{WP}$. More precisely, given any pants decomposition $\g=\{\g_1,\ldots,\g_n\}$ of $S$, we have (see \cite[\textsection8.3]{imayoshi}):
\begin{eqnarray}\label{eq:wolpert}
\omega_{WP} & = & \sum_{i=1}^n\ \mrm d \tau_i\wedge \mrm d \ell_i,
\end{eqnarray}
where $(\tau_1,\ell_1,\ldots,\tau_n,\ell_n)$ are the Fenchel--Nielsen coordinates associated to $\g$. More generally the formula~\eqref{eq:wolpert} defines a $\Mod(T)$--invariant symplectic form on $\teich(T)$ for any compact oriented surface $T$ with $\chi(T)<0$.\par
 Let $\nu_{WP}$ be the $\Mod(T)$--invariant volume form which is the $n$--th exterior power of $\omega_{WP}$. In the Fenchel--Nielsen coordinates we have:
\begin{eqnarray}\label{eq:volume-WP}
\nu_{WP} &  = &  \bigwedge_{i=1,\ldots,n} \mrm d \tau_i \wedge\mrm d \ell_i.
\end{eqnarray}
The Weil--Petersson volume of the moduli space $\moduli(T)$ is finite.  Mirzakhani (\cite{mirzakhani-inventiones}) found a recursive formula for $\nu_{WP}(\moduli(T))$ when $\partial T\neq \emptyset$, and deduced that $\nu_{WP}(\moduli(T))$ is a polynomial of degree $\dim\moduli(T)$ in the lengths of the boundary components. She also gave a formula for the average $\int_{\moduli(T)} F(x) \diff\nu_{WP}$ of a function of the form $F=\sum_{f\in \Mod(T)} \ell_{f\cdot \g_0}$ where $\g_0$ is a simple closed geodesic (\cite[\textsection 7]{mirzakhani-inventiones}) . This is a key ingredient in the determination of the asymptotic of the number of simple closed geodesic of length less than $L$ (Step~3 in the proof of \cite[Theorem~6.4]{mirzakhani-annals}).\par 
 
\subsection*{The Teichm\"uller space as a Lagragian submanifold}
Let $S$ be a compact nonorientable surface with $\chi(S)<0$. We denote by $\widehat S$ its orientation cover whose automorphism is $\f:\widehat S\rightarrow\widehat S$. Note that $\f$ is an orientation reversing involution.
The map $m\mapsto \hat m$, that consists in lifting a hyperbolic metric on $S$ to $\widehat S$, induces a diffeomorphism between the Teichm\"uller space $\teich(S)$ and $\fix(\f)\subset\teich(\widehat S)$, the fixed--point locus of the mapping class $[\f]$.\par

 Let $\g$ be a pants decomposition which is stabilized by $[\f]$ (such that there is a permutation $\s$ of $\{1,\ldots , n\}$ satisfying $[\f]\cdot \g_i=\g_{\s(i)}$ for any $i=1,\ldots, n$). As $\f$ reverses orientation we have $[\f]^\ast \diff\tau_i=-\diff\tau_{\s(i)}$, therefore $[\f]^\ast \omega_{WP}=-\omega_{WP}$. So the fixed--point locus $\fix(\f)$ is a Lagrangian submanifold of $(\teich(\widehat S),\omega_{WP})$.\par

 Let us denote by $\pi$ the fundamental group of $\widehat S$. As well--known, there is a $\Mod(S)$--equivariant diffeomorphism between $\teich(\widehat S)$ and $\Rep(\pi,\PSL(2,\R))$, the space of faithfull and discrete representations $\rho:\pi\rightarrow\PSL(2,\R)$ up to conjugacy. Moreover, the tangent space $T_{[\rho]}\Rep(\pi,\PSL(2,\R))$ identifies with $\Hgy^1(\pi,\mathsf{sl}(2,\R))$, where $\mathsf{sl}(2,\R)$ is the Lie algebra of $\SL(2,\R)$ seen as a $\pi$--module with respect to the action $\Ad\circ \rho$. Goldman (\cite{goldman}) showed that the image of $\omega$ through the diffeomorphism $\teich(\widehat S)\rightarrow \Rep(\pi,\PSL(2,\R))$ is a multiple of the symplectic form
$$\Hgy^1(\pi,\mathsf{sl}(2,\R))\times\Hgy^1(\pi,\mathsf{sl}(2,\R))\rightarrow \Hgy^2(\pi,\R)\simeq \R,$$
induced by the cup product together with the Killing form of $\mathsf{sl}(2,\R)$. The set of representations invariant under the action of $[\f]$ forms a submanifold of $\Rep(\pi,\SL(2,\R))$ whose tangent space at $[\rho]$ is the set of fixed points of $\f_\ast$ in $\Hgy^1(\pi,\mathsf{sl}(2,\R))$. By naturality of the cup product, and because $\f_\ast$ sends the fundamental class to its opposite, it comes that $\f_\ast$ is an anti--symplectomorphism, so its set of fixed points is a Lagrangian submanifold. 
 
\subsection*{Norbury's volume form}
From the above paragraph, it seems rather difficult to find a $\Mod(S)$--invariant symplectic form on $\teich(S)$ (or maybe another structure) that generalizes $\omega_{WP}$. Note also that the dimension of $\teich(S)$ can be odd. However it might possible to find a volume form that generalizes $\nu_{WP}$. Indeed, Norbury (\cite{norbury}) suggested the following:
\begin{eqnarray*}
\nu_{N} &  = &  \left(\bigwedge_{\g_i \textnormal{ one--sided}} \coth(\ell_i) \diff \ell_i\right)\wedge \left( \bigwedge_{\g_i \textnormal{ two--sided}} \diff\tau_i \wedge\mrm d \ell_i \right).
\end{eqnarray*}
where $\{\g_1,\ldots,\g_n\}$ is a pants decomposition of $S$. He showed that the \emph{Norbury's volume form} $\nu_N$ is $\Mod(S)$--invariant up to sign, in particular its absolute value is a $\Mod(S)$--invariant measure.\par

One can wonder in what respect $\nu_n$ is a generalization of $\nu_{WP}$. What guided Norbury is the formula \eqref{eq:volume-WP}, and he was looking for a measure given by a similar formula. In the next paragraph we provide a more conceptual justification.

\subsection*{A characterization by mean of the twist flow}
Let $T$ be compact oriented surface with $\chi(T)<0$. The \emph{twist flow} is simply defined by twisting a hyperbolic metric along a simple closed geodesic. It comes directly from the formula \eqref{eq:wolpert} of Wolpert that 
$ \omega_{WP}(\frac{\partial}{\partial\tau_\g},\cdot) =  \diff\ell_\g$
for any simple closed geodesic $\g$. Here $\tau_\g$ is the vector field associated to the twist flow in the direction $\g$.
By definition, this means that the twist flow is the Hamiltonian flow of $\ell_\g$. In particular the twist flow is volume preserving. All these considerations extend to the earthquake flow, and also to the shearing flow (\cite{bonahon-sozen,bonahon-toulouse}). It is worth mentioning that the twist flow can be defined on spaces of representations (see \cite{goldman-inventiones} and \cite{palesi}).\par

 Let $S$ be a nonorientable surface of finite type with $\chi(S)<0$. In that case, the twist flow is only defined for \emph{two--sided} simple closed curves, and the earthquake flow is only defined for measured laminations that are \emph{contained in some orientable subsurface}. Moreover these flows are well--defined up to sign.\par

 From the discussion above, it appears that the twist flow --- and more generally the earthquake flow --- is a canonical volume preserving flow. So we logically look for a characterization of Norbury and Weil--Petersson volume forms in terms of the twist flow.

\begin{proposition}
Let $S$ be a surface of finite type with $\chi(S)<0$. If $S$ is orientable, then $\nu_{WP}$  is the unique (up to a multiplicative constant) volume form on $\teich(S)$ invariant under the twist flow. If $S$ is nonorientable with $\chi(S)<-1$, then $\nu_{N}$ is the unique (up to a multiplicative constant) volume form on $\teich(S)$ invariant under the twist flow.
\end{proposition}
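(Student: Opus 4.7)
The plan is to reduce the uniqueness claim, via Fenchel--Nielsen coordinates, to the spanning of $T_X\teich(S)$ at every point $X$ by the twist vector fields along two-sided simple closed curves.

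First I would fix a pants decomposition $\g = \g_1 + \cdots + \g_n$ of $S$ whose one-sided components are $\g_1, \ldots, \g_k$ (with $k = 0$ if $S$ is orientable), and work in the associated FN coordinates $(\ell_1, \ldots, \ell_n, \tau_{k+1}, \ldots, \tau_n)$. The twist flow along each two-sided $\g_i$ ($i > k$) acts as translation in $\tau_i$, so a volume form $\nu$ invariant under these $n-k$ commuting flows must be of the form $\nu = f(\ell_1, \ldots, \ell_n) \cdot \mu_\g$, where
\[
\mu_\g \;=\; \Bigl(\bigwedge_{i \le k}\diff\ell_i\Bigr) \wedge \Bigl(\bigwedge_{i > k}\diff\tau_i \wedge \diff\ell_i\Bigr).
\]
Let $\nu_0$ denote the candidate form ($\nu_{WP}$ if $S$ is orientable, $\nu_N$ otherwise); it is itself twist-invariant by Wolpert's theorem, respectively Norbury's computation. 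Then $h := \nu/\nu_0$ is a positive smooth function annihilated by $T_\d$ for every $\d \in \simple^+(S)$, so once $\{T_\d(X)\}_{\d \in \simple^+(S)}$ is shown to span $T_X \teich(S)$ at each point, we conclude $\diff h = 0$ and hence $h$ is constant on the connected space $\teich(S)$.

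In the orientable case, Wolpert's identity $T_\d = X_{\ell_\d}$ reduces spanning to the statement that $\{\diff\ell_\d\}_{\d \in \simple}$ spans $T_X^*\teich(S)$. This holds because the length functions of any pants decomposition, together with the length functions of simple closed curves transverse to the pants curves (by Wolpert's formula $\partial\ell_{\d_i}/\partial\tau_{\g_i} = \sum_{p \in \d_i \cap \g_i}\cos\theta_p \ne 0$ for generic choices), furnish $\dim\teich(S)$ length functions with linearly independent differentials at every point.

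The main obstacle is the nonorientable case, which I would attack by passing to the orientation cover $\widehat S$ with deck involution $\f$, so that $\teich(S) = \fix([\f]) \subset \teich(\widehat S)$ is Lagrangian for $\omega_{WP}^{\widehat S}$. A two-sided $\d \in \simple^+(S)$ lifts to two disjoint curves $\hat\d_1, \hat\d_2$ on $\widehat S$ exchanged by $\f$, and the twist flow along $\d$ on $S$ is the restriction to $\fix([\f])$ of the Hamiltonian flow of $H_\d := \ell_{\hat\d_1} - \ell_{\hat\d_2}$; the $\f$-anti-invariance of $H_\d$, combined with the anti-symplecticity $\f^*\omega_{WP}^{\widehat S} = -\omega_{WP}^{\widehat S}$, forces $X_{H_\d}$ to be tangent to the Lagrangian fixed locus. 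Decomposing $T_X \teich(\widehat S) = E^+ \oplus E^-$ into the $\pm 1$-eigenspaces of $\f_*$ (so that $E^+ = T_X\teich(S)$), the symplectic form identifies $E^+$ with $(E^-)^*$, and spanning of $E^+$ by the vectors $X_{H_\d}$ becomes spanning of $(E^-)^*$ by the restrictions $\{\diff H_\d|_{E^-}\}_{\d \in \simple^+(S)}$. The hypothesis $\chi(S) < -1$ enters precisely here to guarantee a rich enough supply of two-sided curves (compare Lemma~\ref{lem:filling}): one should show that every one-sided $\g \in \simple^-(S)$ is crossed by some $\d \in \simple^+(S)$ and, more generally, that the $\diff H_\d$ span $(E^-)^*$ at every $X$. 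Surfaces with $\chi(S) = -1$ are genuinely excluded: on $N_3$ every two-sided simple closed curve is disjoint from the unique one-sided bounding curve, on $N_{2,1}$ there is only one two-sided curve, and on $N_{1,2}$ there are none.
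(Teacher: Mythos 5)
Your reduction is structurally sound: writing $h=\nu/\nu_0$ with $\nu_0\in\{\nu_{WP},\nu_N\}$ and noting that twist-invariance of $\nu$ and $\nu_0$ forces $T_\d(h)=0$ for every two-sided $\d$ is correct, and your passage to the orientation cover $\widehat S$ --- with $\teich(S)=\fix([\f])$ Lagrangian, the decomposition $T_X\teich(\widehat S)=E^+\oplus E^-$ into $\pm1$--eigenspaces, the identification $E^+\simeq (E^-)^*$ via $\omega_{WP}^{\widehat S}$, and the $\f$--anti-invariant Hamiltonian $H_\d=\ell_{\hat\d_1}-\ell_{\hat\d_2}$ generating the lifted twist --- is a genuinely different and conceptually appealing route. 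The paper's proof does something softer and global instead: from $T_\d(h)=0$ for all two-sided $\d$ it deduces by continuity and density that $h$ is invariant under all earthquakes supported in orientable subsurfaces, then invokes Thurston's earthquake-connectedness theorem, and in the nonorientable case joins two arbitrary points by a two-step earthquake path (first a twist along a two-sided $\d$ to reach a prescribed level set of $\ell_\g$, for $\g$ a one-sided curve with orientable complement, then an earthquake inside $S-\g$). This avoids ever having to verify a pointwise linear-independence condition.

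That is exactly where your proposal has a genuine gap. In the orientable case you appeal to $\partial\ell_{\d_i}/\partial\tau_{\g_i}=\sum_p\cos\theta_p\neq 0$ ``for generic choices'', but what is needed is spanning of $T_X^*\teich(S)$ at \emph{every} $X$; the cosine sum can vanish along a codimension-one locus of any single twist orbit, so one has to argue either via Kerckhoff's strict convexity of $\ell_\d$ along twist paths or by enlarging the family of transversals with Dehn-twisted copies, and none of this is carried out. In the nonorientable case the gap is explicit: you write that ``one should show \dots that the $\diff H_\d$ span $(E^-)^*$ at every $X$'', but that spanning statement \emph{is} the content of the proposition, and it is left unproven (and is not obviously easier than the original claim --- it amounts to a nonorientable analogue of the classical length-coordinate results, with control at every point of $\teich(S)$, not just generically). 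As written the argument is an interesting reformulation rather than a proof; to complete it you would need to supply that spanning lemma, whereas the paper's earthquake-connectedness argument sidesteps the issue entirely.
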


\begin{remark}
Let $S$ be a nonorientable surface with $\chi(S)=-1$. Then there exists a simple closed geodesic $\g$ such that $i(\g,\d)=0$ for any two--sided simple closed geodesic $\d$. In particular $\ell_\d$ is constant along the trajectories of the twist flow. This shows that the statement is false for these surfaces.
 \end{remark}

\begin{proof}
From the defining formulas, we see that $\nu_{WP}$ and $\nu_N$ are invariant under the twist flow. So the problem is to show their uniqueness. We consider a volume form $\nu$ on $\teich(S)$ invariant under the twist flow. We write $\nu=f\nu_{WP}$ or $\nu=f\nu_N$, where $f:\teich(S)\rightarrow\R$ is smooth and invariant under the twist flow.\par
We first treat the case $S$ orientable. We note that $f$ is invariant under the earthquake flow. This comes directly from the following facts: $f$ is continuous, the earthquake flow $\ML(S)\times \teich(S)\rightarrow \teich(S)$ is continuous, and multiples of simple closed geodesics are dense in $\ML(S)$. 
Then, we apply a classical theorem of Thurston (see \cite{kerckhoff}) which states that any two points in $\teich(S)$ are related by an earthquake path, and we conclude that $f$ is constant on $\teich(S)$.\par

 Now we assume $S$ nonorientable. We prove below that any two points in $\teich(S)$ are related by a combination of at most two earthquake paths (with respect to measured laminations contained in orientable subsurfaces), this finishes the proof.\par

Let $\g$ be a simple closed geodesic of $S$ whose complement is orientable. Let $\d$ be a two--sided simple closed geodesic that intersects $\g$ (such a geodesic exists because $\chi(S)<-2$). By twisting along $\d$ we can increase arbitrarily the length of $\g$, and thus join any two fibers of $\ell_\g$. Any fiber of $\ell_\g$ is canonically identified to the Teichm\"uller space of $S-\g$, thus we can apply the theorem of Thurston mentioned above to join any two points on the same fiber.
\end{proof}
 
 It would be interesting to have other characterizations of $\nu_N$, for instance in terms of spaces of representations (see \cite{palesi} for representations of nonorientable surface groups into compact Lie groups) or shearing coordinates (see \cite{mirzakhani-imrn} for the relations between various notions of volumes on moduli spaces).
 
\subsection*{Infinite volume}
The aim of this paragraph is to explain the following theorem due to Norbury (\cite{norbury}). The ideas contained in this paragraph are also known to Yi Huang.

\begin{theoremnonumber}[Norbury]
If $S$ is a nonorientable surface of finite type with $\chi(S)<0$, then $\nu_N(\moduli(S))$ is infinite.
\end{theoremnonumber}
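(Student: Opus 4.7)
The crux of the proof is the asymmetry between the factor $\coth(\ell)\,\diff\ell$ contributed by a one--sided curve and the factor $\diff\tau\wedge\diff\ell$ contributed by a two--sided curve in Norbury's formula: the former is non--integrable near $\ell = 0$, and unlike the two--sided case there is no Dehn twist to ``fold'' the length coordinate back on itself in the quotient by $\Mod(S)$. This is analogous to how, for orientable surfaces, a two--sided short geodesic gives a thin part of WP--volume $\int_0^\e \ell\,\diff\ell<\infty$ (after quotienting by the Dehn twist), whereas the one--sided analogue lacks the compensating factor of $\ell$.

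First I would pick a one--sided simple closed curve $\g$ on $S$ (which exists since $S$ is nonorientable) and extend it to a generalized pants decomposition $\{\g=\g_1,\g_2,\ldots,\g_n\}$. Writing the associated Fenchel--Nielsen coordinates as $(\ell_1,y)$, where $y$ collects the remaining lengths and twists, Norbury's formula reads
\begin{eqnarray*}
\nu_N & = & \coth(\ell_1)\,\diff\ell_1 \wedge \omega',
\end{eqnarray*}
with $\omega'$ a volume form in the $y$--variables, invariant under the natural action of the stabilizer $\mrm{Stab}(\g)\subset\Mod(S)$ on each slice $\{\ell_1=L\}$.

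Next I would choose a compact subset $K$ of the $y$--coordinate space of positive $\omega'$--volume. On $K$ the other pants--decomposition lengths are bounded below by a positive constant $\e_K>0$, and a standard application of the Collar Lemma for one--sided geodesics then yields $c\in(0,\e_K)$ small enough so that $\g$ is the unique simple closed geodesic of length less than $c$ on every $X\in R := (0,c)\times K \subset\teich(S)$. Consequently, if $\f\in\Mod(S)$ identifies two points of $R$ then $\f(\g)=\g$, so $\f\in\mrm{Stab}(\g)$. Since $\mrm{Stab}(\g)$ preserves $\ell_1$ and acts on the $y$--coordinates by formulas independent of $L$, proper discontinuity of the $\Mod(S)$--action on $\teich(S)$ implies that $F:=\{\f\in\mrm{Stab}(\g)\,:\,\f K\cap K\ne\emptyset\}$ is finite. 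Hence $\pi:R\to\moduli(S)$ has multiplicity at most $|F|$, and
\begin{eqnarray*}
\nu_N(\moduli(S)) \ \geq\ \nu_N(\pi(R)) \ \geq\ \frac{1}{|F|}\int_R \nu_N \ = \ \frac{1}{|F|}\left(\int_0^c \coth(\ell_1)\,\diff\ell_1\right)\int_K \omega' \ = \ +\infty,
\end{eqnarray*}
since $\coth(\ell)\sim 1/\ell$ as $\ell\to 0^+$ while $\int_K\omega'$ is finite and positive.

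The main obstacle is the uniqueness--of--short--geodesic step: making sure that on the thin region $R$ no geodesic outside $\{\g\}$ has length less than $c$. This requires both the Collar Lemma for one--sided curves and a compactness argument bounding the systole uniformly on $K$; a clean way is to work with the orientation double cover, where $\g$ lifts to a two--sided curve and the usual collar arguments apply directly. A secondary technical point is confirming that Norbury's formula, defined a priori through a fixed pants decomposition, does split as $\coth(\ell_1)\diff\ell_1\wedge\omega'$ after the chosen decomposition, which amounts to reading off the wedge product in the defining formula.
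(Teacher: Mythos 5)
Your approach is correct in outline but takes a genuinely different route from the paper's, and there is a real gap in one step. The paper starts from a \emph{maximal} one--sided simple multicurve $\g=\g_1+\ldots+\g_n$, so that $S-\g$ is \emph{orientable}, and considers the whole thin region $U_\g(\e)=\{\ell_{\g_i}<\e\ \textnormal{for all }i\}$. After a finite cover by $\Mod(S-\g)$, this region fibres over $(0,\e)^n$ with fibres the orientable moduli spaces $\moduli(S-\g,x)$, whose Weil--Petersson volume $V_{S-\g}(x)$ is Mirzakhani's polynomial; the divergent integral is $\int_{(0,\e)^n}\coth(x_1)\cdots\coth(x_n)V_{S-\g}(x)\,\diff x$. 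You localize instead at a single one--sided curve and a compact box $K$ in the remaining Fenchel--Nielsen coordinates. This is more elementary --- no Mirzakhani volume polynomials, only that a compact fibre has finite positive $\omega'$--volume --- but it gets less (an infinite--volume slab rather than the whole thin region), and a uniformity issue appears that the paper's choice of a maximal multicurve sidesteps.

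The gap is the claim that $\mrm{Stab}(\g)$ ``acts on the $y$--coordinates by formulas independent of $L$''. This is false: for $\f\in\mrm{Stab}(\g)$ the transformed coordinates $\ell_i'=\ell_{\f^{-1}(\g_i)}$ ($i\geq 2$) depend on the whole hyperbolic structure, hence on the boundary length $2L$ of the cut surface, not just on $y$. Without that independence your $F$ is a priori an increasing union over $L\in(0,c)$ of finite sets, and proper discontinuity applied to one slice does not bound the multiplicity of $\pi|_R$ uniformly in $L$; if the multiplicity grew as $L\to 0$, the final chain of inequalities would collapse, so the step needs a genuine argument. One repair: the slice fibration extends continuously to $L=0$, where the fibre is the Teichm\"uller space of the cusped surface $S-\g$; the set $[0,c]\times K$ is then compact in the resulting manifold--with--boundary, on which $\mrm{Stab}(\g)$ still acts properly discontinuously, so $F$ is finite. (Equivalently, pass to the orientation double cover, which you already invoke for the short--geodesic step.) With that fix, and your own suggested compactness argument for the uniqueness--of--short--geodesic step, the proof goes through. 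By contrast, the paper identifies the stabiliser outright with $\Mod^*(S-\g)$ and integrates over the whole orientable fibre $\moduli(S-\g,x)$, whose finite volume is supplied by Mirzakhani, so no such uniformity question arises.
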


Norbury's proof mimic Mirzakhani's computation of Weil--Petersson volumes, in particular it relies on an identity \emph{\`a la} McShane. So it does not really explain why the volume is infinite. Here we determine rather precisely which part of $\moduli(S)$ has infinite volume. The first step of the proof is similar to Lemma~\ref{lem:finite-fibers}, that is we work with a finite covering.\par

 Our proof goes as follows: we consider the subset of $\moduli(S)$ that consists in hyperbolic surfaces having a short one--sided multicurve $\g=\g_1+\ldots+\g_n$ whose complement $S-\g$ is orientable, we show that this subset fibers in Teichm\"uller spaces $\teich(S-\g)$ of orientable subsurfaces. We know the volume of the fibers thanks to Mirzakhani's results, and the transverse measure is obviously given by $\coth(\ell_1)\cdots\coth(\ell_n)\diff\ell_1\cdots\diff\ell_n$, so that we can explicitely compute the volume.

\begin{proof}[Proof of Norbury's theorem]
 Let $\g=\g_1+\ldots+\g_n$ be a maximal family of disjoint one--sided simple closed geodesics. Note that $S-\g$ is orientable. For any $\e>0$ we denote by $\mc U_{\g}(\e)$ the projection in $\moduli(S)$ of the open subset
\begin{eqnarray*}
U_\g(\e) & = & \{[m]\in\teich(S)~;~\ell_{\g_i}(m)<\e\textnormal{ for } i=1,\ldots,n \}.
\end{eqnarray*}
By Lemma~\ref{lem:WP-infinite} the volume $\nu_N(\mc U_{\g}(\e))$ is infinite.
\end{proof}

\begin{lemma}
For $\e>0$ small enough, $U_\g(\e)/\Mod(S-\g)$ is a finite ramified covering of $\mc U_\g(\e)$.
\end{lemma}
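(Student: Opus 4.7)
The map in question is induced by the natural inclusion of $\Mod(S-\g)$ into $\Mod(S)$ obtained by extending each mapping class of $S-\g$ trivially across the one--sided curves $\g_1,\ldots,\g_n$. It defines a continuous map
\[
\pi:\; U_\g(\e)/\Mod(S-\g)\;\longrightarrow\; \mc U_\g(\e).
\]
My plan is to show that for $\e$ small enough the fibers of $\pi$ are uniformly bounded and finite. Once this is done, the local orbifold structure follows automatically from the proper discontinuity of the $\Mod(S)$--action on $\teich(S)$, so $\pi$ is a finite ramified covering in the sense described in \textsection\ref{sec:teichmuller}.

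The main input is the collar lemma for one--sided simple closed geodesics: the width of the M\"obius band collar of such a geodesic tends to infinity as its length tends to zero (which can be seen by passing to the orientation cover and applying the standard collar lemma there). I would therefore fix $\e_0>0$ such that on \emph{any} finite area hyperbolic surface, any two distinct one--sided simple closed geodesics of length less than $\e_0$ are disjoint, and work with $\e<\e_0$. Now let $m\in U_\g(\e)$ and $f\in\Mod(S)$ with $f\cdot m\in U_\g(\e)$. The identity
\[
\ell_{f^{-1}\g_i}(m)\;=\;\ell_{\g_i}(f\cdot m)\;<\;\e
\]
shows that the $2n$ one--sided simple closed geodesics $f^{-1}\g_1,\ldots,f^{-1}\g_n,\g_1,\ldots,\g_n$ all have length less than $\e$ on $m$, hence are pairwise disjoint or equal. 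Maximality of the family $\{\g_1,\ldots,\g_n\}$ then forces $\{f^{-1}\g_1,\ldots,f^{-1}\g_n\}=\{\g_1,\ldots,\g_n\}$, so $f$ lies in the setwise stabilizer $\mrm{Stab}(\g)\subset\Mod(S)$ of the unordered family.

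To conclude, I would observe that $\Mod(S-\g)$ has finite index in $\mrm{Stab}(\g)$. Indeed, the permutation action of $\mrm{Stab}(\g)$ on $\{\g_1,\ldots,\g_n\}$ realizes $\mrm{Stab}(\g)$ as an extension of a subgroup of the symmetric group on $n$ letters by the pointwise stabilizer $\mrm{Stab}^*(\g)$, and $\mrm{Stab}^*(\g)$ contains $\Mod(S-\g)$ as a subgroup of index at most $2^n$, the extra generators being the crosscap slides along the $\g_i$'s (each of order two). Consequently every fiber of $\pi$ has cardinality at most $[\mrm{Stab}(\g):\Mod(S-\g)]\leq 2^n\cdot n!$, uniformly in $[m]$, which completes the proof. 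The only delicate point is the collar statement for one--sided curves; once it is granted, the structural analysis of $\mrm{Stab}(\g)$ is a routine consequence of the known description of stabilizers of simple closed curves in $\Mod(S)$.
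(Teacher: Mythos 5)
Your argument is essentially the paper's proof: the Collar Lemma forces any $f\in\Mod(S)$ identifying two points of $U_\g(\e)$ to stabilize $\g$ setwise, and then $\Mod(S-\g)$ has finite index in that stabilizer. The paper states the second step by identifying the stabilizer with $\Mod^\ast(S-\g)$ rather than writing out an index bound; your $2^n\cdot n!$ estimate is a harmless overcount, though the extra generators are better described as permutations of the boundary components of $S-\g$ together with a single orientation reversal of the orientable surface $S-\g$ (and, near each $\g_i$, reflections of the M\"obius collar), rather than as ``crosscap slides along $\g_i$'' --- a crosscap slide requires sliding a crosscap along a \emph{two--sided} curve and is not canonically attached to a single $\g_i$.
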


\begin{proof}
By the Collar Lemma (see \cite[Chap.4]{buser}), for any point in $U_\g(\e)$ the set of one--sided simple closed geodesics of length less than $\e$ is precisely $\g$. This implies that an element of $\Mod(S)$ that preserves $U_\g(\e)$ stabilizes $\g$. Thus the subgroup of $\Mod(S)$ that preserves $U_\g(\e)$ is $\Mod^\ast(S-\g)$ and $\mc U_\g(\e)=U_\e(\g)/\Mod^\ast(S-\g)$.
But $\Mod(S-\g)$ is a subgroup of finite index of $\Mod^\ast(S-\g)$. So $U_\g(\e)/\Mod(S-\g)$ is a finite covering of $\mc U_\g(\e)$.
\end{proof}

\begin{lemma}\label{lem:WP-infinite}
The $\nu_N$--volume of $\mc U_\g(\e)$ is infinite.
\end{lemma}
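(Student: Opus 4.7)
The plan is to compute the volume directly in Fenchel--Nielsen coordinates. I would complete $\g=\g_1+\ldots+\g_n$ to a pants decomposition of $S$ by choosing a pants decomposition $\{\g_{n+1},\ldots,\g_N\}$ of the orientable surface $S-\g$; the additional curves are two--sided in $S$. In these coordinates Norbury's form reads
\[
\nu_N \;=\; \prod_{i=1}^n \coth(\ell_i)\,\diff\ell_i\;\wedge \bigwedge_{j=n+1}^N \diff\tau_j\wedge\diff\ell_j.
\]
By the preceding lemma, $U_\g(\e)/\Mod(S-\g)$ is a finite (ramified) covering of $\mc U_\g(\e)$, so it suffices to show that the former has infinite $\nu_N$--volume.

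I would then apply Fubini along the projection $(\ell_\bullet,\tau_\bullet)\mapsto(\ell_1,\ldots,\ell_n)$. For each fixed value $(\ell_1,\ldots,\ell_n)\in(0,\e)^n$ the fiber is canonically identified with $\teich(S-\g;2\ell_1,\ldots,2\ell_n)$ --- cutting a one--sided geodesic of length $\ell$ on $S$ produces a boundary circle of length $2\ell$ on $S-\g$ --- the group $\Mod(S-\g)$ preserves this fiber, and by Wolpert's formula the second factor $\bigwedge_{j\geq n+1}\diff\tau_j\wedge\diff\ell_j$ restricts to the Weil--Petersson volume form on it. Hence, integrating out the two--sided coordinates first,
\[
\nu_N\bigl(U_\g(\e)/\Mod(S-\g)\bigr) \;=\; \int_{(0,\e)^n} \coth(\ell_1)\cdots\coth(\ell_n)\,V(\ell_1,\ldots,\ell_n)\,\diff\ell_1\cdots\diff\ell_n,
\]
where $V(\ell_1,\ldots,\ell_n):=\vol_{WP}\bigl(\moduli(S-\g;2\ell_1,\ldots,2\ell_n)\bigr)$. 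By Mirzakhani's theorem $V$ is a polynomial in $\ell_1^2,\ldots,\ell_n^2$ (a product of such, if $S-\g$ is disconnected) with positive constant term $\vol_{WP}(\moduli(S-\g))>0$; shrinking $\e$ if needed, $V\geq c>0$ on $[0,\e]^n$. The integral is therefore at least $c\,\bigl(\int_0^\e \coth(\ell)\,\diff\ell\bigr)^n$, and since $\int_0^\e\coth(\ell)\,\diff\ell=\log\sinh\e-\lim_{\ell\to 0^+}\log\sinh\ell=+\infty$, the volume is infinite.

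The step that requires most care is the Fubini identification of the slice: one must verify that the slice of $U_\g(\e)$ at fixed $\ell_1,\ldots,\ell_n$ really is $\teich(S-\g;2\ell_1,\ldots,2\ell_n)$, that the length--doubling factor $2$ is correctly inserted, and that the $\Mod(S-\g)$--action on the slice is precisely the one used to define the orientable moduli space (as in the statement of the previous lemma). A minor additional nuisance is the possibility that some component of $S-\g$ is a pair of pants, in which case the corresponding factor of $V$ is identically $1$ and the argument is unaffected; the disconnected case is handled by the standard product decomposition of moduli, $V$ remaining a positive polynomial at the origin.
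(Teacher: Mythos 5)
Your proof is correct and follows essentially the same route as the paper's: fiber over the one--sided length coordinates via Fubini, identify each fiber with the moduli space of the orientable cut surface $S-\g$, invoke Mirzakhani's theorem that the Weil--Petersson volume is a polynomial with positive constant term, and conclude from the divergence of $\int_0^\e \coth(\ell)\,\diff\ell$. You in fact spell out a few details the paper leaves implicit (the length--doubling factor of $2$ when cutting a one--sided geodesic, the reason the polynomial is bounded below near the origin, and the disconnected/pair--of--pants cases), but the underlying argument is identical.
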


\begin{proof}
According to the above lemma, it suffices to show that $U_\g(\e)/\Mod(S-\g)$ has infinite volume. 
As $\Mod(S-\g)$ acts trivially on $\g$, the family of length functions $(\ell_{\g_1},\ldots,\ell_{\g_n})$ induces a map $L:U_\g(\e)/\Mod(S-\g)\rightarrow(0,\e)^n$. Each fiber $L^{-1}(x)$ identifies canonically with $\moduli(S-\g,x)$ whose volume $V_{S-\g}(x)$ is a polynomial of degree $\dim \teich(S-\g)$ in $x$ (Mirzakhani \cite{mirzakhani-inventiones}). We find
$$
\vol(U_\g(\e))/\Mod(S-\g))\ =\ \int_{(0,\e)^n} \coth(x_1)\cdots\coth(x_n) V_{S-\g}(x)\ \mrm dx \ =\ +\infty,
$$
which concludes the proof.
\end{proof}

It is interesting to note that if $\g=\g_1+\ldots+\g_n$ and $\d=\d_1+\ldots+\d_m$ are maximal families of disjoint one--sided simple closed geodesics, then for any $\e$ small enough either $\mc U_\g(\e)= \mc U_\d(\e)$ or $\mc U_\g(\e)\cap\mc U_\d(\e)=\emptyset$, the first case occurring if and only if $\g$ and $\d$ have same topological type (\emph{i.e.} are in the same $\Mod(S)$--orbit). This fact is an obvious consequence of the Collar Lemma. One can easily compute the number of such topological types which is the integer part of $(g+1)/2$ where $g$ is the genus of $S$.\par

\section{A finite volume deformation retract}\label{sec:finite-convex}

We call \emph{systole of one--sided geodesics} the length of the shortest one--sided simple closed geodesic. This metric invariant defines a continuous and $\Mod(S)$--invariant function $\sys^-:\teich(S)\rightarrow \R_+^\ast$. In this section we study the $\Mod(S)$--invariant subset
\begin{eqnarray*}
\teich^-_\e(S) & = & \{[m]\in\teich(S)~;~\sys^-(m)\geq\e\}
\end{eqnarray*}
and its quotient $\moduli_\e^-(S)=\teich_\e^-(S)/\Mod(S)$.

\subsection*{Noncompact subsets of finite volume}

\begin{proposition}
For any $\e>0$ the subset $\moduli_\e^-(S)$ has finite $\nu_N$--volume. It is noncompact if $\e$ is sufficiently small and if $S$ is not the two--holed projective plane.
\end{proposition}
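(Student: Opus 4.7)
The statement splits into two claims -- finiteness of $\nu_N(\moduli_\e^-(S))$ and non-compactness -- which I would treat separately. The guiding intuition is Norbury's observation that the infinite volume of $\moduli(S)$ arises entirely from the factor $\coth(\ell)$ in $\nu_N$ diverging as a one-sided length $\ell$ tends to $0$, so $\sys^-\geq\e$ is precisely the condition needed to tame this divergence.

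For finiteness, I would apply Bers' theorem -- which extends to nonorientable surfaces by the classical proof -- to obtain a constant $L_S>0$ such that every $m\in\teich(S)$ admits a pants decomposition with all lengths $\leq L_S$. Combined with the finiteness of the set of $\Mod(S)$-orbits of topological types of pants decompositions $[P_1],\ldots,[P_N]$, this gives a covering $\moduli_\e^-(S)\subseteq\bigcup_j\pi(V_j)$ where $V_j=\{m\in\teich(S)~:~\ell_\a(m)\leq L_S\text{ for all }\a\in P_j\}$ and $\pi$ is the projection to the moduli space. After reducing the twist coordinates along each two-sided $\a\in P_j$ modulo $\ell_\a$ (via the Dehn twist in $\Mod(S)$), the $\nu_N$-volume of $\pi(V_j)\cap\moduli_\e^-(S)$ is bounded above by
\[
\int \prod_{\a_i\in P_j\text{ one-sided}}\coth(\ell_i)\,\diff\ell_i\;\prod_{\a_i\in P_j\text{ two-sided}}\ell_i\,\diff\ell_i,
\]
integrated over $\e\leq\ell_i\leq L_S$ for one-sided $\a_i$ (justified since $\sys^-\geq\e$ forces every one-sided geodesic, in particular those in $P_j$, to have length $\geq\e$) and $0<\ell_i\leq L_S$ for the two-sided ones. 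Each factor is a convergent one-variable integral, so summing over $j$ yields finite total volume.

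For non-compactness, any $S\neq N_{1,2}$ with $\chi(S)<0$ admits a two-sided simple closed geodesic $\a$ (inspection of the small-complexity cases via \textsection\ref{sec:complexity}, together with a standard argument in higher complexity). I would fix a complete finite-area hyperbolic structure $\hat m$ on $S-\a$ (with a pair of new cusps replacing $\a$) satisfying $\sys^-(\hat m)\geq 2\e$, which is possible as soon as $\e$ is smaller than half the one-sided systole of some convenient base structure, and then produce via the classical plumbing construction a sequence $m_n\in\teich(S)$ with $\ell_\a(m_n)=1/n$ whose restriction to $S-\a$ converges to $\hat m$. Since $\sys(m_n)\leq 1/n\to 0$, the points $[m_n]$ leave every compact subset of $\moduli(S)$ by Mumford compactness, so they cannot have a convergent subsequence in $\moduli_\e^-(S)$ provided I can show $\sys^-(m_n)\geq\e$ for $n$ large.

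For this last point, I would analyze one-sided geodesics $\g$ of $S$ case by case. If $i(\g,\a)=0$, then $\g$ persists as a geodesic in $(S-\a,\hat m)$ and $\ell_\g(m_n)\to\ell_\g(\hat m)\geq 2\e$; if $i(\g,\a)>0$, then the Collar Lemma gives $\ell_\g(m_n)\geq 2i(\g,\a)\cdot\mathrm{arcsinh}(1/\sinh(\ell_\a(m_n)/2))\to\infty$, uniformly in $\g$. The Collar Lemma applied to $\hat m$ further restricts the disjoint one-sided curves with $\ell_\g(\hat m)\leq 4\e$ to a finite set, so the pointwise convergence only needs to be verified for finitely many $\g$, while all remaining disjoint curves automatically satisfy $\ell_\g(m_n)\geq 2\e$ for all $n$. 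The main delicacy I anticipate is exactly this interplay between uniform control on curves intersecting $\a$ and pointwise control on finitely many disjoint curves, in order to extract a uniform lower bound $\sys^-(m_n)\geq\e$.
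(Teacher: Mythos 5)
The finiteness half of your proof coincides with the paper's: Bers' constant, Fenchel--Nielsen coordinates, reduction of the twist parameters modulo a Dehn twist, finitely many topological types of pants decompositions, and the key observation that $\sys^-\geq\e$ bounds the one--sided lengths in the pants decomposition away from zero so that the $\coth$ factors are controlled. Nothing to add there.

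For noncompactness you take a genuinely different and heavier route. The paper simply fixes a pants decomposition $\g=\{\g_1,\ldots,\g_n\}$ with a two--sided component $\g_1$ (possible precisely because $S\neq N_{1,2}$), holds $\ell_i=2\e$ for $i\geq 2$ with fixed twists, and lets $\ell_1\to 0$ along the Fenchel--Nielsen ray. On this ray the candidate short one--sided curves are under complete a priori control: a one--sided simple closed geodesic is either one of the $\g_i$ with $i\geq 2$ (length $2\e$), or it crosses the collar of some $\g_i$ and hence has length at least $2\,\mathrm{arcsinh}(1/\sinh(\ell_i/2))\geq\e$ once $\e$ is small. Thus $\sys^-\geq\e$ holds identically along the ray, and Mumford's criterion finishes. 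Your plumbing-to-a-noded-surface argument has to re-derive this control, and the step ``all remaining disjoint curves automatically satisfy $\ell_\g(m_n)\geq 2\e$ for all $n$'' is exactly where it is soft: $\ell_\g(\hat m)>4\e$ does not by itself give a lower bound on $\ell_\g(m_n)$ that is uniform over $\g$, let alone one valid for all $n$. To close this you would need a geometric-convergence statement (e.g.\ the $\e$-thick part of $m_n$ converges to that of $\hat m$, or a uniform comparison of length spectra away from the pinching collar), and even then the conclusion only holds for $n$ large. This is fixable, and you rightly flag it as the delicate point, but the Fenchel--Nielsen ray avoids the issue entirely by freezing all the other lengths at $2\e$ from the outset --- which is why the paper's argument is both shorter and structurally safer.
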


\begin{proof}
Using standard methods (see \cite[\textsection 5]{buser}), one easily shows that there exists a constant $B(S)$, called the \emph{Bers' constant}, such that any point in $\teich(S)$ admits a pants decomposition whose components have length at most by $B(S)$. Let us recall that, in our definition of $\teich(S)$, we assume that the lengths of the boundary components of $S$ are fixed.\par
 
 Given a pants decomposition $\g=\{\g_1,\ldots,\g_n\}$, we denote by $A_\g(\e)\subset\teich(S)$ the subset defined by the following inequalities in the Fenchel--Nielsen coordinates associated to $\g$: 
 $$\left\{\begin{array}{l}
 0\leq \tau_i\leq \ell_i \textnormal{ and } \ell_{i}\leq B(S)\textnormal{ for all } i=1,\ldots, n \\
 \ell_{i}\geq\e \textnormal{ for all } i=1,\ldots,n\textnormal{ such that }\g_i \textnormal{ is one--sided}.
 \end{array}\right.$$
We observe immediately that $A_\g(\e)$ has finite $\nu_N$--volume.
 
 Let us denote by $\mc A_\g(\e)$ the projection of $A_\g(\e)$ in $\moduli(S)$. The existence of the Bers' constant implies that any point in $\moduli_\e^-(S)$ belongs to some $\mc A_\g(\e)$ for some pants decomposition $\g$. But $\mc A_\g(\e)$ depends only on the \emph{topological type} of $\g$, and there are finitely many topological types of pants decomposition. So we can cover $\moduli_\e^-(S)$ by a finite number of subsets of the form $\mc A_\g(\e)$. We conclude that $\moduli_\e^-(S)$ has finite $\nu_N$--volume since every $A_\g(\e)$ has finite $\nu_N$--volume.\par
 
 If $S$ is not the two--holed projective plane, then $S$ has a pants decomposition with a two--sided component. By pinching the two--sided component while keeping the lengths of the other components equal to $2\e$ we leave any compact in $\moduli(S)$ (Mumford's compactness criterion) but remain in $\moduli_\e^-(S)$ (Collar Lemma). This proves the second assertion.
\end{proof}

\subsection*{Quasi--convexity}
In this paragraph we just ask the following natural question (see \textsection\ref{sec:conclusion} for some motivations):

\begin{question}
Is $\teich_\e^-(S)$ quasi--convex with respect to the Teichm\"uller metric~?
\end{question}

We believe that \emph{Minsky's product theorem} (\cite{minsky}) gives some evidence to a positive answer. It provides an approximation of the Teichm\"uller distance when the systole is small, but not an approximation of the Finsler metric, so we were not able to conclude.

\subsection*{A retraction}

Let us fix $\e>0$ small enough in the sense that two closed geodesics of length at most $\e$ can not intersect in any hyperbolic surface (independently of the topology).

\begin{proposition}
For $\e>0$ small enough there is a $\Mod(S)$--invariant (strong) deformation retraction of $\teich(S)$ onto $\teich^-_\e(S)$.
\end{proposition}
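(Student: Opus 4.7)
The plan is to simultaneously stretch all short one-sided simple closed geodesics to length $\e$. By our choice of $\e$, for any $m\in\teich(S)$ the one-sided simple closed geodesics of length at most $\e$ form a (possibly empty) pairwise disjoint collection, that is a one-sided simple multicurve $\g(m)=\g_1+\ldots+\g_k$, with $k=0$ precisely when $m\in\teich^-_\e(S)$.

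First I would construct, for each $\g\in\simple^-$, a smooth one-parameter family $(\Phi^\g_s)_{s>-\ell_\g(m)}$ of maps defined on $\teich(S)$ satisfying $\ell_\g(\Phi^\g_s(m))=\ell_\g(m)+s$, and such that $\Phi^\g_s$ is the identity outside an arbitrarily small tubular neighborhood of the geodesic representative of $\g$. Concretely, in Fenchel--Nielsen type coordinates with respect to any pants decomposition of $S$ containing $\g$, the map $\Phi^\g_s$ translates the $\ell_\g$-coordinate and leaves all the others fixed. Since a one-sided component of such a decomposition contributes only a length coordinate and no twist (see \textsection\ref{sec:weil-petersson}), no further choice is involved. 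It follows from this local description that two such flows $\Phi^\g,\Phi^\d$ commute whenever $i(\g,\d)=0$, because they act on disjoint collars.

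Then I would define the deformation by
\begin{eqnarray*}
H(t,m) & = & \prod_{\g\in\simple^-}\Phi^\g_{t\max(0,\e-\ell_\g(m))}(m),\quad t\in[0,1].
\end{eqnarray*}
Only finitely many factors are nontrivial at each $m$, and by our choice of $\e$ they are supported on pairwise disjoint collars, so the product is well defined and independent of the order of composition. One has $H(0,\cdot)=\mrm{id}_{\teich(S)}$, $H(t,m)=m$ for $m\in\teich^-_\e(S)$ (all factors are trivial), and $\ell_{\g_i}(H(1,m))=\e$ for each $\g_i\in\g(m)$, so $H(1,m)\in\teich^-_\e(S)$. Equivariance under $\Mod(S)$ is automatic since $\g\mapsto\Phi^\g$ is manifestly equivariant and $\g(m)$ is intrinsically associated to $m$.

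The main obstacle is the continuity of $H$. The set-valued map $m\mapsto\g(m)$ is not continuous: curves enter and leave as $\ell_\g(m)$ crosses $\e$. The key observation is that the stretching amount $t\max(0,\e-\ell_\g(m))$ vanishes continuously at $\ell_\g(m)=\e$, so the factor $\Phi^\g_{t\max(0,\e-\ell_\g(m))}$ becomes the identity precisely where it is about to drop out of the product. Locally around any $m_0$, only the finitely many $\g\in\simple^-$ with $\ell_\g(m_0)\leq\e+\d$ can contribute for $m$ in a small neighborhood, and these are pairwise disjoint; $H$ is then locally a finite composition of jointly continuous operations on disjoint parts of $S$, and continuity follows.
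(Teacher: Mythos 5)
Your retraction strategy (stretch all short one-sided geodesics simultaneously, with a stretching amount that vanishes continuously at the threshold $\ell_\g=\e$) is a reasonable alternative to the paper's inductive scheme, and you correctly identify the continuity issue and how it is resolved. But the foundational step — the construction of the family $(\Phi^\g_s)$ — has a genuine gap, and it is exactly the difficulty the paper's proof is built to overcome.

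First, the two descriptions you give of $\Phi^\g_s$ are incompatible, and the first one is impossible. A hyperbolic metric on a one-sided (or two-sided) collar is rigid: once the metric on $S$ minus a tubular neighborhood of $\g$ is fixed, the boundary data of the collar is fixed, and the collar together with its core length is then determined. There is no hyperbolic deformation that changes $\ell_\g$ while acting as the identity outside an arbitrarily small tubular neighborhood of $\g$. In particular the claimed commutation of $\Phi^\g$ and $\Phi^\d$ for $i(\g,\d)=0$ ``because they act on disjoint collars'' has no basis, and neither does the order-independence of your product.

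Second, the Fenchel--Nielsen description does not rescue this, because it is \emph{not canonical}. Fixing $\ell_\g$ and translating it while ``leaving all the others fixed'' depends on the choice of the remaining pants curves: the transition maps between Fenchel--Nielsen charts for $S-\g$ depend on the boundary length $2\ell_\g$, so fixing the coordinates with respect to one completion and fixing those with respect to another gives different paths in $\teich(S)$. The remark that a one-sided $\g$ has no twist coordinate only removes the ambiguity in the $\g$-direction, not the ambiguity coming from the completion of $\g$ to a pants decomposition. Consequently $\g\mapsto\Phi^\g$ is \emph{not} manifestly $\Mod(S)$-equivariant: the stabilizer $\Mod^\ast(S-\g)$ of $\g$ does not preserve any completion, and it therefore does not preserve the flow defined by one. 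Completeness of the flow in forward time is also asserted without argument.

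The paper's proof is organized precisely around these obstructions. It works inductively, retracting $\teich^{\leq n}_\e\to\teich^{\leq n-1}_\e$ one topological type of multicurve at a time, and for a fixed one-sided multicurve $\g$ it does pick a completion $\bar\g$ and the vector field $v^{\bar\g}=\sum\partial/\partial\ell_i$ — acknowledging that this choice is non-canonical — and then restores $\Mod^\ast(S-\g)$-invariance by averaging over the group with a bump function of the Teichm\"uller distance. It also reinterprets $v^{\bar\g}$ as a strip deformation so as to control $\diff\ell_\d(v^{\bar\g})$ uniformly and establish completeness of the (averaged) flow. If you want to pursue your one-step variant, you would still need some equivariant replacement for $\Phi^\g$, e.g.\ a similar averaging over stabilizers or a genuinely coordinate-free length-increasing deformation; as written, the argument does not go through.
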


 It is well--known that for $\e$ small enough $\{\sys\geq\e\}$ is a $\Mod(S)$--invariant deformation retract of $\teich(S)$ (see \cite[\textsection 3]{ji} for a proof and a discussion of its relation with the well--rounded retract). The statement above is a little bit different since we deal with $\sys^-$ and not $\sys$. Our deformation retraction follows the flow of a well--chosen vector field that increases the length of the geodesics realizing $\sys^-$. This is rather classical, the additional difficulty is to make sure that the surface does not degenerate before we reach $\teich^-_\e(S)$. To do so we need to control also the length of the two--sided geodesics. This is done using strip deformations. 
 
\begin{proof}
Let us introduce some notations, for short we drop the $(S)$ in all notations involving $\teich(S)$. We denote by $\teich^{\leq n}_\e$ the set of $X\in\teich$ which have at most $n\geq 0$ one--sided geodesics of length less than $\e$. We have $\teich^{\leq 0}_\e=\teich^-_\e$ and $\teich^{\leq n}_\e=\teich$ for any $n$ greater or equal to the genus $g$ of $S$. Given a one--sided simple multicurve $\g=\g_1+\ldots+\g_n$ we denote by $\teich^\g_\e$ the set of $X\in\teich$ whose one--sided geodesics of length less than $\e$ are exactly $\g_1,\ldots,\g_n$. For short we denote by $\ell_i$ the length function $\ell_{\g_i}$. The subsets $\teich^\g_\e$ are pairwise disjoint.\par

  The strategy of the proof is to decrease the maximal number of one--sided simple closed geodesics of length less than $\e$. We successively retract
$$\teich=\teich^{\leq g}_\e\longrightarrow \teich_\e^{\leq g-1}\longrightarrow \ldots \longrightarrow \teich^{\leq 1}_\e\longrightarrow\teich^{\leq 0}_\e=\teich^-_\e$$
 The construction of the retraction $\teich^{\leq n}_\e\longrightarrow \teich_\e^{\leq n-1}$ is based on the following statement (Lemma~\ref{lem:retraction-2}) which is the heart of the proof: \emph{given a one--sided multicurve $\g=\g_1+\ldots+\g_n$ there exists a $\Mod^\ast(S-\g)$--invariant retraction }
 $$R^\g:\teich^{\leq n}_\e\longrightarrow  \teich^{\leq n}_\e-\teich^\g_\e.$$
 Once we have $R^\g$ we get for free a $\Mod(S)$--invariant retraction
 $$R^{[\g]}:\teich^{\leq n}_\e\rightarrow \teich^{\leq n}_\e-\left(\cup_{f\in \Mod(S)} \teich^{f(\g)}_\e\right),$$
 defined by 
 $$\left\{\begin{array}{ll}
 R^{[\g]}(X)=f(R^{\g}(X)) & \textnormal{if }X\in\teich_\e^{f(\g)}\textnormal{ for some }f\in\Mod(S),\\
R^{[\g]}(X)=X &  \textnormal{otherwise}.
\end{array}\right.$$
Note that $R^{[\g]}$ does not depend on the particular $f\in\Mod(S)$ such that $X\in\teich^{f(\g)}_\e$ since $R^\g$ is $\Mod^\ast(S-\g)$--invariant. The continuity of $R^{[\g]}$ follows directly from the continuity of $R^\g$ and the relative positions of the subsets $\teich^{f(\g)}_\e$, namely they are pairwise disjoint and relatively open in $\teich^{\leq n}$ (Lemma~\ref{lem:retraction-1}). Now we conclude easily: we perform the retraction $R^{[\g]}$ for each topological type $[\g]$ of one--sided multicurve with $n$ components (there are finitely many such topological types), this gives the expected retraction $\teich^{\leq n}_\e\longrightarrow \teich_\e^{\leq n-1}$. It can be checked that it is a deformation retraction like $R^\g$ (Lemma~\ref{lem:retraction-2}).
\end{proof}

\begin{lemma}\label{lem:retraction-1}
Let $\g=\g_1+\ldots+\g_n$ be a one--sided multicurve. The set $\teich^\g_\e$ is open in $\teich_\e^{\leq n}$.
\end{lemma}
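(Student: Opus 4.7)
The plan is to characterize $\teich^\g_\e$ inside $\teich_\e^{\leq n}$ as the trace of an open subset of $\teich(S)$, using only the continuity of the length functions $\ell_{\g_i}$ together with the cardinality constraint defining $\teich_\e^{\leq n}$.

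First, I would note that $\teich^\g_\e$ is automatically contained in $\teich_\e^{\leq n}$: by definition a point $X\in\teich^\g_\e$ has exactly $n$ one--sided simple closed geodesics of length less than $\e$, namely $\g_1,\ldots,\g_n$, hence at most $n$.

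The key claim I would then establish is the identity
$$\teich^\g_\e \;=\; \teich_\e^{\leq n} \,\cap\, \bigcap_{i=1}^n \bigl\{X\in\teich(S)\,:\,\ell_{\g_i}(X)<\e\bigr\}.$$
The inclusion $\subset$ is immediate from the definitions. For the reverse inclusion, suppose $X$ lies in the right--hand side. Then $\g_1,\ldots,\g_n$ are $n$ distinct one--sided simple closed geodesics of $X$ with $\ell_{\g_i}(X)<\e$, so $X$ has at least $n$ one--sided simple closed geodesics shorter than $\e$. Since $X\in\teich_\e^{\leq n}$ forces \emph{at most} $n$ such geodesics, the short one--sided geodesics of $X$ are exactly $\g_1,\ldots,\g_n$, which means $X\in\teich^\g_\e$.

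Finally, each set $\{\ell_{\g_i}<\e\}$ is open in $\teich(S)$ by continuity of the length function $\ell_{\g_i}$, so their finite intersection is open in $\teich(S)$; intersecting with $\teich_\e^{\leq n}$ gives an open subset of $\teich_\e^{\leq n}$ by definition of the subspace topology. There is no real obstacle here: the only subtle point is recognising that the ``no other short one--sided geodesic'' condition, which looks like a closed constraint, becomes automatic once one is already inside $\teich_\e^{\leq n}$ and $n$ short ones have been exhibited.
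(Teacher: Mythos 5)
Your proof is correct and in fact a cleaner packaging of the paper's argument. The paper proceeds locally: given $X_0\in\teich^\g_\e$, it picks a neighborhood $U$ on which every $\ell_{\g_i}<\e$ and then argues that $U\cap\teich^\g_\e=U\cap\teich_\e^{\leq n}$ (with an unnecessary case split between interior and frontier points of $\teich^\g_\e$). You instead record the global identity
$$\teich^\g_\e \;=\; \teich_\e^{\leq n}\cap\bigcap_{i=1}^n\{\ell_{\g_i}<\e\},$$
which encapsulates exactly the same counting observation --- exhibiting $n$ short one--sided geodesics plus the bound ``at most $n$'' forces the set of short one--sided geodesics to be precisely $\{\g_1,\ldots,\g_n\}$ --- and then openness is immediate from continuity of the $\ell_{\g_i}$. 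The two proofs are the same in substance; yours avoids the pointwise neighborhood bookkeeping and makes the mechanism visible at a glance.
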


\begin{proof}
We want to show that any $X_0\in\teich^\g_\e$ has a neighborhood $U$ in $\teich$ such that $U\cap\teich^\g_\e=U\cap\teich^{\leq n}_\e$. If $X_0$ belongs to the interior of $\teich^\g_\e$, then one obviously takes $U=\teich^\g_\e$. So we assume that $X_0\in \teich^\g_\e$ belongs to the frontier $\partial \teich^\g_\e$ of $\teich^\g_\e$ in $\teich$. Let $U$ be a neighborhood of $X_0$ in $\teich$ such that $\ell_i<\e$ on $U$ for any $i$. Then $U-\teich^\g_\e$ is the set of $X\in U$ such that $\ell_{\d}(X)<\e$ for some one--sided geodesic $\d$ which is not a component of $\g$. In particular $U-\teich^\g_\e\subset U-\teich^{\leq n}_\e$. We deduce that $U\cap\teich^\g_\e=U\cap\teich^{\leq n}_\e$, which is what we wanted to show.
\end{proof}

\begin{lemma}\label{lem:retraction-2}
Given a one--sided multicurve $\g=\g_1+\ldots+\g_n$ there exists a $\Mod^\ast(S-\g)$--invariant (strong) deformation retraction
$$R^\g:\teich^{\leq n}_\e\longrightarrow  \teich^{\leq n}_\e-\teich^\g_\e.$$
\end{lemma}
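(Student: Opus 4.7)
The plan is to produce $R^\g$ as the time-$\tau(X)$ endpoint map of a continuous flow on $\teich^\g_\e$, with $\tau(X) = \e - \min_{1\le i\le n} \ell_{\g_i}(X) > 0$, glued with the identity on $\teich^{\leq n}_\e - \teich^\g_\e$. The flow must strictly increase each $\ell_{\g_i}$, so that at time $\tau(X)$ one has $\min_i \ell_{\g_i}(R^\g(X)) = \e$ and the orbit exits $\teich^\g_\e$ into $\teich^{\leq n}_\e - \teich^\g_\e$. Continuity of $R^\g$ at the frontier $\{\min_i \ell_{\g_i} = \e\}$ will come for free because $\tau \to 0$ there.

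First I would build the vector field. Since each $\g_i$ is one-sided, cutting $S$ along $\g$ gives a surface $S'$ with $n$ new boundary components (of lengths $2\ell_{\g_i}$), and because no twist parameter is attached to a one-sided curve, the restriction map is a canonical homeomorphism between $\teich(S)$ and the space of hyperbolic structures on $S'$ with free boundary lengths. Fix a pants decomposition $P \supset \g$ of $S$ and set $V^\g = \sum_{i=1}^n \partial/\partial \ell_{\g_i}$ in the associated Fenchel--Nielsen coordinates. Averaging over the finite stabilizer of $\g$ acting on the choice of the complementary pants curves, or symmetrising the initial choice of $P$, yields a $\Mod^\ast(S-\g)$-invariant vector field with the same monotonicity $V^\g\cdot\ell_{\g_i}=1$ for each $i$.

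The hard part --- which the authors flag --- is preventing some two-sided geodesic $\delta$ from shrinking to length zero along the flow, which would push the orbit out of $\teich(S)$. I would handle this by adding a small infinitesimal strip deformation $\xi$ supported on a $\Mod^\ast(S-\g)$-equivariant family of arcs meeting every two-sided geodesic that could become short. Strip deformations have the crucial monotonicity $\xi\cdot\ell_\eta\ge 0$, strict whenever $\eta$ crosses an arc in the support, so for $\xi$ small enough the modified vector field $\tilde V^\g = V^\g+\xi$ still satisfies $\tilde V^\g\cdot\ell_{\g_i}>0$ while $\tilde V^\g\cdot\ell_\delta\ge 0$ for every two-sided $\delta$ with $\ell_\delta$ near a fixed small threshold. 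Combined with the Collar Lemma and the choice of $\e$ (two geodesics of length $\le\e$ cannot intersect), the same monotonicity rules out the birth of new short one-sided geodesics: any such geodesic would have to lie in $S-\g$, and $\tilde V^\g$ can be arranged not to decrease its length.

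Finally I would set $R^\g(X) = \Phi^\g_{\tau(X)}(X)$ on $\teich^\g_\e$ and $R^\g(X)=X$ elsewhere, where $\Phi^\g_t$ is the flow of $\tilde V^\g$. The map $H(s,X) = \Phi^\g_{s\tau(X)}(X)$ gives the desired strong deformation retraction, and $\Mod^\ast(S-\g)$-invariance follows from that of $\tilde V^\g$ and of $\tau$. The main remaining difficulty is the quantitative balance in the strip correction: $\xi$ must be small enough not to cancel the growth of any $\ell_{\g_i}$, yet large enough to guarantee that no $\ell_\delta$ reaches zero before $\min_i \ell_{\g_i}$ reaches $\e$, uniformly over $\teich^\g_\e$.
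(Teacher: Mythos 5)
Your overall plan---build a vector field that increases each $\ell_{\g_i}$, flow for a suitable time, glue with the identity off $\teich^\g_\e$---is the same strategy the paper follows, and your idea of controlling length monotonicity via strip deformations is exactly the right tool. But there are two genuine gaps in the execution, and one unnecessary complication.

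The main gap is the invariance step. You propose to obtain a $\Mod^\ast(S-\g)$-invariant vector field by ``averaging over the finite stabilizer of $\g$ acting on the choice of the complementary pants curves, or symmetrising the initial choice of $P$.'' The stabilizer of $\g$ in $\Mod^\ast(S)$ essentially \emph{is} $\Mod^\ast(S-\g)$, which is infinite (it contains all Dehn twists along curves of $S-\g$) except in a handful of degenerate cases, so there is no finite group to average over, and there is no canonical ``symmetric'' pants decomposition of $S-\g$. The paper instead builds the invariant field by a \emph{locally finite} bump-function sum: fixing $\psi:\R\to[0,1]$ with $\psi(0)=1$ and support in $[0,1]$, it sets $V_X=\sum_{f\in\Mod^\ast(S-\g)}\psi(d_\teich(X,f(X)))\,v^{f(\bar\g)}_X$; properness of the action makes the sum locally finite, and a reindexing shows $\Mod^\ast(S-\g)$-invariance while the bound $0\le\psi\le1$ preserves the length-rate estimates. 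This ingredient is missing from your argument, and without it $\Mod^\ast(S-\g)$-invariance of $R^\g$ (which is essential for the lemma to serve its role in the proof of the retraction proposition) is not established.

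A secondary gap is the stopping time. You flow for the explicit time $\tau(X)=\e-\min_i\ell_{\g_i}(X)$, arguing that continuity with the identity on $\teich^{\leq n}_\e-\teich^\g_\e$ ``comes for free because $\tau\to 0$ there.'' But the frontier of $\teich^\g_\e$ in $\teich^{\leq n}_\e$ consists of points where \emph{some} $\ell_{\g_i}=\e$ while others may remain $<\e$; there $\tau=\e-\min_i\ell_{\g_i}>0$ and the gluing is not continuous. (Also, after the averaging the rates $\diff\ell_i(V)$ are no longer identically $1$, so the closed-form $\tau$ would not land the minimum exactly at $\e$ anyway.) The paper uses the first-exit stopping time $t_X=\inf\{t\geq 0;\Phi(X;t)\notin\teich^\g_\e\}$, which vanishes on the frontier and is continuous because all geodesics disjoint from $\g$ only lengthen under the flow. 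Finally, a small but notable structural difference: you keep $V^\g$ and a strip-deformation correction $\xi$ as separate competing terms and correctly flag the resulting ``quantitative balance'' as unresolved. The paper avoids this tension entirely by realizing $v^{\bar\g}$ \emph{itself} as a strip deformation along the common perpendiculars between the $\g_i$'s inside the pants of $\bar\g$, which yields $0\leq\diff\ell_\delta(v^{\bar\g})\leq\sum_j i(\delta,\a_j)$ for every $\delta\subset S-\g$ with no tuning required, and these estimates simultaneously give completeness in positive time and rule out new short one-sided geodesics.
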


\begin{proof}
Let us complete $\g$ into a pants decomposition $\bar\g$. Let $v^{\bar\g}$ be the vector field on $\teich(S)$ given by
$v^{\bar\g}=\frac{\partial}{\partial \ell_1}+\ldots+\frac{\partial}{\partial \ell_n}$
in the Fenchel--Nielsen coordinates associated to $\bar\g$. Actually the twist coordinates are not canonically associated to a pants decomposition, but this does not matter here. It obviously satisfies
\begin{eqnarray}\label{eq:vector1}
\diff \ell_i(v^{\bar\g}) & = & 1 \quad (i=1,\ldots,n).
\end{eqnarray}
Its flow $\phi$ is explicitely given by
$
\phi(X;t)  =  X+(t,\ldots,t,0,\ldots,0)
$
with respect to the linear structure of the Fenchel--Nielsen coordinates. Note that $\phi$ is complete in positive time.\par

 The flow $\phi$ can be realized as a \emph{strip deformation}. This is a construction of Thurston which has been studied in details in \cite{danciger} (see also \cite{theret}). Given a surface with boundary, a strip deformation is parametrized by a system of arcs $\{\a_1,\ldots,\a_k\}$ together with a  point $p_j\in\a_j$ and a width $w_j>0$ for any $1\leq j\leq k$. For any hyperbolic metric $m$, the time $1$ strip deformation of $m$ in the direction $(\a,p,w)$ consists in cutting each $\a_j$, and inserting a strip of width $w_j$ in such a way that the geodesic segment contained in the strip and joining the points identified to $p_j$ is orthogonal to the boundary components of the strip. Actually this construction is realized in the Nielsen extension of the surface. We refer to \cite{danciger} for a more precise description. In our case, the surface with boundary is $S-\g$, the arcs $\a_j$'s are the common perpendicular between the $\g_i$'s with respect to the pants decomposition $\bar\g$, the points $p_j$'s are the midpoints of the common perpendiculars, the widths $w_j$'s are chosen so that \eqref{eq:vector1} is realized ($w_j$ depends on the metric). This point to view has the advantage that one can easily control the variation of the length of any closed geodesic $\d$ contained in $S-\g$. Looking at the trace of $\d$ in each pair of pants of $S-\bar \g$ we find (see also \cite[Lemma~2.2]{danciger})
 \begin{eqnarray}\label{eq:vector2}
0\ \leq \ \diff \ell_\d(v^{\bar\g}) & \leq & i(\d,\a_1)+\ldots+i(\d,\a_k).
\end{eqnarray}
We observe that $\diff \ell_\g(v^{\bar \g})$ is uniformly controlled over $\teich$. This gives another proof of the completeness of $\phi$ in positive time. We will use this argument later.\par

Unfortunately $\phi$ is not $\Mod(S-\g)^\ast$--invariant. To fix this problem we endow $\teich$ with the Teichm\"uller distance $d_\teich$, and choose a smooth function $\psi:\R\rightarrow [0,1]$ with $\psi(0)=1$ and $\psi\equiv 0$ outside $[0,1]$. Then we set
 \begin{eqnarray*}
 V_X & = & \sum_{f\in \Mod^\ast(S-\g)} \psi(d_\teich(X,f(X))) \cdot v^{f(\bar\g)}_X.
 \end{eqnarray*}
The sum is finite at any point $X\in\teich(S)$ since $d_\teich$ si complete (closed balls are then compact) and $\Mod(S)$ acts properly. So the vector field $V$ is well--defined and locally Lipschitz, in particular integrable. It is moreover $\Mod^\ast(S-\g)$--invariant:
\begin{eqnarray*}
 V_{g(X)}  & = & \sum_{f\in \Mod^\ast(S-\g)} \psi(d_\teich(g(X),f(X))) \ v^{f(\bar\g)}_{g(X)},\\
    & = &  \sum_{f\in \Mod^\ast(S-\g)} \psi(d_\teich(X,g^{-1}f(X))) \ \diff g_X(v^{g^{-1}f(\bar\g)}_{X}),\\
    & = &  \sum_{h\in \Mod^\ast(S-\g)} \psi(d_\teich(X,h(X))) \ \diff g_X(v^{h(\bar\g)}_{X}),\\
    & = & \diff g_X(V_X).
\end{eqnarray*}
We have used the equality $ \diff g_X(v^{g^{-1}f(\bar\g)}_{X})=v^{f(\bar\g)}_{g(X)}$ which comes from the fact that the Fenchel--Nielsen coordinates of $g(X)$ with respect to $f(\bar\g)$ are equal (up to some additive constants) to the Fenchel--Nielsen of $X$ with respect to $g^{-1}f(\bar\g)$. The additive constants are due to the fact that the twist coordinates are not canonically associated to a pants decomposition. \par

 Let $\Phi$ be the flow of the vector field $V$, we claim that $\Phi$ is complete in positive time. 
This is equivalent to say that, for any simple closed geodesic $\d$ and any $X\in\teich$, the length $\ell_\d(\Phi(X;t))$ can not tend to zero or infinity in a positive finite time.
From equations \eqref{eq:vector1} and \eqref{eq:vector2} and $1\geq \psi\geq 0$ we deduce that $\Phi(X,t)$ can not degenerate in a positive finite time.\par

 Using the flow $\Phi$ we construct the retraction $R^\g$ of the statement. For any $X\in\teich$ we set $t_X=\inf\{t\geq 0~;~\Phi(X;t)\notin\teich^\g_\e\}$. This number is finite since the lengths of the $\g_i$'s increase at least linearly (equation \eqref{eq:vector1} and $\psi(0)=1$). Moreover $X\mapsto t_X$ is continuous since
$t_X=\inf\{t\geq 0~;~\ell_i(\Phi(X;t))\geq \e\textnormal{ for any }i=1,\ldots n\}$ (the lengths of the geodesics disjoint from $\g$ increase). We conclude that the map $X\mapsto \Phi(X;t_X)$ gives the expected deformation retraction.
 \end{proof}

\subsection*{The frontier of $\teich^-_\e(S)$}

\begin{proposition}
For $\e>0$ small enough, the frontier of $\teich^-_\e(S)$ in $\teich(S)$ is homotopy equivalent to the geometric realization of $\curve^-(X)$.
\end{proposition}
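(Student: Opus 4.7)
The plan is to apply the nerve theorem to a $\Mod(S)$--invariant open cover of $\partial\teich^-_\e(S)$ indexed by $\simple^-$. Choose $\e>0$ small enough so that, by the Collar Lemma, any family of one--sided simple closed geodesics on any hyperbolic surface, each of length at most $3\e$, is automatically pairwise disjoint. For each $\g\in\simple^-$ set
\[
U_\g \;=\; \{\, m\in \partial\teich^-_\e(S) \,:\, \ell_\g(m)<3\e\,\}.
\]

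First I would verify that $\{U_\g\}_{\g\in\simple^-}$ is a $\Mod(S)$--invariant open cover of $\partial\teich^-_\e(S)$: any $m$ on the frontier satisfies $\sys^-(m)=\e$ and this minimum is realized by some $\g\in\simple^-$. Next, the choice of $\e$ forces a finite intersection $\bigcap_{i=1}^k U_{\g_i}$ to be nonempty if and only if $\{\g_1,\ldots,\g_k\}$ spans a simplex of $\curve^-(S)$: the ``only if'' direction is the Collar Lemma at the $3\e$ threshold, and the ``if'' direction is an explicit Fenchel--Nielsen construction of a hyperbolic metric on which all $\g_i$ have length exactly $\e$ and all other one--sided geodesics are long. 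Consequently the nerve of $\{U_\g\}_{\g \in \simple^-}$ is exactly $\curve^-(S)$.

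The central step is to prove that each nonempty intersection $U_\sigma := \bigcap_{\g\in\sigma}U_\g$ is contractible. Given a simplex $\sigma=\{\g_1,\ldots,\g_k\}$, I would strongly deformation retract $U_\sigma$ onto the sub-stratum
\[
W_\sigma \;=\; \{\, m\in\teich(S) \,:\, \ell_{\g_i}(m)=\e \text{ for all } i\in\sigma,\ \sys^-(m)\geq\e\,\}.
\]
Cutting along $\sigma$ identifies $W_\sigma$ with the ``thick one--sided part'' $\teich^-_\e(S\setminus\sigma)$ of the Teichm\"uller space of the complement (with boundary lengths prescribed to $2\e$). The latter is contractible by the preceding proposition applied to $S\setminus\sigma$: that Teichm\"uller space is a contractible ball and the preceding proposition exhibits $\teich^-_\e(S\setminus\sigma)$ as one of its deformation retracts. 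The retraction $U_\sigma \to W_\sigma$ itself is then obtained by linearly interpolating the lengths $\ell_{\g_i}$ down to $\e$ using a strip--deformation vector field on $S\setminus\sigma$ analogous to the one constructed in the preceding proposition; the $3\e$ Collar threshold guarantees that any one--sided geodesic $\d$ of $S$ with $\ell_\d$ close to $\e$ is disjoint from $\sigma$, and hence its length is either left invariant or increased by this interpolation, so $\sys^-\geq\e$ is preserved throughout.

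By the nerve theorem, $\partial\teich^-_\e(S)$ is then homotopy equivalent to the geometric realization of the nerve of $\{U_\g\}$, which is $|\curve^-(S)|$. The principal difficulty lies in the contractibility of $U_\sigma$, and specifically in verifying that the length interpolation preserves the systole condition: \emph{a priori} shrinking $\ell_{\g_i}$ in a Fenchel--Nielsen slice could shrink the length of some one--sided curve intersecting $\sigma$, but the Collar threshold rules out this phenomenon for curves close to realizing $\sys^-$. A careful choice of vector field, modelled on the strip deformations used in the proof of the preceding proposition but adapted to the cut surface $S\setminus\sigma$, together with an inductive application of the preceding proposition to identify the target stratum, make this argument quantitative.
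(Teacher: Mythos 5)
Your proposal takes a genuinely different route from the paper. You cover the frontier by the sets $U_\g=\{\ell_\g<3\e\}\cap\{\sys^-=\e\}$ indexed by $\g\in\simple^-$, whereas the paper covers it by sets $U_\g=\{\sys^-=\e\}\cap\{\min_\d \ell_\d>\e\}$ (the minimum taken over one--sided $\d$ disjoint from $\g$, and $\g$ allowed to be a multicurve). Your cover is defined by a \emph{direct} shortness condition on $\g$; the paper's is defined by a \emph{complementary} longness condition. Your contractibility argument is also different: you retract $U_\sigma$ onto a slice $W_\sigma=\{\ell_{\g_i}=\e\}$, identify $W_\sigma$ with $\teich^-_\e(S\setminus\sigma)$, and invoke the preceding proposition inductively for the cut surface. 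The paper instead shows $U_\g$ is a deformation retract of the bigger contractible set $\{\min_\d\ell_\d>\e\}\subset\teich(S)$, so it never needs to identify a stratum with a lower--complexity Teichm\"uller space. The inductive identification is attractive and the nerve computation is clean, but the central retraction step in your argument has a gap.

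The gap is in the claim that interpolating $\ell_{\g_i}$ \emph{down} to $\e$ leaves one--sided lengths $\ell_\d$ (for $\d$ disjoint from $\sigma$) ``invariant or increased.'' The strip--deformation vector field $v^{\bar\g}=\sum_i \partial/\partial\ell_i$ from Lemma~\ref{lem:retraction-2} satisfies $0\leq \diff\ell_\d(v^{\bar\g})\leq \sum_j i(\d,\a_j)$ (inequality~\eqref{eq:vector2}): inserting strips \emph{increases} all lengths. To bring $\ell_{\g_i}$ down to $\e$ you must flow \emph{backward}, and the same inequality then says $\ell_\d$ is \emph{non--increasing}, since $\d$ disjoint from $\sigma$ can perfectly well cross the arcs $\a_j$ (which connect the $\g_i$'s through the complement $S-\g$). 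So a one--sided $\d$ with $\ell_\d(m)$ barely above $\e$ can be pushed below $\e$, leaving the frontier. The $3\e$ Collar threshold gives you disjointness of short one--sided curves, but it gives no lower bound on $\ell_\d$ under the backward flow. You would need either a lower bound of the form $\ell_\d(m)\geq(1+c)\e$ on $U_\sigma$ for curves $\d\notin\sigma$ (which is false, since $\sys^-$ can be realized by such a $\d$), or a vector field decreasing $\ell_{\g_i}$ that provably does not decrease any one--sided $\ell_\d$ with $\d$ disjoint from $\sigma$ — something your proposal asserts but does not construct. The paper avoids this problem entirely: its cover is carved out by the longness condition on the \emph{complementary} one--sided curves, and the retraction pushes \emph{outward} from $\{\min_\d\ell_\d>\e\}$ onto $U_\g$ using the forward (length--increasing) flow, so the constraint $\min_\d\ell_\d>\e$ is automatically preserved.
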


\begin{remark}
\begin{enumerate}
\item The frontier of $\teich^-_\e(S)$ is simply $\{\sys^-=\e\}$.
\item As well--known $\{\sys=\e\}$ is homotopy equivalent to the complex of curves $\curve(S)$ (see \cite{ivanov}).
\end{enumerate}
\end{remark}

\begin{proof}
We use the standard idea, that is we construct a cover of $\{\sys^-=\e\}$ by contractible open subsets  (\emph{i.e} a \emph{good cover}) whose nerve is isomorphic to $\curve^-(S)$. Then we conclude (using \cite[Corollary~4G.3]{hatcher-book}) that $\{\sys^-=\e\}$ is homotopy equivalent to $\curve^-(S)$. Note that $\{\sys^-=\e\}$ is homeomorphic to a CW--complex.\par

Given a one--sided multicurve $\g=\g_1+\ldots+\g_n$ we denote by $U_\g$ the set of
$X\in\{\sys^-=\e\}$ such that $\ell_\d(X)>\e$ for any one--sided geodesic $\d$ disjoint from $\g$. Alternatively
$U_\g=\{\sys^-=\e\} \cap \{\min_{\d} \ell_\d>\e\} $
where $\d$ runs over the set of one--sided geodesics disjoint from $\g$. In this form we see that $U_\g$ is relatively open in $\{\sys-=\e\}$ since $\min_\d \ell_\d$ is continuous. Let us show that $U_\g$ is also contractible.\par

 The open subset $\{\min_\d \ell_\d>\e\}\subset\teich$ (where $\d$ should be taken as previously) is contractible. Actually one can prove that $\{\min_\d \ell_\d\geq2\e\}$ is a deformation retract of both $\teich$ and $\{\min_\d \ell_\d>\e\}$. We realize the deformation retractions through the flow of a well--chosen vector field, as in the proof of Lemma~\ref{lem:retraction-2}, but in a much simpler way since we do not require the $\Mod(S)$--invariance.\par
 
 To prove that $U_\g$ is contractible we show that it is a deformation retract of $\{\min_\d \ell_\d>\e\}$. We proceed as in beginning of the proof of Lemma~\ref{lem:retraction-2}. We complete $\g$ into a pants decomposition $\bar \g$. Then we consider the flow $\phi$ of the vector field $v^{\bar\g}=\frac{\partial}{\partial \ell_1}+\ldots+\frac{\partial}{\partial \ell_n}$. As we have seen, this flow increases the length of all geodesics disjoint from $\g$, in particular it preserves the open set 
$\{\min_\d \ell_\d>\e\}.$
We set 
$t_X=\min\{t\geq 0~;~\ell_i(\phi(X,t))\geq \e \textnormal{ for } i=1,\ldots, n\}.$
Clearly $X\mapsto t_X$ is a well--defined and continuous function, so $(X,s) \mapsto \phi(X, st_X)$ is a deformation retraction. This shows that $U_\g$ is contractible.\par

 The family $\{U_\g~;~\g\textnormal{ is a one--sided muticurve}\}$ is a good open cover of $\teich^-_\e$, its nerve is isomorphic to $\curve^-(S)$ due to the equality $U_\g\cap U_{\g'}=U_{\g\cap \g'}$ for any simple one--sided multicurves $\g$ and $\g'$.
So $\teich^-_\e$ is homotopy equivalent to $\curve^- (S)$. 
\end{proof}


\nocite{*}
\bibliographystyle{alpha}
\bibliography{biblio}


\end{document}